\documentclass[11pt]{amsart}
\usepackage[T1]{fontenc}
\usepackage[utf8]{inputenc}

\usepackage[margin=3.5cm]{geometry}

\usepackage{lmodern, amsfonts,amsmath,amstext,amsbsy,amssymb,
amsopn,amsthm,upref,eucal,mathptmx,mathtools,url,thmtools}

\DeclareMathAlphabet{\matholdcal}{OMS}{cmsy}{m}{n}   
\usepackage{dutchcal}
\usepackage{mathrsfs}
\usepackage{tcolorbox}
\usepackage{import}
\usepackage{float}
\usepackage{enumitem}
\usepackage[hidelinks]{hyperref}
\usepackage{caption}

\usepackage[nameinlink]{cleveref}
\crefalias{bigthm}{theorem}

\RequirePackage{xcolor} 
\definecolor{halfgray}
{gray}{0.55}
\definecolor{webgreen}
{rgb}{0,0.4,0}
\definecolor{webbrown}
{rgb}{.8,0.1,0.1}
\definecolor{red}
{rgb}{1,0,0}

\definecolor{black}{RGB}{0,0,0}
\definecolor{orange}{RGB}{230,159,0}
\definecolor{skyblue}{RGB}{86,180,233}
\definecolor{bluishgreen}{RGB}{0,158,115}
\definecolor{yellow}{RGB}{240,228,66}
\definecolor{blue}{RGB}{0,114,178}
\definecolor{vermillion}{RGB}{213,94,0}
\definecolor{reddishpurple}{RGB}{204,121,167}

\usepackage{microtype}
\usepackage{tikz}
\usetikzlibrary{shapes.geometric}
\usetikzlibrary{calc,math}
\usetikzlibrary{decorations.markings}
\usepackage{soul} 

\newcommand{\cF}{\mathcal{F}}

\newcommand{\cL}{\mathcal{L}}

\newcommand{\cA}{\mathcal{A}}

\newcommand{\cB}{\mathcal{B}} 

\newcommand{\N}{\mathbb{N}}
\newcommand{\Z}{\mathbb{Z}}

\newcommand{\R}{\mathbb{R}}
\newcommand{\C}{\mathbb{C}}

\newcommand{\tS}{\widetilde{S}}
\newcommand{\tSz} {\widetilde{S_0}}

\newcommand{\tT}{\widetilde{T}}
\newcommand{\tphi}{\widetilde{\phi}}
\newcommand{\tx}{\tilde{x}}
\newcommand{\ty}{\tilde{y}}
\newcommand{\cC}{\mathscr{C}}
\newcommand{\sC}{C_\sharp}

\newcommand*{\diff}{\mathop{}\!\mathrm{d}}
\newcommand{\one}{{\rm 1\mskip-4mu l}}

\DeclareMathOperator{\vol}{vol}
\DeclareMathOperator{\Deck}{Deck}
\DeclareMathOperator{\abel}{ab}
\DeclareMathOperator{\dist}{d}
\DeclareMathOperator{\supp}{supp}
\DeclareMathOperator{\Hom}{Hom}
\DeclareMathOperator{\cl}{cl}
\DeclareMathOperator{\SL}{SL}
\DeclareMathOperator{\range}{range}
\DeclareMathOperator{\ess}{ess}
\DeclareMathOperator{\Aut}{Aut}
\DeclareMathOperator{\id}{id}
\DeclareMathOperator{\reg}{reg}
\DeclareMathOperator{\bdd}{bdd}

\DeclareMathOperator{\Realpart}{Re}
\renewcommand{\Re}{\Realpart}
\DeclareMathOperator{\Imaginarypart}{Im}
\renewcommand{\Im}{\Imaginarypart}

\newtheorem{theorem}{Theorem}[section]
\newtheorem {lemma}[theorem]{Lemma}

\newtheorem {proposition}[theorem]{Proposition}
\newtheorem{corollary}[theorem]{Corollary}

\newtheorem{definition}[theorem]{Definition}
\newtheorem{bigthm}{Theorem}

\theoremstyle{definition}
\newtheorem{remark}[theorem]{Remark}
\newtheorem{notation}[theorem]{Notation}

\providecommand{\given}{}
\newcommand{\SetSymbol}[1][]{%
	\nonscript\:\mathord{:}
	\allowbreak
	\nonscript\:
	\mathopen{}}
\DeclarePairedDelimiterX{\Set}[1]\{\}{%
	\renewcommand{\given}{\SetSymbol[\delimsize]}
	#1
}

\makeatletter
\newcount\repeats
\newcommand{\rep}[2]{
  \begingroup
  \repeats=\z@
  \@whilenum\repeats<#1\do{#2 \hspace{-5pt} \advance\repeats 1}%
  \endgroup
}
\makeatother

\usepackage[normalem]{ulem}
\usepackage{cancel}

\begin{document}

\date{\today}

\author[M. Artigiani]{Mauro Artigiani}
\address{Departamento de Matemáticas\\
	Universidad Nacional de Colombia\\
	Carrera 30 No.\ 45-03\\
	111321\\
	Bogotá\\
	Colombia}
\email{martigiani@unal.edu.co}

\author[R. Castorrini]{Roberto Castorrini}
\address{Scuola Normale Superiore\\
	Piazza dei Cavalieri 7\\
	Pisa \\
	Italy\\
	56126
}
\email{roberto.castorrini@gmail.com}

\author[D. Ravotti]{Davide Ravotti}
\address{Université de Lille\\
	CNRS\\
	UMR 8524 - Laboratoire Paul Painlevé\\
	F-59000 Lille\\
	France
}
\email{davide.ravotti@gmail.com}

\author[Y. Tumarkin]{Yuriy Tumarkin}
\address{Institut für Mathematik\\
	Universität Zürich\\
	Winterthurerstrasse 190\\
	CH-8057 Zürich \\
	Switzerland
}
\email{yuriy.tumarkin@math.uzh.ch}

\title[Maharam-Pollicott-Ruelle resonances and self-similar translation flows]
{ Maharam-Pollicott-Ruelle resonances and self-similar translation flows on Abelian covers}

\begin{abstract}
	We study self-similar translation flows on $\mathbb Z^d$-covers of compact translation surfaces. 
	Our main goal is to investigate their ergodic properties with respect to general Maharam measures. To this end, we develop a renormalization approach based on a family of twisted transfer operators associated with the renormalizing pseudo-Anosov map acting on anisotropic spaces of distributions. 
	We describe the discrete spectrum of these operators in terms of the action of the pseudo-Anosov on suitable twisted cohomology groups.
	We further show that the resonant states of the dual operator corresponding to peripheral eigenvalues give rise to Maharam distributions which are invariant under the translation flow. Motivated by this correspondence, we refer to these eigenvalues as \emph{Maharam-Pollicott-Ruelle resonances}.
	As applications, we derive asymptotic formulas for ergodic integrals of smooth observables at Maharam-generic points, prove a central limit theorem for the associated Frobenius cocycle, and compute the Hausdorff dimension of Maharam measures.

\end{abstract}

\maketitle

\tableofcontents

\section{Introduction}

\subsection{Motivation}
Translation surfaces arise naturally at the intersection of several areas of mathematics, including dynamical systems, geometry, and complex analysis. Informally, a translation surface is obtained by gluing polygons in the plane along parallel edges via translations, producing a surface with a metric that is everywhere flat apart from a finite number of points, where the total angle is a positive integer multiple of $2\pi$ (conical singularities). 
Straight-line (or directional) flows on these surfaces, called translation flows, provide a natural class of zero-entropy dynamical systems which exhibit remarkably rich and subtle behavior. 

Compact translation surfaces have been extensively studied in the last fifty
years, using techniques from ergodic theory, algebraic and hyperbolic geometry
and low-dimensional topology. The dynamical picture of the straight-line flow in
a given direction is quite well understood: generically the flow is uniquely
ergodic~\cite{Masur, Veech}, weakly mixing~\cite{AF}, and cannot be strongly
mixing~\cite{Katok}. Informally, these results are obtained using
renormalization to speed up the trajectories, since the straight-line flow on
a translation surface is usually classified as a \emph{renormalizable} flow.
In this paper, we will consider the special case of \emph{self-similar} translation flows, where the renormalization dynamics consists of the action of an affine automorphism of the surface; namely, a \emph{linear pseudo-Anosov} automorphism.
More precisely, we study $\Z^d$-covers $\tS \to S$ of a translation surface $S$, assuming that the base surface $S$ has a pseudo-Anosov  
transformation which lifts to the cover $\tS$. 

In
the case of abelian covers, unique ergodicity no longer holds~\cite{Schmidt,
ST}. The relevant measures, apart from Lebesgue, are \emph{Maharam}
measures~\cite{ANSS,HHW,Hooper,Tum}. In this paper, we study Maharam measures
and ergodic integrals for their generic points. We remark that the non self-similar
case seems to be fundamentally different, and that there are indications that
not even ergodicity should be generically expected, see~\cite{FU}.

We introduce Maharam \emph{distributions}, which generalize Maharam measures. We
give a spectral description of these distributions in
\Cref{bigthm:spectral_Maharam} and relate them to the geometry of the surfaces in
\Cref{bigthm:geometric_Maharam}. The spectral description comes from the
quasi-compactness of a weighted transfer operator, which we prove in
\Cref{bigthm:quasi_compactness}. The link with geometry is given by the fact
that eigenvalues in the spectrum come from the action of the
pseudo-Anosov on a \emph{twisted} cohomology space on the base surface.\\

Since the cover $\tS$ has infinite Lebesgue measure, the
Birkhoff ergodic theorem no longer holds~\cite{Aaronson}. However, one can
hope to recover a description of an ergodic, conservative, measure preserving
flow $\phi_t$ on the $\sigma$-finite space $\tS$ of infinite measure $\mu$ for an observable $f$ by
\begin{equation}\label{eq:asymptotic_ergodic_integral}
	\int_0^t f\circ\phi_s(x)\, \diff s \sim  a(t)\Phi_t(x)\mu(f),
\end{equation}
where $a(t)$ is the ``correct'' asymptotic growth, $\Phi_t$ is an oscillating
term, which does not depend on the function $f$, and that converges in some
weaker sense.  We obtain such a description for the Maharam
measures in \Cref{bigthm:ergodic_integrals}.
Analogous results were previously obtained for several parabolic
systems in~\cite{LS2,LS3,ADDS,BFRT,CR}; in those works, however, the reference measure is given by the pullback of the invariant measure on the base surface. In contrast, \Cref{bigthm:ergodic_integrals} is concerned with Maharam measures that are distinct from Lebesgue measure.
A key difference in our setting is that these measures are \emph{squashable}. In particular, this property rules out the existence of a generalized strong law of large numbers (GLLN), see~\cite{Aaronson}. The absence of a GLLN is reflected in the form of the oscillatory factor $\Phi_t$ appearing in the asymptotic description \eqref{eq:asymptotic_ergodic_integral} of ergodic integrals, which differs significantly from that found in the aforementioned works. We also note that the asymptotic growth rate $a(t)$ occurs at a different order. \\

We were recently informed of the work in progress \cite{DS_new}, in which Dolgopyat and Sarig have also obtained results for ergodic integrals with respect to Maharam measures, specifically on the infinite staircase surface, as well as describing their generic points in this case. 

\subsection{Some previous results}
The transfer operator is a fundamental tool in hyperbolic dynamical systems. Its
study, on appropriate spaces of observables, allows one to prove mixing, decay
of correlations and many other, finer, statistical properties of the dynamical
system. 
Since for a self-similar translation flow the renormalization scheme consists in the iterations of a hyperbolic map $T$ (the linear pseudo-Anosov automorphism), one can use the transfer operator associated to $T$ in order to
study the renormalization and try to obtain insights on the translation flow as
a byproduct. 
This was first carried out in a proof of concept by Giulietti and
Liverani in~\cite{GL} and then by Faure, Gouëzel and Lanneau in~\cite{FaGoLa}.
Very recently, the second and third authors implemented this scheme in the setting of
horocycle flows on $\Z^d$-covers of negatively curved surfaces in~\cite{CR},
and the third author with Bruin, Fougeron and Terhesiu \cite{BFRT} in the same context as for the present paper, but limited to the study of points generic for the Lebesgue measure.
This latter work generalizes and strengthens the results in~\cite{ADDS}, which apply to a $\Z$-cover of a punctured torus (a staircase surface). \\

In the compact case, the
transfer operator associated to $T$, acting on appropriate Banach spaces, is
quasi-compact~\cite{FaGoLa}. It is a remarkable phenomenon that, in this case,
the transfer operator governs the dynamics of the straight-line
flow along the leaves of the stable foliation. In particular, since the flow
in this direction is uniquely ergodic, i.e., the only invariant measure for
the flow is the one induced by Lebesgue measure in coordinates, one knows that
the ergodic integrals grow linearly in time for all (nice) observables. The
deviations of these averages were studied by Forni in his seminal
paper~\cite{Forni:deviations} and are governed by a set of distributions
invariant with respect to the vector field tangent to stable leaves. These
distributions are related to eigenvalues of the transfer operator associated to
$T$, see~\cite{FaGoLa}.\\

As we mentioned above, in the case of abelian covers of compact translation surfaces,
we have infinitely many invariant measures, which are the
\emph{Maharam} measures. These are measures that are scaled under the action of Deck transformations, 
first studied by Maharam in~\cite{Maharam}. More precisely, given a homomorphism $\psi \in \Hom(\Deck,\R)$, 
a $\psi$-Maharam measure is a measure $\mu_\psi$ on $\tS$ such that for any $D\in \Deck$
\begin{equation}
\label{eq:Maharam_def}
\mu_\psi \circ D = e^{\psi(D)} \mu_\psi.
\end{equation}
We will identify $\Hom(\Deck,\R) = \Hom(\Z^d,\R)$ with $\R^d$ and denote $\mu_\psi$ by $\mu_r$ for $r\in \R^d$.

Maharam measures for $\Z^d$-covers have been studied in the following settings:
\begin{enumerate}[label=(\roman*)]
\item In the setting of the horocycle flow on hyperbolic surfaces, the work of
Babillot, Ledrappier, Sarig and Schapira
(\cite{BL,Sarig04,LS,LS2,SarigSchapira}) showed that the ergodic invariant
measures are precisely the Maharam measures (also known as
\emph{Babillot-Ledrappier measures} in this context) and classified their
generic points.
\item Aaronson, Nakada, Sarig and Solomyak studied some classes of $\Z^d$-skew-products over symbolic systems in~\cite{ANSS}.
\item Pollicott and Sharp studied the invariant measures for the stable foliation of a pseudo-Anosov diffeomorphism satisfying some conditions in~\cite{PS}.
\item For $\Z^d$-covers of translation surfaces, Maharam measures have been studied by Hooper, Hubert and Weiss in~\cite{HHW} and by Hooper in~\cite{Hooper}. 
In our setting of self-similar covers, the results of~\cite{Tum} show that the Maharam measures are precisely the ergodic invariant Radon measures for the flow $\{\phi_t\}_{t\in \R}$.
\end{enumerate}

\subsection{Setting}
We now briefly describe our setting and we present our main results. We refer to~\cite{AM} for a general introduction to
compact translation surfaces and to~\cite{DHV} for infinite ones.

Let $(S, \alpha)$ be a compact translation surface, that is $S$ is a compact
Riemann surface and $\alpha$ is a non identically zero holomorphic $1$-form on
$S$. Let $\Sigma$ denote the set of zeros of $\alpha$. We let $\vol =
\frac{\imath}{2}\alpha \wedge \overline{\alpha}$ denote the standard area form
on $S_0 \coloneqq S \setminus \Sigma$.

The 1-form $\alpha$ defines a flat metric on $S_0$, as well as two orthonormal
vector fields $X$ and $Y$, called the horizontal and vertical vector fields, and
the associated pair of measured foliations. We assume that these foliations are
precisely the stable and unstable foliations for a linear pseudo-Anosov
diffeomorphism $T \colon S \to S$. We denote by $\lambda > 1$ the stretch factor
of $T$. Finally, we let $\{\phi_t\}_{t\in \R}$ be the translation flow in the
horizontal direction. 

We are interested in abelian covers of $S$. Let $\mathcal{p}\colon \tS \to S$ be
a locally compact cover of $S$ (we refer to \Cref{sec:twisted_hilbert_spaces}
for precise definitions) with group of deck transformations $\Deck$ isomorphic
to $\Z^d$. Let $\{\widetilde{\phi_t}\}_{t\in \R}$ be the translation flow on
$\tS$ defined by $\mathcal{p}\circ \widetilde{\phi_t} = \phi_t \circ
\mathcal{p}$. We assume that we can lift $T$ to $\tS$; more precisely, we assume
that there exists a linear pseudo-Anosov diffeomorphism $\widetilde{T}\colon \tS
\to \tS$ so that $T \circ \mathcal{p} = \mathcal{p}\circ \widetilde{T}$ and
$\widetilde{T}$ commutes with all elements of $\Deck$. We refer to
\Cref{sec:examples} for explicit examples, which include some staircase
surfaces~\cite{HWS, HHW} and some examples of the Ehrenfest wind-tree surface,
see~\cite{Ehrenfest,FU,Hooper} and the references therein.

\subsection{Main results I : renormalization for Maharam-generic points}

One of the main contributions of this paper is the extension of the transfer operator formalism to the study of the translation flow dynamics with respect to general Maharam measures. We expect that this framework provides a flexible blueprint for further generalizations beyond the present setting.
The technical core of our work is to study a family of \emph{transfer operators} associated to the action of $\widetilde{T}$ on $\tS$. 
Concretely, in \Cref{sec:weighted_transfer_operators}, we study a family of bounded linear operators 
\[
\cL_{z} \colon \cB_{p,q} \to \cB_{p,q}, \qquad \text{for $z\in \C^d$.}
\]
The spaces $\cB_{p,q}$, indexed by a pair $(p,q) \in \Z_{\geq 0}^2$, are
\emph{anisotropic Banach spaces} of distributions, which are constructed
explicitly in \Cref{sec:Sobolev_spaces_and_distributions}. The operators
$\cL_{z} = \cL_{z,F}$ are defined in terms of the pseudo-Anosov map $T$ on the
compact surface $S$ and of the so-called \emph{Frobenius function} $F$ (see
\Cref{sec:twisted_hilbert_spaces}), a smooth function naturally associated to
the cover $\mathcal{p}\colon \tS \to S$ and the map~$\widetilde{T}$. Let $\cL_{z,F}'$ be the
dual operator of $\cL_{z,F}$. Our first main result is a spectral
description of Maharam distributions.

\begin{bigthm}[Spectral description of Maharam distributions
(\Cref{prop:Maharam_distribution})]\label{bigthm:spectral_Maharam} 

Let $r\in \R^d$ and let $\eta$ be an eigenvalue of $\cL'_{r,F}$, with $|\eta|$ sufficiently large.
We can associate to $\eta$ a Maharam distribution which is invariant under the
horizontal flow on $\tS$. Moreover, if $\eta$ is the spectral radius of
$\cL_{r,F}$, this construction gives the Maharam \emph{measure} $\mu_r$.  
\end{bigthm}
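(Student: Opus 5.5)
We will construct the Maharam distribution by unfolding a twisted eigendistribution on the base surface $S$ into a distribution on $\tS$. The unfolding uses the Frobenius function $F$ to encode how $\tT$ shifts the sheets of the cover. Concretely, fix a fundamental domain for the $\Deck\cong\Z^d$ action and write each point of $\tS$ as $(x,n)\in S\times\Z^d$. A compactly supported test function $\tilde f$ on $\tS$ is periodized with the character $e^{\langle r,n\rangle}$:
\[
P_r\tilde f(x)=\sum_{n\in\Z^d} e^{\langle r,n\rangle}\tilde f(x,n),
\]
which is a section of the twisted line bundle over $S$ determined by $r$. For an eigenvector $\ell\in\cB_{p,q}'$ with $\cL'_{r,F}\ell=\eta\ell$, we set $\tilde\ell(\tilde f)\coloneqq\ell(P_r\tilde f)$. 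We then check the Maharam property \eqref{eq:Maharam_def} directly: precomposing $\tilde f$ with $D\in\Deck$ shifts $n$, and this pulls out the factor $e^{\langle r,D\rangle}$ (with the sign convention fixed by $\psi\leftrightarrow r$). This step is pure bookkeeping. It only requires that $P_r\tilde f$ belongs to the space on which $\ell$ acts, i.e.\ that smooth twisted sections embed continuously into $\cB_{p,q}$, which is part of the construction in \Cref{sec:Sobolev_spaces_and_distributions}.

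The substantive step is flow invariance, $\tilde\ell(\tilde X\tilde f)=0$, which reduces to $\ell(X g)=0$ for every smooth twisted section $g$. Here we use renormalization. Because $T$ contracts the stable (horizontal) direction by $\lambda$, we have $X\circ T=\lambda^{-1}\,(X g)\circ T$ up to the twisting factor $e^{\langle r,F\rangle}$. The derivative along $X$ therefore intertwines the operators:
\[
\cL_{r,F}(Xg)=\lambda^{\pm1} X(\cL_{r,F}g),
\]
with the exponent sign set by the orientation convention. Iterating and applying the eigenvalue equation gives
\[
\eta^n\ell(Xg)=\ell(\cL^n_{r,F}Xg)=\lambda^{\pm n}\ell(X\cL^n_{r,F}g).
\]
Since $X\colon\cB_{p,q}\to\cB_{p-1,q}$ (or to the appropriate index shift) is bounded, we get
\[
|\ell(Xg)|\le C\,|\eta|^{-n}\lambda^{\pm n}\|\cL^n_{r,F}\|\,\|g\|.
\]
This is where the hypothesis that $|\eta|$ is sufficiently large enters: we need $|\eta|$ to exceed the growth rate of $\lambda^{\pm1}\cL_{r,F}$ acting on derivatives. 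Equivalently, $\ell$ annihilates the range of $X$ whenever $|\eta|$ lies above the essential spectral radius of the operator restricted to $X$-coboundaries. I expect this to be the main obstacle. One has to match the anisotropic norms, checking that derivatives in the stable direction cost exactly one power of $\lambda$ in the Lasota–Yorke inequality, so that the threshold becomes explicit and comparable to the spectral radius $\rho_r$. The first thing to try is to use the spectral decomposition from the quasi-compactness of $\cL_{r,F}$ on both $\cB_{p,q}$ and its derivative-shifted counterpart, and compare spectral radii.

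For the last claim, take $\eta=\rho_r$, the spectral radius. We use the positivity of $\cL_{r,F}$, which holds because the weight $e^{\langle r,F\rangle}$ is positive and $T$ is a diffeomorphism. A Krein–Rutman/Perron–Frobenius type argument on the cone of positive functions in $\cB_{p,q}$ then shows that the leading dual eigenvector can be chosen to be a positive functional, hence a Radon measure, and the resulting $\tilde\ell$ is a positive $\phi$-invariant $r$-Maharam measure. The classification result of \cite{Tum} states that ergodic invariant Radon measures are exactly the Maharam measures, one for each $r$ up to scaling. By that uniqueness, $\tilde\ell$ is a multiple of $\mu_r$.
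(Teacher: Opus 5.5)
Your outline has the same shape as the paper's argument, but the step that turns twisted objects into elements of $\cB_{p,q}$ is wrong, and the intertwining relation you rely on is false for the operator in the statement. The shared shape is: periodize against a character, reduce invariance to annihilating derivatives, kill those with an intertwining relation and the size of $\eta$, and use positivity at $\eta=\rho(r)$.

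\textbf{The gap.} The space $\cB_{p,q}$ is built from $\mathscr{C}^{p,\bdd}(S)$, i.e.\ from honest functions on $S$. \Cref{sec:Sobolev_spaces_and_distributions} provides no embedding of twisted sections, and the choice of trivialization is exactly what matters here. The operator $\cL_{r,F}$, with the smooth Frobenius function $F=\xi\circ\tT-\xi$, is the Koopman operator on twisted sections written in the smooth trivialization $e^{r\cdot\xi}$ (\Cref{lem:invar}). Your formula uses the discrete sheet index $n$ instead. Since $\xi(D_mx)=\xi(x)+m$, one gets
\[
P_r\tilde f=e^{-r\cdot\xi}\,\pi_0\bigl(e^{r\cdot\xi}\tilde f\bigr)\ \text{ on }\cF,
\qquad\text{hence}\qquad
\ell(P_r\tilde f)=\zeta\bigl(e^{r\cdot(n-\xi)}\tilde f\bigr),
\]
where $\zeta=\ell\circ\pi_0\circ\Xi_r$ is the paper's distribution. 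The multiplier $e^{r\cdot(n-\xi)}$ is Deck-invariant, but it jumps across the lifts of $\partial\cF$ and has flow derivative $-r\cdot\omega(X)\,e^{r\cdot(n-\xi)}$ in between. So your $\tilde\ell$ is Maharam but, even where $\ell$ can be evaluated on it, \emph{not} flow-invariant in general. For instance, for $\ell=\nu_r$ it is $\mu_r$ multiplied by a density that varies along horizontal leaves.

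In the correct trivialization the flow derivative on $\tS$ becomes $X_r=X-r\cdot\omega(X)$ (\Cref{lem:Xi_z}), and the needed relation holds only for $X_r$. Indeed, from $X(g\circ T^{-1})=\lambda\,(Xg)\circ T^{-1}$ and $X(F\circ T^{-1})=\omega(X)-\lambda\,\omega(X)\circ T^{-1}$,
\[
X(\cL_{r,F}g)=\lambda\,\cL_{r,F}(Xg)+r\cdot\bigl(\omega(X)-\lambda\,\omega(X)\circ T^{-1}\bigr)\,\cL_{r,F}g .
\]
This rearranges to $X_r\cL_{r,F}=\lambda\,\cL_{r,F}X_r$, but it shows the plain $X$ does not intertwine. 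Your ``up to the twisting factor'' hides precisely this extra term.

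\textbf{After the fix.} Replace $P_r$ by $\pi_0\circ\Xi_r$ and $X$ by $X_r$; then the rest is short and your anticipated ``main obstacle'' disappears. There is no sign ambiguity: $\cL^n_{r,F}X_r=\lambda^{-n}X_r\cL^n_{r,F}$, because $T^{-1}$ expands the horizontal direction. Invariance for $|\eta|>\lambda^{-1}\rho(r)$ then follows in either of two ways. Your iterate bound $|\ell(X_rg)|\lesssim(\lambda|\eta|)^{-n}\|\cL^n_{r,F}\|$ works and needs only the spectral radius formula. Alternatively, as the paper does, note that $X_r'\ell$ is an eigenvector of $\cL'_{r,F}$ with eigenvalue $\lambda\eta$, whose modulus exceeds $\rho(r)$. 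No Lasota--Yorke bookkeeping or ``essential spectral radius on coboundaries'' is involved.

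\textbf{The top eigenvalue.} For $\eta=\rho(r)$ you do not need \cite{Tum}. The paper defines $\mu_r(g)=\nu_r(\pi_0\circ\Xi_r g)$ (\Cref{cor:maharam_measures}), so what is required is that $\nu_r$ is a positive finite measure. This comes from simplicity of $\rho(r)$ and from $\rho(r)^{-n}\cL^n_{r,F}\ell\to\nu_r(\ell)\ell_+$ with $\ell_+$ positive (\Cref{lem:nu_is_measure_1,lem:nu_is_measure_2}). Your uniqueness appeal would also need ergodicity of the constructed measure and a uniqueness-up-to-scaling statement, neither of which you establish.
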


Let us remark that transfer operators twisted by a unitary complex number were
studied by a subset of the authors in~\cite{CR}. They also appeared, in a slightly different form, in the work of Forni on effective weak mixing properties~\cite{Forni:twisted}, as well as in \cite{FR}. We use a complex weight $z$ because,
morally, its real part $r$ will govern the Maharam scaling, while the imaginary
part manages the Fourier modes of the cover (see \Cref{sec:road_map} for a
brief description of the Fourier decomposition and \Cref{sec:twisted_hilbert_spaces} for the precise definitions).

The existence of eigenvalues comes from \emph{quasi-compactness} of the
operators on the spaces~$\cB_{p,q}$.

\begin{bigthm}[$\cL_{z,F}$ is quasi-compact
(\Cref{cor:spectral_decomposition})]\label{bigthm:quasi_compactness}

Let $z\in \C^d$ with real part $r=\Re z \in \R^d$, and let $p,q \geq 1$. The
operator $\cL_{z} \colon \cB_{p,q} \to \cB_{p,q}$ is \emph{quasi-compact}, with
spectral radius $\rho(z) \leq \rho(r)$ and essential spectral radius
$\rho_{\ess}(z) \leq \lambda^{-\min\{p,q\}}\, \rho(r)$.
\end{bigthm}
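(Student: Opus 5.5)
The plan is to prove a Lasota--Yorke (Doeblin--Fortet) inequality for $\cL_z$ on $\cB_{p,q}$ against a weaker norm that embeds compactly, and then to apply Hennion's theorem. Write $\cL_z u = (e^{z\cdot F} u)\circ T^{-1}$. Up to the conventions fixed in \Cref{sec:weighted_transfer_operators}, the iterates are $\cL_z^n u = (e^{z\cdot F_n} u)\circ T^{-n}$ with the Birkhoff sum $F_n=\sum_{k=0}^{n-1}F\circ T^k$. Since $|e^{z\cdot F_n}| = e^{r\cdot F_n}$, the imaginary part of $z$ only contributes a unimodular factor; its derivatives along $X,Y$ appear in the Leibniz expansion and enter only the compact (weak-norm) part of the estimate.

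For the norms I use the structure of $\cB_{p,q}$ from \Cref{sec:Sobolev_spaces_and_distributions}. Elements are tested with up to $p$ derivatives along the stable direction $X$, which are contracted by $T$ so that $\lambda^{-p}$ is gained, and with up to $q$ "negative" derivatives, i.e.\ integration against test functions with $q$ derivatives along the unstable direction $Y$, which are expanded so that $\lambda^{-q}$ is gained. Linearity of $T$ gives $X\circ T^{-n}$-type relations with exact factors $\lambda^{\pm n}$, and there is no distortion. I then aim for the estimate
\[
\|\cL_z^n u\|_{p,q} \le C\bigl(\lambda^{-\min\{p,q\}n}\,\|e^{r\cdot F_n}\|_{*}\,\|u\|_{p,q} + C_n \|u\|_{\mathrm{weak}}\bigr),
\]
where the weak norm is that of $\cB_{p-1,q+1}$, whose unit ball is relatively compact from $\cB_{p,q}$ by a Rellich-type argument on the compact $S$. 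Here $\|e^{r\cdot F_n}\|_*$ denotes the relevant sup-type weight, uniform over the integration leaves.

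The main obstacle is to identify the growth rate $\limsup_n \|e^{r\cdot F_n}\|_*^{1/n}$ with $\rho(r)$ rather than with the crude bound $e^{n\max r\cdot F}$. My plan is to work with the positive operator $\cL_r$ acting on $\cB_{0,0}$-type objects, namely measures integrated along unstable leaves. I would show that $\|\cL_r^n 1\|$ controls $\sup$ over the unstable leaf integrals of $e^{r\cdot F_n}$, and that these grow like $\rho(r)^n$ up to subexponential factors. Positivity and the minimality of the foliations (bounded distortion of leaf integrals across different leaves) are the ingredients for this. The Leibniz terms, in which derivatives hit $e^{z\cdot F_n}$, produce factors $\|D F_n\|\le Cn$, which are subexponential.

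Granting this, the Lasota--Yorke inequality yields $\rho(z)\le \limsup\|e^{r\cdot F_n}\|_*^{1/n}\le\rho(r)$ by iterating with $|z\cdot F_n|$ replaced by $r\cdot F_n$, and Hennion's theorem then gives $\rho_{\ess}(z)\le \lambda^{-\min\{p,q\}}\rho(r)$.
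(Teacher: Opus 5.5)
Your architecture is the paper's: a Lasota--Yorke inequality for $\cL_z^n$ on $\cB_{p,q}$ against $\cB_{p-1,q+1}$, with a weight depending only on $r=\Re z$ (the paper's $R_n(z)=R_n(r)$, \Cref{prop:LY}); the compact embedding of \Cref{lem:compact}; Hennion's theorem; and the positivity bound $\|\cL_r^n\one\|_{p,q}\geq C R_n^n(r)$ of \Cref{lem:estimates_spectral_radius}. Together these give $\rho(z)\le R(z)=R(r)=\rho(r)$ and $\rho_{\ess}(z)\le\lambda^{-\min\{p,q\}}\rho(r)$.

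However, you have oriented the anisotropy backwards, and this is not cosmetic. The operator $\cL_z$ composes with $T^{-1}$, which expands $X$ and contracts $Y$. So under $\cL_z^n$, $X$-derivatives are multiplied by $\lambda^{n}$, not gained. In $\cB_{p,q}$ they are harmless only because integration by parts moves them onto the test function (hence $\partial^\alpha\ell(x)$ is measured in $\mathcal{C}^{-q-|\alpha|}$). The gain $\lambda^{-pn}$ comes instead from $Y^l(f\circ T^{-n})=\lambda^{-ln}(Y^lf)\circ T^{-n}$. The test functions live on horizontal segments, which $T^{-n}$ stretches to length $\lambda^n$. The gain $\lambda^{-qn}$ comes from the top derivative of $u(\lambda^{-n}\cdot)$; the lower-order part is sent into the weak norm by mollifying $u$.

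The orientation matters most in the step you correctly single out as the crux. The weight must be the average of $e^{r\cdot S_nF}$ over the stretched horizontal leaf, $R_n^n\approx\lambda^{-n}\sup_x\int_{T^{-n}I(x)}e^{r\cdot S_nF}$, which is exactly what $\langle\cL_r^n\one(x),u\rangle$ sees. Integrals of $e^{r\cdot S_nF}\circ T^{-n}$ over unit vertical (unstable) segments $J$ behave differently. Since $T^{-n}J$ has length $\lambda^{-n}$ and $S_nF$ varies by $O(1)$ along it, these integrals are, up to bounded distortion, $\sup e^{r\cdot S_nF}$. That is precisely the crude bound you want to avoid, and $\|\cL_r^n\one\|_{p,q}\le C R_n^n(r)$ cannot control it. With the directions swapped back, your plan goes through.

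Two smaller points. First, the coefficient $C_n$ in front of the weak norm must itself be $O(w_n)$, as in \eqref{LY-1} and \eqref{LY-2}, to conclude $\rho(z)\le\lim w_n^{1/n}$. Second, minimality is not needed for the lower bound. It suffices to use positivity, bounded distortion of $e^{r\cdot S_nF}$ on unit pieces of the stretched leaf, and a shift $Tx=\phi_\varepsilon(y)$ to handle the pieces near the ends of $\supp u$.
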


A central objective in the spectral theory of transfer operators is not only to establish quasi-compactness, but also to identify the resulting discrete spectrum.
A precise description of the Pollicott-Ruelle resonances for the standard (untwisted) transfer operator was obtained, for example, by Butterley, Kiamari, and Liverani in \cite{BuKiLi} for linear hyperbolic toral automorphisms, and by Faure, Gouëzel and Lanneau~\cite{FaGoLa} in our setting of linear pseudo-Anosov maps on translation surfaces. Recently, Galli \cite{Gal} extended these results to nonlinear Anosov diffeomorphisms. The core of the aforedmentioned works is to establish a correspondence between the Pollicott-Ruelle resonances and the eigenvalues of the induced action on cohomology.

In our setting, the presence of a twist in the transfer operator brings our work closer to that of Forni~\cite{Forni:twisted_cohomological,Forni:twisted}, and, more notably, to that of Dang and Rivière~\cite{DaRi}. Following this perspective, we describe the discrete spectrum of the operators $\cL_{z,F}$  in terms of the action of $T$ on a twisted cohomology group. 
We are indebted to Gabriel Rivière for explaining to us their work and for suggesting that the methods could be adapted to our setting.

We denote the twisted cohomology group by $H^1_z(S_0,\C)$ for $z\in\C^d$, and by $\cL_z^\#$ the induced action of $T$ on $H^1_z(S_0,\C)$.

\begin{bigthm}[Connection of Maharam distributions to geometry (\Cref{thm:cohomological_Maharam})]\label{bigthm:geometric_Maharam}
Let $p,q \geq 1$ and $z \in \R^d \oplus i(-\pi,\pi)^d \setminus \{0\}$. Denote by $\sigma_{p,q}(\cL_{z,F})$ the spectrum of $\cL_{z,F}$ on $\cB_{p,q}$, and let
\[\sigma^+_{p,q}(\cL_{z,F}) = \{\alpha \in \sigma_{p,q}: |\alpha| > \lambda^{-p} \rho(z)\}.\]
Let $\Theta(z) \subset \Theta_0(z)$ denote the spectrum of the action of $\cL_z^\#$ on $H^1_z(S,\C)$ and on $H^1_z(S_0,\C)$ respectively.
Then 
\[
\{\lambda^{-n}\mu: n \geq 1, \, \mu \in \Theta(z), \, |\mu|> \lambda^{-\min\{p,q\}+1} \rho(z)\} \subseteq \sigma_{p,q}(\cL_{z,F}),
\]
and 
\[ 
\sigma^+_{p,q}(\cL_{z,F}) \subseteq \{ \lambda^{-n}\mu: 1 \leq n \leq p, \mu \in \Theta_0(z) \}.
\]
\end{bigthm}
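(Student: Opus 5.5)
We adapt the strategy of Faure--Gouëzel--Lanneau and Dang--Rivière to the twisted setting. The starting point is that the flat structure splits the exterior derivative into a horizontal and a vertical part, $d = dx\wedge \partial_X + dy \wedge \partial_Y$. Because $T$ is linear with stretch factor $\lambda$, $X$ and $Y$ are eigen-directions of $DT$. Concretely, $T_*X = \lambda^{-1}X$ and $T_*Y=\lambda Y$, up to the convention of which foliation is contracted. The Frobenius weight $e^{\langle z,F\rangle}$ is built so that it intertwines the twisted differential $d_z = d + \langle z, dF\rangle\wedge$ (the flat connection defining $H^1_z$) with pullback by $T$. So I would first introduce the twisted transfer operators on currents of every degree, $\cL^{(k)}_{z}$ for $k=0,1,2$, acting on anisotropic spaces modelled on $\cB_{p,q}$. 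I would then check the commutation relations $\partial_X \cL_{z,F} = \lambda^{\pm1}\cL_{z,F}\partial_X$, and similarly for $Y$, with $d_z$ commuting with the family $\cL^{(k)}_z$. These relations are the engine of the whole argument. Differentiating along $X$ maps $\cB_{p,q}$ to $\cB_{p-1,q}$ (or the reverse) and multiplies eigenvalues by $\lambda$. Applying $\partial_X$ repeatedly to a resonant state therefore produces a ladder $\alpha,\lambda\alpha,\lambda^2\alpha,\dots$, which must terminate once we leave the region $|\cdot|>\rho_{\ess}$ given by \Cref{bigthm:quasi_compactness}.

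For the second inclusion, take $\alpha\in\sigma^+_{p,q}$ with eigendistribution $u$; we may work with generalized eigenvectors inside a Jordan block. Let $n$ be the largest integer such that $\partial_X^{n-1}u\neq 0$; note $n\le p$, since $|\lambda^{n}\alpha|\le\rho(z)$ forces the chain to stop within $p$ steps, and $\lambda^{-p}\rho(z)<|\alpha|$. Set $v=\partial_X^{n-1}u$, so $\partial_X v=0$. Then $v\,dx$, or its analogue paired with $dy$, is a $d_z$-closed twisted current of degree one, on which $\cL^{(1)}_z$ acts with eigenvalue $\lambda^{n}\alpha$ after accounting for the factor $\lambda^{\pm1}$ coming from the form. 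By the twisted de Rham theorem for currents on $S_0$, and the fact that $\cL^{(1)}$ acts on $d_z$-exact currents through $\cL^{(0)}$, this class defines an element of $H^1_z(S_0,\C)$. If that class is nonzero, $\lambda^n\alpha=\mu\in\Theta_0(z)$. If the class is zero, the current is $d_z f$ with $f$ a degree-zero eigendistribution. The hypothesis $z\in\R^d\oplus i(-\pi,\pi)^d\setminus\{0\}$ is what rules this case out: the twisted cohomology in degree zero vanishes, and the twisted operator has no invariant constant, which forces $f=0$. This contradicts $v\ne0$.

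For the first inclusion, start from an eigenclass $[\omega]\in H^1_z(S,\C)$ with eigenvalue $\mu$. Using Hodge theory for the twisted Laplacian on the compact surface $S$, I would represent the class by a smooth closed form. I would then build an eigencurrent of $\cL^{(1)}_z$ in the anisotropic space by the spectral projector: a suitably normalized limit of $\mu^{-m}(\cL^{(1)}_z)^m\omega$. This limit is nonzero because cohomology pairing with a dual class detects it. Its horizontal component is a distribution $w$ with $\cL_z w=\lambda^{-1}\mu w$, giving the case $n=1$. Higher $n$ would come from solving $\partial_X u_{k+1}=u_k$ within generalized eigenspaces, using the surjectivity of $\partial_X$ modulo the invariant distributions. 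The condition $|\mu|>\lambda^{-\min\{p,q\}+1}\rho(z)$ guarantees that these resonant states lie outside the essential spectrum and genuinely belong to $\cB_{p,q}$.

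I expect the main obstacle to be the singularities $\Sigma$. The twisted de Rham isomorphism must be handled for currents in $\cB_{p,q}$, which can carry mass at the cone points; this is exactly why $H^1_z(S_0)$ appears in one inclusion and $H^1_z(S)$ in the other. I would first try to localize near each zero using the explicit branched-cover charts, as Faure--Gouëzel--Lanneau do.
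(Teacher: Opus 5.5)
\textbf{There is a genuine gap in the second inclusion, in the exact case.}

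In the paper's normalization, $X$ is the contracted flow direction and the twisted derivatives are $X_z=X-z\cdot\omega(X)$ and $Y_z=Y-z\cdot\omega(Y)$. They satisfy $X_zE_\beta\subset E_{\lambda^{-1}\beta}$ and $Y_zE_\beta\subset E_{\lambda\beta}$. So your raising operator is $Y_z$, and the closed current attached to $v\in\ker Y_z$ is $v\,\diff x$.

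If $[v\,\diff x]=0$, then $v\,\diff x=\diff_z f$, that is, $X_zf=v$ and $Y_zf=0$. This $f$ is not $\diff_z$-closed, so the vanishing of degree-zero twisted cohomology says nothing about it. What the hypothesis on $z$ actually gives is injectivity of $X_z$ (\Cref{lem:Xz_injectivity}, via ergodicity of the flow on $\tS$). Since $X_z(\cL_zf-\lambda^n\alpha f)=\lambda\cL_zv-\lambda^n\alpha v=0$, the primitive $f\neq0$ is an eigendistribution in $\ker Y_z$ with eigenvalue $\lambda^n\alpha$. There is no contradiction.

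This case genuinely occurs. For real $z=r\neq0$ one has $Y_r\ell_+=0$, because its eigenvalue would be $\lambda\rho(r)>\rho(r)$. Then $v=X_r\ell_+\neq0$ lies in $\ker Y_r$ with eigenvalue $\lambda^{-1}\rho(r)$ (in $\sigma^+$ for $p\ge2$), while $v\,\diff x=\diff_r\ell_+$ is exact. So your step ``$H^0_z=0$ forces $f=0$'' fails on an actual resonant state, with $f=\ell_+$.

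The paper instead iterates (\Cref{prop:de_Rham_vice_versa}). The current $f\,\diff x$ is again closed, with $\cL_z^\#$-eigenvalue $\lambda^{n+1}\alpha$, and one repeats. Since every eigenvalue of $\cL_z$ has modulus at most $\rho(z)<\lambda^p|\alpha|$, a nonzero class must appear within $p$ steps. This is why the conclusion is only $\lambda^k\alpha\in\Theta_0(z)$ for some $1\le k\le p$. Also, the passage from a closed $\cB_{p,q}$-current to a smooth twisted class is not a black-box de Rham theorem. The paper builds an explicit homotopy operator along the flow (\Cref{prop:de_Rham}) and combines it with \Cref{lem:exact_current}.

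\textbf{Further problems with the set-up and the first inclusion.}

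Your commutation relations must use the twisted derivatives. For untwisted $X$ one has $X(\cL_zu)=\lambda\cL_z(Xu)+z\cdot X(F\circ T^{-1})\,\cL_zu$. Only the identity $X(F\circ T^{-1})=\omega(X)-\lambda\,\omega(X)\circ T^{-1}$ turns this into $X_z\cL_z=\lambda\cL_zX_z$.

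Similarly, $\diff+z\cdot\diff F\wedge=e^{-z\cdot F}\circ\diff\circ e^{z\cdot F}$ is gauge-trivial, because $F$ is a genuine function on $S$. The correct twisted differential is $\diff_z=\diff-z\cdot\omega\wedge$, where $\omega$ are the harmonic forms defining the cover.

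In the first inclusion, write your smooth representative as $f\,\diff x+g\,\diff y$, and let $P_1,P_2$ be the Riesz projectors of $\cL_z$ at $\lambda^{-1}\mu$ and $\lambda\mu$. A literal limit of $\mu^{-m}(\cL_z^\#)^m(f\,\diff x+g\,\diff y)$ need not exist. Nonvanishing of the projected current $P_1f\,\diff x+P_2g\,\diff y$ does not give a nonzero $\diff x$-component. You must also handle the case $P_1f=0\neq P_2g$; the paper does this by applying $X_z^2$ to $P_2g$.

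For $n\ge2$ you need no surjectivity statement. Simply apply the lowering operator $X_z$, which maps $E_{\lambda^{-k}\mu}$ injectively into $E_{\lambda^{-k-1}\mu}$.

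In all three places (the exact case, the $P_2g$ case, and $n\ge2$) the missing ingredient is the same: injectivity of $X_z$ for $z\notin2\pi i\Z^d$.
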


We call the discrete spectrum of $\cL_z$ the \emph{Maharam-Ruelle-Pollicott spectrum of $\widetilde{T}$} associated to the parameter $z \in \C^d$.

\Cref{bigthm:quasi_compactness,bigthm:geometric_Maharam} above are a
generalization of~\cite{FaGoLa}, which
corresponds to the case $z=0$. The twisted cohomology was introduced
in~\cite{Forni:twisted} to study effective weak mixing of the straight-line flow
on compact translation surfaces.

In the proof of \Cref{bigthm:quasi_compactness}, we show that the maximal
eigenvalue $\rho$ for the transfer operator $\cL_{z}$ is simple, and it is the
only one on the circle of radius $|\rho|$. We call $\ell_+$ the corresponding
eigenfunction. Then, the dual transfer operator $\cL'_z$ also has a simple maximal
eigenvalue. Following the approach of~\cite{GoLi}, we show that this corresponds
to a \emph{probability measure} $\nu$ in \Cref{lem:nu_is_measure_2}. Moreover,
we also define the measure $\nu_T = \nu(\cdot\ell_+)$, which is invariant under
$T$ and exponentially mixing for $T$, see \Cref{lem:nu_is_measure_2} and
\Cref{prop:nu_T_is_exponentially_mixing}. We show that these two measures share
a common transverse measure $\vartheta$, when decomposed along flow lines, see \Cref{cor:transverse_measure}. 

The measure $\nu$ is quasi-invariant for $\{\phi_t\}_{t\in\R}$ on $S$ and its appropriately scaled lift to $\tS$ gives the invariant Maharam measure $\mu_r$, see \Cref{cor:maharam_measures}.
We study the Hausdorff dimension of the measure $\nu$ and prove the following result.

\begin{bigthm}[Hausdorff dimension of $\nu$ (\Cref{thm:Hausdorff_dim_nu})]
\label{bigthm:Hausdorff}
	Let $r\in\R^d$ and let $\rho(r)$ be the maximal eigenvalue of
	$\cL_r$ and $\nu$ be the probability measure corresponding to the eigenvalue
	$\rho(r)$ for the dual transfer operator $\cL'_r$. Then, the Hausdorff
	dimension of $\nu$ is given by
	\[
		\dim_H \nu = 1 + \frac{\log{\lambda} + \log{\rho(r)}-r\cdot\nu_T(F)}{\log{\lambda}},
	\]
	where $F$ is the Frobenius function.
\end{bigthm}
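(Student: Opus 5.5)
The strategy is to compute the local dimension of $\nu$ at $\nu$-almost every point and show it is constant; the formula then follows from the standard characterization of $\dim_H$ via local dimensions (Young's criterion). We use the product structure of $\nu$ described earlier: along the flow (stable) direction $\nu$ decomposes, via \Cref{cor:transverse_measure}, into a transverse measure $\vartheta$ on unstable segments and conditional measures along horizontal leaves. Since $\nu$ is quasi-invariant for $\phi_t$, we expect the conditionals along flow lines to be equivalent to Lebesgue with a continuous density. We therefore expect the horizontal direction to contribute exactly $1$ to the dimension, and the remaining work is to compute the dimension of $\vartheta$ along the vertical (unstable) direction.

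First we would estimate the measure of small rectangles. Fix a Markov partition $\{R_j\}$ for $T$ made of rectangles with horizontal and vertical sides. Consider cylinders $C_n(x)=\bigcap_{k=0}^{n-1}T^{-k}R_{j_k}$; these are rectangles of full horizontal width, comparable to $1$, and of vertical height comparable to $\lambda^{-n}$. The eigen-relation $\cL_r'\nu=\rho(r)\nu$ should give
\[
\nu(C_n(x))\asymp \rho(r)^{-n}\lambda^{-n} e^{r\cdot S_nF(x)},
\]
where $S_nF$ is the Birkhoff sum along the orbit. This comes from iterating the transfer operator with its weight $e^{r\cdot F}$ and its Jacobian factor $\lambda^{-1}$, up to the sign conventions of $\cL_{z,F}$. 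The constants are uniform by a bounded-distortion argument, which applies because $F$ is smooth and $T$ is linear. Concretely, we would test $\cL_r'^{\,n}\nu$ against smooth approximations of $\one_{C_n}$ in $\cB_{p,q}$.

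Next we would pass to Birkhoff averages. Note that $\nu$ is not $T$-invariant, but $\nu_T=\nu(\cdot\,\ell_+)$ is, and $\nu\ll\nu_T$ wherever $\ell_+>0$. Since $\nu_T$ is ergodic by \Cref{prop:nu_T_is_exponentially_mixing}, we get $\frac1n S_nF\to\nu_T(F)$ for $\nu$-a.e.\ $x$. Hence
\[
\frac{\log \vartheta(\text{vertical slice of height }\lambda^{-n})}{\log\lambda^{-n}}\to \frac{\log\lambda+\log\rho(r)-r\cdot\nu_T(F)}{\log\lambda}.
\]
Balls of radius $\lambda^{-n}$ are comparable to rectangles of horizontal size $\lambda^{-n}$ and vertical size $\lambda^{-n}$. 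The flow conditional contributes $\lambda^{-n}$ to the measure of such a ball, which gives the extra $1$. This yields the stated local dimension almost everywhere.

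The main obstacle is making the rectangle estimate rigorous: the measure $\nu$ is obtained only as a distribution in the dual of $\cB_{p,q}$, so we cannot directly evaluate it on indicator functions. Compounding this, a ball is not exactly a cylinder, and points near partition boundaries or singularities must be handled separately. To deal with this, we would first use \Cref{lem:nu_is_measure_2} to treat $\nu$ as a genuine measure. We would then prove the cylinder estimate by disintegrating along stable leaves, using that the transverse measure $\vartheta$ transforms under $T$ by the factor $\rho(r)^{-1}\lambda^{-1}e^{r\cdot F}$, which is essentially the content of \Cref{cor:transverse_measure}. Finally, a Borel--Cantelli argument should show that $\nu$-a.e.\ point stays at distance at least $\lambda^{-\epsilon n}$ from the partition boundary, which controls the discrepancy between balls and cylinders.
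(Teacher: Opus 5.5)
Your overall plan is the paper's: use the product structure of $\nu$ so that the horizontal direction contributes $1$, show a vertical slice of height $\lambda^{-n}$ at $x$ has $\vartheta$-mass $\asymp\rho(r)^{-n}\lambda^{-n}e^{r\cdot S_nF(x)}$, and take Birkhoff averages. The resulting exponent is correct. However, two steps fail as written.

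\emph{Transfer of the Birkhoff limit to $\nu$.} The claim ``$\nu\ll\nu_T$ wherever $\ell_+>0$'' is false in general.

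\begin{itemize}
\item $\ell_+$ is not a positive function. It is a distribution that restricts to a measure $\widehat{\ell_+}$ on horizontal leaves.
\item By \Cref{lem:expression_for_nu} and \Cref{cor:transverse_measure}, $\nu$ and $\nu_T$ share the transverse measure $\vartheta$. Their leaf conditionals differ: $G_x(s)\,\diff s$ for $\nu$, and $G_x\,\diff\widehat{\ell_+}(x)$ for $\nu_T$.
\item Nothing makes leaf Lebesgue measure absolutely continuous with respect to $\widehat{\ell_+}$. For $r\neq0$ one expects these two leaf measures, and hence $\nu$ and $\nu_T$, to be mutually singular.
\end{itemize}

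The correct argument is the paper's \Cref{lem:generic_points}, and it uses exactly the shared transverse measure. Since $T$ contracts horizontal leaves, $|S_nF(x)-S_nF(\phi_s x)|\le C|s|$ uniformly in $n$. So the set where $\frac1n S_nF\to\nu_T(F)$ is saturated by horizontal leaves. It has full $\nu_T$-measure by Birkhoff, hence full $\vartheta$-measure, hence full $\nu$-measure.

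\emph{Lower bound in the cylinder estimate.} This does not follow from bounded distortion.

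\begin{itemize}
\item The eigen-relation reduces $\nu(C_n(x))$ to $\rho(r)^{-n}e^{r\cdot S_nF(x)}$ times the $\nu$-mass of a strip near $T^nx$ of horizontal width $\asymp\lambda^{-n}$ and vertical height $\asymp1$.
\item Distortion controls the first factor. For the second to be $\gtrsim\lambda^{-n}$ uniformly in $n$, you need $\vartheta$ to give mass bounded below to every vertical interval of length bounded below. This is a full-support statement for $\nu$, and the earlier sections do not provide it.
\item The paper proves it by contradiction. If $\lambda^{n_k}\nu(f_{n_k}\circ T^{-n_k})\to0$, extract $T^{n_k}x\to x_\infty$, approaching from one side if $x_\infty$ is a singularity. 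Then $\nu$ vanishes on a rectangle at $x_\infty$. Quasi-invariance $\diff\vartheta(\phi_s x)=G_x(s)\,\diff\vartheta(x)$ and minimality of the horizontal flow then force $\nu=0$.
\end{itemize}

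Separately, the transformation rule for $\vartheta$ under $T$ is not the content of \Cref{cor:transverse_measure}. It follows from $\cL'_{r,F}\nu=\rho(r)\nu$ together with \Cref{lem:expression_for_nu}.

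\emph{The Markov partition and Borel--Cantelli step.} This machinery is unnecessary, and the Borel--Cantelli part is harder than you suggest. Because $\nu$ is not $T$-invariant, you cannot bound the $\nu$-mass of $T^{-k}$ of a thin boundary neighbourhood by the mass of the neighbourhood itself. You would have to pass through $\nu_T$ and a saturation argument again.

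The paper avoids all of this with a direct test:
\begin{itemize}
\item Test $\nu$ against a smooth bump $f_n$ supported on $\phi_{[-1/2,1/2]}\circ\phi^\perp_{[-\lambda^{-n},\lambda^{-n}]}(x)$, centred at $x$.
\item The relation $\nu(f_n)=\rho^{-n}\nu\big((e^{r\cdot S_nF}f_n)\circ T^{-n}\big)$ moves this to a strip of width $\asymp\lambda^{-n}$ centred at $T^nx$, on which distortion is bounded.
\item $T^n$ maps rectangles in $S_0$ to rectangles in $S_0$, so neither partition boundaries nor singularities intervene.
\end{itemize}
This gives $\vartheta(\phi^\perp_{(-\epsilon,\epsilon)}(x))$ for every small $\epsilon$ directly, and Frostman's lemma applied to $\vartheta$ finishes. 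Your Young-criterion formulation for $\nu$ is equivalent.
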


The above result was obtained in a somewhat more general setting by Berk, Fr\k{a}czek, Kotlewski and Trujillo in~\cite{BFKT}, using techniques from skew-products of IETs. 

We believe that $\nu_T$ is the Gibbs measure for $T$ with
potential $r\cdot F - \log{\lambda}$,  see
\Cref{rmk:nu_T_is_Gibbs}. If this is true, then the previous result says that
\[
	\dim_H \nu = 1 +\frac{h_{\nu_T}(T)}{\log \lambda}.
\]

\subsection{Main results II : asymptotics of ergodic integrals}

As a main dynamical application of the results presented so far, we study ergodic integrals of sufficiently smooth functions with compact support.
A remarkable novelty is that we obtain an asymptotic description for ergodic integrals not only for Lebesgue-generic points (as in \cite{BFRT}), but also for points which are generic 
with respect to other Maharam measures.

We define a continuous positive function $\Delta_r(x,s)$ for all points $x \in S$ with an infinite forward orbit and for all positive $s$ which is a \lq\lq self-similar\rq\rq\ parametrization of orbits of the translation flow, see \Cref{lem:self_similarity_of_Delta}. With the aid of this \lq\lq normalized length\rq\rq, we can state our result.


\begin{bigthm}[Asymptotics of ergodic integrals (\Cref{thm:ergodic_integrals_asymptotics})]\label{bigthm:ergodic_integrals}
	There exists a constant $c >0$, which only depends on the cover, such that for any $r\in \R^d$ with $\|r\|_{\infty} < c$ the following holds.
	
	There exists a symmetric negative definite $d\times d$ matrix $\Sigma = \Sigma_r$ such that  for any $f\in \mathscr{C}^1_c(\tS)$ with $\mu_r(f) \neq 0$, for $\mu_r$-almost every $x\in \tS$, and for all $t\geq 1$, letting $n = \lceil \log_\lambda t \rceil$, we have
    \[
    \int_0^{t} f\circ \phi_s(x)\diff s = \frac{(\lambda \rho(r))^n e^{-r\cdot \xi(\widetilde{T}^nx)}}{(2\pi n)^{d/2} \sqrt{\det(-\Sigma)}} \mu_{r}(f) \Bigg( \Delta_{r}\Big(T^n \mathcal{p}(x),\frac{t}{\lambda^n}\Big) e^{Z_n(x)} + o(1) \Bigg),
    \]
    where $o(1)$ denotes a term which tends to zero as $t\to \infty$, $Z_n(x)$ satisfies
    \[
    Z_n \to -\frac{1}{2} (Z+ \xi(x)) \cdot \Sigma^{-1} (Z+\xi(x)) \text{ \ in distribution (w.r.t.~$\mu_r$)}, \qquad \text{ where }Z \sim \matholdcal{N}(0,-\Sigma),
    \]
    and $\xi: \tS \to \R^d$ is a continuous analogue of the $\Z^d$-coordinate on the cover (see \Cref{sec:twisted_hilbert_spaces} for definition).
\end{bigthm}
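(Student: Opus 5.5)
We obtain \Cref{bigthm:ergodic_integrals} by renormalizing the orbit segment with $\widetilde{T}^{-n}$, decomposing $f$ into Fourier modes over the cover, and applying the spectral theory of \Cref{bigthm:quasi_compactness} to the twisted operators $\cL_{r+i\theta,F}$ for $\theta\in(-\pi,\pi)^d$.

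\textbf{Step 1: renormalization and Fourier decomposition.} Since $\widetilde{T}$ contracts the horizontal direction by $\lambda$, the segment $\{\phi_s(x)\}_{0\le s\le t}$ with $n=\lceil\log_\lambda t\rceil$ is the image under $\widetilde{T}^{-n}$ of a horizontal segment of length $t/\lambda^n\le 1$ starting at $\widetilde{T}^n x$. So
\[\int_0^t f\circ\phi_s(x)\,\diff s=\lambda^n\int_0^{t\lambda^{-n}} f\circ\widetilde{T}^{-n}\big(\phi_s(\widetilde{T}^nx)\big)\,\diff s .\]
Writing $f$ as a Fourier integral over $\theta\in(-\pi,\pi)^d$ of twisted functions $f_\theta$ on $S$ (\Cref{sec:twisted_hilbert_spaces}), the pullback $f\circ\widetilde{T}^{-n}$ becomes, mode by mode, $\cL^n_{i\theta,F}$ applied to $f_\theta$, paired against the current of integration along the short segment at $T^n\mathcal p(x)$. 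The Maharam weight $e^{-r\cdot\xi}$ shifts the parameter: multiplying by $e^{r\cdot\xi}e^{-r\cdot\xi}$ moves the contour to $z=r+i\theta$. This produces the prefactor $e^{-r\cdot\xi(\widetilde{T}^nx)}$ and operators $\cL^n_{r+i\theta,F}$ acting on $\cB_{p,q}$. The segment current lies in the dual space, uniformly in its length $\le 1$.

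\textbf{Step 2: perturbation theory near $\theta=0$.} By \Cref{bigthm:quasi_compactness} and the simplicity of the leading eigenvalue, for $\|r\|_\infty<c$ the eigenvalue $\rho(r+i\theta)$ is analytic near $\theta=0$. Away from $\theta=0$ we have $|\rho(r+i\theta)|<\rho(r)$; this follows from aperiodicity of the cocycle $F$, which one gets from the cohomological description in \Cref{bigthm:geometric_Maharam}. We then expand
\[\log\rho(r+i\theta)=\log\rho(r)+i\theta\cdot m_r+\tfrac12\theta\cdot\Sigma\theta+O(|\theta|^3).\]
The matrix $\Sigma$ is negative definite by non-degeneracy of $F$ (it is not cohomologous to a function with values in a proper subspace). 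The drift $m_r$ is absorbed into the centering $\xi(\widetilde{T}^nx)$, because for $\nu_T$ the cocycle is centered after the Maharam change of measure. The leading term is then $\rho(r)^n\,\Pi_\theta f_\theta$ with spectral projector $\Pi_\theta=\ell_+^{\theta}\otimes\nu^{\theta}$. Pairing $\nu$ with $f$ gives $\mu_r(f)$ via \Cref{bigthm:spectral_Maharam}, and pairing $\ell_+$ with the segment current gives $\Delta_r(T^n\mathcal p(x),t/\lambda^n)$, using \Cref{lem:self_similarity_of_Delta}.

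\textbf{Step 3: local limit computation.} Rescaling $\theta=u/\sqrt n$ turns the Fourier integral into a Gaussian integral. This yields the factor $(2\pi n)^{-d/2}\det(-\Sigma)^{-1/2}$ times $\exp\big(-\tfrac12 w_n\cdot\Sigma^{-1}w_n\big)$, where $w_n=\big(\xi(\widetilde{T}^nx)-\xi(x)-n m_r\big)/\sqrt n$ is the normalized displacement; we define $Z_n(x)$ to be this exponent. The remaining contributions are $o(1)$ relative to the main term:
\begin{itemize}
\item the essential spectral part decays like $\lambda^{-\min\{p,q\}n}\rho(r)^n$;
\item the non-maximal discrete spectrum decays exponentially faster;
\item the region $|\theta|>\delta$ is controlled by the spectral gap;
\item the error in the Taylor expansion contributes $O(n^{-1/2})$.
\end{itemize}

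\textbf{Step 4: distributional convergence of $Z_n$.} It remains to show $w_n\to Z+\xi(x)$ in distribution with respect to $\mu_r$. Here $\xi(\widetilde{T}^nx)-\xi(x)$ is a Birkhoff sum of $F$ over $T$. Under the exponentially mixing measure $\nu_T$ (\Cref{prop:nu_T_is_exponentially_mixing}) it satisfies a CLT with covariance $-\Sigma$, by the same Nagaev–Guivarc'h argument through $\rho(r+i\theta)$. The measure $\mu_r$ is locally $e^{r\cdot\xi}\nu$ by \Cref{cor:maharam_measures}, and $\nu$ shares its transverse measure with $\nu_T$ (\Cref{cor:transverse_measure}). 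Conditioning on the starting $\Z^d$-cell then accounts for the shift by $\xi(x)$.

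\textbf{Main obstacle.} The hardest step is Step 4: transferring the CLT from $\nu_T$ to the infinite measure $\mu_r$, where squashability prevents a GLLN. Equally delicate is obtaining uniformity in $\theta$ of the spectral data near $\theta=0$ for fixed real $r$. Simplicity of $\rho$ on the whole circle is what makes the restriction $\|r\|_\infty<c$ necessary.
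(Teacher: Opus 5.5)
\textbf{The gap is in Step 1}, in the sentence ``the segment current lies in the dual space, uniformly in its length $\le 1$''. This is false for the spaces on which you have quasi-compactness. An element $\ell\in\cB_{p,q}$ is a map $S_{\reg}\to(\mathscr{C}^q_c(0,1))^*$, and the spectral gap needs $q\ge 1$. Pairing $\ell(y)$ with the sharp cutoff $\one_{[0,L]}$ is then not controlled by $\|\ell\|_{p,q}$. For example, let $\psi_\epsilon\ge 0$ be a smooth bump of leafwise width $\epsilon$ and leafwise integral $1$. Then $\|X\psi_\epsilon\|_{p,q}=O(1)$ uniformly in $\epsilon$ for every $q\ge1$. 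However, $\int_0^L X\psi_\epsilon\circ\phi_s(y)\,\diff s=\psi_\epsilon(\phi_L y)-\psi_\epsilon(y)\asymp\epsilon^{-1}$ when the segment ends inside the bump. So you cannot pair the segment with the remainder $Q_z^n g_z$, with the non-leading spectral components, or even with the leading term $\Pi_z g_z$ for $\theta\neq 0$. The only object known to be a genuine leafwise measure is $\ell_{+,r}$ (\Cref{lem:evectors_are_measures}). Nor can you simply trim the endpoints. An $\epsilon$-neighbourhood of an endpoint at scale $n$ is an orbit segment of length $\epsilon\lambda^n$, and its trivial bound $\epsilon\lambda^n\|f\|_\infty$ is not negligible against the main term $(\lambda\rho(r))^n e^{-r\cdot\xi(\tT^n\tx)}n^{-d/2}$ for fixed $\epsilon$. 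The endpoint pieces therefore have to be estimated spectrally as well.

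The paper handles this with \Cref{lem:smoothen_integral} and \Cref{cor:smoothen_integral}. Up to an $O(\|f\|_\infty)$ error, $\one_{[0,t]}$ is replaced by a $\mathscr{C}^2$ bump at scale $\lambda^n$ plus bumps $w^\pm$ at every scale $\lambda^j$, $1\le j\le n-1$, anchored at both endpoints $\tx$ and $\phi_t(\tx)$. The argument then runs as follows.
\begin{enumerate}
\item Each piece is renormalized by $\tT^j$ at its own scale. Only test functions with uniformly bounded $\mathscr{C}^2$ norm are ever paired with $\cL_z^j g_z$, which is also what gives $|P_j(\theta)-P_j(0)|=O(\|\theta\|)$.
\item Stationary phase at scale $j$ produces $Z_j$.
\item \Cref{lem:dimension_of_l+} transports the scale-$j$ main term to scale $n$ with relative weight $O(\eta^{n-j})$, and the errors sum to $O(n^{-1/2})$.
\item The genericity condition $S_nF_0(x)/n\to 0$ lets one replace $(n/j)^{d/2}e^{Z_j}$ by $e^{Z_n}$ and match the two endpoints. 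This is where ``$\mu_r$-a.e.'' enters, via \Cref{cor:transverse_measure}.
\item Only at the very end is the smooth cutoff replaced by $\one_{[0,t\lambda^{-n}]}$, using that $\ell_{+,r}$ gives mass $O(\eta^n)$ to intervals of length $\lambda^{-n}$.
\end{enumerate}
The hypothesis $\|r\|_\infty<c$ is exactly what makes $\eta<1$ in \eqref{eq:constraint_on_r}. It has nothing to do with simplicity of $\rho$, which holds for all $r\in\R^d$ (\Cref{lem:rho_simple}, \Cref{prop:unique_max_eigenvalue}).

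Three smaller corrections.
\begin{enumerate}
\item The bound $|\rho(r+2\pi i\theta)|<\rho(r)$ for $\theta\neq0$ comes from \Cref{prop:unique_max_eigenvalue}, together with the fact that $v\cdot F$ is not cohomologous to a constant mod $2\pi$ (proved from ergodicity of the flow on $\tS$). It does not come from \Cref{bigthm:geometric_Maharam}.
\item The drift $\nu_{T,r}(F)$ is in general non-zero for $r\neq 0$, and it is not ``absorbed'' by a Maharam centering. It cancels against the Fourier phase $e^{-2\pi i\theta\cdot\xi(\tT^n\tx)}$.
\item Since $\xi$ is measured from the base point $\tx_0$ that normalizes $\mu_r$, the vector in the Gaussian is $\xi(\tx)+S_nF_0(x)$, as in \eqref{eq:definition_Z_j}. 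In your $w_n$ the term $\xi(x)$ cancels, and no ``conditioning on the starting cell'' restores it. In the paper, Step 4 is simply \Cref{thm:CLT} applied at $\mu_r$-generic points; it is not the main difficulty.
\end{enumerate}
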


The function $\xi$ appearing in \Cref{bigthm:ergodic_integrals} above is related to the \emph{Frobenius function} $F$ associated to the lift $\widetilde{T}$ of $T$ to $\tS$, which describes the displacement in the cover of points $x$ under $\widetilde{T}$. 
A key ingredient to prove the above result is a Central Limit Theorem for the Birkhoff sums of $F$ with respect to $\nu_T$, see \Cref{thm:CLT}.

\subsection{A road map to the proof of \Cref{bigthm:ergodic_integrals}}\label{sec:road_map}
In this section we give a brief and very informal sketch of the proof of \Cref{bigthm:ergodic_integrals}, which uses many of the other results proved in the paper.

Let $x\in \tS$ and let $f$ be a compactly supported smooth observable on $\tS$. We are aiming to estimate $\int_0^t f\circ \phi_s(x) \diff s$.
\medskip

\underline{Step 1: Fourier decomposition.} Fixing an $r\in \R^d$, we decompose $f$ as 
\begin{equation}
\label{eq:roadmap_fourier}
f(x) = \int_{\theta \in \left(-\frac12,\frac12\right)^d} f_{r+2\pi i \theta}(x) \diff \theta,
\end{equation}
where each $f_z$ is rescaled under the action of $\Deck$, depending on $z$. More precisely, given $\xi: \tS \to \R^d$ a continuous analogue of the $\Z^d$ coordinate on $\tS$, $f_z$ has the form 
\[f_z(x) = e^{-z\cdot \xi (x)} g_z(\mathcal{p}(x)),\]
for a function $g_z$ on $S$.
\medskip

\underline{Step 2: renormalization.} We let $n$ be such that $\lambda^{n-1} < t \leq \lambda^n$ and renormalize the interval $\phi_{[0,t]}(x)$ to approximately unit length by applying $\widetilde T^n$. 
We have 
\begin{equation}
\label{eq:roadmap_renormalization}
\int_0^t f\circ \phi_s(x) \diff s = \lambda^n \int_0^{\,t/\lambda^n} f\circ \widetilde{T}^{-n} \circ \phi_s (\widetilde{T}^n(x)) \diff s.
\end{equation}
\smallskip

\underline{Step 3: the weighted transfer operator appears.} Computing 
\[f_z \circ \widetilde{T}^{-1} (x) = e^{-z\cdot \xi(x)} e^{z \cdot(\xi(x)-\xi(\widetilde{T}^{-1}(x)) )} g_z\circ T^{-1} (\mathcal{p}(x)),\]
we see that for $F(x) = \xi(\widetilde T(x)) - \xi(x)$, which is a $\Deck$-invariant function and hence projects to a function on $S$, defining $\cL_{z,F} (g) = (e^{z\cdot F} g)\circ T^{-1}$, we have
\begin{equation}
\label{eq:roadmap_wto}
f_z\circ \widetilde{T}^{-n} (x) = e^{-z\cdot \xi(x)} \cL_{z,F}^n(g_z) (\mathcal{p}(x)).
\end{equation}
Hence after the Fourier decomposition, the renormalization dynamics is governed by $\cL_z = \cL_{z,F}$.

Combining \eqref{eq:roadmap_fourier}-\eqref{eq:roadmap_wto}, we can write
\begin{equation}
\label{eq:roadmap_combined_1}
\int_0^t f\circ \phi_s(x) \diff s = \lambda^n \int_{\theta \in \left(-\frac12,\frac12\right)^d} \int_0^{t/\lambda^n} \big(e^{-z\cdot\xi} \cL_z^n(g_z) \big) \circ \phi_s (\widetilde{T}^n (x)) \diff s \diff \theta.
\end{equation}
\medskip

\underline{Step 4: projection onto the maximal eigenvalue.} By the quasi-compactness of $\cL_z$ (\Cref{bigthm:quasi_compactness}), the simplicity of the maximal eigenvalue of $\cL_r$ (\Cref{cor:spectral_decomposition})
and the perturbation theory of bounded operators (\Cref{prop:pert}), for $\theta$ close to $0$ one can write 
\[\cL_z^n = \rho(z)^n \Pi_z + Q_z^n,\] where $\rho(z)$ is the spectral radius of $\cL_z$, $\Pi_z$ a projection onto the eigenspace corresponding to the simple maximal eigenvalue and $Q_z$ a slower-growing error term such that $\Pi_z Q_z = Q_z\Pi_z = 0$. On the other hand for $\theta$ not close to 0, the spectral radius of $\cL_z$ is less than some $\sigma < \rho(r)$, so, assuming that $n$ is large, we can ignore these $\theta.$
\medskip

\underline{Step 5: the Maharam measure.} We show in \Cref{bigthm:spectral_Maharam} that $\mu_r$ is defined by $\mu_r(f) = \nu_r (g_r)$ for compactly supported $f$, where $g_r$ comes from the Fourier decomposition of $f$ and $\nu_r$ is a measure on $S$ arising as the eigenvector of the dual transfer operator $\cL'_{r,F}$, which is dual to $\ell_+$.

We approximate $\Pi_z(g_z)$ by $\Pi_r(g_r)$, and observing that $\Pi_r$ has the form $\Pi_r(g) = \nu_r(g) \ell_+$, being the projection onto the one-dimensional subspace generated by $\ell_+$, we have the estimate
\begin{equation}
\label{eq:roadmap_Maharam}
\cL_{z}^n (g_z) \approx \rho(z)^n \mu_r(f) \ell_+.
\end{equation}

Plugging \eqref{eq:roadmap_Maharam} into \eqref{eq:roadmap_combined_1}, we write 
\[\int_0^t f\circ \phi_s(x) \diff s \approx \lambda^n \rho(r)^n \mu_r(f) \int_{\theta \in B(0,\epsilon)} \frac{\rho(z)^n}{\rho(r)^n} \int_0^{t/\lambda^n} e^{-z\cdot\xi} \circ\phi_s(\widetilde{T}^n x) \diff \ell_+(s) \diff \theta  \]
\smallskip

\underline{Step 6: the $\Z^d$ coordinate.} Approximating $e^{-z\cdot \xi}$ by $e^{-r\cdot \xi}$ for small $\theta$, and using that this is a smooth function which does not vary too much along intervals of length 1, we can take out the factor $e^{-r\cdot \xi(\widetilde{T}^n (x))}$ to write 
\[ \int_0^{t/\lambda^n} e^{-z\cdot\xi} \circ\phi_s(\widetilde{T}^n x) \diff \ell_+(s)  \approx e^{-r\cdot \xi(\widetilde{T}^n (x))} \Delta_r\big(T^n(\mathcal{p}(x)),\frac{t}{\lambda^n} \big), \]
where $\Delta_r$ is a bounded positive function.
\medskip

\underline{Step 7: stationary phase estimate.}
It remains to estimate $\int_{\theta \in B(0,\epsilon)} \frac{\rho(z)^n}{\rho(r)^n} \diff \theta$.

In \Cref{prop:pert} we compute the first two terms of the Taylor expansion at $\theta=0$ of $\log \rho(z)$, and write $\frac{\rho(z)^n}{\rho(r)^n}$ as an exponential of this Taylor expansion.

For $n$ large we apply a stationary phase estimate to the integral of this exponential, which gives the term 
\[\frac{e^{Z_n(x)}}{(2\pi n)^{d/2} \sqrt{\det(-\Sigma)}}.\]

Using the CLT for $T$ proved in \Cref{thm:CLT}, we can say that $Z_n$ converges in distribution. 




\subsection{Future directions}

We conclude with some open questions and future directions of this work. 

We investigate only the case of self-similar covers of translation surfaces, but it is natural to ask what can be said in the more general case of abelian covers. Some strong results in this direction, in the specific case of the infinite staircase surface, have recently been obtained by Dolgopyat and Sarig in \cite{DS_new}. It would be interesting to see whether the methods of the present work can be extended to cover non self-similar cases and what can be said there. 

In the compact case, the interest of Pollicott-Ruelle resonances lies in their connection to the correlations of observables under the hyperbolic dynamics. It would be interesting to see if also in our case the Maharam-Pollicott-Ruelle resonances play a role in the mixing properties of the pseudo-Anosov $\tT$. As the standard notion of mixing is not relevant to infinite measure systems, one would need to consider a different notion of mixing, such as a variant of global-local mixing introduced in \cite{Lenci}.

In \Cref{thm:CLT}, we obtain a Central Limit Theorem for $\nu_T$. It is natural
to ask whether statements like a Local Limit Theorem or even a Large Deviations
Principle hold in this case too.


\subsection{Organization of the paper}
Motivated by the Fourier decomposition, we introduce anisotropic Banach spaces
for functions on compact translation surfaces in
\Cref{sec:Sobolev_spaces_and_distributions} and study the weighted transfer
operator on them in \Cref{sec:weighted_transfer_operators}, where we prove
\Cref{bigthm:quasi_compactness}. We describe in detail the Fourier decomposition
in \Cref{sec:twisted_hilbert_spaces} and specialize the results of the previous
sections to our context. We prove
\Cref{bigthm:spectral_Maharam,bigthm:geometric_Maharam} in
\Cref{sec:Maharam_stuff}, introducing the twisted cohomology. In
\Cref{sec:ergodic_integrals} we study ergodic integrals and prove
\Cref{bigthm:ergodic_integrals}. We study the Hausdorff dimension of Maharam
measures in \Cref{sec:Hausdorff} and prove \Cref{bigthm:Hausdorff}. We give some detailed examples to which our
results apply, together with explicit computations of the twisted cohomology in
Appendix \ref{sec:examples}. Finally, in Appendix \ref{sec:leafwise_measures} we prove a
technical result, \Cref{prop:leafwise_measures}, which we need to adapt
from~\cite{GoLi2}.

\tikzset{every picture/.style={line width=0.75pt}} 

\section{Anisotropic Banach spaces}\label{sec:Sobolev_spaces_and_distributions}
In this section we define the Banach spaces we will use throughout the paper. 
\subsection{Open rectangles and a partition of unity}

Let $\{\phi_t\}_{t\in \R}$ be our translation flow on $S_0$ which, without loss
of generality, we can assume to be a horizontal flow. We denote by
$\{\phi^{\perp}_t\}_{t\in \R}$ the vertical flow. A point $x \in S_0$ is called
\emph{regular} if the horizontal segment $\{\phi_t(x) \ :\ t\in [-1,1]\}$ does
not contain any singularity. Let us call $S_{\reg}$ the open set of regular
points. 

We
begin by clarifying what we mean by ``smooth functions'' on $S$ and on $\tS$.
Let $\tS_{\reg} = \mathcal{p}^{-1}(S_{\reg})$.
\begin{definition}
	Let $k \in \N \cup \{\infty\}$, and let $Z\in \{S, \tS\}$. The space $\mathscr{C}^{k,\bdd}(Z)$ consists of functions $f \colon Z_{\reg} \to \C$ so that, for every differential operator $V$ of order $\leq k$, the function $Vf$ is well-defined and uniformly bounded.
\end{definition}
For a function $f \in \mathscr{C}^{k,\bdd}(S)$, we define the $\mathscr{C}^{k}$-norm, for $k \in \N$, in the usual way as
\[
\|f\|_{\mathscr{C}^k} \coloneqq \sup \{ |X^{n} Y^m f(x)| \ :\ n,m\geq 0,\ n+m\leq k,\ x\in S_{\reg} \};
\]
an analogous definition holds for elements of $\mathscr{C}^{k,\bdd}(\tS)$.

An open rectangle $\Omega \subset S_0$ is the image of an embedding
\[
\begin{split}
I_v \times I_h & \to S_0\\
(s,t) &\mapsto  \phi_s^{\perp} \circ \phi_t(x),
\end{split}
\]
for some $x\in S_0$ and some open interval $I_v$ and $I_h$. 
Clearly, the same rectangle $\Omega$ can be the image of different embeddings; however, the lengths $|I_h|$ and $|I_v|$ of $I_h$ and $I_v$ are uniquely determined by $\Omega$. We call them the horizontal and vertical length of $\Omega$ respectively.
In the following, the rectangles we consider will have lengths $\leq 1$. We will write
\[
\Omega = \{ \phi_t(y)\ :\ t\in I_h,\ y \in \Omega_v\},
\]
where $I_h$ is a subinterval of $I\coloneqq (0,1)$, $\Omega_v = \{\phi_s^{\perp}(x)\ :\ s\in I_v\}$ is a vertical segment, and $x \in S_{\reg}$. Similarly, we write $\Omega_h = \{\phi_t(x)\ :\ t\in I_h\}$.

The following lemma follows from elementary geometric considerations and
will be used later.

\begin{lemma}\label{lem:rectangle} There exists $\delta >0$ such that, for all
	$x \in S_0$, there exists an open rectangle $\Omega = \Omega(x) \subset
	S_0$ containing $x$ with horizontal length $>1$ and with vertical length
	$\geq \delta$.
\end{lemma}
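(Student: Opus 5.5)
The plan is a compactness argument on $S$ with the singularities included. For every $y\in S$ I will find a neighbourhood $U_y$ and a number $\delta_y>0$ such that every $x\in U_y\cap S_0$ lies in an open rectangle of horizontal length $>1$ (say $3/2$) and vertical length $\ge \delta_y$. Covering $S$ by finitely many $U_y$ and taking $\delta$ to be the minimum of the corresponding $\delta_y$ gives the lemma.

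Two facts about the pseudo-Anosov foliations will be used repeatedly.
\begin{enumerate}[label=(\alph*)]
\item The horizontal foliation is minimal, so it has no closed leaves. By compactness, a horizontal segment of length $\le 2$ that avoids $\Sigma$ is therefore embedded. Moreover, its vertical $\delta$-thickening is an embedded rectangle once $\delta$ is small: the map $(s,t)\mapsto\phi^\perp_s\circ\phi_t(x)$ is a local isometry, and it is injective on the compact closed segment, hence injective on a small neighbourhood of it.
\item There are no horizontal saddle connections. Hence every horizontal separatrix, followed for time $\le 3$ away from its singularity, stays at a positive distance from $\Sigma$.
\end{enumerate}

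\emph{Case $y\in S_0$.} The full horizontal trajectory of $y$ cannot hit $\Sigma$ both forward and backward within time $2$: it is not a saddle connection, and if it hits $\Sigma$ at all it does so in only one direction. Choose a closed horizontal segment $J\ni y$ of length $3/2$ that avoids $\Sigma$. If $y$ lies on a separatrix, take $J$ to start at $y$ and point away from the singularity. The closed segment $J$ has positive distance from $\Sigma$, so by (a) a vertical thickening of $J$ of some size $2\delta_y$ is an embedded rectangle missing $\Sigma$. All points $x$ with $\dist(x,y)<\delta_y/2$ lie in the open rectangle obtained by shifting the horizontal interval slightly to the left.

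\emph{Case $y=\sigma\in\Sigma$.} Work in the cone chart of angle $2\pi(k+1)$ around $\sigma$. This chart is a union of $2k+2$ half-discs, alternately upper and lower, glued along the horizontal separatrices. Let $x\ne\sigma$ be close to $\sigma$.
\begin{itemize}
\item If $x$ is in the interior of an upper half-disc, write $x=(a,b)$ with $b>0$ small. Consider the rectangle $\{(a'+t,h): t\in(-\epsilon,3/2),\ h\in(0,\delta_\sigma)\}$, where heights are measured from the separatrix line $h=0$ and $a'$ is chosen so that the rectangle contains $x$. Its horizontal leaves run alongside a separatrix ray of $\sigma$ for time $\le 2$. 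By (b) and continuity, for $\delta_\sigma$ small these leaves neither meet $\Sigma$ nor pass through $\sigma$: the bottom edge $h=0$ is excluded because the rectangle is open. Embeddedness again follows from (a).
\item If $x$ lies on a horizontal separatrix ray at distance $a>0$ from $\sigma$, use the rectangle $\{t\in(a/2,\,a+3/2),\ h\in(-\delta_\sigma,\delta_\sigma)\}$ built around that ray. It is contained in the union of the two half-discs adjacent to the ray, avoids $\sigma$ because $t>a/2>0$, and avoids the rest of $\Sigma$ by (b).
\end{itemize}
Since there are only finitely many separatrices, $\delta_\sigma$ can be chosen uniformly over all of them, and this handles every $x\in U_\sigma\cap S_0$.

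The main obstacle is precisely this uniformity near the singularities. The vertical size must not shrink as $x\to\sigma$, and the trick is to anchor the rectangle on the separatrix line $h=0$ rather than centring it at $x$. This is where the absence of horizontal saddle connections (b) is essential: it guarantees that the thin strip hugging a separatrix stays clear of $\Sigma$ for time $2$. Once this is in place, the remaining verifications (embeddedness and the local isometry property) are routine.
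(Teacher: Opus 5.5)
Your proof is correct, but it takes a different route from the paper's.

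\textbf{The paper's route.} The paper does no local analysis at the singularities. It lets $2m_0$ be the minimal $\|\cdot\|_\infty$-size of holonomy vectors of saddle connections. It then asserts that the point $y=T^{n_0}x$ lies in an open square of side $m_0$ contained in $S_0$. Pulling this square back by $T^{-n_0}$ stretches it horizontally to length $m_0\lambda^{n_0}>1$ and squeezes it vertically to $\delta=m_0\lambda^{-n_0}$. So the paper uses the self-similarity to turn a uniform \emph{isotropic} neighbourhood into the required anisotropic one, and it gets an explicit $\delta$. All the near-singularity issues are absorbed into the single quantity $m_0$; when $y$ is close to $\Sigma$ one still has to place the square with the cone point on its boundary, which the paper leaves implicit.

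\textbf{Your route.} Your compactness argument never uses $T$. It only needs that the horizontal direction has no saddle connections, which already rules out closed leaves. So it works on any compact translation surface with that property. The price is a non-explicit $\delta$ and a case analysis at the cone points. There, anchoring the strip on the broken line formed by the incoming separatrix, $\sigma$, and the outgoing separatrix is exactly what keeps the vertical size uniform.

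\textbf{Two details need a sentence more than you give them.}

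First, fact (a) does not literally give embeddedness of strips whose closure contains $\sigma$, because your parametrisation is not a local isometry at the preimage of $\sigma$. The same compactness argument does work, though. Apply it on the closed half-strip, or in the separatrix case on the closed strip whose left edge passes through $\sigma$. On that set the map is continuous, and it is injective on the bottom (resp.\ left) edge by the absence of saddle connections. It is also injective near the preimage of $\sigma$, because a closed half-disc sheet of the cone chart is embedded.

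Second, uniformity in the position of $x$ (the parameter $a$) is clearest if you note that all your rectangles are subrectangles of finitely many fixed strips of height $\delta_\sigma$ along the $2k+2$ separatrix segments of length about $2$. The lower half-disc case, which you omit, is the mirror image of the upper one.
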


\begin{proof}
We recall that holonomy vectors of saddle connections on a compact translation surface are discrete. Then, we define
\[
	2m_0 = \min\Set*{\Big\|\int_\gamma \alpha\Big\|_{\infty}\given \gamma \text{ saddle connection in } S} >0.
\]
Let $n_0 \geq 0$ be minimal so that $\lambda^{n_0}m_0 > 1$. We claim that it is enough to choose $\delta = m_0\lambda^{-n_0}$. 
Indeed, let $x \in S_0$, and let $y = T^{n_0}x \in S_0$. By definition, there exists an open rectangle $\Omega \subset S_0$ containing $y$ of horizontal and vertical sides $m_0$. Then, the rectangle $   T^{-n_0}\Omega \subset S_0$ contains $x$, its horizontal side is greater than $1$ and its vertical side is $ m_0\lambda^{-n_0}$.
\end{proof}

We will need to ``localize'' elements of $\mathscr{C}^q(S)$ via an appropriately
constructed partition of unity. The properties of this localization procedure
are the content of \Cref{prop:localize} below. Recall that $I$ denotes the unit
interval $(0,1)$. For $q\geq 0$, we let $\mathcal{C}^q = \mathscr{C}_c^q(I)$ and
$\mathcal{C}^{-q} = (\mathscr{C}_c^q(I))^{\ast}$.

By taking a smaller $\delta$ if necessary, we can assume that the $\delta$ we found in \Cref{lem:rectangle} is less than $\frac{1}{6}$.

\begin{proposition}\label{prop:localize}
There exists a finite cover $\{\Omega^{a}\}_{a \in \cA}$ of $S_{\reg}$
(up to a zero measure set) by open rectangles $\Omega^{a} = \{\phi_t(y) \ :\ t
\in I_h^a, \ y\in \Omega^a_v\}$, with horizontal and vertical lengths between
$\delta$ and $1$ so that the following holds. Let $q\geq 0$ and $u \in
\cC^q(S)$; then, there exists a family of functions $\{u_a\}_{a \in \cA}
\subset \cC^q(S)$ so that 
\begin{enumerate}
	\item $u = \sum_a u_a$,
	\item $u_a(x)\neq 0$ only if $x\in \Omega^a$ and $u_a|_{\Omega^a} \in
	\mathscr{C}^q(\overline{\Omega^a})$,
	\item for each $y \in \Omega^a_v$, the function $u_{a,y} \colon t \mapsto
	u_a \circ \phi_t(y)$ can be seen as an element of $\mathcal{C}^q$ and
	$\|u_{a,y}\|_{\mathscr{C}^q} \leq C \|u\|_{\cC^q}$, for a uniform constant
	$C$.
\end{enumerate}
\end{proposition}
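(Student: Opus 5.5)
The plan is to build the cover from \Cref{lem:rectangle} by compactness, and then obtain the $u_a$ by multiplying $u$ against a partition of unity made of product functions (horizontal bump)$\times$(vertical bump) in the rectangle coordinates.

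\emph{Cover.} For each $x\in S_{\reg}$, \Cref{lem:rectangle} gives a rectangle around $x$ of horizontal length $>1$ and vertical length $\ge\delta$. I shrink it to a rectangle $\Omega(x)\ni x$ whose horizontal and vertical lengths lie in $[\delta,1]$, say both equal to $2\delta$ (recall $\delta<1/6$). I also want a slightly smaller concentric rectangle $\Omega'(x)\Subset\Omega(x)$, for instance with sides $\delta$.

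The set $S_{\reg}$ is not compact, since points near singularities, or whose unit horizontal segment hits $\Sigma$, are excluded. I handle this as follows. The complement of $S_{\reg}$ in $S$ is a finite union of horizontal segments of length $\le 2$ through the singularities, so it has zero measure. Then $K_\epsilon=\{x:\dist(x,\Sigma\text{-segments})\ge\epsilon\}$ is compact. Because the rectangles can be taken uniformly sized, I claim finitely many of them cover $S$ up to a null set. Concretely, the rectangles $\Omega'(x)$ are open with sides at least $\delta$ in flat coordinates. I would take a finite $\delta/2$-net of $S$ in the flat metric and, for each net point, use the rectangle from \Cref{lem:rectangle} (which exists for all $x\in S_0$). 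This covers $S_0$ minus the points within distance $\delta/2$ of $\Sigma$ that are not captured. Near each cone point, the sectors can be covered by finitely many rectangles whose closure touches the singularity; this is fine because only interiors need to lie in $S_0$. The upshot is a finite family $\{\Omega^a\}$ with $S\setminus\bigcup_a\Omega^a$ null.

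\emph{Partition of unity.} On each $\Omega^a\cong I_v^a\times I_h^a$, I choose $\chi_a(s,t)=\beta_a(s)\gamma_a(t)$. Here $\gamma_a\in\mathscr{C}^\infty_c(I_h^a)$ and $\beta_a\in\mathscr{C}^\infty_c(I_v^a)$ are nonnegative and equal to $1$ on the inner rectangle. I then set
\[
\psi_a=\chi_a\Big/\sum_b\chi_b
\]
and $u_a=\psi_a u$.

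The main obstacle is that $\sum_b\chi_b$ must be bounded below on the support of $u$, which only holds away from the singular segments. I would fix this by one of two routes. The first is to let the bumps reach the boundary of the rectangles, so that near the singularities the rectangles tile exactly and $\sum_b\chi_b\equiv1$ there. The second is to do the construction in two stages: first partition vertically using the vertical flow coordinate, which is smooth since the rectangles are vertically separated from $\Sigma$ by $\ge\delta$ as in \Cref{lem:rectangle}; then cut horizontally into unit-length pieces using fixed bumps in $t$. With that normalization, property (1) is immediate from the construction, and property (2) holds because $\psi_a$ is supported in $\Omega^a$ and smooth up to $\overline{\Omega^a}$.

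\emph{Estimate (3).} Along a horizontal segment $t\mapsto\phi_t(y)$ with $y\in\Omega^a_v$, the function $u_{a,y}(t)=\psi_a(\phi_t y)\,u(\phi_t y)$ is compactly supported in $I_h^a\subset I$. By Leibniz,
\[
\|u_{a,y}\|_{\mathscr{C}^q}\le 2^q\,\|\psi_a\|_{\mathscr{C}^q}\,\|u\|_{\cC^q},
\]
and since there are finitely many $a$, the constant $C=2^q\max_a\|\psi_a\|_{\mathscr{C}^q}$ is uniform in $y$. In the horizontal direction this uses only $X$-derivatives, so finiteness of $\|\psi_a\|_{\mathscr{C}^q}$ is guaranteed by the lower bound $\delta$ on the side lengths, which prevents the bump derivatives from blowing up.
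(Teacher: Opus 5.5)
Your architecture (finitely many rectangles, a partition of unity, $u_a=\psi_a u$, Leibniz for (3)) matches the paper's. You also correctly identify the one real difficulty: near a cone point, no finite family of bumps compactly supported in open rectangles of $S_0$ can have a sum bounded below, since every such support stays at positive distance from $\Sigma$. But you never resolve it, and that is essentially the whole content of the proposition.

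\textbf{The two routes.} Route (ii) does not work as stated. There is no global vertical-flow coordinate on $S$, and the rectangles of \Cref{lem:rectangle} are not vertically separated from $\Sigma$: a rectangle containing a point at distance $\epsilon$ directly above a singularity has its lower side within $\epsilon$ of that point. Route (i), read literally, breaks (3). If a bump may reach a vertical side of its rectangle, then $t\mapsto u_a\circ\phi_t(y)$ is no longer compactly supported in $(0,1)$.

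\textbf{The missing idea.} The cutoff should be allowed to be discontinuous only across the horizontal separatrices through the singularity. Concretely, use the chart $w=\pi_x(z)=z^{k_x+1}/(k_x+1)=a+ib$ onto $W_{1/2}=\{|a|,|b|<1/2\}$. The paper does the following.
\begin{enumerate}
\item Take the $2(k_x+1)$ rectangles $\Omega^l_x$, the components of $\pi_x^{-1}(W_{1/2}\cap\{\pm b>0\})$. Each has horizontal length $1$ and contains $x$ in the middle of a horizontal side.
\item Take one smooth cutoff $\psi_x=1-\sum_j\psi_j$ on $U_x$. Here $\{\psi_j\}$ is an ordinary partition of unity of the compact set $S\setminus V_\Sigma$, with $V_\Sigma=\bigcup_x\pi_x^{-1}(W_{1/3})$. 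Then $\psi_x\equiv 1$ near $x$ and $\psi_x$ vanishes for $|a|\geq 1/3$.
\item Set $u_{(x,l)}=u\,\psi_x|_{\Omega^l_x}$.
\end{enumerate}
The vanishing for $|a|\geq 1/3$ gives compact horizontal support, hence (3).

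The jumps of $u_{(x,l)}$ occur only on $\{b=0\}$, which consists of the horizontal separatrix segments of length $1/2$ issuing from $x$. These segments lie outside $S_{\reg}$, and membership in $\mathscr{C}^{q,\bdd}(S)$ only constrains behaviour on $S_{\reg}$. This is the justification your proposal lacks.

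On each $\Omega^l_x$ one has $X=\partial_a$ and $Y=\partial_b$, so $u_{(x,l)}$ is $\mathscr{C}^q$ up to $\overline{\Omega^l_x}$ with norm $\ll \|u\|_{\cC^q}$. It is this, not the lower bound $\delta$ on side lengths, that controls the derivatives of the cutoffs near $\Sigma$. In the $z$-coordinates, $X$ and $Y$ have size $|z|^{-k_x}$.

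\textbf{The covering step.} This has a smaller problem of the same origin. \Cref{lem:rectangle} only produces a rectangle \emph{containing} $x$, possibly with $x$ arbitrarily close to its boundary. So the rectangles attached to a $\delta/2$-net need not contain the $\delta/2$-balls around the net points, and the net argument does not produce a cover. It suffices instead to cover $S\setminus V_\Sigma$ (compact, and at flat distance $\geq 1/3$ from $\Sigma$) by finitely many small rectangles. The sectors $\Omega^l_x$ then cover the rest up to the separatrix segments, which have zero measure.
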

\begin{proof}
Let us fix some notation: we use the coordinate $w = a+ib \in \C$, with $a,b \in \R$, and we let $W_r = \{w \in \C : |a|, |b| < r\}$ for any $r>0$.
Let $x \in \Sigma$ be a singular point; by definition, there exist a positive integer $k_x$, an open set $U_x \subset S$ such that $U_x \cap \Sigma = \{x\}$, and a (branched) cover $\pi_x \colon U_x \to W_{1/2}$ so that, in canonical coordinates $z = s+it$ on $U_x$, we have
\[
\pi_x(z) = \frac{z^{k_x+1}}{k_x+1}.
\]
Note that the vector fields $X$ and $Y$ on $U_x$ are given by
\[
\begin{split}
	&X = \pi_x^{\ast}(\partial_a) = \frac{1}{|z|^{2k_x}} \left(\Re (z^{k_x}) \partial_s - \Im (z^{k_x} )\partial_ t\right),\qquad \text{and} \\ &Y=\pi_x^{\ast}(\partial_b)=\frac{1}{|z|^{2k_x}} \left(\Im (z^{k_x}) \partial_s + \Re (z^{k_x}) \partial_ t\right)
\end{split}
\]
respectively; in particular, $\|X_z\|, \|Y_z\| \leq |z|^{-k_x}$. 

Let $V_x = \pi_x^{-1}(W_{1/3}) \subset U_x$, and define $V_\Sigma = \cup_{x\in
\Sigma} V_x$. Fix a finite cover $\{\widetilde{\Omega_i}\}_{i \in \mathcal{I}}$
of $S \setminus V_\Sigma$ by open rectangles $\widetilde{\Omega_i}$ of
horizontal and vertical lengths between $\delta$ and $1/6$; in particular
$\dist(\widetilde{\Omega_i},\Sigma) > 1/6$. By compactness, there exists a
\emph{finite} partition of unity $\{\psi_j\}_{j \in \mathcal{J}}$ of $S
\setminus V_\Sigma$ subordinate to the cover $\{\widetilde{\Omega_i}\}_{i \in
\mathcal{I}}$ (and hence we will replace the index set $\mathcal{I}$ by
$\mathcal{J}$); in other words, there exists a finite family $\{\psi_j\}_{j \in
\mathcal{J}}$ of smooth functions $\psi_j \colon S_0 \to [0,1]$ such that 
\[
\sum_{j\in \mathcal{J}} \psi_j(x) = 1, \qquad \text{for all $x\in S \setminus V_\Sigma$, and }\qquad \dist(\supp \psi_j, \Sigma) \geq 1/6.
\] 
We complete the family $\{\psi_j\}_{j \in \mathcal{J}}$ to a partition of unity
on $S$ as follows: for every $x \in \Sigma$, we define $\psi_x \colon U_x \to
[0,1]$ by $\psi_x = 1- \sum_{j\in \mathcal{J}}\psi_j$. Since
$\psi_x|_{\pi_x^{-1}(W_{1/6})} \equiv 1$, we have that $\psi_x \in
\cC^{\infty}(U_x)$; moreover, $\psi_x|_{U_x \setminus V_x} \equiv 0$.

\begin{figure}[t]
	\centering
	\def\svgwidth{0.9\textwidth}
\begingroup%
  \makeatletter%
  \providecommand\color[2][]{%
    \errmessage{(Inkscape) Color is used for the text in Inkscape, but the package 'color.sty' is not loaded}%
    \renewcommand\color[2][]{}%
  }%
  \providecommand\transparent[1]{%
    \errmessage{(Inkscape) Transparency is used (non-zero) for the text in Inkscape, but the package 'transparent.sty' is not loaded}%
    \renewcommand\transparent[1]{}%
  }%
  \providecommand\rotatebox[2]{#2}%
  \newcommand*\fsize{\dimexpr\f@size pt\relax}%
  \newcommand*\lineheight[1]{\fontsize{\fsize}{#1\fsize}\selectfont}%
  \ifx\svgwidth\undefined%
    \setlength{\unitlength}{407.01420593bp}%
    \ifx\svgscale\undefined%
      \relax%
    \else%
      \setlength{\unitlength}{\unitlength * \real{\svgscale}}%
    \fi%
  \else%
    \setlength{\unitlength}{\svgwidth}%
  \fi%
  \global\let\svgwidth\undefined%
  \global\let\svgscale\undefined%
  \makeatother%
  \begin{picture}(1,0.39739802)%
    \lineheight{1}%
    \setlength\tabcolsep{0pt}%
    \put(0,0){\includegraphics[width=\unitlength,page=1]{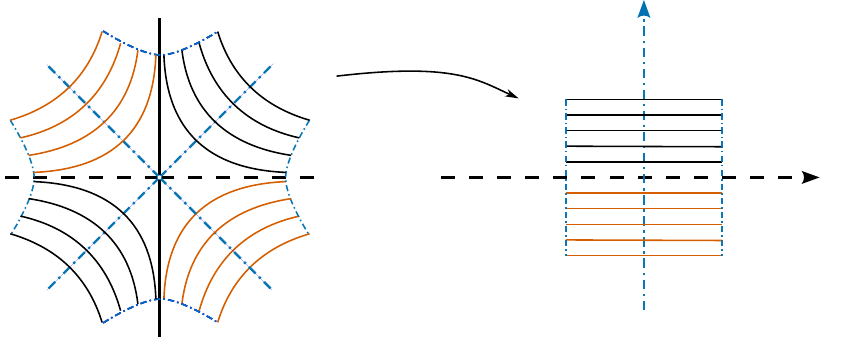}}%
    \put(0.50471354,0.32211065){\makebox(0,0)[t]{\lineheight{1.25}\smash{\begin{tabular}[t]{c}$\pi_x(z) = \frac{z^2}{2}$\end{tabular}}}}%
    \put(0.28993808,0.3408729){\makebox(0,0)[t]{\lineheight{1.25}\smash{\begin{tabular}[t]{c}$\Omega_x^1$\end{tabular}}}}%
    \put(0.08724246,0.3408729){\color[rgb]{0.83529412,0.36862745,0}\makebox(0,0)[t]{\lineheight{1.25}\smash{\begin{tabular}[t]{c}$\Omega_x^2$\end{tabular}}}}%
    \put(0.08632111,0.02485199){\makebox(0,0)[t]{\lineheight{1.25}\smash{\begin{tabular}[t]{c}$\Omega_x^3$\end{tabular}}}}%
    \put(0.28901674,0.02485199){\color[rgb]{0.83529412,0.36862745,0}\makebox(0,0)[t]{\lineheight{1.25}\smash{\begin{tabular}[t]{c}$\Omega_x^4$\end{tabular}}}}%
    \put(0.73974965,0.09216773){\makebox(0,0)[t]{\lineheight{1.25}\smash{\begin{tabular}[t]{c}$-\frac{1}{2}$\end{tabular}}}}%
    \put(0.75053551,0.27587149){\makebox(0,0)[t]{\lineheight{1.25}\smash{\begin{tabular}[t]{c}$\frac{1}{2}$\end{tabular}}}}%
    \put(0.86553309,0.15581428){\makebox(0,0)[t]{\lineheight{1.25}\smash{\begin{tabular}[t]{c}$\frac{1}{2}$\end{tabular}}}}%
    \put(0.64139025,0.15489294){\makebox(0,0)[t]{\lineheight{1.25}\smash{\begin{tabular}[t]{c}$-\frac{1}{2}$\end{tabular}}}}%
    \put(0.88573956,0.25334162){\makebox(0,0)[t]{\lineheight{1.25}\smash{\begin{tabular}[t]{c}$R^+$\end{tabular}}}}%
    \put(0.88573956,0.10674964){\color[rgb]{0.83529412,0.36862745,0}\makebox(0,0)[t]{\lineheight{1.25}\smash{\begin{tabular}[t]{c}$R^-$\end{tabular}}}}%
  \end{picture}%
\endgroup%
	\caption{The rectangles $R^\pm$ and their preimages, in an example where $k_x=1$.}
	\label{fig:partitionof1}
\end{figure}

We now slightly modify the family of rectangles $\{\widetilde{\Omega_j}\}_{j \in
\mathcal{J}}$ and obtain a new family of rectangles $\{\Omega_j\}_{j \in
\mathcal{J}}$ which form a finite cover of $S_{\reg}$ up to a zero measure set
(by a slight abuse of notation, we still denote by $\mathcal{J}$ the finite
index set). If $\widetilde{\Omega_j}\cap S_{\reg} \neq \widetilde{\Omega_j}$, we
cut horizontally $\widetilde{\Omega_j}$ and we replace it with two disjoint open
rectangles with the same horizontal length and with vertical length still $\geq
\delta$. We now add the following open rectangles: let $R^{\pm} = W_{1/2} \cap
\{ \text{sign} (b) = \pm\}$, and let $\Omega_x^1, \dotsc, \Omega_x^{2(k_x+1)}$
be the open rectangles given by the connected components of
$\pi_x^{-1}(R^{\pm})$, see \Cref{fig:partitionof1}. We call $\{\Omega_j\}_{j \in
\mathcal{J}}$ the resulting family; by construction, up to a finite union of
horizontal segments, they form an open cover of $S_{\reg}$.

We are ready to conclude. Define the index set $\cA = \mathcal{J} \cup
\{(x,l) \ :\ x \in \Sigma,\ l\in \{1,\dotsc,2(k_x+1)\}\}$. Let $u \in \cC^q(S)$,
and let $u_a = u \psi_j$ for $a=j\in \mathcal{J}$, and $u_{a} =
u\psi_x|_{\Omega_x^l}$ for $a = (x,l)$. Since the functions $\psi_j$ and
$\psi_x$ form a partition of unity, we have that $\sum_a u_a = u$ on $S_{\reg}$.
Moreover, by construction, each $u_{a}$ as above is not identically zero in at
most one rectangle $\Omega^a = \Omega_j$ or $\Omega^a = \Omega_x^l$. Then,
$u_a\in \mathscr{C}^q(\overline{\Omega_a})$ with $\|u_a\|_{\cC^q} \leq
C\|u\|_{\cC^q} $ for some uniform constant $C$ (depending on $\delta$ and on
the finite partition of unity we constructed). Finally, let us verify (3). If
$a=j \in \mathcal{J}$, then for every $y \in \Omega^a_v$, the function $t\mapsto
\psi_j \circ \phi_t(y)$ is compactly supported in $I^a_h \subset I$; therefore,
the function $u_{a,y}(t) = u_a \circ \phi_t(y)$ can be seen as an element of
$\mathcal{C}^q = \mathscr{C}_c^q(I)$. If $ a=(x,l)$, then $\psi_x|_{\Omega_x^l}
\equiv 0$ in $\Omega_x^l \cap (U_x\setminus V_x)$; thus, similarly as above, for
every $y \in \Omega^a_v$, the function $t\mapsto u_a\circ \phi_t(y)$ is
compactly supported in $I_h = I$. Hence (3) follows in this case as well, and
the proof is complete.
\end{proof}

Let us state a few properties of the space $\cC^k(S)$ which will be useful
later.

\begin{proposition}\label{prop:facts_about_Ck}
	For each $k, k'\in \Z_{\geq 0}$, with $k>k'$, the following properties hold.
	\begin{enumerate}[label=(\roman*)]
		\item $\cC^k(S) \hookrightarrow (\cC^\infty(S))^*$.
		\item \label{item:compact_inclusion} The immersion
		$\cC^{k,\bdd}(S)\hookrightarrow\cC^{k',\bdd}(S)$ is compact.
		\item \label{item:continuity_on_hor_int} For any open rectangle $\Omega
		\subset S_0$, there is a continuous embedding of $\cC^k(\Omega)$ into
		$\cC^k(\Omega_h)$.
	\end{enumerate}
\end{proposition}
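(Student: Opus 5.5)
The three items are independent and elementary, so I will prove them one at a time. Throughout, $\cC^k(S)$ is identified with $\mathscr{C}^{k,\bdd}(S)$, with norm $\|\cdot\|_{\mathscr{C}^k}$. Two facts carry the argument: $\vol$ is a finite measure on $S$, and $S_{\reg}$ has full measure.

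For (i), I map $f\in\cC^k(S)$ to the functional $\ell_f(g)=\int_S f g\,\diff\vol$ for $g\in\cC^\infty(S)$. This is well defined because $f$ and $g$ are bounded on $S_{\reg}$, which has full measure. It also gives the bound $|\ell_f(g)|\le \vol(S)\|f\|_{\mathscr{C}^0}\|g\|_{\mathscr{C}^0}\le \vol(S)\|f\|_{\mathscr{C}^k}\|g\|_{\mathscr{C}^k}$, so the map is continuous for any of the seminorms defining the topology of $\cC^\infty(S)$. For injectivity, suppose $\ell_f=0$. Take a point $x\in S_{\reg}$ and a small rectangle around $x$ contained in $S_{\reg}$. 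Testing against smooth bump functions supported in that rectangle shows $f=0$ almost everywhere there, and since $f$ is continuous on $S_{\reg}$ it vanishes identically.

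For (ii), the plan is Arzelà–Ascoli applied on the rectangles $\Omega^a$ of \Cref{prop:localize}. Let $(f_n)$ be bounded in $\mathscr{C}^{k,\bdd}$ with $k\ge k'+1$. Then every derivative $X^iY^jf_n$ with $i+j\le k'$ is uniformly bounded and uniformly Lipschitz along the flat coordinates inside each rectangle, because its first derivatives are bounded by $\sup_n\|f_n\|_{\mathscr{C}^k}$.
\begin{itemize}
\item First I extract, on each of the finitely many rectangles, a subsequence along which all these derivatives converge uniformly on the rectangle.
\item Next I note that the rectangles cover $S_{\reg}$ only up to finitely many horizontal segments. Since the estimates are uniform, the equicontinuity extends up to the closures of the rectangles relative to $S_{\reg}$, and so the convergence is uniform on all of $S_{\reg}$.
\end{itemize}
Uniform convergence of all derivatives of order $\le k'$ on $S_{\reg}$ is exactly convergence in $\|\cdot\|_{\mathscr{C}^{k'}}$, which gives compactness. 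The main point of care is near the singularities. There the functions are only defined on $S_{\reg}$, but the rectangles $\Omega^{(x,l)}$ are flat rectangles in the coordinates given by $\pi_x$, and Arzelà–Ascoli in those coordinates suffices. If a finite cover with controlled overlaps is awkward to use, I would instead run a diagonal argument over a countable exhaustion of $S_{\reg}$ by compact sets. In that case uniformity near the removed segments comes from the uniform Lipschitz bound.

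For (iii), I restrict $u\in\cC^k(\Omega)$ to the horizontal segment $\Omega_h$. In the flat coordinates $(s,t)$ of $\Omega$, restriction commutes with $X=\partial_t$. This gives $\|u|_{\Omega_h}\|_{\mathscr{C}^k(\Omega_h)}\le \|u\|_{\mathscr{C}^k(\Omega)}$, so restriction is a bounded linear map, i.e.\ a continuous embedding.
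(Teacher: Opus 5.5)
Your proposal is correct and follows essentially the same route as the paper. The paper treats (i) and (iii) as immediate and proves (ii) in three steps: cover by the finitely many rectangles $\Omega^a$, use the compact embedding $\mathscr{C}^{k}(\overline{\Omega^a})\hookrightarrow\mathscr{C}^{k'}(\overline{\Omega^a})$ on each bounded convex rectangle (an Arzel\`a--Ascoli statement, cited from Adams--Fournier), and extract successive subsequences. Your explicit arguments for (i) and (iii), and your treatment of the horizontal segments of $S_{\reg}$ not covered by the rectangles, supply details the paper leaves implicit.
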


\begin{proof}
	The only part which needs to be proven is the second one. 
	
	Let $\{f_n\}_{n\in\N}$ be a sequence of $\cC^{k,\bdd}$ functions. By
	compactness of $S$, we can cover it with finitely many open rectangles
	$\{\Omega^a\}_{a\in\cA}$. By definition, a $\cC^{k,\bdd}$ function extends
	continuously to the boundary of each rectangle (which might include
	singularities). Since each rectangle is bounded and convex, the inclusion
	$\cC^{k,\bdd}\bigl(\overline{\Omega^a}\bigr) \hookrightarrow
	\cC^{k'}\bigl(\overline{\Omega^a}\bigr)$ is compact for each $a\in\cA$, see
	e.g.,~\cite[Theorem~1.34]{AdamsFournier}. Passing to a common refinement, we
	obtain a converging subsequence of the $f_n$'s inside $\cC^{k'}(S)$, as we
	wanted.
\end{proof}

\subsection{Anisotropic Banach spaces: definitions}
\label{sec:def_spaces}

We are going to define the elements of our anisotropic Banach spaces as
distributions generated by functionals in the dual space $\mathcal{C}^{-q}$
which are essentially integration along the flow $\phi_t$ away from the singular
set. 

In view of \Cref{prop:facts_about_Ck}, we can define the following family of
distributions.
\begin{definition}\label{def:Gamma}
	Let $p,q \in \Z_{\geq 0}$ and let $v \in \cC^{p,\bdd}(S)$. Then, for every
	$x\in S_{\reg}$, we define the distribution $\Gamma v(x) \in
	\mathcal{C}^{-q}$ by
	\[
	\Gamma v(x) \colon u \mapsto \langle \Gamma v(x), u \rangle=\int_0^1 v\circ\phi_s(x) \cdot u (s) \diff s, \qquad \text{ for }u \in \mathcal{C}^{q}.
	\] 
\end{definition}
By \Cref{lem:rectangle} and \Cref{prop:facts_about_Ck}, there exists a constant
$C$ depending only on the geometry of $S$ such that, for every $v \in
\cC^{p,\bdd}(S)$ and $u \in \cC^q$, we have
\begin{equation}\label{eq:ineq_H2}
|\langle \Gamma v(x), u \rangle | \leq C \, \|v\|_{\cC^0}\, \|u\|_{\cC^0(I)}, \qquad \text{so that} \qquad \|\Gamma v(x)\|_{\mathcal{C}^{-q}}  \leq C \,   \|v\|_{\cC^0}.
\end{equation}
Given $v \in \cC^{p,\bdd}(S)$, we have a map
\[
\begin{split}
	\Gamma v \colon S_{\reg}& \to \mathcal{C}^{-q}\\
	x& \mapsto \Gamma v(x),
\end{split}
\]
which, by the inequality above, is uniformly bounded. As a matter of facts, by
\labelcref{item:continuity_on_hor_int} of \Cref{prop:facts_about_Ck}, it is
continuous. We now study the differentiability properties of this map, in the
following sense. Let $V \in \{X,Y\}$, and let $\phi^V_s$ be the unit speed
motion for time $s$ in direction $V$. Given $f \colon S_{\reg} \to
\mathcal{C}^{-q}$, the derivative $Vf$, if it exists, is the map $Vf\colon
S_{\reg} \to \mathcal{C}^{-q-1}$ satisfying
\[
\lim_{\varepsilon \to 0} \left\| \frac{f\circ \phi^V_{\varepsilon}(x)-f(x)}{\varepsilon} - Vf(x) \right\|_{\mathcal{C}^{-q-1}} = 0.
\]
Note that the range of $Vf$ is not $\mathcal{C}^{-q}$ but the larger space $\mathcal{C}^{-q-1}$.
We say that $f$ is of class $\mathscr{C}^r$ if all derivatives $X^mY^nf$, for $m+n\leq r$, exist and are continuous maps from $S_{\reg}$ to $\mathcal{C}^{-q-m-n}$. 
We equip the space $\mathscr{C}^r(S_{\reg},\mathcal{C}^{-q})$ of maps $f \colon S_{\reg} \to \mathcal{C}^{-q}$ of class $\mathscr{C}^r$ with the norm
\[
 \sup_{x \in S_{\reg}} \sum_{|\alpha| \leq r} \|\partial^\alpha f(x)\|_{\mathcal{C}^{-q-|\alpha|}}<\infty,
\]
where, for a multi-index $\alpha = (m,n) \in \Z_{\geq 0}^2$, we denoted
\[
\partial^\alpha = X^mY^n = Y^nX^m \qquad \text{and} \qquad |\alpha| = m+n.
\]

\begin{lemma}\label{lem:derivative_Gamma}
	Let $p,q \in \Z_{\geq 0}$. Then, for any $v \in \cC^{p,\bdd}(S)$, the
	function $\Gamma v \colon S_{\reg} \to \mathcal{C}^{-q}$ is of class
	$\cC^p$. More precisely, for any $n \leq p$ and $x \in S_{\reg}$, we have 
	\[
		Y^{n}(\Gamma v)(x) = \Gamma (Y^{n}v)(x), \qquad \text{and} \qquad X^{n}(\Gamma v)(x) = \Gamma (X^{n}v)(x) = (-1)^n \, \Gamma v (x) \circ \partial^n,
	\]
	and there exists a constant $C_{n}$ depending on $n$  such that
	\[
		\sup_{x \in S_{\reg}} \|Y^{n}(\Gamma v)(x)\|_{\mathcal{C}^{-q-n}} \leq C_{n} \|v\|_{\cC^n}, \qquad \text{and} \qquad \sup_{x \in S_{\reg}} \|X^{n}(\Gamma v)(x)\|_{\mathcal{C}^{-q-n}} \leq C_{n} \|v\|_{\cC^0}.
	\]
\end{lemma}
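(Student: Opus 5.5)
The plan is to treat the two directions separately. In both cases we prove the first-order statement ($n=1$) with the limit taken in the $\mathcal{C}^{-q-1}$ norm, then induct on $n$. Throughout, we use that $x\in S_{\reg}$ has a horizontal segment $\phi_{[-1,1]}(x)$ free of singularities. By \Cref{lem:rectangle}, this segment sits inside an open rectangle $\Omega(x)$ of vertical length $\geq\delta$. So for $|\varepsilon|$ small, both $\phi^Y_\varepsilon(x)$ and $\phi^X_\varepsilon(x)$ have their unit horizontal segments inside the same chart, where $X=\partial_t$ and $Y=\partial_s$ are commuting coordinate fields.

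\emph{Vertical direction.} For $u\in\mathcal{C}^{q+1}$ we write
\[
\Big\langle \tfrac{\Gamma v(\phi^Y_\varepsilon x)-\Gamma v(x)}{\varepsilon}-\Gamma(Yv)(x),u\Big\rangle=\int_0^1\Big(\tfrac{v(\phi_s\phi^Y_\varepsilon x)-v(\phi_s x)}{\varepsilon}-Yv(\phi_s x)\Big)u(s)\diff s .
\]
We use $\phi_s\circ\phi^Y_\varepsilon=\phi^Y_\varepsilon\circ\phi_s$, which holds because $[X,Y]=0$ inside the rectangle. By the mean value theorem, the bracket is bounded by $|\varepsilon|$ times the modulus of continuity of $Yv$ along vertical segments. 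For $p\geq2$ this is at most $|\varepsilon|\,\|v\|_{\cC^2}$. For $p=1$ we use the uniform continuity of $Yv$ on the compact closure of the rectangle, given by \Cref{prop:localize}(2) and \Cref{prop:facts_about_Ck}. Hence the difference quotient tends to $0$ even in $\mathcal{C}^{-q}\hookrightarrow\mathcal{C}^{-q-1}$. Iterating gives $Y^n\Gamma v=\Gamma(Y^nv)$, and \eqref{eq:ineq_H2} gives the bound $\|\Gamma(Y^n v)(x)\|_{\mathcal{C}^{-q-n}}\le C\|Y^nv\|_{\cC^0}\le C\|v\|_{\cC^n}$.

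\emph{Horizontal direction.} Here the key identity is the change of variables
\[
\langle\Gamma v(\phi_\varepsilon x),u\rangle=\int_\varepsilon^{1+\varepsilon}v(\phi_s x)\,u(s-\varepsilon)\diff s=\int_0^1 v(\phi_sx)u(s-\varepsilon)\diff s .
\]
The second equality holds for small $\varepsilon$ because $u$ is compactly supported in $I$; strictly, this needs $u$ supported in $[\varepsilon,1-\varepsilon]$, so we first fix $u$ and take $\varepsilon$ small depending on its support. The difference quotient minus $(-1)\langle\Gamma v(x),u'\rangle$ is then controlled by $\int|v|\,|\tfrac{u(s-\varepsilon)-u(s)}{\varepsilon}+u'(s)|\diff s\le C|\varepsilon|\,\|v\|_{\cC^0}\|u\|_{\cC^2}$. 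This is only $o(1)$ uniformly for $\|u\|_{\cC^{q+1}}\le1$ when $q\geq1$.

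The main obstacle is exactly this uniformity: convergence in the operator norm of $\mathcal{C}^{-q-1}$ for $u$ ranging over the unit ball of $\mathcal{C}^{q+1}$ with arbitrary supports near $\partial I$. I would handle it in two steps. First, use $\int_0^1 u'=0$-type cancellations, or equivalently write $\Gamma v(\phi_\varepsilon x)$ as the integral over the shifted interval and control the boundary contributions, which are $O(\varepsilon)$ since $u$ and $u'$ vanish at $0,1$. Second, use equicontinuity of $u'$ over the unit ball of $\mathcal{C}^{q+1}$ when $q\ge0$; this equicontinuity holds since the ball is bounded in $\cC^1$ via the Arzelà–Ascoli compactness of \Cref{prop:facts_about_Ck}\ref{item:compact_inclusion}.

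Once $X\Gamma v=-\Gamma v\circ\partial$ is established, we note that integrating by parts in $s$ gives $\langle\Gamma(Xv)(x),u\rangle=\int \tfrac{d}{ds}v(\phi_sx)u(s)\diff s=-\langle\Gamma v(x),u'\rangle$, with no boundary terms. This yields the middle equality, and induction gives the $n$-th order formula. The estimate follows from $\|\Gamma v(x)\circ\partial^n\|_{\mathcal{C}^{-q-n}}\le\|\Gamma v(x)\|_{\mathcal{C}^{-q}}\le C\|v\|_{\cC^0}$ by \eqref{eq:ineq_H2}. Finally, continuity of the derivatives as maps into $\mathcal{C}^{-q-n}$ follows from the continuity of $\Gamma w$ for $w=Y^n v$ or $w=v$, noted after \Cref{def:Gamma}.
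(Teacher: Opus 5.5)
Your vertical-direction argument is the paper's argument, and your final norm estimates and the integration-by-parts identity are fine. The gap is in the horizontal direction when $q=0$. The statement includes this case, and the paper uses it, e.g.\ through the norms $\|\cdot\|_{p,0}$. After moving the shift onto the test function, your residual is
$\int g(s)\bigl[\varepsilon^{-1}(\tilde u(s-\varepsilon)-\tilde u(s))+\tilde u'(s)\bigr]\diff s$, where $g(s)=v\circ\phi_s(x)$ and $\tilde u$ is $u$ extended by zero. To make this $o(1)$ uniformly over $\|u\|_{\mathcal{C}^{1}}\le 1$, you invoke equicontinuity of $u'$ on that ball. That claim is false. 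Boundedness in $\mathcal{C}^{q+1}$ with $q=0$ only gives equicontinuity of $u$ itself: \Cref{prop:facts_about_Ck}\ref{item:compact_inclusion} yields precompactness in $\mathcal{C}^{q}=\mathcal{C}^0$, not in $\mathcal{C}^1$. For example, $u_k(s)=k^{-1}\chi(s)\sin(ks)$ is bounded in $\mathcal{C}^1$, but the $u_k'$ are not equicontinuous. So, as written, you have not shown that $\Gamma v$ is differentiable into $\mathcal{C}^{-1}$ in the $X$ direction when $q=0$.

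The modulus of continuity has to sit on $v$, not on $u'$.
\begin{itemize}
\item The paper simply repeats your $Y$ argument for $X$. Since $p\ge 1$, $Xv$ is continuous, and the mean value theorem along $\phi_s$ bounds the residual by $\|u\|_{\mathcal{C}^0}\int_0^1|Xv(y_s)-Xv(\phi_s x)|\diff s$. This is uniform over the unit ball of $\mathcal{C}^0$, for every $q$. The identity $\Gamma(Xv)(x)=-\Gamma v(x)\circ\partial$ then follows by integration by parts.
\item If you want to keep your route, which has the merit of not needing $Xv$ at all, write the residual as $\int (g-G_\varepsilon)\,\tilde u'\diff s$ with $G_\varepsilon(s)=\varepsilon^{-1}\int_s^{s+\varepsilon}g$. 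This is bounded by $\omega_g(|\varepsilon|)\,\|u\|_{\mathcal{C}^1}$, where $\omega_g$ is the modulus of continuity of $g$ on a compact neighbourhood of $[0,1]$.
\end{itemize}
Finally, the ``supports near $\partial I$'' issue you identified as the main obstacle is not a real one. Extend $u$ by zero and integrate over $[\min(0,\varepsilon),1+\max(0,\varepsilon)]$, where $g$ is defined because $\phi_\varepsilon x\in S_{\reg}$. There are then no boundary terms at all.
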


\begin{proof}
	We assume $p\ge 1$, otherwise there is nothing to prove. Let $V \in
	\{X,Y\}$, and, as above, let $\phi^V_s$ be the unit speed motion for time
	$s$ in direction $V$. Let $v \in \cC^{p,\bdd}(S)$; we shall prove that, for
	all $x \in S_{\reg}$, we have 
	\[
		V(\Gamma v)(x) =  \Gamma (Vv)(x) \qquad \text{in }\mathcal{C}^{-q-1};
	\]
	the general claim follows by induction.
	By definition, we have to show that, for all $x \in S_{\reg}$ we have 
	\begin{equation}\label{eq:deriv_x_gamma}
		\lim_{\varepsilon \to 0} \left\| \frac{\Gamma v(\phi^V_{\varepsilon}x)-\Gamma v(x)}{\varepsilon} - \langle \Gamma (Vv)(x), \cdot \rangle  \right\|_{\mathcal{C}^{-q-1}} = 0.
	\end{equation}
	Let $x\in S_{\reg}$, $u \in \mathcal{C}^{q+1}$, and let $\Omega$ be an open rectangle as in \Cref{lem:rectangle}. For $\varepsilon$ sufficiently small (so that a $\varepsilon$-neighbourhood of the horizontal segment of length one at $x$ is all contained in $\Omega$), we have
	\[
	\begin{split}
		&\left\lvert \frac{\langle \Gamma v(\phi^V_{\varepsilon}x), u \rangle - \langle \Gamma v(x), u \rangle }{\varepsilon} - \langle\Gamma (Vv)(x), u \rangle   \right\rvert \\
		&\qquad = \left\lvert \frac{1}{\varepsilon} \int_{0}^{1} (v\circ \phi_{\varepsilon}^V -v) \circ \phi_s(x) \cdot u(s)\diff s- \int_{0}^{1} Vv\circ \phi_s(x) \cdot u(s)\diff s \right\rvert \\
		&\qquad \leq  \|u\|_{\cC^0(I)}   \int_{0}^{1} |Vv(y_s) - Vv(\phi_s(x))| \diff s,
	\end{split}
	\]
	where $y_{s}$ is a point in the segment joining $\phi_s(x)$ and $ \phi_{\varepsilon}^V(\phi_s(x))$.  
	Continuity of $Vv$ proves \cref{eq:deriv_x_gamma}. 
	In the case $V=X$, we can further write
	\[
	\langle\Gamma (Xv)(x), u \rangle = \int_{0}^{1} \partial_s v\circ \phi_s(x) \cdot u(s)\diff s = - \int_{0}^{1} v\circ \phi_s(x) \cdot u'(s)\diff s = \langle\Gamma v(x), u' \rangle,
	\]
	since $u$ is compactly supported on $I$.
	The conclusion follows from the bounds~\eqref{eq:ineq_H2}.
\end{proof}

We are now ready to give the definition of our anisotropic Banach spaces.

\begin{definition}
Let $p,q \in \Z_{\geq 0}$. We define the \emph{anisotropic Banach space} $\cB_{p,q}$ as 
\[
\cB_{p,q} = \cl_{\mathscr{C}^p}(\langle \Gamma v \ : v \in \cC^{p,\bdd}(S)\rangle),
\]
where $\cl_{\mathscr{C}^p}$ denotes the closure in $\mathscr{C}^p(S_{\reg}, \mathcal{C}^{-q})$. 
\end{definition}

Naturally, the norm of an element $\ell$ in $\cB_{p,q}$ is given by
\[
\|\ell\|_{p,q}= \sup_{x\in S_{\reg}} \sup_{|\alpha| \le p} \|\partial^\alpha \ell(x)\|_{\mathcal{C}^{-q-|\alpha|}} = 
\sup_{x\in S_{\reg}} \sup_{|\alpha| \le p} \sup_{\substack{u \in \mathcal{C}^{q+|\alpha|} \\ \|u\|_{\mathcal{C}^{q+|\alpha|}}\le 1}} \left\lvert \langle \partial^\alpha \ell(x), u\rangle \right\rvert.
\]
For elements of the form $\ell= \Gamma v$, with $v \in \cC^{p,\bdd}(S)$, the
norm above, by \Cref{lem:derivative_Gamma}, could be rewritten as
\[
\|\Gamma v\|_{p,q} = \sup_{x\in S_{\reg}} \, \sup_{|\alpha| \le p} \, \sup_{\substack{u \in \mathcal{C}^{q+|\alpha|} \\ \|u\|_{\mathcal{C}^{q+|\alpha|}}\le 1}} \left\lvert \langle  \Gamma (\partial^{\alpha}v)(x), u\rangle \right\rvert.
\]

\subsection{Anisotropic Banach spaces: basic properties}
The following Lemma describes the spaces $\cB_{p,q}$ as Banach spaces of
distributions. We recall that we use the notation $\cC^{-q} = (\cC^q)^*$.

\begin{lemma}\label{lem:inclusion}
For any $p,q \in \Z_{\ge 0}$ we have the following continuous injections:
\[
\cC^{p,\bdd}(S) \hookrightarrow \cB_{p,q} \hookrightarrow \cC^{-q}(S). 
\]
Moreover, the first inclusion is dense.
\end{lemma}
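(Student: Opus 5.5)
The first injection is the map $v\mapsto \Gamma v$. Continuity follows from \Cref{lem:derivative_Gamma}: for every $|\alpha|\le p$, the estimates there (together with \eqref{eq:ineq_H2}) give
\[
\sup_{x\in S_{\reg}}\|\partial^\alpha\Gamma v(x)\|_{\mathcal{C}^{-q-|\alpha|}}\le C_p\|v\|_{\cC^{p}},
\]
and hence $\|\Gamma v\|_{p,q}\le C\|v\|_{\cC^p}$.

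For injectivity, suppose $\Gamma v\equiv 0$. Then for every $x\in S_{\reg}$ the function $s\mapsto v\circ\phi_s(x)$ integrates to zero against every $u\in\mathscr{C}^q_c(I)$. Hence it vanishes on $(0,1)$, and in particular $v(\phi_s(x))=0$ for small $s>0$. Letting $s\to0$ and using continuity gives $v(x)=0$ on $S_{\reg}$. Density holds by definition, since $\cB_{p,q}$ is the $\mathscr{C}^p$-closure of $\{\Gamma v\}$.

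For the second injection, I would define a distribution $\iota\ell$ acting on $w\in\cC^q(S)$ by localizing via \Cref{prop:localize}. Write $w=\sum_a w_a$ and set
\[
\langle \iota\ell, w\rangle=\sum_{a\in\cA}\int_{\Omega^a_v}\bigl\langle \ell(\phi_{-t_a}y), w_{a,y}(\cdot - t_a)\bigr\rangle\diff y ,
\]
where $t_a$ is a shift making $I^a_h\subset I$. More simply, one can parametrize $\Omega^a$ so that its horizontal segments start at the base points $y$, so that $\langle\ell(y),w_{a,y}\rangle$ makes sense with $w_{a,y}\in\mathcal{C}^q$.

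For $\ell=\Gamma v$, Fubini in the rectangle coordinates shows that this equals $\sum_a\int_{\Omega^a} v\,w_a\diff\vol=\int_S v w\diff\vol$. So $\iota$ extends the standard embedding of functions, and in particular it is independent of the partition on the dense subspace.

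The bound comes from item (3) of \Cref{prop:localize} and the finiteness of $\cA$:
\[
|\langle\iota\ell,w\rangle|\le\sum_a|\Omega^a_v|\sup_y\|\ell(y)\|_{\mathcal{C}^{-q}}\,C\|w\|_{\cC^q}\le C'\|\ell\|_{p,q}\|w\|_{\cC^q}.
\]
Thus $\iota$ is continuous, extends from the dense subspace to all of $\cB_{p,q}$, and remains independent of the choices by density.

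The main obstacle is injectivity of $\iota$ on the closure. If $\iota\ell=0$, I would test against $w$ of product form $w=\chi(y)\,u(t)$ in rectangle coordinates around a point $x\in S_{\reg}$, using the rectangle from \Cref{lem:rectangle}, which has horizontal length greater than $1$. This gives $\int\chi(y)\langle\ell(y),u\rangle\diff y=0$ for all $\chi$. Since $y\mapsto\langle\ell(y),u\rangle$ is continuous ($\ell$ is at least $\mathscr{C}^0$ into $\mathcal{C}^{-q}$), it vanishes identically, so $\langle\ell(x),u\rangle=0$ for all $u\in\mathcal{C}^q$ and therefore $\ell(x)=0$. Here one must check that the pairing of $\ell$ with product test functions is consistent with the localized definition of $\iota$. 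This holds for $\Gamma v$ by Fubini and passes to the closure by continuity of both sides in $\|\ell\|_{p,q}$.
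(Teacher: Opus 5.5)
Your proposal is correct and follows essentially the same route as the paper. Continuity and density come from \Cref{lem:derivative_Gamma} and the definition of $\cB_{p,q}$, the second map is obtained through the localization of \Cref{prop:localize} and the resulting $\|\cdot\|_{p,q}$-bound, and injectivity comes from pairing with a product test function $u(t)\psi(s)$ on a thin flow box around $\phi_{[0,1]}(x)$ together with continuity of $s\mapsto\langle\ell(\phi^\perp_s x),u\rangle$. The differences are cosmetic: you prove injectivity of $\Gamma$ along a single horizontal segment rather than with a two-dimensional integral, and you pass the Fubini identity to the closure by continuity rather than through the paper's explicit approximation by $\Gamma v$ with a $b/6$ margin. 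It would be worth adding explicitly that $\ell(x)=0$ for every $x$ forces all $\partial^\alpha\ell$ to vanish, so $\ell=0$ in $\cB_{p,q}$.
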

\begin{proof}
By the definition of the spaces $\cB_{p,q}$ and by \Cref{lem:derivative_Gamma}
we know that $\cC^{p,\bdd}(S)\hookrightarrow \cB_{p,q}$ is continuous and dense.
Let us show it is injective. Let $v\in \cC^{p,\bdd}(S)$, with $v \neq 0$; then,
there exists a smooth function $u$ compactly supported in an open rectangle
$\Omega= \{\phi_t (y)\ :\ y \in \Omega_v,\ t\in (b,c) \}$, where $\Omega_v =
\{\phi_s^{\perp}(x)\ :\ s \in I_v\}$ is a vertical segment, with horizontal and
vertical lengths $< 1$ so that
\[
0 \neq \int_{\Omega} v \cdot u \, \diff\!\vol = \int_{\Omega_v} \int_b^{c}(v \cdot u) \circ \phi_t(y) \, \diff t \diff y.
\]
As usual, we assume $I_h=(b,c) \subset I$; then, we can extend the horizontal side to have length exactly $1$, so that we can write
\[
0 \neq \int_{\Omega} v \cdot u \, \diff\!\vol = \int_{\Omega_v} \langle \Gamma v(y),u_y \rangle \diff y,
\]
where $u_y \in \mathcal{C}^q$ is defined as $u_y(t) = u\circ \phi_t(y)$.
Thus, there exists $y \in \Omega_v$ so that $\Gamma v(y) \in \mathcal{C}^{-q}\setminus\{0\}$, which proves the injectivity claim.

To prove the existence of the second inclusion, let us consider the functional
$i\colon\cC^{\infty,\bdd}(S)\to \cC^{-q}(S)$ defined by 
\[
\langle i(v), u \rangle=\int_S v \cdot u \, \diff\!\vol, \qquad \text{for } 
 u\in \cC^q.
\]
It is sufficient to prove that the map $i$ can be extended by continuity to $\cB_{p,q}$ and that it is injective.
Let us fix $u\in \cC^q$; by \Cref{prop:localize}, we have
\[
\begin{split}
	\left|\int_S v \cdot u \, \diff\!\vol \right|& = \left|\sum_{a\in \cA} \int_{\Omega^a} v \cdot u_a \, \diff\!\vol\right|	
	= \left|\sum_{a\in \cA} \int_{\Omega^a_v} \int_{I^a_h} v \circ \phi_t(y) \cdot \widetilde{u_{a,y}}(t) \, \diff t \diff y\right|,
\end{split}
\]
where, $\widetilde{u_{a,y}}(t) = u_a\circ \phi_t(y)$ is a $\mathscr{C}^q$
function with compact support in the interval $I_h^a \subset I$. Thus, we can
write $\widetilde{u_{a,y}} \in \mathcal{C}^q$, with
$\|\widetilde{u_{a,y}}\|_{\mathcal{C}^q} \leq C \|u\|_{\cC^q}$. We deduce that
\[
	\left|\int_S v \cdot u \, \diff\!\vol \right| = \left|\sum_{a\in \cA} \int_{\Omega^a_v} \langle \Gamma v(y), \widetilde{u_{a,y}}\rangle \, \diff y\right| \leq C \sup_{y \in S_{\reg}} \|\Gamma v(y)\|_{\mathcal{C}^{-q}}\|u\|_{\cC^q}.
\]
Hence, we obtain the continuity of $i\colon\cB_{p,q} \to \cC^{-q}(S)$.

Let us prove that the map $i$ is injective. Let $\ell \in \cB_{p,q}$, with
$\ell\neq 0$; to prove that $i(\ell) \neq 0$ in $\cC^{-q}(S)$, it suffices to
show that there exists $\widetilde{u} \in \cC^q(S)$ with $\| \widetilde{u}
\|_{\cC^q} = 1$, so that $\langle i(\ell), \widetilde{u} \rangle \neq 0$. By
definition, there exist $x\in S_{\reg}$ and a multi-index $\alpha$, with
$|\alpha|\leq p$, so that $\partial^{\alpha}\ell(x) \neq 0$ in
$\mathcal{C}^{-q-|\alpha|}$; in particular, if $\ell(x)=0$, then $\ell$ is not locally constant.
Therefore, there exist $b > 0$, $y\in S_{\reg}$ close to $x$, and $u \in
\mathcal{C}^{q}$, with $\|u\|_{\mathcal{C}^{q}}=1$, so that $\langle
\ell(y),u\rangle = b$. Let us denote $y_s = \phi^{\perp}_s(y)$. By continuity,
there exists $\varepsilon >0$ so that $\langle \ell(y_s), u\rangle > b/2$ for
all $|s|<\varepsilon$. Fix a smooth function $\psi \colon
(-\varepsilon,\varepsilon) \to \R_{\geq 0}$ with compact support and so that
$\int_{-\varepsilon}^{\varepsilon} \psi(t)\diff t = 1$ and let $\Omega$ be the
rectangle $\Omega = \{\phi_t(y_s) \ :\ s\in (-\varepsilon,\varepsilon),\ t\in
(0,1) \}$. We define the function $\hat u$ on $\Omega$ by 
\[
\hat u \circ \phi_t(y_s)\coloneqq u(t) \cdot \psi(s), 
\]
and let $\widetilde{u} = \frac{\hat u}{\|\hat u \|_{\cC^q}}$.
By density of $\cC^{p,\bdd}(S)$ in $\cB_{p,q}$, there exists $v \in \cC^{p,\bdd}(S)$ so that 
\[
\|\Gamma v - \ell\|_{p,q} < \frac{b}{6};
\]
in particular, $\|i(v) - i(\ell)\|_{\cC^{-q}} < \frac{b}{6}$. 
Since we have
\[
\left|\langle \Gamma v(y_s), u \rangle - \langle \ell(y_s), u \rangle\right| \leq \|\Gamma v(y_s) - \ell(y_s)\|_{\mathcal{C}^{-q}} \leq \|\Gamma v - \ell\|_{p,q} < \frac{b}{6},
\]
it follows that $\langle \Gamma v(y_s), u \rangle > \frac{b}{3}$ for all $s \in (-\varepsilon,\varepsilon)$. Therefore,
\[
\begin{split}
|\langle i(\ell), \widetilde{u}\rangle|&\geq |\langle i(v), \widetilde{u}\rangle| - \frac{b}{6} = \left|\int_{\Omega} v\cdot \widetilde{u} \, \diff\!\vol \right| - \frac{b}{6} =  \left|\int_{-\varepsilon}^{\varepsilon} \langle\Gamma v(y_s), u\rangle \cdot \psi(s) \, \diff s \right| - \frac{b}{6}
\geq \frac{b}{3} - \frac{b}{6} >0,
\end{split}
\]
which completes the proof.
\end{proof}
Finally, in order to apply Hennion's theorem in the next section, we will need the following compactness result.
\begin{lemma}\label{lem:compact}
For all integers $p \geq 1$ and $q \geq 0$, there is a compact inclusion $\cB_{p,q}\hookrightarrow \cB_{p-1,q+1} $.
\end{lemma}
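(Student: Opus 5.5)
Since $\cC^{p,\bdd}(S)$ is dense in $\cB_{p,q}$ (\Cref{lem:inclusion}), and the set of compact operators is closed in operator norm, it suffices to show that the unit ball of $\cB_{p,q}$ is totally bounded in $\cB_{p-1,q+1}$. By density we may restrict to elements $\ell=\Gamma v$ with $v\in\cC^{p,\bdd}(S)$ and $\|\Gamma v\|_{p,q}\le 1$. The plan is an Arzelà--Ascoli argument applied to the maps $x\mapsto \partial^\alpha\ell(x)$ for $|\alpha|\le p-1$, viewed as maps $S_{\reg}\to\mathcal{C}^{-q-1-|\alpha|}$.

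The argument has two ingredients.

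\emph{Pointwise compactness.} For each fixed $k$, the inclusion $\mathcal{C}^{-k}\hookrightarrow\mathcal{C}^{-k-1}$ is compact. This is the dual of the compact inclusion $\mathscr{C}^{k+1}_c(I)\hookrightarrow\mathscr{C}^k_c(I)$, and compactness passes to duals (Schauder). Consequently, for each $x$, the set $\{\partial^\alpha\ell(x)\}$ is precompact in $\mathcal{C}^{-q-1-|\alpha|}$.

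\emph{Equicontinuity.} For $|\alpha|\le p-1$, the derivatives $X\partial^\alpha\ell$ and $Y\partial^\alpha\ell$ are bounded by $1$ in $\mathcal{C}^{-q-|\alpha|-1}$, because the $\cB_{p,q}$ norm controls derivatives up to order $p$. Along a horizontal or vertical segment in $S_{\reg}$ we can integrate: for instance
\[
\partial^\alpha\ell(\phi^V_\varepsilon x)-\partial^\alpha\ell(x)=\int_0^\varepsilon V\partial^\alpha\ell(\phi^V_s x)\,\diff s,
\]
so $x\mapsto\partial^\alpha\ell(x)$ is uniformly Lipschitz into $\mathcal{C}^{-q-|\alpha|-1}$ with respect to the flat metric, within each rectangle.

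To pass from these two facts to total boundedness, we use that $S_{\reg}$ is not compact, so uniform control near its boundary is needed. We cover $S_{\reg}$, up to a null set, by the finitely many rectangles $\Omega^a$ of \Cref{prop:localize}. In each one we work with an $\varepsilon$-net of points. The Lipschitz bound lets us approximate the value at any point by the value at a nearby net point, with error $O(\varepsilon)$. The difficulty is that the target is only precompact, not finite-dimensional, so we argue via a diagonal extraction. Given a bounded sequence $\ell_n$, we extract a subsequence converging at every point of a countable dense set, using the pointwise precompactness. Equicontinuity then upgrades this to uniform convergence on each rectangle. Near the boundary of $S_{\reg}$, where points approach singularities or the ends of horizontal segments of length $1$, the Lipschitz bound is still uniform: $S_{\reg}$ is locally a flat rectangle, and the flat distance to nearby regular points is controlled by \Cref{lem:rectangle}. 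This gives uniform convergence on all of $S_{\reg}$. The resulting subsequence is Cauchy in the $\cB_{p-1,q+1}$ norm, and its limit lies in the closure defining $\cB_{p-1,q+1}$.

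I expect the main obstacle to be this uniformity near the boundary of $S_{\reg}$, where a vertical segment passes a singularity and the horizontal segment of length one starting at $x$ changes combinatorics. The first thing I would try is to use the $\varepsilon$-net only inside the rectangles $\Omega^a$ of \Cref{prop:localize}. Since these have vertical length at least $\delta$, every regular point lies within $\varepsilon$ of a net point along a path that stays inside a single rectangle.
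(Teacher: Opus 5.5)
Your argument works once you commit to the fallback in your last paragraph. It rests on the same three ingredients as the paper's proof. The first is a finite net of base points. The second is the Lipschitz bound for $x\mapsto\partial^\alpha\ell(x)$ with values in $\mathcal{C}^{-q-|\alpha|-1}$, which comes from the one extra derivative controlled by $\|\ell\|_{p,q}$. The third is compactness of $\mathcal{C}^{k+1}\hookrightarrow\mathcal{C}^{k}$.

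Only the packaging differs. The paper applies the criterion of \cite[Proposition~2.8]{FaGoLa} with the finitely many linear forms $\ell\mapsto\langle\partial^\alpha\ell(x_l),u^{|\alpha|}_k\rangle$. Here the $u^{|\alpha|}_k$ are a finite $\epsilon$-net of the unit ball of $\mathcal{C}^{q+|\alpha|+1}$ for the $\mathcal{C}^{q+|\alpha|}$-norm. It then obtains directly $\|\ell\|_{p-1,q+1}\le\sup_{\alpha,k,l}|\langle\partial^\alpha\ell(x_l),u^{|\alpha|}_k\rangle|+3\epsilon\|\ell\|_{p,q}$. That version uses the compact inclusion on the test-function side and needs neither Schauder's theorem nor a diagonal extraction. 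You instead dualize that inclusion and run the textbook Arzel\`a--Ascoli argument, which is equally valid. Your remark that compact operators form a closed set plays no role; you only use that the image of a dense subset of the unit ball is totally bounded.

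On the boundary of $S_{\reg}$, your first suggestion is false: there is no Lipschitz bound uniform with respect to the flat distance of $S$ obtained via \Cref{lem:rectangle}. Elements of $\cB_{p,q}$ can jump across the removed horizontal separatrix segments of length $1$. Take $v\in\cC^{\infty,\bdd}(S)$ equal to a bump function just above the midpoint of such a segment and to $0$ just below. Then $\Gamma v(x^+)$ and $\Gamma v(x^-)$ stay at distance bounded below in $\mathcal{C}^{-k}$, while $\operatorname{dist}_S(x^+,x^-)\to0$. Moreover, the rectangle of \Cref{lem:rectangle} lies in $S_0$, not in $S_{\reg}$.

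Your fallback is the correct fix. Use finitely many open rectangles contained in $S_{\reg}$ with dense union, for instance those of \Cref{prop:localize}, cut further along the removed segments if needed. Their vertical length is irrelevant; what matters is the product structure, so an L-shaped path joining two points of one rectangle stays inside it. This gives uniform Cauchy estimates on the union. You must then invoke continuity of $\partial^\alpha\ell$ on $S_{\reg}$ to pass to the supremum over all of $S_{\reg}$, because points on the leftover horizontal segments lie in no rectangle.

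The paper's proof has exactly this imprecision. It takes the $x_l$ to be $\epsilon$-dense for $\operatorname{dist}_S$ and applies the mean-value bound without checking that a short path inside $S_{\reg}$ exists. Your rectangle-based net repairs this and fits equally well into the linear-forms argument.
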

\begin{proof}
	The existence of a continuous inclusion follows from the fact that the
	inclusion $\cC^{p,\bdd} \hookrightarrow \cB_{p-1,q+1}$ satisfies the bound $\|
	\partial^{\alpha}\Gamma v(x)\|_{\mathcal{C}^{-q-1-|\alpha|}} \leq \|
	\partial^{\alpha}\Gamma v(x)\|_{\mathcal{C}^{-q-|\alpha|}}$, for any $x \in
	S_{\reg}$ and $|\alpha|\leq p-1$, so that it extends to a continuous map
	$\cB_{p,q} \hookrightarrow \cB_{p-1,q+1}$.
	
In order to prove that the inclusion is compact, we are going to exploit the
abstract compactness criterion proved in~\cite[Proposition 2.8]{FaGoLa}: let
$B_1 \subseteq B_2$ be two Banach spaces. Assume that, for every $\epsilon >0$,
there exist finitely many continuous linear forms $\Xi_1,\dotsc, \Xi_N$ on $B_1$
such that, for any $v \in B_1$,\footnote{In~\cite{FaGoLa} there is the sum over
$k$ instead of the sup, but the argument of the proof is unchanged in the latter
case.}
\begin{equation}\label{eq:criterion}
\|v\|_{B_2}\le \epsilon \|v\|_{B_1}+ \sup_{1\le k\le N} |\Xi_k(v)|.
\end{equation}
Then the inclusion of $B_1$ in $B_2$ is compact. 

Let us fix $p,q$ and $\epsilon >0$. We can take a finite set of regular points
$x_1,\dotsc,x_{N}$ such that, for every $x\in S_0$, there exists $j\in
\{1,\dotsc,N\}$ for which  $\operatorname{dist}_S(x,x_j)<\epsilon$. Moreover, by
\labelcref{item:compact_inclusion} of \Cref{prop:facts_about_Ck}, for any
$a=1,\dotsc,p-1$, there exist $u^a_{1}, \dotsc, u^a_{N_a} \in
\mathcal{C}^{q+a+1}$ of norm at most 1 so that, for any $u\in
\mathcal{C}^{q+a+1}$ of norm at most 1, we have $\min_{k=1,\dotsc,N_a}
\|u^a_{k}-u\|_{\mathcal{C}^{q+a}}\leq \epsilon$. We take as linear forms on
$\cB_{p,q}$  
\[
\Xi_{\alpha,k,l} \colon \ell \mapsto \langle \partial^{\alpha}\ell(x_l),u^{|\alpha|}_{k}\rangle,
\]
for $|\alpha|\leq p-1$, $l=1,\dotsc,N$, and $k = 1,\dotsc, N_{|\alpha|}$.

Let now $\ell\in \cB_{p,q}$. For any $x\in S_{\reg}$, any $a=|\alpha|\leq p-1$, and any $u\in \mathcal{C}^{q+a+1}$, we can find $k \in \{1,\dotsc, N_{a}\}$ and $l \in \{1,\dotsc, N\}$ such that 
\[
\begin{split}
|\langle \partial^{\alpha}\ell(x), u\rangle|&\leq |\langle \partial^{\alpha}\ell(x), u^a_{k}\rangle| + \|\partial^{\alpha}\ell(x)\|_{\mathcal{C}^{-q-a}} \cdot \|u-u^a_{k}\|_{\mathcal{C}^{q+a}} \\
&\leq |\langle \partial^{\alpha}\ell(x_l), u^a_{k}\rangle| + \|\partial^{\alpha}\ell(x)-\partial^{\alpha}\ell(x_l)\|_{\mathcal{C}^{-q-a-1}}\cdot \|u^a_{k}\|_{\mathcal{C}^{q+a+1}} + \epsilon\|\ell\|_{p-1,q} \\
& \leq |\Xi_{\alpha,k,l}(\ell)| + 2\epsilon \sup_{y\in S_{\reg} } \|\partial^{\beta}\ell(y)\|_{\mathcal{C}^{-q-a-1}}+ \epsilon\|\ell\|_{p-1,q},
\end{split}
\]
where $|\beta| = a+1$. Since the middle summand above is bounded by $2\epsilon \|\ell\|_{p,q}$, we conclude 
\[
\|\ell\|_{p-1,q+1} \leq \sup_{\alpha, k,l}|\Xi_{\alpha,k,l}(\ell)| + 3\epsilon \|\ell\|_{p,q},
\]
which proves the result.
\end{proof}

\begin{lemma}\label{lem:multiplying_by_smooth}
	Let $p,q \in\Z_{\ge 0}$. Given $f\in\mathscr{C}^{\infty, \bdd}(S)$ and
	$\ell\in\cB_{p,q}$, then $f\ell\in\cB_{p,q}$. Moreover,
	$\|f\ell\|_{p,q} \le C \|f\|_{\cC^{p+q}} \|\ell\|_{p,q}$, for some
	constant $C>0$.
\end{lemma}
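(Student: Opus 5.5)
The plan is to prove the bound first for elements $\ell = \Gamma v$ with $v \in \cC^{p,\bdd}(S)$ and then extend to all of $\cB_{p,q}$ by density.

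\textbf{Step 1: the product stays in the dense class.} For such $v$ we have $fv \in \cC^{p,\bdd}(S)$. Hence $f\Gamma v := \Gamma(fv)$ is defined, and it coincides with the natural pointwise product
\[
\langle (f\ell)(x), u\rangle = \langle \ell(x), u\cdot f\circ\phi_{\cdot}(x)\rangle .
\]
This pointwise formula makes sense for every $\ell \in \cB_{p,q}$, because $s\mapsto f\circ\phi_s(x)$ is smooth along the horizontal segment.

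\textbf{Step 2: the derivative estimate.} It suffices to show that for every $x\in S_{\reg}$, every $|\alpha|\le p$, and every $u\in\mathcal{C}^{q+|\alpha|}$ with $\|u\|\le 1$,
\[
|\langle \partial^\alpha \Gamma(fv)(x),u\rangle| \le C\|f\|_{\cC^{p+q}}\|\Gamma v\|_{p,q}.
\]
By \Cref{lem:derivative_Gamma}, $\partial^\alpha\Gamma(fv)=\Gamma(\partial^\alpha(fv))$. Since $X$ and $Y$ commute on $S_0$, the Leibniz rule gives
\[
\partial^\alpha(fv)=\sum_{\beta\le\alpha}\binom{\alpha}{\beta}\,\partial^{\alpha-\beta}f\,\partial^\beta v .
\]
Each term then contributes
\[
\langle \Gamma(\partial^\beta v)(x),\, u\cdot (\partial^{\alpha-\beta}f)\circ\phi_\cdot(x)\rangle
= \langle \partial^\beta\Gamma v(x),\, u_{\alpha-\beta}\rangle,
\qquad u_{\alpha-\beta}(s)=u(s)\,(\partial^{\alpha-\beta}f)(\phi_s x).
\]
The function $u_{\alpha-\beta}$ is compactly supported in $I$. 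Its $\mathcal{C}^{q+|\beta|}$-norm is at most $C\|u\|_{\mathcal{C}^{q+|\beta|}}\|f\|_{\cC^{q+|\alpha|}}$, because differentiating in $s$ is just applying $X$ to $\partial^{\alpha-\beta}f$. Moreover $q+|\alpha|\le p+q$, and $\|u\|_{\mathcal{C}^{q+|\beta|}}\le\|u\|_{\mathcal{C}^{q+|\alpha|}}\le 1$. Therefore each term is bounded by
\[
C\|f\|_{\cC^{p+q}}\|\partial^\beta\Gamma v(x)\|_{\mathcal{C}^{-q-|\beta|}} \le C\|f\|_{\cC^{p+q}}\|\Gamma v\|_{p,q}.
\]
Summing the finitely many terms gives the estimate.

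\textbf{Step 3: extension by density.} The map $\Gamma v\mapsto\Gamma(fv)$ is linear and, by Step 2, bounded on the dense subspace $\langle\Gamma v\rangle$. It therefore extends uniquely to a bounded operator on $\cB_{p,q}$ with the same constant, and its image lies in the closure defining $\cB_{p,q}$. By Step 1 the extension agrees with the pointwise product formula, since both sides are continuous in $\ell$ for the $\mathcal{C}^{-q}$ topology at each $x$.

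\textbf{Main obstacle.} The only genuinely delicate point is the $\mathcal{C}^{q+|\beta|}$ bound on the test function $u_{\alpha-\beta}$ near singularities. One needs the derivatives $X^k\partial^{\alpha-\beta}f$ for $k\le q+|\beta|$ to be uniformly bounded on $S_{\reg}$. This is guaranteed by $f\in\cC^{\infty,\bdd}$, since the relevant total order is $|\alpha|+q\le p+q$, and this is exactly what produces the $\|f\|_{\cC^{p+q}}$ in the statement.
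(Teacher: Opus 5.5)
Your proposal is correct and follows the same route as the paper: reduce by density to $\ell=\Gamma v$, expand $\partial^\alpha(fv)$ by the Leibniz rule, and bound each term by $C\|f\|_{\cC^{p+q}}\|\Gamma v\|_{p,q}$ by absorbing the $f$-derivatives into the test function. You additionally make explicit two points the paper leaves implicit: the count $q+|\alpha|\le p+q$ of derivatives of $f$ needed for the modified test function, and the fact that the density extension agrees with the pointwise product.
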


\begin{proof}
	By density, it is enough to prove the result for $\ell = \Gamma v$, with
	$v\in \cC^{p,\bdd}(S)$. Let $\alpha = (m, n)\in\Z^2_{\ge 0}$ be a multi-index with
	$|\alpha| \le p$. For any $x\in S_{\reg}$, and function $u\in
	\mathcal{C}^{q+|\alpha|}$ of norm less than $1$, we have
	\[
		|\langle\Gamma (\partial^\alpha (f v))(x), u\rangle| \le 
			\sum_{j=0}^{m} \sum_{k=0}^{n} \binom{m}{j} \binom{n}{k} 
				\left| \int_{0}^{1} X^{j}Y^{k} v \circ \phi_s (x) \cdot 
					X^{m-j}Y^{n-k} f \circ \phi_s (x) \cdot u(s) \, \diff s \right|.
	\]
	Since each of the integrals is bounded by $\|f\|_{\mathscr{C}^{p+q}}
	\|\Gamma v\|_{p,q}$, we are done.
\end{proof}

In the rest of the paper, we will suppress $\Gamma$ from the notation; as such, we identify a smooth function $f \in \mathscr{C}^{\infty, \bdd}(S)$ with the corresponding element $\Gamma f \in \cB_{p,q}$.

\section{Weighted transfer operators}
\label{sec:weighted_transfer_operators}

Let $T \colon S\to S$ be a linear pseudo-Anosov diffeomorphism which, in local coordinates\footnote{The coordinates are given by the charts of the translation atlas.} $x=(s,t)$ is given by $Tx = T(s,t) =(\lambda^{-1}s, \lambda t)$, for some $\lambda>1$. 

Let us fix a {  real-valued vector-valued} function
{  $F \in \cC^{\infty, \bdd}(S,\R^d)$}
with zero average.
For {  $z\in \C^d$}, we define the operator
\[
\cL_{z,F}f(x)
=
\big({  e^{z \cdot F}}\, f\big)\circ T^{-1}(x), 
\]
acting on smooth functions $f\in \cC^{\infty, \bdd}(S)$.
In this section, we prove that $\cL_{z,F}$ can be extended to a continuous linear operator on the spaces $\cB_{p,q}$ introduced in the previous section and we establish a Lasota-Yorke type inequality.

It can be easily checked that the $n$-th composition of the operator is given by
\[
\cL_{z,F}^n f(x)
=
\big({  e^{z \cdot S_n F}}\, f\big)\circ T^{-n}(x),
\]
where
\[
{  S_nF(x)\coloneqq\sum_{k=0}^{n-1}F\circ T^k(x)\in\R^d}
\]
is the usual Birkhoff sum of $F$.\\
\begin{definition}\label{def:pou}
	Let $n\in \N$, $x\in S_{\reg}$, and let $I(x) \subset S_0$ be the horizontal segment starting at $x$ of length 1. The horizontal segment $T^{-n}I(x)$ does not contain any singularity and is of length $\lambda^{n}$. 
	We say a partition of unity $\{\tilde{u}_a\}_{a\in \mathcal{A}}$ of $T^{-n}I(x)$ is \emph{good} if the following properties hold:
	\begin{enumerate}
	\item for each $a$, $\tilde{u}_a$ is supported on some horizontal unit interval $I_a \subset T^{-n}I(x)$,\\
	\item each point $y\in T^{-n}I(x)$ is contained in at most two intervals $I_a$,\\
	\item there exists a non-negative function $\theta \in \mathscr{C}^{\infty}_c(0,1)$, $\theta \neq 0$, independent of $x$ and $n$, such that, identifying each $\tilde{u}_a$ with a function on the unit interval, we have $\tilde{u}_a \geq \theta$.
	\end{enumerate} 
	Let $\mathcal{P}_n(x)$ be the set of good partitions of unity of $T^{-n}I(x)$. 
\end{definition}

For each {  $z\in \C^d$} we define the quantity
\[
R_n = R_n(z)
\coloneqq
\lambda^{-1}
\sup_{x\in S_{\reg}}
\sup_{\mathcal{A} \in \mathcal{P}_n(x)}
\left(
\sum_{a\in\mathcal{A}}
\big\|
{  e^{z \cdot S_n F}}\big|_{I_a}
\big\|_{\mathcal{C}^{0} }
\right)^{\frac1n}.
\]



\begin{notation}
To simplify notation when talking about partitions of unity, we will denote by $\mathcal{A}$ both the partition of unity itself as an element of 
$\mathcal{P}_n(x)$ and its index set. Further for an interval $I_a$ of the partition, we also denote by $I_a$ the set 
$\{s\in \R: \phi_s(x) \in I_a\}$. Hence an integral of the form $\int_{I_a} \psi \circ \phi_s(x) \diff s$ is an integral over the interval $I_a$.
\end{notation}

\begin{lemma}\label{lem:submultiplicativity}
The function $R_n^n$ is submultiplicative, i.e., for any $n, m \geq 1$,
\[R_{n+m}^{n+m} \leq R_n^n R_m^m.\]
Hence the sequence $(R_n)_n$ is monotone decreasing and there exists a limit
\begin{equation}\label{eq:def_Rz}
R(z) = \lim_{n \to \infty}R_n(z) = \inf_{n \in \N}R_n(z) >0.
\end{equation}
\end{lemma}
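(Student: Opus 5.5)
The plan is to unfold the definition. By definition,
\[
R_n^n=\lambda^{-n}\sup_{x}\sup_{\mathcal A\in\mathcal P_n(x)}\sum_{a}\big\|e^{z\cdot S_nF}|_{I_a}\big\|_{\mathcal C^0},
\]
so it suffices to bound the inner sum for $n+m$ by the product of the corresponding suprema for $n$ and for $m$.

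\textbf{Setup and cocycle splitting.} Fix $x\in S_{\reg}$ and a good partition $\mathcal A\in\mathcal P_{n+m}(x)$ of $T^{-(n+m)}I(x)=T^{-m}\big(T^{-n}I(x)\big)$. For $y\in T^{-(n+m)}I(x)$ I will use the cocycle identity
\[
S_{n+m}F(y)=S_mF(y)+S_nF(T^my).
\]
This gives
\[
\big\|e^{z\cdot S_{n+m}F}|_{I_a}\big\|_{\mathcal C^0}\le \big\|e^{z\cdot S_mF}|_{I_a}\big\|_{\mathcal C^0}\,\big\|e^{z\cdot S_nF}|_{T^mI_a}\big\|_{\mathcal C^0}.
\]
Note that $T^mI_a$ is a horizontal segment of length $\lambda^{-m}\le 1$ contained in $T^{-n}I(x)$.

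\textbf{Regrouping.} Next I choose a good partition $\mathcal B\in\mathcal P_n(x)$ of $T^{-n}I(x)$, with unit intervals $J_b$. Each short segment $T^mI_a$ is assigned to some $J_b$ containing it; this uses the overlap structure of good partitions. Then $I_a\subset T^{-m}J_b$, and $T^{-m}J_b=T^{-m}I(x_b)$, where $x_b$ is the left endpoint of $J_b$. Grouping the sum over $a$ according to $b$ yields
\[
\sum_a\big\|e^{z\cdot S_{n+m}F}|_{I_a}\big\|\le\sum_b\big\|e^{z\cdot S_nF}|_{J_b}\big\|\sum_{a\mapsto b}\big\|e^{z\cdot S_mF}|_{I_a}\big\|.
\]
The inner sum should be bounded by $\lambda^mR_m^m$, by viewing $\{I_a\}_{a\mapsto b}$, suitably restricted and completed, as a good partition of $T^{-m}I(x_b)$. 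The outer sum is then bounded by $\lambda^nR_n^n$. Taking suprema gives $R_{n+m}^{n+m}\le R_n^nR_m^m$.

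\textbf{Main obstacle.} The delicate step is the combinatorics in the regrouping. Segments $T^mI_a$ may straddle two $J_b$'s, and the subfamily assigned to $b$ need not literally be an element of $\mathcal P_m(x_b)$: the overlap bound and the lower bound $\tilde u_a\ge\theta$ must survive the restriction. My first attempt is to choose $\mathcal B$ adapted to $\mathcal A$, taking the $J_b$ to be unions of images $T^mI_a$ with the partition functions regrouped accordingly. This keeps the at-most-two-overlap property and the uniform lower bound $\theta$, and each $T^mI_a$ then lies in a unique $J_b$. If constants cannot be avoided, I would absorb them by noting that $\sup_{\mathcal A}$ only changes by bounded factors, which does not affect the limit $R(z)$.

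\textbf{Limit and positivity.} Once submultiplicativity is established, Fekete's lemma applied to $\log R_n^n$ gives existence of $\lim_n R_n=\inf_nR_n$. For positivity, observe that $|e^{z\cdot S_nF}|=e^{r\cdot S_nF}\ge e^{-n\|r\cdot F\|_\infty}$, where $r=\Re z$. Moreover, every good partition of a segment of length $\lambda^n$ by unit intervals has at least $\lambda^n$ elements. Hence $R_n\ge e^{-\|r\cdot F\|_\infty}>0$ uniformly in $n$, and therefore $R(z)>0$.
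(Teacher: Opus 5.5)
Your proposal is correct and is essentially the paper's proof. It uses the same cocycle splitting $S_{n+m}F=S_mF+S_nF\circ T^m$ and the same regrouping of $\mathcal{A}\in\mathcal{P}_{n+m}(x)$ along some $\cB\in\mathcal{P}_n(x)$ with $T^mI_a\subset J_b$ and $\mathcal{A}_b$ regarded as an element of $\mathcal{P}_m(x_b)$. The paper simply asserts that regrouping, so your discussion of choosing $\cB$ adapted to $\mathcal{A}$ is, if anything, more careful. It also uses the same lower bound $\lambda^n e^{-n\|\Re(z\cdot F)\|_\infty}$ for positivity.

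One remark. Fekete's lemma gives exactly $\lim_n R_n=\inf_n R_n$, but the statement also asserts that $(R_n)_n$ is monotone decreasing. That does not follow from submultiplicativity alone: $a_n=2$ for $n$ odd and $a_n=1$ for $n$ even is submultiplicative, yet $a_n^{1/n}$ is not monotone. So neither your argument nor the paper's proof actually establishes that clause.
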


\begin{proof}
Consider a partition $\mathcal{A} \in \mathcal{P}_{n+m}(x)$ for some $x$. For
some partition $\cB \in \mathcal{P}_n(x)$ with intervals $\{J_b : b\in \cB\}$,
we can subdivide $\mathcal{A}$ as $\mathcal{A} = \cup_{b\in \cB} \mathcal{A}_b$,
so that for any $a\in \mathcal{A}_b$, $T^m(I_a) \subset J_b$. Then 
\[
\|{  e^{z \cdot S_{m+n} F}} |_{I_a}\|_{\mathcal{C}^0}
\leq
\|{  e^{z \cdot S_m F}} |_{I_a}\|_{\mathcal{C}^0}
\,
\|{  e^{z \cdot S_n F}} |_{J_b}\|_{\mathcal{C}^0}.
\]
Hence
\[
\sum_{a\in \mathcal{A}} \|{  e^{z \cdot S_{m+n} F}} |_{I_a}\|_{\mathcal{C}^0}
\leq
\sum_{b\in \cB}
\|{  e^{z \cdot S_n F}} |_{J_b}\|_{\mathcal{C}^0}
\sum_{a\in \mathcal{A}_b}
\|{  e^{z \cdot S_m F}} |_{I_a}\|_{\mathcal{C}^0}.
\]
Observing that if $x_b$ is the leftmost point of $J_b$, then $\mathcal{A}_b \in \mathcal{P}_m(x_b)$, we deduce that
\[
\lambda^{m+n}R_{n+m}^{n+m} \leq \lambda^n R_n^n\, \lambda^m R_m^m.
\]
Finally, to see that $R(z) >0$ for any {  $z\in\C^d$}, observe that for any $n$, any $x$, any partition $\mathcal{A}\in \mathcal{P}_n(x)$, 
\[
\sum_{a\in \mathcal{A}} \|{  e^{z \cdot S_n F}}|_{I_a}\|_{\mathcal{C}^0}
\geq
\lambda^n e^{-n\,{  |\Re (z \cdot F)|_{\mathcal{C}^0}}}.
\]
Hence
\[
R(z) \geq e^{-{  |\Re (z \cdot F)|_{\mathcal{C}^0}}}.
\]
\end{proof}

\begin{remark}\label{rk:Rn_and_R}
We note that, for any {  $z \in \C^d$} and $n\in \N$, we have
\[
R_n(z) = R_n({  \Re z}),
\]
which implies that $R(z) = R({  \Re z})$.
\end{remark}

\subsection{Technical lemmas}

The following technical results are essential in the proof of the Lasota-Yorke inequality. We will use the following notation: if $p,q \geq 0$, when writing $A \ll_{p,q} B$, we mean that $A \leq C_{p,q} B$ for some constant $C_{p,q}>0$ depending on $p$ and $q$ only. 

\begin{lemma}\label{lem:testfun_0}
	Let $(G_n)_{n\in\N}\subset \cC^{\infty, \bdd}(S)$, and assume that there exists $\theta \geq 1$ such that for all $a,b \geq 0$ with $a+b \geq 1$, we have
	\[
	\|X^aY^bG_n\|_{\mathscr{C}^0}\ll_{a,b}  \theta^{bn}.
	\]
	Then, we have for all $j,k \geq 0$,
	\[
	\|X^j Y^k e^{G_n}\|_{\mathscr{C}^0}
	\ll_{j,k}
	 \theta^{kn} \|e^{G_n}\|_{\mathscr{C}^0}.
	\] 
\end{lemma}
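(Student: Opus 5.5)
The plan is a Faà di Bruno style expansion for derivatives of $e^{G_n}$, followed by weight counting. Since $X$ and $Y$ commute on $S_{\reg}$ (they come from the translation structure), $X^jY^k$ is a genuine mixed partial derivative in the flat coordinates. The multivariate chain rule then writes $X^jY^k e^{G_n}$ as $e^{G_n}$ times a finite linear combination, with coefficients depending only on $(j,k)$, of products
\[
\prod_{i=1}^{m} X^{a_i}Y^{b_i} G_n ,
\]
where each $(a_i,b_i)$ satisfies $a_i+b_i\geq 1$, and $\sum_i a_i = j$, $\sum_i b_i = k$. This is the set-partition form of Faà di Bruno's formula for $\partial^\alpha e^{g}$: one sums over partitions of the multiset of $j$ copies of $X$ and $k$ copies of $Y$ into nonempty blocks. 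Each block contributes one factor $\partial^{\beta}G_n$, because every derivative of $\exp$ is again $\exp$.

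The expansion can also be proved by a direct induction on $j+k$, which avoids quoting the general formula. Suppose
\[
X^jY^k e^{G_n} = P_{j,k}\, e^{G_n},
\]
where $P_{j,k}$ is a polynomial in the derivatives $\partial^\beta G_n$ with $|\beta|\geq 1$. Assign to each monomial the \emph{bidegree} $(\sum a_i,\sum b_i)$, and assume every monomial of $P_{j,k}$ is homogeneous of bidegree $(j,k)$. Applying $V\in\{X,Y\}$ gives
\[
P_{j,k}\, (VG_n)\, e^{G_n} + (VP_{j,k})\, e^{G_n}.
\]
In the first term, multiplying by $VG_n$ raises the relevant component of the bidegree by exactly one. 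In the second term, $V$ acts on one factor $X^{a}Y^{b}G_n$ and turns it into a factor with $a$ or $b$ increased by one. So homogeneity is preserved, the number of monomials stays finite, and the coefficients depend only on $(j,k)$.

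Next we estimate each monomial with the hypothesis. Every factor has $a_i+b_i\geq 1$, so
\[
\|X^{a_i}Y^{b_i}G_n\|_{\mathscr{C}^0}\ll_{a_i,b_i}\theta^{b_i n}.
\]
Hence a monomial of bidegree $(j,k)$ is bounded by a constant times $\theta^{n\sum b_i}=\theta^{kn}$. Multiplying by $e^{G_n}$ and taking sup norms, using $\|fg\|_{\mathscr{C}^0}\leq \|f\|_{\mathscr{C}^0}\|g\|_{\mathscr{C}^0}$, gives the claim. When $j=k=0$ the statement is trivial.

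No step is a real obstacle. The only point needing care is to set up the bookkeeping so that the total number of $Y$-derivatives across all factors equals exactly $k$, rather than merely at most $k$, and that every factor carries at least one derivative. Both properties are built into the bidegree invariant in the induction above. Because $\theta\geq 1$, an upper count would also suffice. Note also that no factor of the form $G_n$ itself, with zero derivatives and hence no control from the hypothesis, ever appears: only $e^{G_n}$ appears undifferentiated, and it is kept as the prefactor $\|e^{G_n}\|_{\mathscr{C}^0}$.
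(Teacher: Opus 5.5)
Your proposal is correct and follows essentially the same route as the paper. The paper also argues by induction that $|X^jY^ke^{G_n}|$ is bounded by $|e^{G_n}|$ times a maximum over products $\prod_i |X^{a_i}Y^{b_i}G_n|$ with $a_i+b_i\geq 1$, $\sum_i a_i=j$, $\sum_i b_i=k$, and then applies the hypothesis to each factor, exactly as your bidegree bookkeeping does.
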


\begin{proof}
	One can see by induction that 
	\begin{multline*}
|X^j Y^k e^{G_n}|
\ll_{j,k}
|e^{G_n}|
\max\{
|X^{a_1}Y^{b_1}{G_n}|\cdots |X^{a_{r}}Y^{b_r}{G_n}|
:\ a_i, b_i \geq 0, a_i+b_i \geq 1,\\
a_1 + \cdots + a_r = j,\quad
b_1 + \cdots + b_r = k
\}.
	\end{multline*}
	The claim follows from the assumption on the norm of $G_n$.
\end{proof}

We want to apply \Cref{lem:testfun_0} with $G_n = z \cdot S_n F$, hence we need
the following estimate.

\begin{lemma}\label{lem:testfun_1}
	For every $j,k,n \geq 0$ with $j+k \geq 1$, we have
	\[
	\|X^jY^k  (z \cdot S_n F)\|_{\mathscr{C}^0}
	\ll_{j,k}
	{(  1+ |z|)}\,\lambda^{kn}\|F\|_{\mathscr{C}^{j+k}}.
	\]
\end{lemma}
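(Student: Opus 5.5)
The plan is to differentiate the Birkhoff sum term by term and use that $T$ acts linearly in the translation charts. Write $S_nF = \sum_{m=0}^{n-1} F\circ T^m$. Since $T(s,t) = (\lambda^{-1}s,\lambda t)$ in local coordinates, and $X=\partial_s$, $Y=\partial_t$ in those charts, the chain rule gives, for any smooth $G$ on $S_{\reg}$,
\[
X(G\circ T) = \lambda^{-1}(XG)\circ T, \qquad Y(G\circ T) = \lambda\,(YG)\circ T .
\]
The first step is to iterate these identities. Since $X$ and $Y$ commute and $T^m$ is again linear with constants $\lambda^{\mp m}$, induction gives
\[
X^jY^k(F\circ T^m) = \lambda^{(k-j)m}\,(X^jY^kF)\circ T^m .
\]
The vector-valued $F$ is handled componentwise, and the identity holds on $S_{\reg}$ away from singularities, where all derivatives of $F\in\cC^{\infty,\bdd}$ are bounded.

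Next I would take sup norms. The composition with $T^m$ does not increase the $\mathscr{C}^0$ norm, so
\[
\|X^jY^k(z\cdot F\circ T^m)\|_{\mathscr{C}^0}\le |z|\,\lambda^{(k-j)m}\|F\|_{\mathscr{C}^{j+k}} \le |z|\,\lambda^{km}\|F\|_{\mathscr{C}^{j+k}}.
\]
Summing over $0\le m\le n-1$ gives a geometric series.

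The only point needing care is the case analysis. If $k\ge 1$, then $\sum_{m<n}\lambda^{km}\le \lambda^{kn}/(\lambda^k-1)\ll_k \lambda^{kn}$, which yields the claim. If $k=0$ and $j\ge1$, the terms decay like $\lambda^{-jm}$, so the sum is bounded by $(1-\lambda^{-j})^{-1}\ll_j 1=\lambda^{0\cdot n}$. This uniform boundedness in $n$ is exactly why $j+k\ge 1$ is required; for $j=k=0$ the sum would grow like $n$. The factor $(1+|z|)$ dominates $|z|$, so the statement follows with implicit constants depending only on $j$, $k$ and $\lambda$.
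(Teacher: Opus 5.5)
Your proposal is correct and follows the paper's proof essentially step for step. You differentiate the Birkhoff sum term by term using $X^jY^k(F\circ T^m)=\lambda^{(k-j)m}(X^jY^kF)\circ T^m$, then sum the geometric series with the same case split: $k\ge 1$ gives a bound $\ll_k \lambda^{kn}$, while $k=0$, $j\ge 1$ gives a bound uniform in $n$.
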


\begin{proof}
	We have 
	\[
	|X^jY^k  (z \cdot S_n F)|
	=
	\left|
	\sum_{\ell=0}^{n-1} X^jY^k  (z \cdot F\circ T^{\ell})
	\right|
	\leq
	{  (1+|z|)}
	\sum_{\ell=0}^{n-1}\lambda^{k\ell-j\ell}\|F\|_{\mathscr{C}^{j+k}}.
	\]
	For $k=0$, since $j >0$, the latter term is bounded by $C_j
	\|F\|_{\mathscr{C}^{j+k}}$, for some constant $C_j>0$ which depends on $j$
	only. In the case $k \geq 1$, 
	\[
	\sum_{\ell=0}^{n-1}\lambda^{k\ell-j\ell} \|F\|_{\mathscr{C}^{j+k}}
	\ll_k
	\|F\|_{\mathscr{C}^{j+k}} \lambda^{kn}. \qedhere
	\]
\end{proof}

\begin{lemma}\label{lem:testfun}
For every $k\geq 0$, $n \geq 0$ and $y\in S_{\reg}$, define for $s\in [0,\lambda^n]$ the function
\[
\Phi_{z,n,y}^{(k)}(s)
\coloneq
Y^k \big( {  e^{z \cdot S_n F}} \circ T^{-n} \big)
\circ \phi_{ \lambda^{-n} s} (y).
\]
Fix a partition $\mathcal{A} \in \mathcal{P}_n(y)$. Then the $\mathcal{C}^j$ norm of $\Phi_{z,n,y}^{(k)}$ restricted to one interval $I_a$ of $\mathcal{A}$ is bounded by 
\[
\|\Phi_{z,n,y}^{(k)}|_{I_a}\|_{\mathcal{C}^j}
\ll_{j,k}
{  (1+|z|)^{j+k}}
\|F\|^2_{\mathscr{C}^{j+k+1}}
\|{  e^{z \cdot S_n F}} |_{I_a}\|_{\mathcal{C}^{0}}.
\]
\end{lemma}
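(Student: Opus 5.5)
The plan is to undo the rescaling in the definition of $\Phi^{(k)}_{z,n,y}$, so that everything reduces to pointwise bounds on derivatives of $e^{G_n}$ with $G_n = z\cdot S_nF$. Those bounds come from the induction in \Cref{lem:testfun_0} combined with \Cref{lem:testfun_1}.

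First, in translation charts $T^{-n}(s,t) = (\lambda^{n}s,\lambda^{-n}t)$. The horizontal direction is contracted by $T$, so $T^{-n}\circ\phi_{\lambda^{-n}s} = \phi_s\circ T^{-n}$ on the regular set, and $Y(g\circ T^{-n}) = \lambda^{-n}(Yg)\circ T^{-n}$. Iterating gives
\[
\Phi^{(k)}_{z,n,y}(s) = \lambda^{-kn}\,\big(Y^k e^{G_n}\big)\circ \phi_s(T^{-n}y), \qquad s\in[0,\lambda^n].
\]
Since $\frac{d}{ds}$ of a function evaluated along $\phi_s$ is $X$ applied to that function, we get for every $i\le j$
\[
\frac{d^i}{ds^i}\Phi^{(k)}_{z,n,y}(s) = \lambda^{-kn}\,\big(X^iY^k e^{G_n}\big)\circ\phi_s(T^{-n}y).
\]
Hence it suffices to bound $\sup_{p\in I_a}|X^iY^ke^{G_n}(p)|$ by $\lambda^{kn}$ times the claimed constants times $\sup_{I_a}|e^{G_n}|$. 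Here $I_a$ is viewed as a subsegment of $T^{-n}I(y)$.

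Second, this is not literally \Cref{lem:testfun_0}, which is stated with the global norm $\|e^{G_n}\|_{\mathscr{C}^0}$. It is, however, what its proof gives. The inductive pointwise inequality there bounds $|X^iY^ke^{G_n}(p)|$ by $|e^{G_n}(p)|$ times a finite sum of products
\[
|X^{a_1}Y^{b_1}G_n(p)|\cdots|X^{a_r}Y^{b_r}G_n(p)|, \qquad \sum_l a_l = i,\quad \sum_l b_l = k,\quad a_l+b_l\ge 1,\quad r\le i+k.
\]
By \Cref{lem:testfun_1}, each factor is $\ll (1+|z|)\lambda^{b_l n}\|F\|_{\mathscr{C}^{a_l+b_l}}$. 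The powers of $\lambda$ therefore multiply to exactly $\lambda^{kn}$, which cancels the prefactor $\lambda^{-kn}$. The powers of $(1+|z|)$ are at most $(1+|z|)^{i+k}\le(1+|z|)^{j+k}$. Taking the supremum over $p\in I_a$ then yields the local quantity $\|e^{z\cdot S_nF}|_{I_a}\|_{\mathcal{C}^0}$.

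The main point requiring care is the bookkeeping of the $F$-dependence. The product above involves up to $i+k$ norms of $F$, each at most $\|F\|_{\mathscr{C}^{j+k+1}}$. Since $F$ is fixed throughout, I would absorb $\max(1,\|F\|_{\mathscr{C}^{j+k+1}})^{j+k-2}$ into the implicit constant, which depends on $j,k$ (and on the fixed $F$). This recovers the stated $\|F\|^2_{\mathscr{C}^{j+k+1}}$ form up to that convention.

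I would also check that $T^{-n}I(y)$ stays in $S_{\reg}$ (\Cref{def:pou}), so that all derivatives are well defined along it. Finally, note that $|e^{G_n}| = e^{\Re G_n}$, so no cancellation from the imaginary part is needed.
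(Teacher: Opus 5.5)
Your argument is correct, but it takes a different route from the paper's.

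\textbf{The paper's route.} It freezes a reference point $s_a\in I_a$ and factors $e^{z\cdot S_nF}\circ\phi_s(T^{-n}y)$ as $e^{(z\cdot S_nF)(\phi_{s_a}(T^{-n}y))}$ times the oscillation factor $e^{(z\cdot S_nF)(\phi_s(T^{-n}y))-(z\cdot S_nF)(\phi_{s_a}(T^{-n}y))}$.
\begin{itemize}
\item The derivatives of the oscillation factor are estimated by an explicit Birkhoff-sum computation. Each term is Lipschitz along horizontals at scale $\lambda^{-n-i}$, and this is where the extra derivative in $\|F\|_{\mathscr{C}^{j+k+1}}$ comes from.
\item Its sup is bounded by $\exp\bigl(\frac{|z|}{1-\lambda^{-1}}\|F\|_{\mathscr{C}^1}\bigr)$. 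This is a bounded-distortion step of the same kind as the constant $K$ in \Cref{lem:estimates_spectral_radius}.
\end{itemize}

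\textbf{Your route.} You use that $X^iY^ke^{G_n}=e^{G_n}\cdot P(\text{derivatives of }G_n)$ holds pointwise. So $|e^{G_n(p)}|$ factors out at every point, and the local sup over $I_a$ appears directly. The paper's bound on the frozen factor already invokes \Cref{lem:testfun_0} in exactly this pointwise form, at the point $\phi_{s_a}(T^{-n}y)$. Your observation therefore shows the splitting is unnecessary.

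\textbf{What each buys.} Your route is shorter and needs only $\|F\|_{\mathscr{C}^{j+k}}$. It also gives genuinely polynomial dependence on $|z|$. The paper's oscillation factor instead hides an exponential factor in the implicit constant; only $|\Re z|$ is really needed there, and this is harmless in the paper because $\Re z$ stays bounded in all applications.

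\textbf{A small bookkeeping correction.} Absorbing $\max(1,\|F\|_{\mathscr{C}^{j+k+1}})^{j+k-2}$ into the constant only works when $\|F\|_{\mathscr{C}^{j+k+1}}\ge 1$. In general, simply let the constant depend on the fixed $F$. For $j=k=0$ and small $F$, the literal statement with a constant depending only on $j,k$ cannot hold. Letting the constant depend on $F$ is also what the paper tacitly does.
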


\begin{proof}
First observe that 
\[
\Phi_{z,n,y}^{(k)}(s) =  Y^k \big( e^{z \cdot S_n F }  \circ T^{-n}  \circ \phi_{ \lambda^{-n} s} (y)\big) 
 = Y^k(e^{z \cdot S_n F } \circ \phi_s (T^{-n} y)).
\]
Hence
\[
\frac{\diff^j}{\diff s^j} \Phi_{z,n,y}^{(k)}(s) = \lambda^{-nj} X^jY^k \big( e^{z \cdot S_n F }  \circ \phi_s (T^{-n} y)\big).
\]
Now fix a point $s_a \in I_a$ and write 
\[
e^{z \cdot S_n F }  \circ \phi_s (T^{-n} y)
=
e^{(z \cdot S_n F)  (\phi_s(T^{-n} y)) - (z \cdot S_n F)  (\phi_{s_a}(T^{-n} y))}\,
e^{(z \cdot S_n F)  (\phi_{s_a}(T^{-n} y))}.
\]
We estimate separately the derivatives of the two terms above.
First, for $\ell \leq j, m \leq k$,
\begin{equation}\label{eqn:test_term2}
\begin{split}
\left| X^\ell Y^m e^{(z \cdot S_n F)  (\phi_{s_a}(T^{-n} y))} \right|
&= \lambda^{n\ell -nm} \left|X^\ell Y^m(e^{z \cdot S_n F }) \circ \phi_{s_a}(T^{-n} y) \right| \\
& \ll_{\ell,m} \lambda^{n\ell} {  (1+|z|)^{\ell+m}}\, \|F\|_{\mathscr{C}^{\ell+m}}\, {  \|e^{z \cdot S_n F } |_{I_a}\|_{\mathscr{C}^{0}}}\ ,
\end{split}
\end{equation}
where we applied {  \Cref{lem:testfun_0}} and \Cref{lem:testfun_1}.

Now for the first term, we need to estimate
\begin{align*}
&\left|X^\ell Y^m \left( S_n F(\phi_s (T^{-n} y)) - S_n F (\phi_{s_a} (T^{-n} y)) \right) \right| \leq\\
& \qquad \qquad \leq  \sum_{i=-n}^{-1} \left| X^\ell Y^m\left(F\circ T^i (\phi_{\lambda^{-n}s} (y)) - F\circ T^i (\phi_{\lambda^{-n}s_a} (y))\right)  \right| \\
& \qquad \qquad = \sum_{i=-n}^{-1} \lambda^{-\ell i+mi} \left| (X^\ell Y^m F) (\phi_{\lambda^{-n-i}s}(T^i y)) - (X^\ell Y^m F) (\phi_{\lambda^{-n-i}s_a}(T^i y)) \right| \\
& \qquad \qquad \leq \sum_{i=-n}^{-1} \lambda^{-\ell i+mi-n-i} \|F\|_{\mathscr{C}^{\ell+m+1}} \ll_{\ell,m} \lambda^{\ell n} \|F\|_{\mathscr{C}^{\ell+m+1}}.
\end{align*}
Hence by {  \Cref{lem:testfun_0}},
\begin{equation}\label{eqn:test_term1}
\left| X^{j-\ell} Y^{k-m} \Big(e^{(z \cdot S_n F)  (\phi_s(T^{-n} y)) - (z \cdot S_n F)  (\phi_{s_a}(T^{-n} y))}\Big)\Big|_{I_a}\right|
\ll_{j,k} {  (1+|z|)^{j+k-\ell-m}} \lambda^{jn-\ell n} \|F\|_{\mathscr{C}^{j+k+1}},
\end{equation}
since 
\[
\Big\| e^{(z \cdot S_n F)  (\phi_s(T^{-n} y)) - (z \cdot S_n F)  (\phi_{s_a}(T^{-n} y))}\Big|_{I_a}\Big\|_{\mathscr{C}^0}
\leq
{  \exp\!\left(\frac{|z|}{1-\lambda^{-1}} \|F\| _{\mathscr{C}^{1}}\right)}.
\]

We conclude by combining \eqref{eqn:test_term1} and \eqref{eqn:test_term2}.
\end{proof}



\subsection{The Lasota-Yorke Inequality}

 Let us call $\cL\coloneq\cL_{{  z},F}$ for simplicity. Recall that $R = R({  z}) = \lim_n R_n$. We will prove the following result.
\begin{proposition}\label{prop:LY}
For each $p,q \in \Z_{\ge 0}$ there exists $C = C(p,q,F)>0$ such that, for each $n\in \N$ and {  $z\in \C^d$}, the operator $\cL^n$ initially defined on $\cC^{\infty, \bdd}(S)$ extends continuously to $\cB_{p,q}$ and
\begin{equation}\label{LY-1}
\|\cL^n f\|_{p,q}\le C {  (1+|z|)}^{p+q}  R_n^n \|f\|_{p,q}.
\end{equation}
Moreover, for each $p, q\ge 1$, for each $\sigma \in (\lambda^{-\min\{p,q\}},1)$, there exists a $C= C(p,q,F,\sigma)>0$ such that, for any $n\in \N$ and {  $z\in \C^d$}, we have
\begin{equation}\label{LY-2}
\begin{split}
		\|\cL^n f\|_{p,q} \le &C {  (1+|z|)}^{p+q} \sigma^n R_n^{n}\|f\|_{p,q} + C {  (1+|z|)}^{2(p+q+1)} R_n^n \|f\|_{p-1,q+1}.
\end{split}
\end{equation}
\end{proposition}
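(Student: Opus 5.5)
By density (\Cref{lem:inclusion}) it suffices to prove both inequalities for $\ell = \Gamma f$ with $f \in \cC^{\infty,\bdd}(S)$ and then extend. Fix $x \in S_{\reg}$, a multi-index $\alpha=(m,k)$ with $|\alpha|\le p$, and a test function $u \in \mathcal{C}^{q+|\alpha|}$ of norm at most one.

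\textbf{Step 1: reduce to integrals over pieces of the stretched segment.} The commutation relations $X(g\circ T^{-n}) = \lambda^{-n}(Xg)\circ T^{-n}$ and $Y(g\circ T^{-n})=\lambda^{n}(Yg)\circ T^{-n}$ give
\[
\langle \partial^\alpha \Gamma(\cL^n f)(x), u\rangle = (-1)^m\int_0^1 Y^k\big((e^{z\cdot S_nF} f)\circ T^{-n}\big)\circ\phi_s(x)\, u^{(m)}(s)\diff s .
\]
I expand $Y^k$ by Leibniz. Each term has the form $\Phi^{(k-i)}_{z,n,x}(\lambda^n s)\,\lambda^{ni}(Y^if)\circ T^{-n}\circ\phi_s(x)$. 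I change variables $s\mapsto\lambda^{-n}s$ to get an integral over $T^{-n}I(x)$, of length $\lambda^n$, and insert a good partition $\mathcal{A}\in\mathcal{P}_n(x)$. Each piece becomes $\lambda^{-n}\langle \Gamma(Y^i f)(x_a), w_a\rangle$, where $x_a$ is the left endpoint of $I_a$ and
\[
w_a(t)=\tilde u_a(t)\,\Phi^{(k-i)}(t)\, u^{(m)}(\lambda^{-n}t)\in\mathcal{C}^{q+i}.
\]

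\textbf{Step 2: bound the test functions and sum.} By \Cref{lem:testfun},
\[
\|w_a\|_{\mathcal{C}^{q+i}}\ll (1+|z|)^{\cdots}\,\lambda^{(k-i)n}\,\|e^{z\cdot S_nF}|_{I_a}\|_{\mathcal{C}^0}\,\|u\|_{\mathcal{C}^{q+m+i}}.
\]
Here I use that derivatives of $u^{(m)}(\lambda^{-n}\cdot)$ gain factors $\lambda^{-n}$. The factor $\lambda^{(k-i)n}$ combines with $\lambda^{ni}$ to give $\lambda^{kn}$, and this cancels against the $\lambda^{-kn}$ hidden in the $Y$-commutation. Equivalently, keeping $Y^k$ on $f$ yields $\lambda^{nk}$ from the chain rule, which is compensated. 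To be careful, I would redo this bookkeeping with $Y^k$ acting as $\lambda^{kn}(Y^k\cdot)\circ T^{-n}$ and note that $\Phi^{(j)}$ already contains $\lambda^{jn}$. The expected total is $\lambda^{-n}\sum_a\|e^{z\cdot S_nF}|_{I_a}\|_{\mathcal{C}^0}\,\|f\|_{p,q}\le R_n^n\|f\|_{p,q}$. The power $(1+|z|)^{p+q}$ comes from \Cref{lem:testfun} with $j\le q+|\alpha|$. This proves \eqref{LY-1}.

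\textbf{Step 3: gain contraction for \eqref{LY-2}.} Two mechanisms give a factor $\lambda^{-n}$.

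\emph{(a) Many $X$-derivatives.} When $m\ge 1$, or when derivatives of $u$ fall on the rescaled argument, the rescaled test function $u(\lambda^{-n}\cdot)$ contributes $\lambda^{-nj}$ for each derivative. I would use this by Taylor-expanding $u^{(m)}(\lambda^{-n}t)$ on $I_a$ to order $q$. The polynomial part gives terms paired against $\Gamma f$ with lower regularity, so they are bounded by $\|f\|_{p-1,q+1}$ or absorbed. The remainder is $O(\lambda^{-nq}\|u\|_{\mathcal{C}^{q+|\alpha|}})$.

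\emph{(b) The top $Y$-derivative.} The term where all $Y$-derivatives fall on $f$ pairs $\Gamma(Y^kf)$ against a test function whose regularity I can raise. The difference between the smooth part of $w_a$ and its frozen version has small $\mathcal{C}^{q}$-norm, roughly $\lambda^{-n}$ per order, while the frozen part is controlled by the weaker norm.

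Concretely, I will split $w_a = w_a^{\mathrm{low}} + w_a^{\mathrm{rem}}$, where $w_a^{\mathrm{rem}}$ has $\mathcal{C}^{q+i}$-norm $\ll\lambda^{-n\min\{p,q\}}$ and $w_a^{\mathrm{low}}$ is smooth enough to be paired in $\mathcal{C}^{-q-1-i}$. This gives \eqref{LY-2} for $\sigma$ slightly bigger than $\lambda^{-\min\{p,q\}}$; the slack absorbs polynomial-in-$n$ constants. The squared power $(1+|z|)^{2(p+q+1)}$ arises because the low term needs one more derivative of the weight.

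\textbf{Main obstacle.} The hardest step is this splitting in the $Y$-direction, i.e.\ getting the $\lambda^{-np}$ gain. Losing $Y$-derivatives is not directly compensated by test-function smoothness. I would try to handle it by iterating: first prove the one-step inequality with contraction $\lambda^{-1}$ per application on the $(p,q)$ pieces. Then obtain the full exponent by the standard trick of applying it at the intermediate regularities $(p-j,q+j)$ and using submultiplicativity of $R_n^n$ from \Cref{lem:submultiplicativity} to combine the blocks.
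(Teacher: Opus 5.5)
There is a genuine gap, and it starts in Step~1: your commutation relations are reversed. With $T(s,t)=(\lambda^{-1}s,\lambda t)$, the map $T^{-n}$ stretches the horizontal direction and contracts the vertical one. You use the first fact yourself when you say $T^{-n}I(x)$ has length $\lambda^n$. Hence $X(g\circ T^{-n})=\lambda^{n}(Xg)\circ T^{-n}$ and $Y(g\circ T^{-n})=\lambda^{-n}(Yg)\circ T^{-n}$. With your relations, $Y^k$ falling on $f\circ T^{-n}$ would produce $\lambda^{kn}$, and \eqref{LY-1} would fail. Step~2 then compensates with factors that do not exist: \Cref{lem:testfun} bounds $\Phi^{(j)}_{z,n,x}|_{I_a}$ uniformly in $n$, with no $\lambda^{jn}$, precisely because vertical derivatives are contracted by $T^{-n}$. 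Once the sign is fixed, Steps~1--2 become the paper's proof of \eqref{LY-1}. You integrate $X^m$ onto $u$ and expand $Y^k$ by Leibniz. Each term is then $\lambda^{-ln-n}$ times a sum over $a$ of pairings of $\Gamma(Y^lf)$ against $\tilde u_a\,\Phi^{(k-l)}_{z,n,x}\,u^{(m)}(\lambda^{-n}\cdot)$, bounded by $R_n^n\|f\|_{p,q}$.

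The more serious consequence is that you misplace the source of the $\lambda^{-pn}$ gain, and your plan for obtaining it cannot work. In the correct picture the gain is free. Terms with $l\le p-1$ reduce to the weak norm up to a $\lambda^{-qn}\|f\|_{p,q}$ error once the test function is smoothed. The only remaining term is $l=k=p$, with all vertical derivatives on $f$, and it carries $\lambda^{-pn}$ from the chain rule, giving $\lambda^{-pn}R_n^n\|f\|_{p,q}$ at once. Your mechanism (b) cannot handle that term. No amount of smoothness in the test function lets $\|\cdot\|_{p-1,q+1}$ control $Y^pf$, so the ``frozen part'' is not bounded by the weak norm. Iterating a one-step estimate at intermediate regularities $(p-j,q+j)$ does not manufacture a contraction that a single step lacks. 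So \eqref{LY-2} is not established by your outline.

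What survives is the following. For the terms with $l\le p-1$, your Taylor expansion of $u^{(m)}(\lambda^{-n}\cdot)$ on each $I_a$ is a valid substitute for the paper's device, which mollifies $u=u_\epsilon+(u-u_\epsilon)$ with $\epsilon=\lambda^{-qn}$. Both rely on the rescaled test function gaining $\lambda^{-n}$ per derivative. Together with the $l=p$ term, this gives the one-block bound $\|\cL^nf\|_{p,q}\le C_z R_n^n(\lambda^{-\min\{p,q\}n}\|f\|_{p,q}+\|f\|_{p-1,q+1})$, with $C_z$ of order $(1+|z|)^{p+q+1}$. Only then does the block argument enter, and for a different purpose from the one you intend. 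Choose $n_0$ with $C_z\lambda^{-\min\{p,q\}n_0}\le\sigma^{n_0}$ and iterate in blocks of length $n_0$. Control the accumulated weak-norm terms with \eqref{LY-1} at regularity $(p-1,q+1)$ and \Cref{lem:submultiplicativity}. That second factor of $(1+|z|)$-powers is what produces the exponent $2(p+q+1)$.
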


\begin{proof}
	Let $p,q\geq 0$ be fixed. For any $0\leq p_0 \leq p$, for any $x\in S_{\reg}$, and for any $u \in \mathcal{C}^{q+p_0}$ with $\|u\|_{\mathcal{C}^{q+p_0}}\leq 1$, we need to estimate
	\[
	\left|\int_0^1 X^jY^k(\cL^n f)\circ \phi_s(x) u(s) \diff s\right|,
	\]
	where $0\leq j,k\leq p_0$ are so that $j+k = p_0$.
	Integration by parts yields
	\begin{equation*}
		\begin{split}
			\int_0^1 X^jY^k(\cL^n f)\circ \phi_s(x) u(s) \diff s &=\int_0^1 \frac{\diff^j}{\diff s^j}Y^k(\cL^n f)\circ \phi_s(x) u(s)\diff s\\
			&=(-1)^j\int_0^1 Y^k(\cL^n f)\circ \phi_s(x) u^{(j)}(s) \diff s;
		\end{split}
	\end{equation*}
	thus, it is sufficient to estimate 
	\[
	\left|\int_0^1 Y^k(\cL^n f)\circ \phi_s(x) u(s) \diff s\right|,
	\]
	where $0\leq k\leq p_0$ and $u \in \mathcal{C}^{q+k}$ with $\|u\|_{\mathcal{C}^{q+k}}\leq 1$.
	
	Recalling \Cref{def:pou}, we have
	\begin{equation*}
		\begin{split}
			&\left|\int_0^1 Y^k(\cL^n f)\circ \phi_s(x) u(s) \diff s\right|
			=\left|\int_0^1 Y^k\big(({  e^{z \cdot S_n F}}\, f)\circ T^{-n}\big)\circ \phi_s(x) u(s) \diff s\right|\\
			&\qquad \ll_k \max_{l + m = k} \left|\int_0^1 Y^{l}(f\circ T^{-n})\circ \phi_s(x)\, Y^{m}({  e^{z \cdot S_n F}}\circ T^{-n})\circ \phi_s(x)\, u(s)\diff s\right|\\
			&\qquad =\max_{l + m = k} \lambda^{-ln}\left|\int_0^1 Y^{l}f\circ \phi_{\lambda^{n} s} (T^{-n}x)\, \big(Y^{m}({  e^{z \cdot S_n F}}\circ T^{-n})\circ \phi_s(x)\, u(s)\big)\diff s\right|\\
			&\qquad = \max_{l + m = k} \lambda^{-ln-n}\left|\int_0^{\lambda^n} Y^{l}f\circ \phi_{s} (T^{-n}x)\, \big(Y^{m}({  e^{z \cdot S_n F}}\circ T^{-n})\circ \phi_{\lambda^{-n}s}(x)\, u(\lambda^{-n}s)\big) \diff s \right|\\
			&\qquad \le \max_{l + m = k} \lambda^{-ln-n} \sum_{a\in \mathcal{A}}\left|\int_{I_a}Y^{l}f\circ \phi_{s} (T^{-n}x)\, \big(\Phi_{z,n,x}^{(m)}(s) \, u(\lambda^{-n}s)\tilde{u}_a(s)\big) \diff s\right|,\\
		\end{split}
	\end{equation*}
	where we have chosen a partition $\mathcal{A} \in \mathcal{P}_n(x)$ and used the notation of \Cref{lem:testfun}.
	
	Using \Cref{lem:testfun} and the fact that the norms of the functions $\tilde{u}_a$ are uniformly bounded, we obtain that
	\begin{equation}\label{eq:first_bound}
		\begin{split}
			\left|\int_0^1 Y^k(\cL^n f)\circ \phi_s(x) u(s) \diff s\right| & 
			\ll_{k} \max_{l + m = k} \lambda^{-ln-n}  \sum_{a\in \mathcal{A}}\left|\int_{I_a} \|Y^{l}f\|_{0,q}\, \|\Phi_{z,n,x}^{(m)}|_{I_a}\|_{\mathcal{C}^{q}}\,\|u(\lambda^{-n}\cdot)\|_{\mathcal{C}^{q}}\right|\\
			& \ll_{k} \max_{l + m = k} \lambda^{-ln-n} \sum_{a\in \mathcal{A}} {  (1+|z|)}^{m+q}\|F\|^2_{\mathscr{C}^{m+q+1}}\,  \|f\|_{l,q} \, \|{  e^{z \cdot S_n F}}|_{I_a}\|_{\mathscr{C}^{0}}\\
			& \ll_{p,q}  {  (1+|z|)}^{p+q}\|F\|^2_{\mathscr{C}^{p+q+1}}  R_n^n \|f\|_{p,q},
		\end{split}
	\end{equation}
	from which~\eqref{LY-1} follows.
	
	We now claim that
	\begin{equation}\label{eq:claim2}
	\|\cL^n f\|_{p,q}\ll_{p,q} {  (1+|z|)}^{p+q+1}\|F\|^2_{\mathscr{C}^{p+q+2}} R_n^n (\lambda^{-\min\{p,q\}n}\|f\|_{p,q} + \|f\|_{p-1,q+1}).
	\end{equation}
	We start from \eqref{eq:first_bound} and we distinguish two cases. If $l = k = p$, then we have 
	\[
		\left|\int_0^1 Y^p(\cL^n f)\circ \phi_s(x) u(s) \diff s\right| 
		 \ll_{p,q} \lambda^{-pn}  {  (1+|z|)}^{q}\|F\|^2_{\mathscr{C}^{q+1}} R_n^n \|f\|_{p,q},
	\]
	hence the claim \eqref{eq:claim2} is verified.
	Otherwise, if $l\leq p-1$, we proceed as follows. 
	For each $\epsilon>0$ small enough, let us consider $u_\epsilon$ obtained by convolving $u \in \mathcal{C}^{q+k}$ with a mollifier $j_\epsilon$ with support in $(0,\epsilon)$ and $\int j_\epsilon=1$ so that,\footnote{The claimed estimates are straightforward once one notes that, if $j_\epsilon (x)=\epsilon^{-1}j(\epsilon^{-1} x)$ where $j\in \cC^{\infty}, \operatorname{supp}j \subset (-1,1), \int j =1$, then
		\[
		\int j_\epsilon(x-y) u(y)\diff y=\int j_\epsilon (y) u (x-y)\diff y.
		\]
	}
	\begin{equation}\label{eq:molly}
		\|u_\epsilon-u\|_{\mathcal{C}^{q-1}}\le \sC\epsilon, \qquad \|u_\epsilon\|_{\mathcal{C}^{q}}\le \sC, \qquad \|u_\epsilon\|_{\mathcal{C}^{q+1}}\le \sC \epsilon^{-1}.
	\end{equation}
	From \eqref{eq:molly}, it follows that
	\[
	\begin{split}
		\|(u-u_\epsilon)(\lambda^{-n}\cdot)\|_{\mathcal{C}^{q}}&\ll_q \lambda^{-qn}\|X^q u- X^q u_\epsilon\|_{\mathcal{C}^0}+ \|(u-u_\epsilon)(\lambda^{-n}\cdot)\|_{\mathcal{C}^{q-1}}\\
		&\ll_q \max\{\lambda^{-qn}, \epsilon\}.
	\end{split}
	\]
	Similarly, 
	\[
	\begin{split}
		\|u_\epsilon(\lambda^{-n}\cdot)\|_{\mathcal{C}^{q+1}}&\ll_q \lambda^{-(q+1)n}\|X^{q+1} u_\epsilon\|_{\mathcal{C}^0}+ \|u_\epsilon(\lambda^{-n}\cdot)\|_{\mathcal{C}^{q}}\\
		&\ll_q \max\{\lambda^{-(q+1)n}\epsilon^{-1}, 1\}.
	\end{split}
	\]
	
	Therefore, from \eqref{eq:first_bound}, choosing $\epsilon = \lambda^{-qn}$, we obtain
	\[
	\begin{split}
&\left|\int_0^1 Y^k(\cL^n f)\circ \phi_s(x) u(s) \diff s\right| \\
& \qquad\ll_{k} \max_{\ell + m = k} \lambda^{-\ell n-n} \sum_{a\in \mathcal{A}} \left|\int_{I_a}Y^{\ell}f\circ \phi_{s} (T^{-n}x)\, \big(\Phi_{z,n,x}^{(m)}(s) \, (u-u_\epsilon)(\lambda^{-n}s)\tilde{u}_a(s)\big) \diff s\right| \\
& \qquad \qquad +\max_{\ell + m = k} \lambda^{-\ell n-n} \sum_{a\in \mathcal{A}} \left|\int_{I_a}Y^{\ell}f\circ \phi_{s} (T^{-n}x)\, \big(\Phi_{z,n,x}^{(m)}(s) \, u_\epsilon(\lambda^{-n}s)\tilde{u}_a(s)\big) \diff s\right|\\
& \qquad \ll_k \max_{l+m = k} \lambda^{-\ell n-n} \sum_{a\in \mathcal{A}} \Big(\|Y^{\ell}f\|_{0,q} \|\Phi_{z,n,x}^{(m)}|_{I_a}\|_{\mathcal{C}^{q}}\|(u-u_\epsilon)(\lambda^{-n}\cdot)\|_{\mathcal{C}^{q}} \\
& \qquad \qquad  + \|Y^{\ell}f\|_{0,q+1} \|\Phi_{z,n,x}^{(m)}|_{I_a}\|_{\mathcal{C}^{q+1}}\|u_\epsilon (\lambda^{-n}\cdot)\|_{\mathcal{C}^{q+1}}\Big)\\
&\qquad \ll_{k,q} \max_{\ell + m= k} \lambda^{-\ell n} {  (1+|z|)}^{p+q+1}\|F\|^2_{\mathscr{C}^{p+q+2}} R_n^n
(\|f\|_{\ell,q} \lambda^{-qn}+ \|f\|_{\ell,q+1} )\\
&\qquad \ll_{p,q}{  (1+|z|)}^{p+q+1}\|F\|^2_{\mathscr{C}^{p+q+2}} R_n^n(\|f\|_{p,q} \lambda^{-qn} + \|f\|_{p-1,q+1} ).
	\end{split}
	\]
	We have then proved \eqref{eq:claim2}.
	
	We can now conclude the proof.
	Let us take any $\sigma \in (\lambda^{-\min\{p,q\}},1)$ and let $n_0\coloneqq n_0({z})$ the smallest integer such that
	\[
	\left(\|F\|^2_{\mathscr{C}^{p+q+2}}{  (1+|z|)}^{p+q+1}  \right)^{\frac{1}{n_0}}\lambda^{-\min\{p,q\}}\le \sigma.
	\]
	Note that $n_0$ is a constant depending on ${  z},p,q$ and $\sigma$, but not on $n$. 
	
	Let $n\in \N$ be arbitrary, and write $n=n_0s+m$, with $m<n_0$. Rewriting \eqref{eq:claim2} in terms of $\sigma$,  we have 
	\[
	\| \cL^{n_0}f\|_{p,q} \ll_{p,q} \sigma^{n_0} R_{n_0}^{n_0} \|f\|_{p,q}
	+ {  (1+|z|)}^{p+q+1} \|F\|^2_{\mathscr{C}^{p+q+2}} R_{n_0}^{n_0} \|f\|_{p-1,q+1}.
	\]
	
	Applying \eqref{LY-1} and using \cref{lem:submultiplicativity} we deduce
	\[
	\begin{split}
		\|\cL^n f\|_{p,q} &\ll_{p,q} \sigma^{n_0} R_{n_0}^{n_0}\|\cL^{n-n_0} f\|_{p,q}
		+ {  (1+|z|)}^{p+q+1}\|F\|^2_{\mathscr{C}^{p+q+2}} R_{n_0}^{n_0}  \|\cL^{n-n_0}f\|_{p-1,q+1} \\
		&\ll_{p,q} \sigma^{n_0} R_{n_0}^{n_0}\|\cL^{n-n_0} f\|_{p,q}
		+ {  (1+|z|)}^{2(p+q+1)}\|F\|^4_{\mathscr{C}^{p+q+2}} R_n^{n} \|f\|_{p-1,q+1}.
	\end{split}
	\]
	
	Iterating,
	\[
	\begin{split}
	\|\cL^n f\|_{p,q} &\ll_{p,q}  \big(\sigma^{n_0} R_{n_0}^{n_0}\big)^s \|\cL^m f\|_{p,q}
	+  {  (1+|z|)}^{2(p+q+1)}\|F\|^4_{\mathscr{C}^{p+q+2}} R_n^{n} \sum_{j=0}^{s-1} \sigma^{jn_0} \|f\|_{p-1,q+1} \\
	& \ll_{p,q} {  (1+|z|)}^{p+q} \|F\|^2_{\mathscr{C}^{p+q+1}} \sigma^n R_n^{n}\|f\|_{p,q}
	+  {  (1+|z|)}^{2(p+q+1)}\|F\|^4_{\mathscr{C}^{p+q+2}} R_n^n \|f\|_{p-1,q+1},
	\end{split}
	\]
	which completes the proof.
	
 \end{proof}



\subsection{The spectral radius and the discrete spectrum}

\Cref{prop:LY} implies that $\cL_{z,F}$ is a linear and continuous operator acting on the Banach space $\cB_{p,q}$, for any fixed $p,q \geq 0$. We are going to show that, in fact, for $p,q \geq 1$, it is a quasi-compact operator.
Let $\rho(z) = \rho(\cL_{z,F})$ and $\rho_{\ess}(z) = \rho_{\ess}(\cL_{z,F})$ denote the spectral radius and the essential spectral radius of $\cL_{z,F}$ on $\cB_{p,q}$  respectively. 
To get a lower bound on $\rho(z)$ we introduce the following definition.

Recall that elements $f$ of our Banach spaces $\cB_{p,q}$ are functions $S_{\reg} \to \mathcal{C}^{-q} = (\mathcal{C}^{q})^{\ast}$; we write $\langle f(x), u \rangle$ to denote the functional $f(x) \in  \mathcal{C}^{-q}$ applied to $u \in \mathcal{C}^{q}$.
\begin{definition}
We say that a functional $f \in \cB_{p,q}$ is \emph{positive} if for any $x \in S_\mathrm{reg}$ and any $u\in \mathcal{C}^q$ satisfying $u \geq 0$, 
$\langle f(x), u \rangle \geq 0$.

We say that a positive $f\in \cB_{p,q}$ is \emph{bounded away from zero} if
for every non-negative $u\in \mathcal{C}^{q}$ with $u\neq 0$, then $\inf \{ \langle f(x), u \rangle \ : \ x\in S_{\reg}\} >0$.
\end{definition}

Note that if $f \in \mathcal{C}^{\infty,\bdd}(S)$ is strictly positive, then it
is bounded away from zero since $  \langle f(x), u \rangle \geq \left( \min_{x\in S} f(x)\right) \int_0^1u(s) \diff s >0 $.

Recalling the definition in \eqref{eq:def_Rz}, we have the following estimates.

\begin{lemma}\label{lem:estimates_spectral_radius}
We have
\[
\rho(z) \leq R(z) \text{\ \  for any $p,q \geq 0$, and\ \  } \qquad \rho_{\ess}(z) \leq \lambda^{-\min\{p,q\}}R(z) \text{\ \ for any $p,q \geq 1$}.
\]
Further, for $p,q \geq 0$, suppose that $f \in \cB_{p,q}$ is positive and bounded away from zero.
Then, for $r\in \R^d$, there exists a constant $C_f >0$ such that, for all $n \geq 1$,
\begin{equation} \label{eq:lowerbound}
\|\cL_{r,F}^n f \|_{p,q} \geq C_f R_n^n(r).
\end{equation}
In particular, if  $r\in \R^d$, then $\rho(r) = R(r)$. 
\end{lemma}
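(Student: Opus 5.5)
The upper bounds come directly from the Lasota--Yorke inequalities; the lower bound \eqref{eq:lowerbound} is a positivity argument in which the test function is chosen so that the partition of unity reproduces $R_n^n$.

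\textbf{Spectral radius.} By \eqref{LY-1}, $\|\cL^n\|_{p,q}\le C(1+|z|)^{p+q}R_n^n$. Taking $n$-th roots and letting $n\to\infty$, Gelfand's formula together with \Cref{lem:submultiplicativity} gives $\rho(z)\le \lim_n R_n(z)=R(z)$.

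\textbf{Essential spectral radius.} For $p,q\ge1$, the inclusion $\cB_{p,q}\hookrightarrow\cB_{p-1,q+1}$ is compact by \Cref{lem:compact}. Inequality \eqref{LY-2} is a Lasota--Yorke inequality with weak norm $\|\cdot\|_{p-1,q+1}$, so Hennion's theorem applies. It yields $\rho_{\ess}\le \liminf_n\big(C(1+|z|)^{p+q}\sigma^nR_n^n\big)^{1/n}=\sigma R(z)$. Letting $\sigma\downarrow\lambda^{-\min\{p,q\}}$ gives the claimed bound.

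\textbf{Lower bound \eqref{eq:lowerbound}.} Fix $r\in\R^d$, so that $e^{r\cdot S_nF}>0$. We only need the $\alpha=0$ part of the norm: $\|\cL^n_{r,F}f\|_{p,q}\ge\sup_x\langle \cL^n f(x),u\rangle$ for any $u\in\mathcal{C}^q$ with $\|u\|_{\mathcal{C}^q}\le1$.

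1. Work first with smooth $f$ and pass to general $f$ by density. Pick $x$ and $\mathcal{A}\in\mathcal{P}_n(x)$ nearly attaining the supremum in $R_n$.
2. Choose a non-negative $u\in\mathcal{C}^q$, bounded below on a large subinterval, say $u\ge \tfrac12$ on $[\tfrac14,\tfrac34]$. Changing variables as in the proof of \Cref{prop:LY} and inserting the partition gives
\[
\langle\cL^nf(x),u\rangle=\lambda^{-n}\sum_a\int_{I_a} f\circ\phi_s(T^{-n}x)\,e^{r\cdot S_nF}\circ\phi_s(T^{-n}x)\,u(\lambda^{-n}s)\,\tilde u_a(s)\diff s .
\]
All terms are non-negative.
3. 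Use $\tilde u_a\ge\theta$ on each interval. On $I_a$ the oscillation of $r\cdot S_nF$ is bounded uniformly in $n$ (by $|r|\|F\|_{\mathscr{C}^1}/(1-\lambda^{-1})$, as in \Cref{lem:testfun}), so $e^{r\cdot S_nF}\ge c\,\|e^{r\cdot S_nF}|_{I_a}\|_{\mathcal{C}^0}$ there. Positivity and being bounded away from zero give $\int_{I_a}f\circ\phi_s\,\theta\ge c_f>0$ after identifying $I_a$ with $(0,1)$.
4. Each term is therefore $\gtrsim c_f\,\|e^{r\cdot S_nF}|_{I_a}\|_{\mathcal{C}^0}$, and summing over $a$ gives $\gtrsim \lambda^{-n}\cdot\lambda R_n^n$.

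\textbf{Main obstacle.} The factor $u(\lambda^{-n}s)$ must not kill the boundary intervals. The intervals near the endpoints, where $u$ is small, carry only a fraction of the mass. To handle this, I would compare with the supremum over partitions of the middle half, viewed as elements of $\mathcal{P}_{n-1}$ at a shifted base point. Submultiplicativity, or a direct covering argument, gives a uniform loss of constants only. Alternatively, I would change the base point $x$ so that the whole segment $T^{-n}I(x)$ lies inside the region where $u\ge\tfrac12$, absorbing a bounded factor.

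\textbf{Conclusion.} Finally, $\|\cL^nf\|\ge C_fR_n^n$ together with $R_n\ge R$ gives $\|\cL^n\|^{1/n}\ge C_f^{1/n}R$, hence $\rho(r)\ge R(r)$. Combined with the upper bound, $\rho(r)=R(r)$.
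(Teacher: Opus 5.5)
Your proposal is correct and is essentially the paper's proof. The upper bounds come from \eqref{LY-1}, \eqref{LY-2} and Hennion's theorem, and the lower bound is the same positivity plus bounded-distortion argument over a good partition, with the boundary problem resolved by shifting the base point at the cost of one iterate. Concretely, the paper takes $u\equiv 1$ on $[\varepsilon,1-\varepsilon]$ with $\varepsilon<\frac{1-\lambda^{-1}}{2}$, a near-optimal $x$ for $R_n$, and $y$ with $Tx=\phi_\varepsilon(y)$, so that $T^{-n}I(x)$ lies where $u\circ T^{n+1}=1$ inside $T^{-(n+1)}I(y)$; it then absorbs the loss via $R_{n+1}^{n+1}\le R_1R_n^n$.

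Note that this shift necessarily changes $n$: a full segment of length $\lambda^n$ cannot sit inside the window of some $T^{-n}I(x')$ with the same $n$. Also, with your fixed window $[\frac14,\frac34]$, one extra (or one fewer) iterate suffices only when $\lambda\ge 2$, so the window width or the number of shifted iterates must be chosen depending on $\lambda$.
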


\begin{proof}
From \Cref{prop:LY}, for any $\sigma \in (\lambda^{-\min\{p,q\}},1)$ and for any
$n$, we have $\rho_{\ess}(z) \leq \sigma R_n$ and $\rho(z) \leq R_n$, from which
the upper bounds follow immediately. It remains to prove the lower bound. 

Assume that $f$ is positive and bounded away from zero, and let $r\in\R^d$. Fix $\varepsilon \in (0, \frac{1-\lambda^{-1}}2)$, 
and let $u \in \mathscr{C}^{\infty}([0,1])$ be compactly supported in $(0,1)$,
satisfying $0\leq u(s) \leq 1$ and such that $u(s) = 1$ for all $s \in
[\varepsilon,1-\varepsilon]$. 
Note that we can write
\[
\langle \cL_{r,F}^nf(x), u\rangle =  \lambda^{-n} \sum_{a\in \mathcal{A}_n} \langle f(x_a),  e^{r\cdot S_nF} \, \tilde{u}_a \rangle,
\] 
where $\mathcal{A}_n$ is any good partition of $T^{-n}I(x)$, and for $a\in \mathcal{A}_n$, $x_a$ is the left endpoint of the interval $I_a$ and $\tilde{u}_a$ is the restriction to $I_a$ of $u\circ T^n$ times the element of the partition of unity supported on $I_a$.
Note that $\tilde{u}_a(s) \geq u\circ T^n (s) \theta(s)$ by definition of good partition. 
Let $ \mathcal{A}_n'$ denote the set of indices for which $u\circ T^n = 1$. By positivity, we have
\[
\langle \cL_{r,F}^nf(x), u\rangle \geq \lambda^{-n} \sum_{a\in \mathcal{A}_n'}  \min_{I_a} {  e^{r \cdot S_{n}F}} \langle f(x_a),  \theta \rangle.
\]
Thus, there exists a constant $\Theta = \Theta_f > 0$ such that 
\[
\langle \cL_{r,F}^nf(x), u\rangle \geq \Theta \lambda^{-n} \sum_{a\in \mathcal{A}_n'}  \min_{I_a} {  e^{r \cdot S_{n}F}}.
\]
Now on the intervals $I_a$ the ratio
\[
\frac{\min_{I_a}e^{r\cdot S_{n} F}}{\max_{I_a}e^{r \cdot S_{n} F}}
\]
is uniformly bounded independent of $n$. Indeed, for any $n$,
any $x$, any $t \in [-1,1]$,
\[
	e^{r \cdot (S_n F (x) - S_n F(\phi_t(x)))}
	\leq
	e^{{|r|} t\sum_{i=0}^{n-1} \lambda^{-i}
	\|F\|_{\mathscr{C}^1}}
	\leq
	e^{{|r|} \frac{1}{1-\lambda^{-1}}
	\|F\|_{\mathscr{C}^1}} =: K.
\]
Therefore, we obtain 
\begin{equation}\label{eq:positive_bdd_away_from_0}
\langle \cL_{r,F}^nf(x), u\rangle \geq \frac{\Theta}{K} \lambda^{-n} \sum_{a\in \mathcal{A}_n'}  \|{  e^{r \cdot S_{n} F}} |_{I_a}\|_{\mathcal{C}^0}.
\end{equation}

Recall that
\[
	R_{n}^{n} = \lambda^{-n} \sup_{x\in S_{\reg}} \sup_{\mathcal{A} \in
	\mathcal{P}_{n}(x)} \sum_{a\in \mathcal{A}} \|{  e^{r \cdot S_{n} F}}
	|_{I_a}\|_{\mathcal{C}^0}.
\] 
Fix any $\delta >0$, let $x \in  S_{\reg}$ and $\mathcal{A}$ be such that 
\[
 \lambda^{-n} \sum_{a\in \mathcal{A}} \|{  e^{r \cdot S_{n} F}}
	|_{I_a}\|_{\mathcal{C}^0} > R_{n}^{n} - \delta.
\]
Let $y \in S_{\reg}$ so that $T(x) = \phi_{\varepsilon}(y)$. By \eqref{eq:positive_bdd_away_from_0}, we get
\[
\langle \cL_{r,F}^{n+1}f(y), u\rangle \geq \frac{\Theta}{K} \lambda^{-(n+1)} \sum_{a\in \mathcal{A_{n+1}}}  \|{  e^{r \cdot S_{n+1} F}} |_{I_a}\|_{\mathcal{C}^0},
\]
where we chose the partition of unity $\mathcal{A}_{n+1} \in \mathcal{P}_{n+1}(y)$ so that  $ \mathcal{A}_{n+1}' = \mathcal{A}$.
Then, we conclude,
\[
\langle \cL_{r,F}^{n+1}f(y), u\rangle \geq \frac{\Theta}{K} e^{-{  |r \cdot F|_{\mathscr{C}^0}}} \lambda^{-(n+1)} \sum_{a\in \mathcal{A}}  \|{  e^{r \cdot S_{n} F}} |_{I_a}\|_{\mathcal{C}^0} >  \frac{\Theta}{\lambda K} e^{-{  |r \cdot F|_{\mathscr{C}^0}}}(R_{n}^{n} - \delta).
\]
Since $\delta$ was arbitrary, we conclude that
\[
\|\cL_{r,F}^n f\|_{p,q}
\geq
 \frac{\Theta_f}{\lambda K R_1} e^{-{  |r \cdot F|_{\mathscr{C}^0}}} R_n^n.
\]
Applying the previous estimate to the case $f=\one$, we deduce that there exists a constant $C >0$ such that $ \|\cL_{r,F}^n\|_{p,q} \geq C R_n^n$.
The spectral radius formula yields
\[
\rho(r) \geq \lim_{n\to \infty} \|\cL_{r,F}^n\|_{p,q}^{\frac{1}{n}} \geq R(r).
\]
Thus we conclude that $\rho(r) = R(r)$.
\end{proof}

\begin{corollary}\label{cor:estim_spectral_radius}
Let $z\in \C^d$ be such that $\Re z = r$, then $\rho(z) \leq \rho(r)$.
\end{corollary}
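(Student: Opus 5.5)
This is a direct combination of \Cref{lem:estimates_spectral_radius} with \Cref{rk:Rn_and_R}. No new estimate is needed; the argument is two lines.

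First, fix $p,q\ge 0$ and $z\in\C^d$ with $\Re z = r$. The first part of \Cref{lem:estimates_spectral_radius}, which comes from \eqref{LY-1} and the spectral radius formula, gives the upper bound $\rho(z)\le R(z)$ for every complex $z$. Next, by \Cref{rk:Rn_and_R}, $R_n(z)=R_n(\Re z)=R_n(r)$ for all $n$. This holds because $|e^{z\cdot S_nF}| = e^{r\cdot S_nF}$ pointwise, $F$ being real-valued, so the $\mathcal{C}^0$ norms entering the definition of $R_n$ coincide. Passing to the limit gives $R(z)=R(r)$.

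Finally, since $r$ is real, the second part of \Cref{lem:estimates_spectral_radius} applies. The lower bound \eqref{eq:lowerbound} with $f=\one$, which is positive and bounded away from zero, yields $\rho(r)=R(r)$. Chaining these,
\[
\rho(z)\le R(z)=R(r)=\rho(r).
\]

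There is no real obstacle. The only point to check is that $\rho(z)$ and $\rho(r)$ are computed on the same space $\cB_{p,q}$, which is the case throughout \Cref{lem:estimates_spectral_radius}.
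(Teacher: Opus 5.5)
Your proposal is correct and follows exactly the paper's argument: the chain $\rho(z)\le R(z)=R(\Re z)=\rho(r)$, using \Cref{lem:estimates_spectral_radius} for both the upper bound and the identity $\rho(r)=R(r)$, and \Cref{rk:Rn_and_R} for $R(z)=R(r)$. Your justification of the remark, via $|e^{z\cdot S_nF}|=e^{r\cdot S_nF}$ for real-valued $F$, is also the right reason.
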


\begin{proof}
From \Cref{lem:estimates_spectral_radius} and \Cref{rk:Rn_and_R}, we conclude
\[
\rho(z) \leq R(z) = R(\Re z) = \rho(r). \qedhere 
\]
\end{proof}

In the next two lemmas, we prove that, for parameters $r\in \R^d$, there exists an eigenvector $\ell_{+}$ for $\cL_{r,F}$ with eigenvalue $\rho(r)$, which is a positive and bounded away from zero functional.

\begin{lemma}
Let $r\in \R^d$, and let $p,q \geq 1$. Then, the operator $\cL_{r,F}$ acting on $\cB_{p,q}$ has a real eigenvalue $\rho(r)$ and a positive eigenvector $\ell_{+} \in \cB_{p,q}$.
\end{lemma}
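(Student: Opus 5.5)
The plan is a Krein--Rutman type argument based on Cesàro (or resolvent) averages of the iterates applied to the constant function $\one$. Set $\rho=\rho(r)=R(r)$, which is positive by \Cref{lem:submultiplicativity} and \Cref{lem:estimates_spectral_radius}. By \Cref{prop:LY} we have $\rho_{\ess}(r)\le\lambda^{-\min\{p,q\}}\rho<\rho$. Hence the spectrum of $\cL\coloneqq\cL_{r,F}$ on $\cB_{p,q}$ in the annulus $\{|\eta|>\lambda^{-\min\{p,q\}}\rho\}$ consists of finitely many eigenvalues of finite multiplicity. Since $\rho$ is the spectral radius, at least one of them lies on the circle $|\eta|=\rho$. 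The task is to show that $\rho$ itself is an eigenvalue and that the corresponding eigenvector can be taken positive.

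First I would use the Laurent expansion of the resolvent at the peripheral eigenvalues. Let $N\ge 1$ be the maximal order of a pole of $(\eta-\cL)^{-1}$ on $|\eta|=\rho$. The Jordan decomposition then gives
\[
\|\cL^n\|_{p,q}\asymp n^{N-1}\rho^n .
\]
I would then consider the normalized averages
\[
\ell_n=\frac{1}{n^{N}\rho^{n}}\sum_{k=0}^{n-1}\rho^{n-k}\cL^k\one
\]
or, perhaps more cleanly, the resolvent $(t-\cL)^{-1}\one=\sum_k t^{-k-1}\cL^k\one$ for real $t\downarrow\rho$, multiplied by $(t-\rho)^{N}$. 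The positive functionals form a closed cone in $\cB_{p,q}$, and $\cL$ preserves it: $e^{r\cdot F}>0$, and composing with $T^{-1}$ maps nonnegative test functions $u$ to nonnegative ones after rescaling. Every term of the resolvent series is therefore positive, so any limit is positive.

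The key step is to show that $(t-\rho)^N(t-\cL)^{-1}\one$ converges as $t\downarrow\rho$ to a nonzero element. The limit equals $A_{-N}\one$, where $A_{-N}$ is the leading Laurent coefficient at $\rho$. This coefficient satisfies $(\cL-\rho)A_{-N}=0$, so $\ell_+\coloneqq A_{-N}\one$ is an eigenvector as soon as it is nonzero. The main obstacle is proving nonvanishing, which amounts to showing that $\rho$ is actually a pole of order $N$ and that $\one$ is not annihilated.

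For this I would use the lower bound \eqref{eq:lowerbound}, namely $\|\cL^n\one\|_{p,q}\ge C R_n^n\ge C\rho^n$, which holds because $R_n\ge R=\rho$ by monotonicity. Positivity then gives a pointwise domination: for positive $f$ and $u\ge0$, the quantity $|\langle \cL^k g(x),u\rangle|$ for a general smooth $g$ is bounded by $\|g\|_{\mathscr C^0}\langle\cL^k\one(x),u\rangle$. Consequently $\|\cL^n\|$ restricted to the dense subspace of smooth functions is controlled, at least at the level of $\mathcal C^{-q}$ pairings, by the growth of $\cL^n\one$.

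To turn this into a norm statement, I would choose a unit test function $u$ with $\langle\cL^n\one(x),u\rangle\gtrsim\rho^n$. Positivity lets the resolvent pairing $\langle (t-\cL)^{-1}\one(x),u\rangle=\sum_k t^{-k-1}\langle\cL^k\one(x),u\rangle$ be bounded below termwise. This gives $\sum_k t^{-k-1}\rho^k\sim (t-\rho)^{-1}\to\infty$, so the resolvent genuinely blows up at $t=\rho$. Hence $\rho$ belongs to the spectrum, and since $\rho>\rho_{\ess}$ it is an eigenvalue.

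To get positivity of the eigenvector, I would take $N'$ to be the pole order at $\rho$ itself. The limit $\lim_{t\downarrow\rho}(t-\rho)^{N'}(t-\cL)^{-1}\one$ is a positive functional, and it is nonzero by the divergence just established, provided the appropriate power is chosen: I would take the largest $N'$ for which $(t-\rho)^{N'}\langle (t-\cL)^{-1}\one(x),u\rangle$ stays bounded below along the sequence. If $\one$ were killed by the leading coefficient, I would instead use the density of smooth positive functions to pick some strictly positive smooth $g$ with $A_{-N'}g\ne0$; such a $g$ exists because $A_{-N'}\neq0$ and every smooth function is a difference of multiples of strictly positive ones. The resulting limit is positive, nonzero, and satisfies $\cL\ell_+=\rho\,\ell_+$.
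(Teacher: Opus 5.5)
Your positivity argument works \emph{once you know that $\rho$ is in the spectrum}. For real $t\downarrow\rho$, $(t-\rho)^{N'}(t-\cL)^{-1}g$ is positive, and it converges to the leading Laurent coefficient $A_{-N'}g$, which is an eigenvector. Your choice of a strictly positive smooth $g$ with $A_{-N'}g\neq0$, by writing an arbitrary smooth function as a combination of positive ones, is also fine.

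The gap is in the step meant to prove $\rho\in\sigma(\cL_{r,F})$. You bound $\langle (t-\cL)^{-1}\one(x),u\rangle=\sum_k t^{-k-1}\langle\cL^k\one(x),u\rangle$ from below termwise by $\sum_k t^{-k-1}\rho^k$. That requires $\langle\cL^k\one(x),u\rangle\gtrsim\rho^k$ for \emph{one fixed} $x$ and \emph{all} $k$. Nothing you cite gives this.
\begin{itemize}
\item \eqref{eq:lowerbound} is a norm bound, i.e.\ a supremum over points and derivative orders.
\item Its proof gives a large pairing $\langle\cL^{n+1}\one(y),u\rangle\gtrsim R_n^n$ only at a point $y=y_n$ built from a near-maximizer for $R_n$, so the point depends on $n$.
\end{itemize}
With only $\sup_x\langle\cL^k\one(x),u\rangle\gtrsim\rho^k$, the termwise estimate gives $\sup_x\langle(t-\cL)^{-1}\one(x),u\rangle\gtrsim\sup_k t^{-k-1}\rho^k=t^{-1}$. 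This stays bounded, so no blow-up of the resolvent at $t=\rho$ follows. This is exactly where positivity has to exclude a peripheral spectrum that avoids $\rho$, such as $\{-\rho\}$, and your argument does not yet do it.

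There are two ways to repair the step.
\begin{itemize}
\item A Harnack-type inequality. For a suitable fixed $m$ and test functions, compare $\langle\cL^{n+m}\one(x),u\rangle$ at an \emph{arbitrary} $x$ with $\sup_y\langle\cL^n\one(y),\theta\rangle$. This uses density of $T^{-m}I(x)$ and vertical bounded distortion of $e^{r\cdot S_nF}\circ T^{-n}$, much as in \Cref{lem:eigenfunction_positivity}.
\item A Pringsheim argument. If $\rho\notin\sigma(\cL)$, the Taylor series of the resolvent at a real $t_0$ slightly above $\rho$ converges at some real $t<\rho$. Its coefficients $(t_0-\cL)^{-n-1}\one=\sum_j\binom{n+j}{n}t_0^{-n-1-j}\cL^j\one$ are positive. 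Tonelli then gives $\sum_j t^{-j-1}\langle\cL^j\one(x),u\rangle=\langle(t-\cL)^{-1}\one(x),u\rangle\le C$ uniformly in $x$, which contradicts the sup bound above.
\end{itemize}

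The paper avoids the issue by a different route. It restricts to the finite-dimensional range of the peripheral spectral projection. There, the sup-type bound $\|R_n^{-n}\cL^nf\|_{p,q}\ge C_f$ together with $R_n^{-n}Q^nf\to0$ already yields a nonzero positive accumulation point. Brouwer's theorem on the projectivized invariant cone then gives a positive eigenvector. Its eigenvalue is nonnegative and has modulus $\rho$, so it equals $\rho$.
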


\begin{proof}
The Lasota-Yorke Inequality in \Cref{prop:LY} and the estimates in \Cref{lem:estimates_spectral_radius} imply that we can write $\cL_{r,F} = \cL = A + Q$, where $\mathcal{A} = \range(A)$ (the range of the operator $A$) is a finite dimensional invariant subspace of $\cB_{p,q}$, $AQ=QA=0$, and $\rho(A) = \rho(r)$ and $\rho(Q) < \rho(r)$.

Let $\Lambda = \{ \ell \in \mathcal{A} : \ell \text{ is positive}\}$. Let us first check that $\Lambda \neq \{0\}$. Take a positive functional $f$ 
that is bounded away from zero (e.g., a strictly positive smooth function on $S$), and consider $R_n^{-n} \cL^n f = R_n^{-n} A^n f +R_n^{-n}  Q^n f$.
By  \Cref{lem:estimates_spectral_radius} and \Cref{prop:LY}, there exist two positive constants $C_f' \geq C_f >0$ such that $C_f \leq \|R_n^{-n} A^n f \|_{p,q} \leq C_f'$.
Since  $\mathcal{A}$ is finite dimensional, there is an accumulation point $f_0 \neq 0$ of the sequence $R_n^{-n} A^n f $. 
As $R_n^{-n} \cL^n f$ is positive for all $n$ and $\rho^{-n}Q^n f \to 0$, 
$f_0$ must be positive, and hence $f_0 \in \Lambda \setminus \{0\}$.

It is easy to verify that $\Lambda$ is a closed convex set in $\mathcal{A}$.
Since $r \in \R^d$, the operator $\cL = \cL_{r,F}$ preserves the cone of positive
functionals; therefore $\cL(\Lambda) \subseteq \Lambda$. By the Brouwer
Fixed Point Theorem applied to the projectivisation of $\Lambda$, there exists $\ell_{+}
\in \Lambda$ such that $\cL \ell_{+} = \rho \ell_{+}$, which proves
the result.
\end{proof}

\begin{lemma}\label{lem:eigenfunction_positivity} 
	Let $r\in \R^d$. If $f\in \cB_{p,q}$ is a positive eigenfunction of $\cL_{r,F}$, then either $f$ is zero
	or it is bounded away from zero.
\end{lemma}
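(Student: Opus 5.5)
The plan is to exploit the eigenvalue equation $\cL_{r,F}^n f = \eta^n f$ together with the spreading effect of $T^{-n}$ along horizontal leaves and the minimality of the horizontal flow. Since $f\neq 0$ is positive, its eigenvalue $\eta$ is real and nonnegative: pair both sides against a nonnegative test function $u$ at a point where $\langle f(x),u\rangle>0$. Moreover $\eta>0$, since otherwise $\cL^n f = 0$; the transfer operator with real weight is injective because $T$ is invertible and $e^{r\cdot F}>0$.

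\textbf{Step 1 (positive somewhere).} As in the injectivity argument of \Cref{lem:inclusion}, $f\neq 0$ implies that $f(x)\neq 0$ for some $x$, possibly after using a derivative and moving to a nearby point. Positivity together with density of differences of nonnegative test functions then gives a point $x_0\in S_{\reg}$ and a nonnegative $u_0\in\mathcal{C}^q$ with $\langle f(x_0),u_0\rangle=b>0$. By continuity of $f\colon S_{\reg}\to\mathcal{C}^{-q}$, this persists, $\langle f(y),u_0\rangle>b/2$, for all $y$ in a small open set $U$ around $x_0$.

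\textbf{Step 2 (spreading by renormalization).} Fix a nonnegative nonzero $u\in\mathcal{C}^q$; it is $\ge c>0$ on some subinterval $J\subset I$. As in the proof of \Cref{lem:estimates_spectral_radius}, I will write
\[
\langle f(x),u\rangle=\eta^{-n}\langle \cL^n f(x),u\rangle=\eta^{-n}\lambda^{-n}\sum_{a}\langle f(x_a),e^{r\cdot S_nF}\tilde u_a\rangle,
\]
with all terms nonnegative. The segment $T^{-n}(\phi_J(x))$ is a horizontal segment of length $|J|\lambda^n$. By minimality of the horizontal flow (a pseudo-Anosov stable foliation is minimal, and uniformly so by compactness), there is $N$ such that every horizontal segment of length $\geq L$ passes through $U$ in a way that contains a unit subinterval starting at some $y\in U$, lying in the region where $u\circ T^n\ge c$. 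Choosing the good partition so that one $I_a$ is this interval, that single term is bounded below by $c\,e^{-N|r|\|F\|_{\mathscr C^0}}\langle f(y),\theta u_0'\rangle$.

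\textbf{Step 3 (matching test functions), the main obstacle.} The weight $\tilde u_a$ must dominate a fixed multiple of $u_0$ translated appropriately. To handle this I will choose $u_0\le \theta$ from the outset, shrinking $u_0$ and replacing it by $\min$-type smooth cutoffs, which is possible since positivity only needs $u_0\ge0$. Then $\tilde u_a\ge c\theta\ge c u_0$ and positivity gives the bound $\ge c\,b/2$. Since $n=N$ is fixed and independent of $x$, this yields $\inf_x\langle f(x),u\rangle\ge \eta^{-N}\lambda^{-N}e^{-N|r|\|F\|}c\,b/2>0$. The delicate point is ensuring that the intervals $I_a$ can be arranged to start inside $U$ and that $u\circ T^N\ge c$ there, uniformly in $x$. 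I would handle this using a uniform minimality constant $L$ for horizontal segments and taking $N$ with $|J|\lambda^N\ge L+2$.
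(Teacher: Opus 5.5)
Your argument is correct and is essentially the paper's proof run in the direct rather than the contrapositive direction. The paper assumes $\inf_x\langle f(x),u_0\rangle=0$ and uses the same one-term positivity bound on $\langle \cL^{n_0}f(x_\delta),u_0\rangle$, together with $\varepsilon_1$-density of $T^{-n_0}J$, to force $\langle f(y),\theta\rangle\le\varepsilon/2$ near every point, whereas you start from a point where $f$ is positive and propagate a uniform lower bound to every $x$.

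The one step to tighten is Step~3, since shrinking $u_0$ cannot put it below $\theta$ when their supports are disjoint. Instead, either move the base point along the flow using $\langle f(\phi_s y),v\rangle=\langle f(y),v(\cdot-s)\rangle$ (valid when $v(\cdot-s)$ is still compactly supported in $(0,1)$), or drop the ``good'' requirement for this one computation and take the partition element on $I_a$ to be identically $1$ on $\supp u_0$, which the expansion of $\langle\cL^N f(x),u\rangle$ allows.
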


\begin{proof}
Let $f$ be a positive eigenfunction of $\cL = \cL_{r,F}$, with $\cL f = \mu f$ for some $\mu$.
Suppose that $f$ is not bounded away from zero, so there exists $u_0\in \mathscr{C}^q_c(0,1)$, with $u_0\neq 0$, such that, for every $\varepsilon >0$, there exists $x_{\varepsilon} \in S_{\reg}$ such that $\langle f(x_{\varepsilon}),u_0\rangle \leq \varepsilon$.
We will show that, for every $x \in S_{\reg}$, for every non-negative $u\in \mathscr{C}^q_c(0,1)$, and for every $\varepsilon >0$, we have $\langle f(x),u\rangle \leq \varepsilon$. This proves that $f =0$ as an element of $\cB_{p,q}$.
In fact, by positivity, it is sufficient to prove the claim above for any fixed non-negative function; in particular, it is sufficient to prove the claim with $u = \theta$ as in \Cref{def:pou}.

Let us fix $x \in S_{\reg}$ and $\varepsilon >0$. 
There exists $\varepsilon_1 >0$ such that $|\langle f(x),\theta\rangle - \langle f(y),\theta\rangle|\leq \varepsilon/2$ for every $y \in S_{\reg}$ at distance at most $\varepsilon_1$ from $x$.
Since $u_0\neq 0$, there exist $0<b<c<1$ and $m>0$ such that $u_0(s)\geq m$ for all $s\in [b,c]$. Let $n_0 \in \N$ be such that, if $J$ is any horizontal segment of length at least $c-b$, then $T^{-n_0}J$ is $\varepsilon_1$-dense in $S$. Finally, let 
\[
\delta = \frac{m\varepsilon}{2}(\rho(r)\lambda)^{-n_0}e^{-|r| \, n_0 \, \|F\|_{\mathscr{C}^0}},
\]
and let $x_{\delta} \in S_{\reg}$ be such that $\langle f(x_{\delta}),u_0\rangle \leq \delta$, as by our assumption.

Let $J = \phi_{[b,c]}(x_{\delta})$ and denote by $y$ the starting point of a horizontal unit segment in $T^{-n_0}J$ at distance less than  $\varepsilon_1$ from $x$.
As in the proof of \Cref{lem:estimates_spectral_radius}, by positivity, we have
\[
\begin{split}
\delta \geq \mu^{-n_0}\langle \cL^{n_0}f(x_{\delta}),u_0\rangle \geq  \mu^{-n_0} \lambda^{-n_0}  m e^{-|r| \, n_0 \, \|F\|_{\mathscr{C}^0}} \langle f(y),\theta\rangle.
\end{split}
\]
This proves that $ \langle f(y),\theta\rangle \leq \varepsilon/2$ and hence our claim.
\end{proof}

The next two lemmas focus on the eigenvalues of maximal modulus: we prove that the associated eigenspaces have no Jordan blocks and, for parameters $r\in \R^d$, the eigenspace is one-dimensional.

\begin{lemma}\label{lem:no_Jordan_blocks}
	Let $r \in \R^d$ and $z \in \C^d$ with $\Re z = r$. Let $\eta$ be an eigenvalue
	of $\cL_{z,F}$, and assume that $|\eta| = \rho(z)= \rho(r)$. Then, the
	associated eigenspace has no Jordan blocks. 
\end{lemma}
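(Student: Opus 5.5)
The plan is to show that the iterates $\cL_{z,F}^n$ are bounded by a constant times $\rho(r)^n$ on $\cB_{p,q}$, uniformly in $n$. A Jordan block at an eigenvalue $\eta$ with $|\eta|=\rho(r)$ would force $\|\cL_{z,F}^n\|_{p,q}\gtrsim n\rho(r)^n$, which gives the contradiction. Quasi-compactness lets us reduce to this growth statement. By \Cref{lem:estimates_spectral_radius}, $\rho_{\ess}(z)\le \lambda^{-\min\{p,q\}}\rho(r)<\rho(r)$, so $\eta$ is an isolated eigenvalue of finite multiplicity. If its generalized eigenspace contained a vector $g$ with $(\cL-\eta)g=h\neq 0$ and $(\cL-\eta)h=0$, then $\cL^n g=\eta^n g+n\eta^{n-1}h$. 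Hence $\|\cL^n g\|_{p,q}\ge n\rho(r)^{n-1}\|h\|_{p,q}-\rho(r)^n\|g\|_{p,q}$, which grows like $n\rho(r)^n$.

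The uniform bound comes from \Cref{prop:LY}. By \eqref{LY-1}, $\|\cL_{z,F}^n\|_{p,q}\le C(1+|z|)^{p+q}R_n(z)^n$, and by \Cref{rk:Rn_and_R}, $R_n(z)=R_n(r)$. It therefore suffices to show that $R_n(r)^n\le C'\rho(r)^n$ for all $n$, i.e.\ a supermultiplicative-type lower bound complementing \Cref{lem:submultiplicativity}. Here I would use the positive eigenvector $\ell_+$ of $\cL_{r,F}$. By the preceding lemma it exists, and by \Cref{lem:eigenfunction_positivity} it is bounded away from zero. The positivity computation in the proof of \Cref{lem:estimates_spectral_radius} then shows that, for the test function $u$ used there and suitable $y$,
\[
\rho(r)^{n+1}\langle \ell_+(y),u\rangle=\langle \cL_{r,F}^{n+1}\ell_+(y),u\rangle\ge \frac{\Theta_{\ell_+}}{\lambda K}e^{-|r\cdot F|_{\mathscr{C}^0}}\,(R_n(r)^n-\delta).
\]
Since $\langle\ell_+(y),u\rangle\le \|\ell_+\|_{p,q}\|u\|_{\mathcal{C}^q}$ is bounded above uniformly in $y$, this gives $R_n(r)^n\le C''\rho(r)^n$. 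Combining the two estimates yields $\sup_n\rho(r)^{-n}\|\cL_{z,F}^n\|_{p,q}<\infty$, and the Jordan block argument above concludes.

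The main obstacle is this lower bound in terms of the eigenvector. One must check that the estimate \eqref{eq:positive_bdd_away_from_0} really applies with $f=\ell_+$, an abstract element of $\cB_{p,q}$ rather than a smooth function. Two points need verification: positivity is used in the form $\langle f(x_a),\tilde u_a\rangle\ge \min e^{r\cdot S_nF}\langle f(x_a),\theta\rangle$, and the representation $\langle\cL^n f(x),u\rangle=\lambda^{-n}\sum_a\langle f(x_a),e^{r\cdot S_nF}\tilde u_a\rangle$ must extend from smooth $f$ to all of $\cB_{p,q}$ by density and continuity. I expect both to follow by approximating $\ell_+$ with $\Gamma v$ in the $\mathscr{C}^p(S_{\reg},\mathcal{C}^{-q})$ norm, since each side is continuous in $f$ for fixed $n$. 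Once this is settled, the rest of the argument is formal.
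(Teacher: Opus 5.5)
Your proposal is correct and is essentially the paper's proof: the paper likewise combines the Jordan-block growth $\|\cL_{z,F}^n g\|_{p,q}\gtrsim n\rho(r)^{n-1}$ with \eqref{LY-1}, \Cref{rk:Rn_and_R}, and the lower bound \eqref{eq:lowerbound} applied to $f=\ell_+$, which gives $R_n^n(r)\ll\rho(r)^n$. The extension to non-smooth $f$ that you flag is already built into the statement of \Cref{lem:estimates_spectral_radius}, which is formulated for any positive, bounded-away-from-zero $f\in\cB_{p,q}$, so you can cite \eqref{eq:lowerbound} directly.
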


\begin{proof}
	Assume that there is a non-trivial Jordan block; then there exist
	$\mathcal{g},\mathcal{l} \in \cB_{p,q}$ such that
	$\cL_{z,F}\mathcal{l} = \eta \mathcal{l}$ and $\cL_{z,F}\mathcal{g} = \eta
	\mathcal{g} + \mathcal{l}$. But then there exists a constant $C_z>0$
	(depending on the norms of $\mathcal{g}$ and $\mathcal{l}$ as well) such
	that 
	\[
	\|\cL_{z,F}^n \mathcal{g}\|_{p,q} \geq C_z n |\eta|^{n-1} = C_z n \rho(r)^{n-1}.
	\]
	Let $\ell_{+} \in \cB_{p,q}$ be the normalized, positive and bounded away from zero eigenvector $\cL_{r,F}\ell_{+} = \rho(r)\ell_{+}$. 
	Then, up to changing the constant $C_z$, by the Lasota-Yorke Inequality, \Cref{rk:Rn_and_R}, and  \eqref{eq:lowerbound}, we have
	\[
	n \rho(r)^{n-1}  \leq C_z \|\cL_{z,F}^n \mathcal{g}\|_{p,q} \leq  C_z \|\mathcal{g}\|_{p,q} R_n^n(z) = C_z \|\mathcal{g}\|_{p,q} R_n^n(r) \leq  C_z C_{\ell_{+}} \|\mathcal{g}\|_{p,q} \rho(r)^n,
	\]
	which is impossible.
\end{proof}

\begin{lemma}\label{lem:rho_simple}
For  $r\in \R^d$, the eigenvalue $\rho(r)$ of $\cL_{r,F}$ is simple.
\end{lemma}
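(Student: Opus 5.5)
Since \Cref{lem:no_Jordan_blocks} already rules out Jordan blocks for $\rho(r)$, simplicity reduces to showing that the eigenspace $E=\ker(\cL_{r,F}-\rho(r))$ is one-dimensional. I would argue by contradiction, using the positive eigenvector $\ell_+$ and a cone/sup argument in the spirit of Perron--Frobenius.

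First I need real eigenvectors. Since $\cL=\cL_{r,F}$ has real weight $e^{r\cdot F}$, it commutes with complex conjugation on $\cB_{p,q}$, defined by $\langle \bar f(x),u\rangle=\overline{\langle f(x),\bar u\rangle}$. Hence if $g\in E$, then so do $\Re g$ and $\Im g$. So if $\dim E\ge 2$, there is a real $g\in E$ that is not a multiple of $\ell_+$.

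Next I would construct a boundary element of the positive cone. By \Cref{lem:eigenfunction_positivity}, $\ell_+$ is bounded away from zero. Fix the test function $\theta$ of \Cref{def:pou}. Consider $t\mapsto \ell_+ + t g$ and set
\[
t^*=\sup\{t\ge 0:\ \ell_++tg \text{ is positive}\}.
\]
Two points need checking. First, $t^*>0$: this requires a comparison of the form $|\langle g(x),u\rangle|\le C\langle \ell_+(x),u\rangle$ for nonnegative $u$, which is the delicate point discussed below. Second, $t^*<\infty$, at least after possibly replacing $g$ by $-g$. If both $\ell_+\pm tg$ were positive for all $t$, then $\pm g$ would be positive, so $g=0$. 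Positivity is a closed condition, so $h=\ell_++t^*g$ is positive. It is an eigenvector for $\rho(r)$, and $h\neq 0$ because $g$ is not a multiple of $\ell_+$. By \Cref{lem:eigenfunction_positivity}, $h$ is therefore bounded away from zero. Then for small $\varepsilon>0$, $h+\varepsilon g$ should still be positive, contradicting maximality of $t^*$.

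The main obstacle is exactly this comparison step: being bounded away from zero is only stated test function by test function, while the positive cone of $\mathcal C^{-q}$ functionals is not open in the $\cB_{p,q}$ topology. I would avoid it by replacing the comparison with a renormalization. Write $\langle \ell(x),u\rangle=\rho^{-n}\langle\cL^n\ell(x),u\rangle$ and expand it as in the proof of \Cref{lem:estimates_spectral_radius}, as a sum over a good partition of $T^{-n}I(x)$ with positive weights $\lambda^{-n}e^{r\cdot S_nF}\tilde u_a$. This reduces both positivity and the comparison to the single test function $\theta$, up to a uniform multiplicative constant $K$. The constant $K$ comes from the distortion bound $e^{r\cdot(S_nF(x)-S_nF(\phi_tx))}\le K$ together with the fact that $\tilde u_a\ge\theta$.

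Concretely, I would define $t^*$ using only the values $\langle(\ell_++tg)(y),\theta\rangle$ over all $y$, and in the renormalized expansion dominate each $\tilde u_a$ from above by a fixed multiple of a finite sum of translates of $\theta$. Then $\inf_y\langle h(y),\theta\rangle>0$ together with the uniform bound $\sup_y|\langle g(y),\tilde u\rangle|\le\|g\|_{p,q}\|\tilde u\|_{\mathcal C^q}$ gives room to increase $t$, and the contradiction closes.
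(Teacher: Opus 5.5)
Your strategy is the paper's. For a second eigenvector $\ell'$, the paper takes the infimum of $\langle\ell'(x),a\rangle/\langle\ell_+(x),a\rangle$ over $x\in S_{\reg}$ and $a\ge0$ with $\|a\|_{\mathcal{C}^q}=1$. It subtracts that multiple of $\ell_+$ to land on the boundary of the positive cone, and concludes with \Cref{lem:eigenfunction_positivity}. Your $\ell_++t^*g$ is the same boundary point, and your passage to real eigenvectors is needed anyway for that infimum to make sense. You are right that everything hinges on a uniform comparison $|\langle g(x),u\rangle|\le C\langle h(x),u\rangle$ for all $x$ and all $u\ge0$. Being bounded away from zero only controls one test function at a time, and the paper covers this point with a single clause (the infimum is finite because $\ell_+$ is bounded away from zero).

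\textbf{Where the proposal fails.} The concrete fix in your last paragraph does not go through as written.
\begin{itemize}
\item If $t^*$ is defined only through $\langle(\ell_++tg)(y),\theta\rangle\ge0$, then all you know about $h=\ell_++t^*g$ is nonnegativity against translates of $\theta$. \Cref{lem:eigenfunction_positivity} then no longer applies. Its proof discards all but one term of the renormalized sum, and those terms involve test functions $e^{r\cdot S_nF}(u\circ T^n)\psi_a$ (with $\psi_a$ the partition element). These are not translates of $\theta$, because $e^{r\cdot S_nF}$ does not become close to constant on unit intervals.
\item Dominating these test functions pointwise from above tells you nothing about the signed $g$.
\end{itemize}

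\textbf{How to repair it.} Keep genuine positivity in the definition of $t^*$. Then prove the comparison for any positive eigenvector $h$ with $m_h:=\inf_y\langle h(y),\theta\rangle>0$, as follows.

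Fix $x$, $u\ge0$, $n$, and a good partition of $T^{-n}I(x)$. Set $v_a=e^{r\cdot S_nF}(u\circ T^n)\psi_a$, $c_a=\max_{I_a}e^{r\cdot S_nF}$ and $m_a=\min_{I_a}u\circ T^n$. Then:
\begin{itemize}
\item $\langle\ell(x),u\rangle=(\lambda\rho)^{-n}\sum_a\langle\ell(x_a),v_a\rangle$ for $\ell\in\{h,g\}$.
\item Since $\psi_a\ge\theta$, we have $v_a\ge K^{-1}c_am_a\theta$, hence $\langle h(x_a),v_a\rangle\ge K^{-1}m_h c_am_a$.
\item \Cref{lem:testfun} together with $\|u(\lambda^{-n}\cdot)\|_{\mathcal{C}^q(I_a)}\le m_a+2\lambda^{-n}\|u\|_{\mathcal{C}^q}$ gives $|\langle g(x_a),v_a\rangle|\le C\|g\|_{p,q}c_a(m_a+\lambda^{-n}\|u\|_{\mathcal{C}^q})$.
\item $\sum_ac_a\le\lambda^nR_n^n\le C(\lambda\rho)^n$ by \eqref{eq:lowerbound}.
\end{itemize}
Summing yields
\[
|\langle g(x),u\rangle|\le\frac{CK\|g\|_{p,q}}{m_h}\,\langle h(x),u\rangle+O\big(\lambda^{-n}\|u\|_{\mathcal{C}^q}\big).
\]
The left-hand side does not depend on $n$, so letting $n\to\infty$ removes the error term.

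This limit is the missing idea. The non-uniformity in $u$ (values small compared with $\|u\|_{\mathcal{C}^q}$, e.g.\ near zeros of $u$) cannot be handled at fixed $n$, but it is pushed into a term that renormalization kills. With this comparison, $h+\varepsilon g$ is positive for $0<\varepsilon<m_h/(CK\|g\|_{p,q})$. That contradicts the maximality of $t^*$ and closes your argument. Applied to $\ell_+$, the same estimate is also what makes the paper's infimum finite.
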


\begin{proof}
\Cref{lem:no_Jordan_blocks} implies, in particular, that there cannot be a
Jordan block associated to the maximal eigenvalue $\rho = \rho(r)$. Now let us show
that there can be no second eigenfunction $\ell'$ for the eigenvalue $\rho$.
Suppose $\ell'$ exists. Consider the following infimum:
\[
h =  \inf_{\substack{a\in \mathcal{C}^{q}\\ a \geq 0 \\ \|a\|_{\mathcal{C}^{q}} = 1}}  \inf_{x\in S_{\reg}}\frac{\langle \ell' (x), a \rangle }{\langle \ell_+(x), a \rangle }.
\]
Note that $\ell'$ may not be a positive functional, so $h$ can be negative; however, since $\ell_{+}$ is bounded away from zero by \Cref{lem:eigenfunction_positivity}, then $h$ is finite. 

Now, $\ell' -h \ell_+$ is also an eigenfunction with eigenvalue $\rho$. It is positive, since for any $x\in S_{\reg}$, any positive $a\in \mathcal{C}^{q}$, 
\[
\langle \ell'(x), a \rangle \geq h \langle \ell_+(x), a \rangle.
\]
However, $\ell' - h \ell_+$ is not bounded away from zero: for any $\epsilon >0$, there exist $x\in S_{\reg}$ and $a\in \mathcal{C}^{q}$ such that 
\[
\frac{\langle \ell' (x), a \rangle }{\langle \ell_+(x), a \rangle } < h + \epsilon,
\]
therefore
\[
\langle (\ell' -h\ell_+)(x), a\rangle < \epsilon \langle \ell_+(x), a \rangle \leq  \epsilon \|\ell_{+}\|_{p,q}.
\]
Thus by \Cref{lem:eigenfunction_positivity} we must have $\ell' - h\ell_+ = 0$, so $\ell'$ is a scalar multiple of $\ell_+$.
\end{proof}


For the reader's convenience, in the following we collect the main consequences of the results we have proven so far in this section.
\begin{corollary}\label{cor:spectral_decomposition}
Fix $p,q\ge 1$ and let $z\in\C^d$. Set $r\coloneqq\Re z$ and let $\cL\coloneqq\cL_{z,F}$ act on $\cB_{p,q}$.
The triple $(\cL_{z,F}, \cB_{p,q},\cB_{p-1,q+1})$ satisfies the hypothesis of Hennion's theorem~\cite{Hen} and we have:

\begin{enumerate}
\item[(i)] The operator $\cL$ is quasi-compact on $\cB_{p,q}$ and its essential spectral radius satisfies
\begin{equation}\label{eq:ess_rad_cor}
\rho_{\ess}(z) \leq \lambda^{-\min\{p,q\}}\,R(r).
\end{equation}
In particular, for every $\tau$ such that
\[
\lambda^{-\min\{p,q\}}R(r)<\tau<\rho(z) \leq \rho(r),
\]
the part of the spectrum of $\cL$ outside $\{|\xi|\le \tau\}$ consists of finitely many eigenvalues
of finite algebraic multiplicity.

\item[(ii)]
Let $\mathcal{S}_{+}$ be the (possibly empty) set of eigenvalues $\eta \in \C$ with $|\eta|=\rho(r)$, and let $\mathcal{S}_{0}$ be the (finite) set of eigenvalues $\eta \in \C$ with $\tau < |\eta| < \rho(r)$.
There exist bounded finite rank operators $\Pi_{\eta}, N_{\eta}, Q \colon \cB_{p,q}\to\cB_{p,q}$ such that
\begin{equation}\label{eq:spectral_decomp_cor}
\begin{split}
&\cL \;=\; \sum_{\eta \in \mathcal{S}_{+}} \eta\,\Pi_{\eta} \;+\;  \sum_{\eta \in \mathcal{S}_{0}} (\eta\,\Pi_{\eta} +N_{\eta}) + Q,
\qquad
\Pi_{\eta}\Pi_{\eta'}=\delta_{\eta \eta'}\Pi_{\eta},\\
&
\Pi_{\eta}N_{\eta'}=N_{\eta}\Pi_{\eta'}=\delta_{\eta \eta'}N_{\eta},
\qquad
N_{\eta}N_{\eta'}=\delta_{\eta \eta'}N_{\eta}^2,\\
&
\Pi_{\eta}Q=Q\Pi_{\eta}=N_{\eta}Q= QN_{\eta}=0,
\end{split}
\end{equation}
and there exists $j\geq 0$ such that 
\begin{equation}\label{eq:remainder_bound_cor}
N_{\eta}^j = 0, \quad \text{and} \quad
\rho(Q)\le \tau.
\end{equation}
In particular, there exists $\tau < \rho(r)$ such that, for all $n\ge 0$,
\begin{equation}\label{eq:iterate_decomp_cor}
\cL^n \;=\;  \sum_{\eta \in \mathcal{S}_{+}} \eta^n\,\Pi_{\eta} \;+\; O(\tau^n).
\end{equation}

\item[(iii)] 
If $z=r\in\R^d$, then $\rho(r)=R(r) \in \mathcal{S}_{+}$ and the associated eigenvector $\ell_+\in\cB_{p,q}$ for $\cL_{r,F}$ is positive and bounded away from zero. 
Moreover, $\rho(r)$ is simple.
\end{enumerate}
\end{corollary}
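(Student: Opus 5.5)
Most of the corollary is a repackaging of results already proven in this section, so the plan is to verify the hypotheses of Hennion's theorem and then read off the spectral decomposition. For Hennion's theorem we need three inputs. First, $\cL$ is bounded on $\cB_{p,q}$, which is \eqref{LY-1}. Second, the inclusion $\cB_{p,q}\hookrightarrow\cB_{p-1,q+1}$ is compact, which is \Cref{lem:compact} (this needs $p\ge1$). Third, a Lasota--Yorke inequality of the form $\|\cL^n f\|_{p,q}\le C\sigma^n R_n^n\|f\|_{p,q}+C_n\|f\|_{p-1,q+1}$, which is \eqref{LY-2} for any $\sigma\in(\lambda^{-\min\{p,q\}},1)$ (this needs $q\ge1$). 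By \Cref{rk:Rn_and_R}, $R_n(z)=R_n(r)$, so $(C R_n^n\sigma^n)^{1/n}\to\sigma R(r)$. Hennion's theorem then gives $\rho_{\ess}(z)\le\sigma R(r)$, and letting $\sigma\downarrow\lambda^{-\min\{p,q\}}$ yields \eqref{eq:ess_rad_cor}. By \Cref{cor:estim_spectral_radius} we also have $\rho(z)\le\rho(r)=R(r)$, using \Cref{lem:estimates_spectral_radius}. This proves (i): outside the disc of radius $\tau>\rho_{\ess}$, the spectrum is a finite set of isolated eigenvalues of finite algebraic multiplicity.

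For (ii), take the Riesz projections $\Pi_\eta=\frac{1}{2\pi i}\oint_{|\xi-\eta|=\varepsilon}(\xi-\cL)^{-1}\diff\xi$ for the finitely many eigenvalues with $|\eta|>\tau$, and set $N_\eta=(\cL-\eta)\Pi_\eta$ and $Q=\cL(\mathrm{Id}-\sum_\eta\Pi_\eta)$. The orthogonality and commutation relations in \eqref{eq:spectral_decomp_cor} are the standard holomorphic functional calculus identities. Each $N_\eta$ is nilpotent because $\Pi_\eta$ has finite rank, and $\rho(Q)\le\tau$ because the spectrum of $Q$ on the complementary invariant subspace lies in $\{|\xi|\le\tau\}$. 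For $\eta\in\mathcal S_+$, \Cref{lem:no_Jordan_blocks} gives $N_\eta=0$, which explains why no nilpotent term appears there. Iterating, we get $\cL^n=\sum_{\mathcal S_+}\eta^n\Pi_\eta+\sum_{\mathcal S_0}(\eta\Pi_\eta+N_\eta)^n+Q^n$. The terms over $\mathcal S_0$ are $O(n^{j}|\eta|^n)$ with $|\eta|<\rho(r)$, and $Q^n=O((\tau+\epsilon)^n)$. Choosing a new $\tau'$ strictly between $\max(\max_{\mathcal S_0}|\eta|,\tau)$ and $\rho(r)$ absorbs the polynomial factors and gives \eqref{eq:iterate_decomp_cor}.

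For (iii), the existence of a real eigenvalue $\rho(r)=R(r)$ with a positive eigenvector $\ell_+$ is the lemma preceding \Cref{lem:eigenfunction_positivity}. That $\ell_+$ is bounded away from zero follows from \Cref{lem:eigenfunction_positivity}, since $\ell_+\ne0$. Simplicity is \Cref{lem:rho_simple}, which rules out both a second eigenvector and, via \Cref{lem:no_Jordan_blocks}, any Jordan block; so the algebraic multiplicity is one.

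The only subtle point is the first step: making sure the Lasota--Yorke constants match the form Hennion's theorem requires. The weak-norm coefficient in \eqref{LY-2} depends on $n$ through $R_n^n$, but Hennion only needs it to be finite for each fixed $n$, so this causes no problem. The strong-norm coefficient's $n$-th root converges to $\sigma R(r)$ because $R_n\to R$ by \Cref{lem:submultiplicativity}, and the factor $(1+|z|)^{p+q}$ disappears after taking $n$-th roots.
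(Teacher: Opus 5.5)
Your proposal is correct and follows the same route as the paper. The paper gives no separate proof of this corollary: it presents it as a collection of the preceding results, namely boundedness and the Lasota--Yorke inequality from \Cref{prop:LY}, the compact embedding of \Cref{lem:compact}, the spectral-radius bounds of \Cref{lem:estimates_spectral_radius} and \Cref{cor:estim_spectral_radius}, and the positivity, no-Jordan-block and simplicity lemmas for part (iii). Your Riesz-projection construction, and your absorption of the polynomial factors from the $\mathcal{S}_0$ terms into a slightly larger $\tau$, correctly fill in the routine details the paper leaves implicit.
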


\begin{remark}
In the present setting, the presence of conical singularities does not prevent one from shrinking the essential spectral radius arbitrarily by increasing the regularity of the anisotropic Banach spaces. This is in sharp contrast with systems exhibiting genuine discontinuities, where the essential spectrum reflects intrinsic dynamical obstructions that cannot, in general, be removed by refining the underlying functional setting; see, for instance,~\cite{BuLi,BuCaJa,BuCaCa,BaCa}.
\end{remark}

\subsection{The peripheral spectrum}

In this subsection, we study in more detail the eigenvalues of maximal modulus and the associated eigenfunctions.

We first show that if $\cL_{z,F}$ has an eigenvalue $\eta$ with $|\eta|=\rho(\Re z)$, then the associated eigenvector is
 not only a distribution but, in fact, a (complex) measure.

\begin{lemma}\label{lem:evectors_are_measures}
	Let  $r \in \R^d$ and  $z \in \C^d$ with $\Re z = r$. Let $\eta$ be an eigenvalue
	of $\cL_{z,F}$, and assume that $|\eta| = \rho(r)= \rho(z)$. If
	$\mathcal{l}$ is an associated eigenvector, then $\mathcal{l}(x)$ is a
	(complex) measure for all $x \in S_{ \reg }$.
\end{lemma}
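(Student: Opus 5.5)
Since $\mathcal{l}$ is an eigenvector, we have $\mathcal{l} = \eta^{-n}\cL_{z,F}^n\mathcal{l}$ for every $n$, so it suffices to show that, for each fixed $x\in S_{\reg}$, the functional $u\mapsto\langle \eta^{-n}\cL_{z,F}^n\mathcal{l}(x),u\rangle$ on $\mathcal{C}^q$ is bounded by $C\|u\|_{\mathcal{C}^0}$, with $C$ independent of $n$. Once this holds, $\mathcal{l}(x)$ extends continuously from $\mathcal{C}^q$ (dense in $\mathscr{C}_c^0(I)$ for the sup norm) to $\mathscr{C}_c^0(I)$. By the Riesz representation theorem it is then a complex Radon measure on $I$, with total variation at most $C$.

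To get the uniform bound, fix $u\in\mathcal{C}^q$ and proceed as in the proof of \Cref{prop:LY} with $j=k=0$. Change variables $s\mapsto\lambda^{-n}s$ and choose a good partition $\mathcal{A}\in\mathcal{P}_n(x)$. This gives
\[
\langle \cL^n_{z,F}\mathcal{l}(x),u\rangle=\lambda^{-n}\sum_{a\in\mathcal{A}}\big\langle \mathcal{l}(x_a),\,\big(e^{z\cdot S_nF}\circ\phi_{\cdot}(T^{-n}x)\big)\,u(\lambda^{-n}\cdot)\,\tilde u_a\big\rangle ,
\]
where this identity is first proved for smooth $\mathcal{l}$ and then extended to $\cB_{p,q}$ by density and continuity. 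Each summand is bounded by $\|\mathcal{l}\|_{0,q}$ times the $\mathcal{C}^q$ norm of the test function on $I_a$. The key point is that rescaling kills the derivatives of $u$: $\|u(\lambda^{-n}\cdot)\|_{\mathcal{C}^q(I_a)}\le \|u\|_{\mathcal{C}^0}+\sum_{i\ge1}\lambda^{-in}\|u\|_{\mathcal{C}^i}$. By \Cref{lem:testfun} (with $k=0$), the weight has $\mathcal{C}^q$ norm $\ll \|e^{z\cdot S_nF}|_{I_a}\|_{\mathcal{C}^0}$, and by \Cref{rk:Rn_and_R} the sum of these over $a$ is at most $\lambda^nR_n^n(r)$. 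Hence
\[
|\langle \eta^{-n}\cL^n\mathcal{l}(x),u\rangle|\ll \rho(r)^{-n}R_n^n(r)\,\|\mathcal{l}\|_{0,q}\big(\|u\|_{\mathcal{C}^0}+\lambda^{-n}\|u\|_{\mathcal{C}^q}\big).
\]

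It remains to know that $R_n^n(r)\le C\rho(r)^n$ uniformly in $n$. This is the main obstacle, since submultiplicativity alone gives only $R_n\ge R=\rho(r)$. I would obtain it from \eqref{eq:lowerbound} applied to the positive, bounded-away-from-zero eigenvector $\ell_+$ of $\cL_{r,F}$ from \Cref{cor:spectral_decomposition}(iii): $C_{\ell_+}R_n^n(r)\le\|\cL_{r,F}^n\ell_+\|_{p,q}=\rho(r)^n\|\ell_+\|_{p,q}$. This is the same trick used in \Cref{lem:no_Jordan_blocks}. Letting $n\to\infty$ then gives $|\langle\mathcal{l}(x),u\rangle|\le C\|\mathcal{l}\|_{0,q}\|u\|_{\mathcal{C}^0}$, and the claim follows.
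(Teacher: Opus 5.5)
Your proof is correct, but it takes a different route from the paper's.

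The paper never iterates $\mathcal{l}$ itself. It writes $\mathcal{l}=\Pi_\eta f$ with $f\in\mathscr{C}^{\infty,\bdd}(S)$, which is possible because $\Pi_\eta$ has finite rank and smooth functions are dense. It then uses \Cref{lem:no_Jordan_blocks} to express $\Pi_\eta f$ as the Ces\`aro limit of $\eta^{-k}\cL_{z,F}^k f$. Finally it applies \eqref{LY-1} in the space $\cB_{p,0}$. This is allowed precisely because the smooth $f$ has finite $\|\cdot\|_{p,0}$ norm, so the $\mathcal{C}^0$ control on the test function is inherited from the choice $q=0$.

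You instead iterate the eigen-equation and extract the $\mathcal{C}^0$ control from hyperbolicity. Renormalization damps the derivatives of $u(\lambda^{-n}\cdot)$ by powers of $\lambda^{-n}$, which is the same mechanism behind the $q$-contraction in the Lasota--Yorke proof. Your version needs neither the Jordan-block lemma nor a smooth preimage. It also yields the intrinsic bound $|\langle\mathcal{l}(x),u\rangle|\le C\|\mathcal{l}\|_{0,q}\|u\|_{\mathcal{C}^0}$.

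The price is reopening the localized computation and extending it to non-smooth $\mathcal{l}$ by density. This is harmless: for fixed $x,n,u$ and partition there are finitely many terms. Every $x_a$ lies in $S_{\reg}$, because $T^{-n}\phi_{[-1,1]}(x)=\phi_{[-\lambda^n,\lambda^n]}(T^{-n}x)$ avoids the singularities.

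Both proofs end with $R_n^n(r)\le C\rho(r)^n$. The paper compresses this into ``up to changing the constant''; you correctly justify it through \eqref{eq:lowerbound} applied to $\ell_+$.

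The advantage of the paper's Ces\`aro representation with a bounded $f$ is that it is reused verbatim in \Cref{cor:evectors_are_measures_2}. There the pointwise bound $|\cL^k_{z,F}f|\le\|f\|_\infty\cL^k_{r,F}\one$ gives the domination $|\widehat{\mathcal{l}}|\le C\,\widehat{\ell_+}$. A total-variation bound in terms of $\|\mathcal{l}\|_{0,q}$, as yours provides, does not directly give that domination.
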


\begin{proof}
We begin by recalling that \Cref{lem:no_Jordan_blocks} implies that there are no
Jordan blocks for the eigenvalue $\eta$. Let $\mathcal{l}$ be an associated
eigenvector. Then, there exists $f\in \mathscr{C}^{\infty,\bdd}(S)$ so that
$\mathcal{l} = \Pi_{\eta}f$, where $\Pi_{\eta}$ denotes the projection onto the
eigenspace of $\eta$. By \eqref{eq:iterate_decomp_cor} in \Cref{cor:spectral_decomposition}, we can write
\begin{equation}\label{eq:expression_projection}
\mathcal{l} = \Pi_{\eta}f = \lim_{n \to \infty} \frac{1}{n} \sum_{k=0}^{n-1} \eta^{-k}\cL_{z,F}^k f.
\end{equation}
Let $u \in \mathcal{C}^q$. For any $x \in S_{ \reg }$, by \eqref{LY-1}, we have
\[
\begin{split}
|\langle \mathcal{l}(x), u\rangle|
&\leq \limsup_{n \to \infty} \frac{1}{n} \sum_{k=0}^{n-1} |\eta|^{-k} \, |\langle \cL_{z,F}^k f(x),u\rangle | \\
&\leq \limsup_{n \to \infty} \frac{1}{n} \sum_{k=0}^{n-1} |\eta|^{-k} \, \|\cL_{z,F}^k f\|_{p,0} \, \|u\|_{\mathcal{C}^0} \\
& \leq  \limsup_{n \to \infty} \frac{C_{p,z,F} }{n} \sum_{k=0}^{n-1} |\eta|^{-k} \, R_k^k(z) \|f\|_{p,0} \, \|u\|_{\mathcal{C}^0}, 
\end{split}
\]
for some constant $C_{p,z,F}$ depending on $p,z$, and $F$. Since, by assumption, $|\eta|= \rho(r)$ and $R_k^k(z) = R_k^k(r)$, up to changing the constant $C_{p,z,F}$, we conclude
\[
|\langle \mathcal{l}(x), u\rangle|\leq C_{p,z,F} \, \|f\|_{p,0} \, \|u\|_{\mathcal{C}^0},
\]
which proves the claim.
\end{proof}

Each element $\mathcal{l}$ as in the statement of \Cref{lem:evectors_are_measures} induces, via a partition of unity, a complex measure on orbits $\{\phi_t(x) \ : \ t\in\R \}$ of the translation flow, which we denote $\widehat{\mathcal{l}}(x)$. 
In the language of \cite[Section 9]{GoLi2}, $\widehat{\mathcal{l}}$ is a \emph{continuous leafwise measure}.

We will use the following result, which is an adaptation of Theorem 9.1 and Proposition 9.4 from \cite{GoLi2}. The proof is a minor modification of the proof in \cite{GoLi2}, but for completeness we include it in Appendix \ref{sec:leafwise_measures}.

\begin{proposition}
\label{prop:leafwise_measures}
Let $p,q \geq 1$ and let $\ell_1, \mathcal{l}_2 \in \cB_{p,q}$ be such that for any $x\in S_\mathrm{reg}$, $\ell_1(x)$ is a positive measure and $\mathcal{l}_2(x)$ is a non-zero complex measure. Let $\widehat{\ell_1}, \widehat{\mathcal{l}_2}$ be the corresponding leafwise measures on horizontal leaves. Assume that there exists a $C>0$ such that $|\widehat{\mathcal{l}_2}| \leq C \widehat{\ell_1}$. Further assume that there is a continuous positive function $g$ on $S$, Hölder on horizontal leaves, and a $\gamma \in \C$ with $|\gamma| = 1$ such that $\widehat \ell_1 = g T_* \widehat\ell_1$ and $\widehat{\mathcal{l}_2} = \gamma g T_* \widehat{\mathcal{l}_2}$. 

Then $\gamma=1$ and there exists a $c\in \C$ such that $\mathcal{l}_2 = c \ell_1$.
\end{proposition}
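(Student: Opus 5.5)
The plan is to follow the Gouëzel–Liverani approach: use the Radon–Nikodym density of $\widehat{\mathcal{l}_2}$ with respect to $\widehat{\ell_1}$ and show that it is invariant under the renormalization. Since $|\widehat{\mathcal{l}_2}|\le C\widehat{\ell_1}$ on every horizontal leaf, there is a bounded measurable function $h$, defined on the union of horizontal leaves, with $\widehat{\mathcal{l}_2} = h\,\widehat{\ell_1}$ and $|h|\le C$, where $h$ is defined $\widehat{\ell_1}$-a.e. on each leaf.

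\textbf{Step 1: the twisted invariance of $h$.} Plugging the relations $\widehat{\ell_1} = gT_*\widehat{\ell_1}$ and $\widehat{\mathcal{l}_2} = \gamma g T_*\widehat{\mathcal{l}_2}$ into $\widehat{\mathcal{l}_2}=h\widehat{\ell_1}$ gives
\[
h\, g\, T_*\widehat{\ell_1} = \gamma\, g\, T_*(h\widehat{\ell_1}) = \gamma\, g\,(h\circ T^{-1})\, T_*\widehat{\ell_1}.
\]
Since $g>0$, this yields $h = \gamma\, h\circ T^{-1}$, that is, $h\circ T = \gamma h$ almost everywhere with respect to the leafwise measures. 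Iterating, $h = \gamma^{-n} h\circ T^{n}$ for every $n$.

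\textbf{Step 2: constancy along leaves.} Next I show that $h$ is a.e. constant on each horizontal leaf, by exploiting the contraction of $T^n$ along horizontal leaves. Because $\ell_1,\mathcal{l}_2\in\cB_{p,q}$ with $p\ge1$, the maps $x\mapsto \ell_1(x)$ and $x\mapsto\mathcal{l}_2(x)$ are $\mathscr{C}^1$ in the vertical direction. Hence the local ratio $\langle \mathcal{l}_2(x),u\rangle/\langle \ell_1(x),u\rangle$ (for a fixed test function $u\ge0$ with $\langle\ell_1(x),u\rangle\neq0$) varies continuously transversally. Along leaves, iterating $T^{-n}$ blows up a unit segment $I(x)$ into a segment of length $\lambda^n$. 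Using the conformal relations with the Hölder weight $g$, this segment has bounded distortion, so the ratio on $I(x)$ equals $\gamma^{n}$ times an average of ratios over long segments.

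To compare two subintervals $J,J'\subset I(x)$, I map them forward by $T^{n}$: they shrink to length $\lambda^{-n}$ and lie within distance $\lambda^{-n}$ of each other. By continuity of the family $y\mapsto \mathcal{l}_2(y),\ell_1(y)$, and since the weights $\prod g\circ T^{-k}$ have ratios controlled by Hölder continuity along leaves, the ratios $\widehat{\mathcal{l}_2}(J)/\widehat{\ell_1}(J)$ and $\widehat{\mathcal{l}_2}(J')/\widehat{\ell_1}(J')$ differ by $o(1)$ as $n\to\infty$. Equivalently, I use density of forward orbits together with the continuity of $\widehat{\mathcal{l}_2}/\widehat{\ell_1}$ at small scales. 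This gives $h$ constant on leaves, say $h=c(x)$.

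\textbf{Step 3: constancy across leaves and $\gamma=1$.} Continuity of $x\mapsto\ell_1(x),\mathcal{l}_2(x)$ in the vertical direction makes $c$ continuous in $x$. Leaves are dense, since the horizontal flow is minimal on $S$ (the stable foliation of a pseudo-Anosov), so $c$ is a global constant and $\mathcal{l}_2 = c\,\ell_1$. Here $c\neq0$ because $\mathcal{l}_2\neq0$. Finally, $h\circ T=\gamma h$ becomes $c=\gamma c$, which forces $\gamma=1$.

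\textbf{Main obstacle.} I expect Step 2 to be the hardest: turning the a.e. relation $h\circ T=\gamma h$ into genuine constancy. It requires a bounded-distortion estimate for the cocycle $\prod_k g\circ T^{-k}$ on horizontal segments, which is where the Hölder hypothesis on $g$ enters. It also requires the uniform continuity of the leafwise measures in $x$ coming from the $\cB_{p,q}$ structure. My first attempt would be to prove that the quotient of masses of nested short intervals is uniformly continuous, following \cite[Theorem~9.1]{GoLi2}.
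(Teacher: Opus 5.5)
Your Steps 1 and 3 are fine. The paper also starts from the leafwise Radon--Nikodym derivative with $f=\gamma\, f\circ T^{-1}$. It also ends by arguing that a continuous density which is locally constant along leaves is a global constant, which forces $\gamma=1$.

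\textbf{The gap is Step 2, and it is a genuine one.} Pushing $J,J'$ forward by $T^n$ gains nothing. Apply the bounded-distortion estimate you invoke to the cocycle $\prod_{j=1}^{n} g\circ T^{j}$ on $J$, where it does hold because $T^j$ contracts $J$. It gives $\widehat{\mathcal{l}_2}(T^nJ)/\widehat{\ell_1}(T^nJ)=\gamma^{n}\bigl(\widehat{\mathcal{l}_2}(J)/\widehat{\ell_1}(J)+O(|J|^{\alpha})\bigr)$, with an error independent of $n$. So comparing the tiny intervals $T^nJ,T^nJ'$ is exactly as hard as comparing $J,J'$.

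Continuity of $y\mapsto \ell_1(y),\mathcal{l}_2(y)$ cannot close this. It is continuity of functionals tested against \emph{fixed} test functions, and it says nothing about masses of intervals of length $\lambda^{-n}$. The ``continuity of $\widehat{\mathcal{l}_2}/\widehat{\ell_1}$ at small scales'' you fall back on is essentially the conclusion itself, and so is the uniform continuity of ratios of short nested intervals you propose to prove. A priori the density is only a bounded measurable function. Note also that bounded distortion fails for $T^{-n}$ on a whole unit segment $I(x)$; it holds only for $T^{-i}$ on intervals of length $\lesssim\lambda^{-i}$.

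\textbf{The missing idea} is to run the scales in the opposite direction and to use only almost-everywhere information.
\begin{enumerate}
\item By Lebesgue differentiation, pick a point $x$ at which, for every $\epsilon>0$, the $\widehat{\ell_1}$-proportion of $F_n(x)$ where $|f-f(x)|>\epsilon$ tends to $0$. The paper does this with the martingale convergence theorem for the dyadic partitions $\mathcal{F}_n$ of a unit horizontal interval, with respect to $\widehat{\ell_1}$.
\item Blow $F_n(x)$ up to unit size by $T^{-i(n)}$, with $\lambda^{i(n)}2^{-n}\asymp 1$. Here distortion is bounded. Using $f\circ T^{i(n)}=\gamma^{i(n)} f$, the density is close to $\gamma^{-i(n)}f(x)$ on most of a unit interval $B(x_n,\lambda^{-1})$.
\item Only now does continuity of the family enter, at unit scale. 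Along a subsequence with $x_n\to x'$ and $\gamma^{-i(n)}\to\gamma'$, you obtain exact proportionality $\widehat{\mathcal{l}_2}=c\,\widehat{\ell_1}$ on $B(x',\lambda^{-2})$.
\item Spread this to every horizontal interval. The paper takes $y\in B(x',\lambda^{-2})$ with dense $T^{-1}$-orbit, uses $\widehat{\mathcal{l}_2}=c\gamma^{-n}\widehat{\ell_1}$ on the expanded intervals $T^{-n}B(y,r)$, and passes to limits once more.
\end{enumerate}
After that, your Step 3 concludes the proof.
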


To apply the above result we will need the following estimate, which is a corollary of \Cref{lem:evectors_are_measures}.
\begin{corollary}\label{cor:evectors_are_measures_2}
Let $r\in \R^d$,  $z\in \C^d$, and $\mathcal{l}$ be as in the statement of \Cref{lem:evectors_are_measures}. Then $|\widehat{\mathcal{l}}| \leq C \widehat{\ell_{+}}$, where $|\widehat{\mathcal{l}}|$ is the variation of $\widehat{\mathcal{l}}$ and $\ell_{+}$ is the eigenvector of $\cL_{r,F}$ with eigenvalue $\rho(r)$.
\end{corollary}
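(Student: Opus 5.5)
We reuse the representation of the eigenvector from the proof of \Cref{lem:evectors_are_measures}, but keep track of positivity instead of only taking norms. Write $\mathcal{l} = \Pi_\eta f$ with $f\in\mathscr{C}^{\infty,\bdd}(S)$, so that, by \eqref{eq:expression_projection},
\[
\langle \mathcal{l}(x),u\rangle = \lim_{n\to\infty}\frac1n\sum_{k=0}^{n-1}\eta^{-k}\langle \cL_{z,F}^k f(x),u\rangle .
\]
The key pointwise observation is that, for $u\in\mathcal{C}^q$ and $x\in S_{\reg}$,
\[
\langle \cL_{z,F}^k f(x),u\rangle=\lambda^{-k}\int_0^{\lambda^k}\bigl(e^{z\cdot S_kF}f\bigr)(\phi_s(T^{-k}x))\,u(\lambda^{-k}s)\diff s .
\]
Since $|e^{z\cdot S_kF}|=e^{r\cdot S_kF}$, this gives
\[
|\langle \cL_{z,F}^k f(x),u\rangle|\le \|f\|_{\mathscr{C}^0}\,\langle \cL_{r,F}^k \one(x),|u|\rangle .
\]
Taking the same Ces\`aro averages, and using $|\eta|=\rho(r)$, we obtain
\[
|\langle \mathcal{l}(x),u\rangle|\le \|f\|_{\mathscr{C}^0}\limsup_n \frac1n\sum_{k<n}\rho(r)^{-k}\langle\cL_{r,F}^k\one(x),|u|\rangle .
\]

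Next we identify the right-hand side. By \Cref{cor:spectral_decomposition}(iii), $\rho(r)$ is a simple eigenvalue. We also need it to be the only eigenvalue on the circle $|\eta|=\rho(r)$ for the real operator, or at least we need the Ces\`aro averages to kill the other peripheral eigenvalues. Ces\`aro averaging does this automatically, since $\frac1n\sum\eta'^k\rho^{-k}\to0$ for $\eta'\neq\rho$ with $|\eta'|=\rho$, and there are no Jordan blocks by \Cref{lem:no_Jordan_blocks}. Hence
\[
\frac1n\sum_{k<n}\rho(r)^{-k}\cL_{r,F}^k\one\to\Pi_{\rho(r)}\one=c_0\,\ell_+
\]
in $\cB_{p,q}$, for some constant $c_0\ge0$. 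The convergence in $\cB_{p,q}$ gives pointwise convergence of $\langle\cdot(x),|u|\rangle$ whenever $|u|$ is an admissible test function. However, $|u|$ is only Lipschitz, so we first run the argument for $u\in\mathcal{C}^q$ and then use that $\ell_+(x)$ is a positive measure: it is a positive distribution, hence a measure. Approximating $|u|$ uniformly from above by smooth nonnegative functions $v\ge|u|$ closes the gap. Therefore
\[
|\langle\mathcal{l}(x),u\rangle|\le C\,\langle \ell_+(x),v\rangle,
\]
and letting $v\downarrow|u|$ yields $|\mathcal{l}(x)|\le C\,\ell_+(x)$ as measures on the unit segment at $x$.

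Finally we pass to the leafwise measures. The hats are built from the local pieces via the same partition of unity for both $\mathcal{l}$ and $\ell_+$. The variation of a sum is bounded by the sum of the variations, and the partition functions are nonnegative, so the local inequality globalizes to $|\widehat{\mathcal{l}}|\le C\widehat{\ell_+}$ on every horizontal leaf.

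The main obstacle is step two: we must make sure that $c_0>0$, or at least that the limit is a finite multiple of $\ell_+$. If instead $c_0=0$, then $\mathcal{l}=0$, which contradicts $\mathcal{l}$ being an eigenvector, so the inequality still holds trivially. This also explains why it is essential to use the Ces\`aro form rather than the decomposition \eqref{eq:iterate_decomp_cor} for $\cL_{r,F}$ directly.
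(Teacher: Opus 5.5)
Your proof is correct and is essentially the paper's argument: write $\mathcal{l}=\Pi_\eta f$ via the Ces\`aro formula \eqref{eq:expression_projection}, use $|e^{z\cdot S_kF}|=e^{r\cdot S_kF}$ to dominate each term by $\|f\|_{\infty}\langle\cL_{r,F}^k\one(x),|u|\rangle$, and identify the Ces\`aro limit of $\rho(r)^{-k}\cL_{r,F}^k\one$ with a multiple of $\ell_+=\Pi_{\rho(r)}\one$. Your extra care fills in details the paper leaves implicit. You approximate $|u|$ from above by admissible test functions and then globalize to leaves. You also rely on Ces\`aro means rather than on uniqueness of the peripheral eigenvalue; the latter would be circular, since \Cref{prop:unique_max_eigenvalue} itself uses this corollary.
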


\begin{proof}
Let $f\in \mathscr{C}^{\infty,\bdd}(S)$ so that $\mathcal{l} = \Pi_{\eta}f$. Since the eigenvalue $\rho(r)$ of $\cL_{r,F}$ is simple, by \Cref{lem:rho_simple}, up to a constant we can write $\ell_{+} = \Pi_{\rho(r)}1$. Let $u$ be a continuous function on the orbit of the point $x$. We can suppose that $u$ is supported on the segment $\gamma = \{ \phi_t(x) \ : \ t \in (0,1)\}$. Then, using \eqref{eq:expression_projection}, we have
\[
\begin{split}
\left\lvert \int_{\gamma} u \diff \widehat{\mathcal{l}}(x) \right\rvert
&= |\langle \mathcal{l}(x),u \rangle|
\leq \limsup_{n \to \infty} \frac{1}{n} \sum_{k=0}^{n-1} |\eta|^{-k} \, |\langle \cL_{z,F}^kf(x),u \rangle| \\
&\leq \limsup_{n \to \infty} \frac{\|f\|_{\infty}}{n} \sum_{k=0}^{n-1} \rho(r)^{-k} \, \langle \cL_{r,F}^k1(x), |u| \rangle \\
&= \|f\|_{\infty}\langle \ell_{+}(x), |u| \rangle
= \|f\|_{\infty} \int_{\gamma} |u| \diff \widehat{\ell_{+}}(x),
\end{split}
\]
which completes the proof.
\end{proof}

We can now show that, under a natural assumption on~$F$, there are no eigenvalues $\eta$ of $\cL_{z,F}$ of modulus $\rho(\Re z)$ unless $z=\Re z$ and $\eta = \rho(\Re z)$.


\begin{definition}
For a set $A \subset S$ we say that a function $f: A \to \R$ is \emph{cohomologous to a constant  $\hspace{-0.5em}\pmod{2\pi}$} if there exist a constant $c$ and a measurable function $\alpha: A \to \R$ such that for Lebesgue-almost all $x\in A$,
\[f(x) - c - (\alpha(Tx)-\alpha(x)) \in 2\pi \Z.\]
\end{definition}

\begin{proposition}\label{prop:unique_max_eigenvalue}
	Assume that, for all $v\in (-\pi,\pi)^d \setminus \{0\}$, the function $F\cdot v$ is not cohomologous to a
	constant $\hspace{-0.5em}\pmod{2\pi}$.

	Let $z \in \R^d +\oplus i (-\pi,\pi)^d$, let $r = \Re(z)$ and let $\eta\in\C$ be an eigenvalue of $\cL_{z,F}$ with
	$|\eta|=\rho(r)$. Then, $r=z$, $\eta = \rho(r)$, and the eigenvalue $\rho(r)$ is simple. 
	
	As a consequence, if $z\notin \R^d$, then $\rho(z) < \rho(\Re(z))$.

\end{proposition}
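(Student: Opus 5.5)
Let $z = r + iv$ with $v \in (-\pi,\pi)^d$, and let $\mathcal{l}\in\cB_{p,q}$ satisfy $\cL_{z,F}\mathcal{l} = \eta\mathcal{l}$ with $|\eta| = \rho(r)$. The plan is to reduce to \Cref{prop:leafwise_measures} with $\ell_1 = \ell_+$ (the positive eigenvector of $\cL_{r,F}$) and $\mathcal{l}_2 = \mathcal{l}$, and then use the non-cohomology assumption to force $v=0$.

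\textbf{Step 1: hypotheses of \Cref{prop:leafwise_measures}.} By \Cref{lem:evectors_are_measures}, each $\mathcal{l}(x)$ is a complex measure. By \Cref{cor:evectors_are_measures_2}, the leafwise measures satisfy $|\widehat{\mathcal{l}}|\le C\widehat{\ell_+}$. Also $\ell_+(x)$ is a positive measure: apply \Cref{lem:evectors_are_measures} with $z=r$ and use positivity. Unwinding $\cL_{r,F}\ell_+ = \rho(r)\ell_+$ at the level of leafwise measures gives
\[
\widehat{\ell_+} = \rho(r)^{-1}e^{r\cdot F\circ T^{-1}}\,T_*\widehat{\ell_+}.
\]
So the identity $\widehat\ell_1 = gT_*\widehat\ell_1$ holds with $g = \rho(r)^{-1}e^{r\cdot F\circ T^{-1}}$, which is continuous, positive and smooth along leaves. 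For $\mathcal{l}$ we get
\[
\widehat{\mathcal{l}} = \eta^{-1}e^{z\cdot F\circ T^{-1}}\,T_*\widehat{\mathcal{l}} = \gamma\, e^{i v\cdot F\circ T^{-1}}\, g\, T_*\widehat{\mathcal{l}}, \qquad \gamma = \rho(r)/\eta .
\]
Here the extra unimodular factor $e^{iv\cdot F\circ T^{-1}}$ is a function, not a constant.

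\textbf{Step 2: absorbing the phase.} Since $|\widehat{\mathcal{l}}|\le C\widehat{\ell_+}$, Radon--Nikodym gives $\widehat{\mathcal{l}} = h\,\widehat{\ell_+}$ with $|h|\le C$, defined $\widehat{\ell_+}$-a.e. on each leaf. Substituting into both relations, and using that $T_*$ transports densities, yields an a.e. identity
\[
h\circ T \cdot e^{-i v\cdot F} = \gamma\, h .
\]
Taking moduli, $|h|\circ T = |h|$. By ergodicity of the relevant $T$-invariant measure ($\nu_T$, or a suitable Lebesgue-class measure built from the leafwise measures and a transverse measure), $|h|$ is constant and nonzero, since $\mathcal{l}\ne0$. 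Writing $h = |h|e^{i\alpha}$ gives
\[
v\cdot F - \alpha\circ T + \alpha \equiv -\arg\gamma \pmod{2\pi},
\]
so $v\cdot F$ is cohomologous to a constant $\hspace{-0.5em}\pmod{2\pi}$. By hypothesis this forces $v=0$, hence $z=r$.

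\textbf{Step 3: conclusion.} With $z=r$ the phase disappears, so \Cref{prop:leafwise_measures} applies directly and gives $\gamma=1$, i.e.\ $\eta=\rho(r)$, and $\mathcal{l}=c\,\ell_+$. Simplicity is \Cref{lem:rho_simple}. For the consequence: if $z\notin\R^d$, then by quasi-compactness (\Cref{cor:spectral_decomposition}) the spectrum in $\{|\xi|>\tau\}$ consists of eigenvalues. Together with $\rho(z)\le\rho(r)$ and the absence of peripheral eigenvalues just shown, this gives $\rho(z)<\rho(r)$.

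\textbf{Main obstacle.} The hard step is Step 2. The cohomology equation must hold Lebesgue-a.e. (the definition's measure), whereas $h$ is naturally defined only $\widehat{\ell_+}$-a.e. along leaves. I would first try to redo the argument of \Cref{prop:leafwise_measures} (Appendix~\ref{sec:leafwise_measures}) with $g$ replaced by the complex-valued function $e^{iv\cdot F\circ T^{-1}}g$, whose modulus is still $g$. That argument should produce a continuous-on-leaves density $h$ with $|h|$ constant. Holonomy invariance along the vertical direction, together with the minimality of $T$, should then let me define $\alpha$ measurably on $S$ and upgrade the identity to hold Lebesgue-almost everywhere.
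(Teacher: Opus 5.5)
Your Steps~1 and~3, the identity $h\circ T=\gamma e^{iv\cdot F}h$ in Step~2, and the final deduction $\rho(z)<\rho(\Re z)$ are all fine. The gap is the one you flag yourself, and it cannot be postponed: Step~2 as written does not prove the statement.

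\textbf{The gap.} The function $h$ is only defined $\widehat{\ell_+}$-a.e.\ along leaves. Ergodicity of $\nu_T$ then gives $|h|=\mathrm{const}$ and the equation $v\cdot F-\alpha\circ T+\alpha\equiv -\arg\gamma$ only on a $\nu_T$-full set, i.e.\ $\widehat{\ell_+}$-a.e.\ along $\vartheta$-almost every leaf. For $r\neq 0$, $\nu_T$ and $\vartheta$ are in general singular with respect to Lebesgue (compare the dimension formula in \Cref{thm:Hausdorff_dim_nu}). So this contradicts nothing, because the hypothesis is a Lebesgue-a.e.\ statement. Your ``Lebesgue-class measure'' variant fails for the same reason: $h$ is not defined Lebesgue-a.e.

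There is also a circularity. In the paper, mixing of $\nu_T$ (\Cref{prop:nu_T_is_exponentially_mixing}) is deduced from the present proposition. You would have to settle the case $z=r$ first (your Step~3 does not use Step~2), or obtain ergodicity from Ces\`aro averages of $\rho(r)^{-k}\cL_{r,F}^k$.

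\textbf{What is missing.} You need regularity of $h$ along leaves; once you have it, no holonomy argument is needed. Suppose $h$ is continuous on horizontal segments. Since $\ell_+$ is bounded away from zero (\Cref{cor:spectral_decomposition}(iii)), each $\widehat{\ell_+}(x)$ charges every open subinterval. Hence the a.e.\ identity $h\circ T=\gamma e^{iv\cdot F}h$ holds at every point of $S_{\reg}$, and $|h|$ is constant. Moreover $\alpha=\arg h$ is then defined everywhere and is measurable, being a pointwise limit of ratios $\langle\mathcal{l}(x),u_\epsilon\rangle/\langle\ell_+(x),u_\epsilon\rangle$ that are continuous in $x$. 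So the cohomological equation holds everywhere, in particular Lebesgue-a.e.

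\textbf{Your fallback supplies this.} Run the argument of Appendix~\ref{sec:leafwise_measures} with weight $e^{iv\cdot F\circ T^{-1}}g$. On the stretched segments $T^{-i(n)}F_n(x)$, the extra phase is $e^{-iv\cdot(S_{i(n)}F(y)-S_{i(n)}F(x_n))}$, up to a unimodular constant. This is uniformly Lipschitz along the leaf, so along a subsequence you get $\widehat{\mathcal{l}}=c\,e^{i\Psi}\widehat{\ell_+}$ near $x'$ with $\Psi$ continuous. The density-of-orbits step then propagates this, because the phases $e^{-iv\cdot S_nF}$ stay equicontinuous on the expanded segments. This also gives $|h|$ constant directly, so the ergodicity step can be dropped.

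\textbf{How the paper does it.} The paper's route is the same in spirit but shorter. It applies \Cref{prop:leafwise_measures} to $|\widehat{\mathcal{l}}|$ and $\widehat{\ell_+}$ with $\gamma=1$ to get $|\widehat{\mathcal{l}}|=\widehat{\ell_+}$. It then writes $\widehat{\mathcal{l}}=e^{i\alpha}\widehat{\ell_+}$ with $\alpha$ asserted to be continuous along orbits. Finally it reads off $\alpha-\alpha\circ T+\Im(z)\cdot F-c\in 2\pi\Z$ at every point from $\ell_+=\rho^{-1}\cL_{r,F}\bigl(e^{i(-\alpha\circ T-c+\Im(z)\cdot F+\alpha)}\ell_+\bigr)$. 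Your twisted version of the appendix argument, carried out carefully, is a way to justify that continuity claim.
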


\begin{proof}
Let ${z\in \R^d +\oplus i (-\pi,\pi)^d}$ and let us consider $\cL_{z,F}$. Suppose $\eta$ is an eigenvalue of $\cL_{z,F}$, with $\eta = e^{ic}\rho$ for some $c\in \R$, where $\rho = \rho(r)$. Then, there exists some $\mathcal{l}\in \cB_{p,q}$ so that $\cL_{z,F} \mathcal{l} = e^{ic} \rho \mathcal{l}$. 

We claim that $|\widehat{\mathcal{l}}| = \widehat{\ell_{+}}$. Indeed, we have
\[
|\widehat{\mathcal{l}}| = \left|e^{-ic}\rho^{-1} \widehat{\cL_{z,F}\mathcal{l}}\right| = \rho^{-1} \left|{  e^{(z \cdot F)\circ T^{-1}}}\, T_{\ast}\widehat{\mathcal{l}} \right|
= \rho^{-1} {  e^{(r \cdot F) \circ T^{-1}}}\, T_{\ast}|\widehat{\mathcal{l}}|.
\]
Let $\ell_+$ be an eigenvector of $\cL_{r,F}$, where $r = \Re(z)$, with eigenvalue $\rho$.
Since also 
\[\widehat{\ell_+} = \rho^{-1}\widehat{ \cL_{r,F} \ell_+} =  \rho^{-1} e^{(r \cdot F) \circ T^{-1}}  T_\ast \widehat{\ell_+},\]
the claim follows from \Cref{prop:leafwise_measures} applied to $|\widehat{\mathcal{l}}|$ and $\widehat{\ell_{+}}$, with $\gamma = 1$ and $g = \rho^{-1} e^{(r \cdot F)\circ T^{-1}}$.

Thus, $|\widehat{\mathcal{l}}| = \widehat{\ell_{+}}$ and hence for all $x\in S_{\reg}$, there exists a bounded real-valued function $\alpha(x)$ on the orbit of $x$ so that $\widehat{\mathcal{l}}(x) = e^{i \alpha(x)} \widehat{\ell_{+}}(x)$. Since $\widehat{\mathcal{l}}$ and $ \widehat{\ell_{+}}$ vary continuously, in each open rectangle in $S$, $\alpha(x)$ varies continuously on orbits; in particular, we can identify $\alpha$ with a measurable function on $S$.

We have, suppressing for convenience the dependence
on $x$, 
\[
\begin{split}
\rho^{-1} \cL_{r,F} \ell_{+} &= \ell_{+}
= e^{-i \alpha}\mathcal{l}
= e^{-i \alpha} \rho^{-1} e^{-ic} \cL_{z,F} \mathcal{l} \\
&= e^{-i \alpha} \rho^{-1} e^{-ic} {  \cL_{r,F} (e^{i \Im(z) \cdot F}\mathcal{l})} \\
&= e^{-i \alpha} \rho^{-1} e^{-ic} {  \cL_{r,F} (e^{i \left(\Im(z) \cdot F + \alpha\right)}\ell_+)} \\
&= \rho^{-1} \cL_{r,F} \big(e^{i \left(-\alpha\circ T -c + \Im(z) \cdot F + \alpha\right)}\ell_{+}\big).
\end{split}
\]
Comparing the first and last term we see that for any $x$,
\[
	\alpha(x) -\alpha \circ T (x) + \Im(z) \cdot F(x) -c = 0 \quad \text{(mod $2\pi$)},
\]
which contradicts the assumption that $F \cdot v$ is not cohomologous to a
constant $\hspace{-0.5em}\pmod{2\pi}$ for all $v\in (-\pi,\pi)^d\setminus\{0\}$, taking $v=\Im z \neq 0$. Thus in fact $\Im(z) = 0$ and $z=r$.

Given that $z=r$, we can apply \Cref{prop:leafwise_measures} again to $\widehat{\mathcal{l}}$ and $\widehat{\ell_+}$, now with $\gamma = e^{-ic}$, to see that $e^{-ic} = 1$ and $\mathcal{l} = \ell_+$. Thus we deduce that $\eta = \rho$, and $\rho$ is simple.

For the last claim, we know from \Cref{cor:estim_spectral_radius} that $\rho(z)\leq \rho(\Re(z))$. Since we have shown that for $z\in \C^d\setminus\R^d$, there is no eigenvalue of $\cL_{z,F}$ of modulus $\rho(\Re(z))$, we conclude that $\rho(z) < \rho(\Re(z))$.
\end{proof}


\subsection{The peripheral spectrum of the dual transfer operator}\label{sec:periph_spectrum_dual}
Thanks to the result of the last section, we know that, when
$r\in \R^d$, the operator $\cL_{r,F}$ has a simple eigenvalue at $\rho(r)$, with a positive eigenvector
$\ell_+$. Let us assume that $\ell_+$ is normalized so that $\ell_{+} = \lim_{n\to \infty} \rho(r)^{-n} \cL_{r,F}^n(\one)$. In this section, we show that the dual to $\ell_+$ is a (positive)
probability measure. 

Let $\cB'_{p,q}$ denote the dual space of $\cB_{p,q}$ and let $\cL'_{z,F}$ be the dual of the transfer operator $\cL_{z,F}$, acting on
$\cB'_{p,q}$. 

\begin{remark}\label{rk:discrete_spectrum_of_dual}
Let $z\in \C^d$, and let $\mathcal{S} = \mathcal{S}_{+} \cup  \mathcal{S}_{0}$ be the discrete spectrum of $\cL_{z,F}$ on $\cB_{p,q}$, as in \Cref{cor:spectral_decomposition}.
For each $\eta \in \mathcal{S}$, the space $\Pi_{\eta}\cB_{p,q}$ is finite dimensional and there exists $\ell_{\eta} \in \Pi_{\eta}\cB_{p,q}$ such that $\cL_{z,F}\ell_{\eta} = \eta \ell_{\eta}$. Let $L_{\eta}$ be a subspace such that $\C\ell_{\eta} \oplus L_{\eta} = \Pi_{\eta}\cB_{p,q}$; then, for every $f \in \cB_{p,q}$, we can write $\Pi_{\eta}f = \nu_{\eta}(f) \ell_{\eta} + f_{\eta}$, where $f_{\eta} \in L_{\eta}$. It is easy to check that $ \nu_{\eta} \in \cB_{p,q}'$ and $\cL_{z,F}'\nu_{\eta} = \eta \nu_{\eta}$; that is, $\eta$ is an eigenvalue of the dual operator $\cL_{z,F}'$, and $\nu_{\eta}$ an associated eigenvector. 
\end{remark}

By \Cref{lem:rho_simple} and \Cref{rk:discrete_spectrum_of_dual}, we have $\Pi_{\rho} f = \nu(f) \ell_{+}$, where $\nu=\nu_{\rho}$ is the only eigenvector of $\cL'_{r,F}$ with eigenvalue $\rho$, normalized so that $\nu(\ell_+) = 1$.

\begin{lemma}\label{lem:nu_is_measure_1}
	There exists a constant $C > 0$ such that $|\nu (f )| \leq C \|f\|_{0,p+q} \le C
	\|f\|_{\mathscr{C}^0}$ for all $f\in
	\mathscr{C}^{\infty,\bdd}(S)$. 
	In particular, $\nu$ induces a (complex) measure on $S$.
\end{lemma}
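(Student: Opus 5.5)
Since $\Pi_\rho$ is the spectral projection of $\cL_{r,F}$ at the simple, isolated, peripheral eigenvalue $\rho=\rho(r)$, and by \Cref{cor:spectral_decomposition} all other spectrum lies in a disk of radius $\tau<\rho$, I would write, for $f\in\mathscr{C}^{\infty,\bdd}(S)$,
\[
\nu(f)\,\ell_+ = \Pi_\rho f = \lim_{n\to\infty}\rho^{-n}\cL_{r,F}^n f \quad\text{in } \cB_{p,q}.
\]
Then I would pair both sides with a fixed test datum. Concretely, I would fix a point $x_0\in S_{\reg}$ and a nonnegative $u_0\in\mathcal{C}^{p+q}$ such that $c_0:=\langle \ell_+(x_0),u_0\rangle>0$. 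Such a choice exists because $\ell_+$ is positive and bounded away from zero by \Cref{cor:spectral_decomposition}(iii). This gives
\[
\nu(f) = c_0^{-1}\lim_{n\to\infty}\rho^{-n}\langle \cL_{r,F}^nf(x_0),u_0\rangle .
\]
Convergence in $\cB_{p,q}$ implies convergence of $\langle\cdot(x_0),u_0\rangle$, since evaluation at a point followed by pairing with $u_0\in\mathcal{C}^{q}$ is continuous.

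The key estimate is a bound on $|\langle \cL^n_{r,F}f(x_0),u_0\rangle|$ that uses only a weak norm of $f$. I would use the first estimate \eqref{LY-1} of \Cref{prop:LY}, but applied with the space $\cB_{0,p+q}$ in place of $\cB_{p,q}$:
\[
|\langle \cL^n f(x_0),u_0\rangle| \le \|\cL^n f\|_{0,p+q}\|u_0\|_{\mathcal{C}^{p+q}} \le C R_n^n\|f\|_{0,p+q}.
\]
This is legitimate because \eqref{LY-1} holds for all $p,q\ge0$. Then I would use $R_n^n(r)\le C'\rho^n$. This does not follow directly from $\rho=R=\inf R_n$. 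Instead it follows from \eqref{eq:lowerbound} applied to $\ell_+$, as in the proof of \Cref{lem:no_Jordan_blocks}: $C_{\ell_+}R_n^n\le\|\cL^n\ell_+\|=\rho^n\|\ell_+\|$. Combining these bounds gives $|\nu(f)|\le C\|f\|_{0,p+q}$, uniformly in $n$. Finally, $\|f\|_{0,p+q}=\sup_x\|\Gamma f(x)\|_{\mathcal{C}^{-p-q}}\le C\|f\|_{\mathscr{C}^0}$ by \eqref{eq:ineq_H2}.

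For the ``in particular'', the bound $|\nu(f)|\le C\|f\|_{\mathscr{C}^0}$ lets $\nu$ extend by density to a bounded linear functional on continuous functions. By the Riesz representation theorem, it is then a complex Radon measure on the compact surface $S$. A small point needs care here. Elements of $\mathscr{C}^{\infty,\bdd}$ live on $S_{\reg}$, so I would restrict the density argument to continuous functions on $S$, whose smooth approximants lie in $\mathscr{C}^{\infty,\bdd}(S)$.

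I expect the main obstacle to be the inequality $R_n^n\ll\rho^n$, which I would take from the lower bound \eqref{eq:lowerbound} as above. A second subtlety is justifying that the limit defining $\Pi_\rho f$ and the evaluation at $x_0$ commute. I would handle this by noting that $\ell\mapsto\langle\ell(x_0),u_0\rangle$ is a continuous functional on $\cB_{p,q}$, so that only the $\cB_{p,q}$-convergence is needed, while the bounds are taken in the weaker $\cB_{0,p+q}$ norm.
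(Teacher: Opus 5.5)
Your proof is correct, but it takes a different route from the paper's.

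\textbf{The paper's route (dual side).} It starts from $\nu=\rho(r)^{-n}(\cL'_{r,F})^n\nu$ and uses the continuity of $\nu$ on $\cB_{p,q}$ to get $|\nu(f)|\le C\rho(r)^{-n}\|\cL^n_{r,F}f\|_{p,q}$. It then needs the contracting Lasota--Yorke inequality \eqref{LY-2}, whose factor $\sigma^n$ removes the strong norm $\|f\|_{p,q}$. Finally, an induction over $p$ (extending $\nu$ successively to $\cB_{p-1,q+1},\dots,\cB_{1,p+q-1}$) brings the bound down to the weak norm $\|f\|_{0,p+q}$.

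\textbf{Your route (forward side).} You read off $\nu(f)$ from $\Pi_\rho f=\lim_n\rho^{-n}\cL^n f=\nu(f)\ell_+$, paired against a fixed test datum with $\langle\ell_+(x_0),u_0\rangle>0$. A pointwise pairing against a $\mathcal{C}^{p+q}$ test function is controlled by $\|\cdot\|_{0,p+q}$. So the crude bound \eqref{LY-1} applied directly in $\cB_{0,p+q}$ suffices, with no contraction, no restriction $p,q\ge1$, and no induction. This is the same mechanism the paper uses in \Cref{lem:evectors_are_measures}.

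\textbf{What each route needs.} Your route requires the convergence $\rho^{-n}\cL^n\to\Pi_\rho$, whereas the paper's only needs $\nu$ to be a bounded $\rho$-eigenfunctional of $\cL'_{r,F}$. That $\rho$ is the only eigenvalue of modulus $\rho$ comes from the real case $z=r$ of \Cref{prop:unique_max_eigenvalue}, not from \Cref{cor:spectral_decomposition} alone. You could avoid this point entirely by using the Ces\`aro formula \eqref{eq:expression_projection} in place of the plain limit.

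Both arguments need $R_n^n(r)\ll\rho(r)^n$. The paper uses this tacitly when it replaces $R_n^n$ by $\rho(r)^n$ in \eqref{LY-2}. You justify it explicitly through \eqref{eq:lowerbound} applied to $\ell_+$, which is a point in your favour.
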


\begin{proof}
	The eigenvector $\nu$ satisfies $\nu = \rho(r)^{-n} (\cL'_{r,F})^n \nu$ for all $n \in \N$. Hence, for any
	$f\in\cB_{p,q}$, 
	\[
	\begin{split}
		|\nu (f)| &= \rho(r)^{-n} |\nu (\cL^n_{r,F}(f))| 
					\le C \rho(r)^{-n} \| \cL^n_{r,F} (f)\|_{p,q} \\
					&\le C \rho(r)^{-n} (C \sigma^n \rho(r)^n \|f\|_{p,q} + C \rho(r)^n \|f\|_{0,p+q}),
	\end{split}
	\]
	where we used the Lasota-Yorke inequality~\eqref{LY-2}, and induction over
	$p$ to get the last norm. Since $\sigma < 1$, letting $n$ go to infinity, we
	obtain
	\[
		|\nu (f)| \le C \|f\|_{0,p+q}
	\]
	for some constant $C > 0$ and all $f\in\cB_{p,q}$. 
	If $f\in
	\mathscr{C}^{\infty,\bdd}(S)$, we also have $\|f\|_{0,p+q} \leq \|f\|_{\mathscr{C}^0}$, which completes the proof.
\end{proof}

By \Cref{lem:multiplying_by_smooth}, for any $f\in
\mathscr{C}^{\infty,\bdd}(S)$, we can also define
\[
\nu_T(f) \coloneqq \nu (f\ell_{+}).
\]
\Cref{lem:nu_is_measure_1} and \Cref{lem:evectors_are_measures} yield
\[
|\nu_T(f) |\leq C \| f \ell_{+}\|_{0,p+q}\leq C \|f\|_{\mathscr{C}^0},
\]
which implies that $\nu_T$ is also a complex measure on $S$.

\begin{lemma}\label{lem:nu_is_measure_2}
	The measures $\nu$ and $\nu_T$ are positive probability measures. Furthermore, $\nu_T$ is $T$-invariant.
\end{lemma}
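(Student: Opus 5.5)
We need to show that $\nu$ and $\nu_T$ are positive, that they have total mass one, and that $\nu_T$ is $T$-invariant. The plan is to identify both measures through the spectral projection $\Pi_\rho$. By \Cref{cor:spectral_decomposition} and \Cref{lem:rho_simple}, for $r\in\R^d$ we have $\rho(r)^{-n}\cL^n_{r,F} f \to \Pi_\rho f = \nu(f)\ell_+$ in $\cB_{p,q}$ for every $f\in\cB_{p,q}$. The other peripheral eigenvalues are excluded by \Cref{prop:unique_max_eigenvalue} if we assume its hypothesis. Without it, we can still use the Cesàro average \eqref{eq:expression_projection}, which is enough for positivity.

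\textbf{Positivity of $\nu$.} Let $f\in\mathscr{C}^{\infty,\bdd}(S)$ with $f\ge 0$. Since $r$ is real, $\cL_{r,F}$ preserves positive functionals. Hence each $\rho^{-k}\cL^k_{r,F} f$ is positive, and so is the Cesàro limit $\nu(f)\ell_+$. Fix a nonnegative $u\in\mathcal{C}^q$, $u\neq 0$. Then $\langle \ell_+(x),u\rangle>0$, because $\ell_+$ is bounded away from zero by \Cref{lem:eigenfunction_positivity}. It follows that $\nu(f)\langle\ell_+(x),u\rangle\ge 0$ is real, so $\nu(f)\ge 0$. Combined with the continuity bound of \Cref{lem:nu_is_measure_1}, this shows by Riesz that $\nu$ is a positive Radon measure. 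This requires $\mathscr{C}^{\infty,\bdd}$ to be dense in $\mathscr{C}^0$ on the relevant set; one should handle this via compactness of $S$ and smooth functions on $S$.

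\textbf{Positivity of $\nu_T$.} By \Cref{lem:multiplying_by_smooth}, $f\ell_+$ is positive whenever $f\ge 0$. The argument above then shows that $\nu$ is nonnegative on positive elements of $\cB_{p,q}$, not only on smooth ones. Indeed, for positive $g\in\cB_{p,q}$, the element $\nu(g)\ell_+$ is a limit of positive functionals, so $\nu(g)\ge0$. Hence $\nu_T(f)=\nu(f\ell_+)\ge 0$.

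\textbf{Normalisation.} For $\nu_T$ this is immediate: $\nu_T(\one)=\nu(\ell_+)=1$ by the choice of normalisation. For $\nu$, the convention $\ell_+=\lim\rho^{-n}\cL^n_{r,F}\one$ gives $\Pi_\rho\one=\ell_+$, hence $\nu(\one)\ell_+=\ell_+$ and $\nu(\one)=1$. In the Cesàro version, the Cesàro limit of $\one$ is again $\ell_+$.

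\textbf{Invariance of $\nu_T$.} For smooth $f$ we compute
\[
\cL_{r,F}\big((f\circ T)\,\ell_+\big)=\big(e^{r\cdot F}(f\circ T)\ell_+\big)\circ T^{-1}=f\,\cL_{r,F}\ell_+=\rho f\ell_+ .
\]
For $\ell_+\notin\mathscr{C}^\infty$ this identity, $\cL_{r,F}(g\circ T\cdot\ell)=g\,\cL_{r,F}\ell$, is first checked on $\ell=\Gamma v$ and then extended by density and continuity using \Cref{lem:multiplying_by_smooth}. Since $\cL'_{r,F}\nu=\rho\nu$, we get
\[
\nu_T(f\circ T)=\rho^{-1}\nu\big(\cL_{r,F}((f\circ T)\ell_+)\big)=\nu(f\ell_+)=\nu_T(f).
\]
The main obstacle is this density-and-positivity bookkeeping: checking that multiplication by $f\circ T$ and positivity pass to the closure of smooth functions in $\cB_{p,q}$. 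Both are routine but need care near the singularities.
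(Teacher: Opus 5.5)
Your proposal is correct and follows essentially the paper's argument. Invariance of $\nu_T$ comes from $\cL'_{r,F}\nu=\rho\nu$ and the identity $\cL_{r,F}((f\circ T)\ell_+)=\rho f\ell_+$; positivity of $\nu$ comes from positivity of the limit $\nu(f)\ell_+$; and $\nu(\one)=1$ comes from the normalization $\ell_+=\lim\rho^{-n}\cL^n_{r,F}\one$. The only differences are minor: you get positivity of $\nu_T$ by extending positivity of $\nu$ to the whole positive cone of $\cB_{p,q}$ (the paper instead writes $\nu_T(f)=\lim_n\nu(f\circ T^n)$), and your Ces\`aro averaging avoids tacitly using \Cref{prop:unique_max_eigenvalue} for the convergence $\rho^{-n}\cL^n_{r,F}\to\Pi_\rho$.
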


\begin{proof}
	We first verify that the measure $\nu_T$ is $T$-invariant: we compute
	\[
	\begin{split}
		\nu (f \ell_+) &= \rho(r)^{-1} (\cL'_{r,F}\nu) (f\ell_+) 
		= \rho(r)^{-1} \nu(\cL_{r,F} (f\ell_+)) \\
		&= \nu (f\circ T^{-1} \cdot \rho(r)^{-1}\cL_{r,F} (\ell_+))
		= \nu (f\circ T^{-1} \cdot \ell_+),
	\end{split}
	\]
	which proves the claim.
	
	By \Cref{cor:spectral_decomposition}, for every
	$\ell \in\cB_{p,q}$, we have that
	\[
	\lim_{n\to\infty} \rho(r)^{-n} \cL^n_{r,F} (\ell) = \nu(\ell) \ell_+.
	\]
	If $f_1$, $f_2$ are positive functions, then, for every $x\in S_{\reg}$,
	\[
	0 \le \langle \lim_{n\to\infty} \cL^n_{r,F}(f_2)(x), f_1 \rangle = \nu(f_2) \langle \ell_+(x), f_1 \rangle. 
	\]
	In particular, since $\ell_+$ is positive and not the zero measure, we can
	find an $x\in S_{\reg}$ and a positive function $f_1$ such that $\langle
	\ell_+(x), f_1 \rangle > 0$. This implies that, if $f \ge 0$, then also
	$\nu(f)\ge 0$. 
	
	We verify that also $\nu_T$ is a positive measure; indeed
	\[
	\nu_T(f) = \nu (f \ell_+)  = \lim_{n\to\infty} \nu(f\cdot\rho(r)^{-n} \cL^n_{r,F}1)
	= \lim_{n\to\infty} \nu(\rho(r)^{-n} \cL^n_{r,F}(f\circ T^n))
	= \lim_{n\to\infty} \nu (f\circ T^n) \ge 0,
	\]
	where, in the last equality we used that $\nu$ is the $\rho(r)$-eigenvector
	of the dual of $\cL_{r,F}$.  The
	normalization $\nu(\ell_+)=1$ implies that $\nu_T$ is a probability measure. Finally, by our normalization of $\ell_{+}$, namely $\ell_{+} = \lim_{n\to \infty} \rho(r)^{-n} \cL_{r,F}^n(\one)$, we conclude that $\nu(\one) = \lim_{n\to \infty} [\rho(r)^{-n} (\cL_{r,F}')^n(\nu)](\one) = \nu(\ell_{+}) = 1$, so that $\nu$ is a probability measure as well.
\end{proof}

We further study the relation between the measures $\nu$ and $\nu_T$: we now show that, once they are disintegrated along the orbits of the flow $\{\phi_t\}_{t \in \R}$, they have the same transverse measure. 
We start with some preliminaries.

Let $\Omega \subset S_0$ be an open rectangle with vertical side $\Omega_v$, and let $f\in \mathscr{C}^{\infty,\bdd}(S)$ be a smooth function supported in $\Omega$. For any $\ell \in \cB_{p,q}$, we can define  
\[
f(\ell) \coloneqq \int_{\Omega_v} \langle \ell(x),f_x \rangle \diff x,
\]
where $\diff x$ denotes the Lebesgue measure on the vertical side $\Omega_v$ and $f_x$ is the function $f_x\colon s \mapsto f\circ \phi_{s}(x)$, which can be seen as an element of $\mathcal{C}^q = \cC^q_c(I)$.
Then, by \Cref{prop:localize}, as in the proof of \Cref{lem:inclusion}, we have an embedding 
\[
\cC^{\infty, \bdd}(S) \hookrightarrow \cB_{p,q}'. 
\] 
\begin{lemma}\label{lem:expression_for_L_prime}
Let $r\in \R^d$. For any $f \in \cC^{\infty, \bdd}(S)$, we have $\cL_{r,F}'f = e^{r \cdot F} \cdot f\circ T$.
\end{lemma}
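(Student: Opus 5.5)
The identity follows by unwinding definitions and changing variables under the area-preserving map $T$. By density (\Cref{lem:inclusion}) and continuity of $\cL_{r,F}$ on $\cB_{p,q}$ (\Cref{prop:LY}), it suffices to show that for every $g\in\cC^{\infty,\bdd}(S)$, viewed as $\Gamma g\in\cB_{p,q}$,
\[
f(\cL_{r,F} g) = \bigl(e^{r\cdot F}\, f\circ T\bigr)(g).
\]
The right-hand side is first checked to be a well-defined element of $\cB'_{p,q}$: the function $e^{r\cdot F} f\circ T$ is again in $\cC^{\infty,\bdd}(S)$, so it embeds via the map $\cC^{\infty,\bdd}(S)\hookrightarrow\cB'_{p,q}$ just constructed. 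The two sides are then continuous functionals agreeing on a dense set, hence equal.

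\textbf{Key step: identifying the pairing with an area integral.} For smooth $g$, the pairing $f(\Gamma g)$ is just $\int_S f\, g\,\diff\!\vol$. This holds because $f(\Gamma g)$ is defined through the partition of unity of \Cref{prop:localize}, applied to $f$ supported in rectangles $\Omega^a$. On each rectangle,
\[
\int_{\Omega^a_v}\langle\Gamma g(x), f_{a,x}\rangle\,\diff x = \int_{\Omega^a_v}\int_{I^a_h} g\circ\phi_s(x)\, f_a\circ\phi_s(x)\,\diff s\,\diff x,
\]
which is the Fubini decomposition of $\int_{\Omega^a} f_a g\,\diff\!\vol$, since $\vol$ is $\diff s\,\diff x$ in flow-box coordinates. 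Summing over $a$ gives $\int_S fg\,\diff\!\vol$. The same computation as in the proof of \Cref{lem:inclusion} shows that this is independent of the localization.

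\textbf{Change of variables.} With the area-integral formula in hand,
\[
f(\cL_{r,F} g) = \int_S f\cdot (e^{r\cdot F} g)\circ T^{-1}\,\diff\!\vol = \int_S (f\circ T)\, e^{r\cdot F} g\,\diff\!\vol,
\]
using $T$-invariance of $\vol$, which holds because $T$ is $(\lambda^{-1}s,\lambda t)$ in charts and so has Jacobian $1$. The last integral equals $(e^{r\cdot F} f\circ T)(g)$, as required.

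The only slightly delicate point is compact support and the localization. The function $f\circ T$ need not be supported in a single rectangle, so $f$ should be taken to be a general element of $\cC^{\infty,\bdd}(S)$, localized via \Cref{prop:localize} rather than assumed to sit in one rectangle. The localization also has to be handled near the singularities, but there the area-integral formula holds for all smooth functions, and both sides are continuous in the $\cC^0$ norm of $f$ by \eqref{eq:ineq_H2}.
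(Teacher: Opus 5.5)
Your proposal is correct and is essentially the paper's argument: the paper likewise reduces, by linearity and density of $\mathscr{C}^{\infty,\bdd}(S)$ in $\cB_{p,q}$, to pairing $f$ against smooth $g$, where the pairing is the area integral, and concludes by changing variables under $T$, which preserves Lebesgue measure. Your extra discussion of localizing $f$ and $f\circ T$ just spells out what the paper compresses into ``we can assume that $f$ is supported in an open rectangle.''
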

\begin{proof}
We can assume that $f$ is supported in an open rectangle. Let $g\in \cC^{\infty, \bdd}(S)$; then
\[
\langle \cL_{r,F}'f, g \rangle = \langle f, \cL_{r,F}g \rangle = \langle f, (e^{r \cdot F} g) \circ T^{-1}\rangle = \langle e^{r \cdot F} \cdot f\circ T, g \rangle, 
\]
where we used the fact that $T$ preserves the Lebesgue measure. Since $r\in \R^d$, and by density of $\cC^{\infty, \bdd}(S)$ in $\cB_{p,q}$, the proof is complete.
\end{proof}

For $r\in \R^d$ and $x\in S_{\reg}$, we define 
\begin{equation}\label{eq:Gx}
	G_{r,x}(s)= G_{x}(s) = \prod_{k=0}^{\infty} e^{r \cdot (F\circ T^k(\phi_s(x)) - F\circ T^k(x))}, \qquad \text{for $s\in [0,1]$.}
\end{equation}
Note that $G_x(s)$ is uniformly bounded in $x$ and $s\in [0,1]$.

\begin{lemma}\label{lem:expression_for_nu}
There exists a measure $\vartheta$ on vertical segments such that, if $\ell \in \cB_{p,q}$ is supported on an open rectangle $\Omega$ with vertical side $\Omega_v$, then
\[
\nu(\ell) = \int_{\Omega_v} \langle \ell(x), G_x \rangle \diff \vartheta(x).
\]
\end{lemma}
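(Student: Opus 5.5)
The plan is to disintegrate $\nu$ along horizontal segments and identify the conditional measures through the eigen-equation $\cL'_{r,F}\nu=\rho\nu$, where $\rho=\rho(r)$. Write this equation in the form $\nu(f)=\rho^{-1}\nu(\cL_{r,F}f)$ and, iterated, $\nu(f)=\rho^{-n}\nu(\cL^n_{r,F}f)$. This expresses the value of $\nu$ on a function localized on one unit horizontal segment in terms of its values on the $n$-fold iterate, where the segment has been contracted to length $\lambda^{-n}$. On such tiny segments $\nu$ looks like (transverse mass)$\times$(point evaluation). The density along the leaf should then be produced by the cocycle $e^{r\cdot S_nF}$, which is exactly how $G_x$ arises.

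\textbf{Step 1 (disintegration).} By \Cref{lem:nu_is_measure_2}, $\nu$ is a positive finite measure on $S$. On an open rectangle $\Omega=\{\phi_t(y): y\in\Omega_v, t\in I_h\}$, disintegrate $\nu|_\Omega$ as $\nu=\int_{\Omega_v}\nu_y\,\diff\vartheta_\Omega(y)$. Here $\vartheta_\Omega$ is the push-forward of $\nu$ to $\Omega_v$ under the projection along horizontal leaves, and $\nu_y$ is a probability measure on the horizontal segment through $y$.

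\textbf{Step 2 (quasi-invariance along leaves).} The key claim is that, for $s$ in the segment, $\diff\nu_y(\phi_s y)=c_y G_y(s)\diff s$. To obtain it, apply $\nu=\rho^{-n}(\cL'_{r,F})^n\nu$, using \Cref{lem:expression_for_L_prime} in the form $\nu(f)=\rho^{-n}\nu\big(e^{r\cdot S_nF\circ T^{-n}}\, f\circ T^{-n}\big)$. Test this against functions $f=a(t)b(y)$ localized in $\Omega$. The map $T^{-n}$ sends $\Omega$ to a rectangle of horizontal length $\lambda^n$ and vertical length $\lambda^{-n}$. Inside it, the function $e^{r\cdot S_nF\circ T^{-n}}$ restricted to a horizontal line equals $e^{r\cdot S_nF(\text{base point})}$ times the finite product $\prod_{k<n}e^{r\cdot(F\circ T^k(\phi_s x)-F\circ T^k(x))}$, with $x=T^{-n}$ of a point. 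Comparing two horizontal translates $a(\cdot)$ and $a(\cdot - s)$ shows that the ratio of $\nu$-masses along the leaf is governed by this product up to errors. By the estimate in the proof of \Cref{lem:estimates_spectral_radius}, these errors are bounded by $|r|\,\|F\|_{\mathscr C^1}\sum_{k\ge n}\lambda^{-k}\to0$. In the limit this gives the Radon--Nikodym cocycle $\frac{\diff\nu_y\circ\phi_s}{\diff\nu_y}$ equal to $G_y$ evaluated along the leaf. A cleaner alternative is to use \Cref{prop:leafwise_measures}-type uniqueness: both $\nu_y$ and $G_y\,\diff s$ satisfy the same transformation rule under $T$, namely $G_{x}(s)=e^{r\cdot(F(\phi_s x)-F(x))}G_{Tx}(\lambda^{-1}s)$. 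Conditional measures on horizontal leaves (the stable direction) are then contracted by $T$, and the conditional of $\nu$ must coincide with the unique solution, which is $G$ times Lebesgue.

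\textbf{Step 3 (assembling).} Set $\diff\vartheta(y)=c_y\diff\vartheta_\Omega(y)$. For $\ell=\Gamma v$ with $v$ smooth and supported in $\Omega$, this gives $\nu(v)=\int_{\Omega_v}\int v(\phi_s y)G_y(s)\diff s\,\diff\vartheta(y)=\int_{\Omega_v}\langle \Gamma v(y),G_y\rangle\diff\vartheta(y)$. One checks compatibility under changes of basepoint. Moving $y$ to $\phi_t y$ multiplies $G$ by the constant $G_y(t)^{-1}$, and rescales $\vartheta$ correspondingly, so $\vartheta$ is a well-defined measure on vertical segments, consistent across overlapping rectangles. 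Finally, extend to all $\ell\in\cB_{p,q}$ supported in $\Omega$ by density (\Cref{lem:inclusion}). Both sides are continuous, since $G_y\in\mathscr C^q$ uniformly (derivatives of the product converge geometrically, as in \Cref{lem:testfun}) and $\nu\in\cB'_{p,q}$.

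\textbf{Main obstacle.} The hardest part will be the rigorous identification in Step 2, namely that the leafwise conditionals of $\nu$ are absolutely continuous with density $G$. Disintegration only gives measures defined almost everywhere, and the argument must control the limit $n\to\infty$ uniformly. I would first try the direct iteration argument with explicit error bounds. If that proves too messy, I would fall back on adapting the uniqueness argument of Appendix~\ref{sec:leafwise_measures}.
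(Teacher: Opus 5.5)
Your Step~2 has a genuine gap, and neither route you sketch closes it. The only information about $\nu$ you use there is the fixed-point identity $\nu=\rho^{-n}(\cL'_{r,F})^n\nu$ together with positivity, and these cannot determine the leafwise conditionals. For $r=0$ one has $\rho(0)=1$ and $G_x\equiv 1$, so the identity just says $\nu(f\circ T^{-n})=\nu(f)$, i.e.\ $T$-invariance. The equidistributed probability measure on a periodic orbit of $T$ in $S_0$ also satisfies this, and its conditionals on horizontal leaves are Dirac masses, not Lebesgue.

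Concretely, $f\circ T^{-n}$ is supported in $T^{n}\Omega$ (not $T^{-n}\Omega$), whose horizontal segments have length $\lambda^{-n}|I_h|$. At the point $\phi_{\lambda^{-n}t}(T^n y)$ the weight is $e^{r\cdot S_nF(\phi_t y)}=e^{r\cdot S_nF(y)}G_y(t)\,\widetilde{G}_{n,r,y}(t)^{-1}$, which is correct. But replacing $a$ by $a(\cdot-s)$ becomes a translation by $\lambda^{-n}s$ of a profile living at scale $\lambda^{-n}$. Comparing the two $\nu$-integrals therefore requires the conditional of $\nu$ at scale $\lambda^{-n}$, which is the same unknown, rescaled. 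The tail bound $\sum_{k\ge n}\lambda^{-k}$ controls only the weight, never $\nu$.

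The fallback fails too. \Cref{prop:leafwise_measures} concerns elements of $\cB_{p,q}$, i.e.\ leafwise measures varying continuously in the transverse point. It also \emph{assumes} the domination $|\widehat{\mathcal{l}_2}|\le C\widehat{\ell_1}$, i.e.\ absolute continuity with bounded density, which is precisely what you must prove. The conditionals of $\nu$ are only defined $\vartheta_\Omega$-almost everywhere, and $T$-equivariance along the contracted foliation gives no uniqueness (same periodic-orbit example).

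The missing idea is an input that singles out $\nu$ among solutions of its eigen-equation. The paper uses the spectral gap. Since $\rho(r)$ is simple and isolated (\Cref{cor:spectral_decomposition}), $\mu_N:=\rho^{-N}(\cL'_{r,F})^N\one\to\one(\ell_+)\,\nu$ in $\cB'_{p,q}$, with $\mu_N(\ell_+)=\one(\ell_+)>0$. By \Cref{lem:expression_for_L_prime}, $\mu_N$ is the explicit measure $\rho^{-N}e^{r\cdot S_NF}\diff\!\vol$. Its density on a unit horizontal segment is $\rho^{-N}e^{r\cdot S_NF(x)}G_x(s)\,\widetilde{G}_{N,r,x}(s)^{-1}$, with $\widetilde{G}_{N,r,x}\to1$ uniformly. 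Using the uniform bound on $\rho^{-N}(\cL'_{r,F})^N$, $\mu_N$ is $o(1)$-close to $\nu_N(\ell)=\int_{\Omega_v}\rho^{-N}e^{r\cdot S_NF(x)}\langle\ell(x),G_x\rangle\diff x$. So the leafwise density $G_x$ survives in the limit, and $\diff\vartheta=\one(\ell_+)^{-1}\lim_N\rho^{-N}e^{r\cdot S_NF(x)}\diff x$.

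Alternatively, your direct route can be rescued with the a priori bound $|\nu(g)|\le C\|g\|_{0,p+q}$ of \Cref{lem:nu_is_measure_1}. Take $f(\phi_t y)=c(t,y)$ with $\int c(t,y)G_y(t)\diff t=0$ for every $y\in\Omega_v$. Taylor-expanding the test function at scale $\lambda^{-n}$, the leading term cancels on each crossing of $\Omega$ by $T^{-n}I(x)$. This gives $\|\cL^n_{r,F}f\|_{0,p+q}\ll\lambda^{-n}R_n^n\ll\lambda^{-n}\rho(r)^n$, hence $|\nu(f)|=\rho(r)^{-n}|\nu(\cL^n_{r,F}f)|\ll\lambda^{-n}\to0$. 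Thus $\nu(f)$ depends only on $y\mapsto\int c(t,y)G_y(t)\diff t$, and positivity of $\nu$ then yields $\vartheta$.
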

\begin{proof}
First of all, we note that, for any $N\in \N$, we can write $e^{r \cdot S_NF}\circ \phi_s(x) \widetilde{G}_{N,r,x}(s) = G_{r,x}(s) e^{r \cdot S_NF(x)}$, where 
\[
\widetilde{G}_{N,r,x}(s) = e^{r \cdot (\sum_{k=N}^{\infty} F\circ T^k(\phi_sx) - F\circ T^k(x))} \ll e^{|r| \, \lambda^{-N}}.
\]

Now, for any $\ell \in \cB_{p,q}$ supported on an open rectangle $\Omega$ as above, define
\[
\nu_N(\ell) \coloneqq \int_{\Omega_v} \rho(r)^{-N} e^{r \cdot S_N F(x)} \langle\ell(x), G_x \rangle \diff x.
\] 
Then, $\nu_N$ can be seen as an element of $\cB_{p,q}'$.
We claim that $\nu_N \to \one(\ell_{+}) \cdot \nu$ in $ \cB_{p,q}'$. Once this is proved, we would deduce $\diff \vartheta(x) = (\one(\ell_{+}))^{-1}\lim_{N\to \infty} \rho(r)^{-N} e^{r \cdot S_N F(x)} \diff x$, and hence prove the result.

For $N\in \N$, we define 
\[
\mu_N \coloneqq \rho(r)^{-N} (\cL_{r,F}')^N (\one) \in \cB_{p,q}'.
\]
Clearly, $\mu_N(\ell_{+}) = \one(\ell_{+}) >0$, which implies that the sequence $\mu_N$ does not converge to $0 \in \cB'_{p,q}$. Hence, since $\rho(\cL_{r,F}') = \rho(r)$, the sequence $\mu_N$ must converge to its projection on the eigenspace with maximal eigenvalue $\rho$; more precisely, we have $\lim_{N\to\infty} \mu_N = \one(\ell_{+}) \nu$. We claim that $\nu_N - \mu_N \to 0$; this would complete the proof. It is sufficient to verify the claim for all nonnegative $f\in \cC^{\infty,\bdd}(S)$ supported on an open rectangle $\Omega$. 
Using \Cref{lem:expression_for_L_prime}, we can rewrite
\[
\begin{split}
\mu_N(f) &= [\rho(r)^{-N} (\cL_{r,F}')^N (\one)](f) = \rho(r)^{-N} e^{r \cdot S_N F} \one(f) = \int_{\Omega_v} \rho(r)^{-N}\langle f_x, e^{r \cdot S_N F} \circ \phi_{\cdot}(x)\rangle \diff x \\
&= \int_{\Omega_v} \rho(r)^{-N}e^{r \cdot S_N F(x)}\langle f_x, \widetilde{G}_{N,r,x}^{-1} G_x\rangle \diff x. 
\end{split}
\]
Therefore, 
\[
\begin{split}
	|\mu_N(f) - \nu_N(f)|\ &= \left\lvert \int_{\Omega_v} \rho(r)^{-N}e^{r \cdot S_N F(x)}\langle f_x, (\widetilde{G}_{N,r,x}^{-1}-1) G_x\rangle \diff x \right\rvert\\
	&= \left\lvert \int_{\Omega_v} \rho(r)^{-N}\langle f_x, (\widetilde{G}_{N,r,x}^{-1}-1) \widetilde{G}_{N,r,x} e^{r \cdot S_N F} \circ \phi_{\cdot}(x) \rangle \diff x \right\rvert \\
	&=\left\lvert [\rho(r)^{-N}(\cL_{r,F}')^N((\widetilde{G}_{N,r,x}^{-1}-1) \widetilde{G}_{N,r,x})]( f) \right\rvert\\
	&\ll \|\rho(r)^{-N}(\cL_{r,F}')^N\|_{0,0} \|(\widetilde{G}_{N,r,x}^{-1}-1) \widetilde{G}_{N,r,x}\|_{\infty} \|f\|_{\infty},
\end{split}
\]
which tends to $0$ as $N \to \infty$. 
This shows that $\nu_N - \mu_N \to 0$ in $\cB_{p,q}'$, and hence completes the proof.
\end{proof}

We then obtain the following corollary.
\begin{corollary}\label{cor:transverse_measure}
The measures $\nu$ and $\nu_T$ can be disintegrated along the orbits of the flow $\{\phi_t\}_{t\in \R}$ and have the same transverse measure $\vartheta$.
\end{corollary}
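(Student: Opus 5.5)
The plan is to read off both disintegrations directly from \Cref{lem:expression_for_nu}, using the fact that multiplying by $\ell_+$ only changes the leafwise part and not the transverse part.

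For $\nu$: take a smooth $f$ supported in an open rectangle $\Omega$ with vertical side $\Omega_v$. \Cref{lem:expression_for_nu} gives
\[
\nu(f) = \int_{\Omega_v} \int_0^1 f\circ\phi_s(x)\, G_x(s)\diff s \diff\vartheta(x).
\]
This is already a disintegration. The conditional measure on the horizontal segment through $x$ is $G_x(s)\diff s$, a positive continuous density because $G_x>0$ and is uniformly bounded. The transverse measure is $\vartheta$.

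Two compatibility checks are needed, and they are the only real content here.

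First, the leafwise densities must be consistent under sliding the transversal along the flow. From the product formula \eqref{eq:Gx},
\[
G_{\phi_t x}(s) = \frac{G_x(s+t)}{G_x(t)} .
\]
Consistency then amounts to $\diff\vartheta$ transforming by the factor $G_x(t)$ under holonomy. I would check this through the defining limit
\[
\diff\vartheta = c\lim_{N\to\infty}\rho(r)^{-N} e^{r\cdot S_NF(x)}\diff x ,
\]
using that $e^{r\cdot S_NF(\phi_t x) - r\cdot S_NF(x)} \to G_x(t)$.

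Second, the pieces must glue. Via the partition of unity of \Cref{prop:localize}, the local disintegrations patch into a global disintegration of $\nu$ along flow orbits, with transverse measure $\vartheta$.

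For $\nu_T$: by definition $\nu_T(f)=\nu(f\ell_+)$. Here $f\ell_+$ is supported in $\Omega$, and $\ell_+(x)$ is a positive measure by \Cref{lem:evectors_are_measures}, applied with $\eta=\rho(r)$. Applying \Cref{lem:expression_for_nu} to $\ell=f\ell_+$ gives
\[
\nu_T(f)=\int_{\Omega_v}\langle \ell_+(x), f_x G_x\rangle\diff\vartheta(x)=\int_{\Omega_v}\int f\, G_x\diff\widehat{\ell_+}(x)\diff\vartheta(x).
\]
So $\nu_T$ disintegrates with conditionals $G_x\,\widehat{\ell_+}(x)$ and the same transverse measure $\vartheta$. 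The holonomy consistency follows from the previous one together with the continuity of $\widehat{\ell_+}$ along leaves, since $\ell_+\in\cB_{p,q}$ is a continuous leafwise measure.

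The main obstacle is justifying \Cref{lem:expression_for_nu} for $\ell=f\ell_+$, which is not smooth, together with the measurability of $x\mapsto\langle\ell_+(x),f_xG_x\rangle$. My approach is to approximate $\ell_+$ by $\rho(r)^{-n}\cL_{r,F}^n\one$. Each approximant is smooth, and the convergence holds in $\cB_{p,q}$. Both sides of the identity are continuous in the $\cB_{p,q}$ topology: $\nu\in\cB'_{p,q}$, and pointwise evaluation is bounded uniformly in $x$. Passing to the limit under dominated convergence with respect to the finite measure $\vartheta$ then gives the identity for $f\ell_+$.

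Measurability comes from the continuity of $x\mapsto\ell_+(x)$ into $\mathcal{C}^{-q}$, combined with the continuity of $x\mapsto f_xG_x$ into $\mathcal{C}^q$.
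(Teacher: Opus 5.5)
Your proposal is correct and follows the paper's proof, which simply applies \Cref{lem:expression_for_nu} to $f$ and to $f\ell_+$. It then reads off the conditionals $G_x\diff s$ and $G_x\diff\widehat{\ell_+}(x)$ over the common transverse measure $\vartheta$.

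Your holonomy and gluing checks are extra verification, and they agree with the paper, which later uses $\diff\vartheta(\phi_s x)=G_x(s)\diff\vartheta(x)$ in \Cref{sec:Hausdorff}. Your approximation step is sound but unnecessary: \Cref{lem:expression_for_nu} is already stated for every $\ell\in\cB_{p,q}$ supported in a rectangle, and $f\ell_+\in\cB_{p,q}$ by \Cref{lem:multiplying_by_smooth}.
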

\begin{proof}
If $f \in \cC^{\infty, \bdd}(S)$ is supported on an open rectangle $\Omega$ with vertical side $\Omega_v$, then, in local coordinates $(s,x) = \phi_s(x)$ for $x \in \Omega_v$, by \Cref{lem:expression_for_nu}, we have
\[
\nu(f) = \int_{\Omega_v} \int_0^1 f(s,x) G_x(s) \diff s \diff \vartheta(x), \qquad \text{and} \qquad \nu_T(f) = \int_{\Omega_v} \int_0^1 f(s,x) G_x(s) \diff \widehat{\ell_{+}}(x)(s) \diff \vartheta(x),
\]
which proves the result.
\end{proof}

We conclude this section by showing that the measure $\nu_T$ has strong mixing properties for $T$.

\begin{proposition}\label{prop:nu_T_is_exponentially_mixing}
The map $T$ is exponentially mixing with respect to $\nu_T$.
\end{proposition}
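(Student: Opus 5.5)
The plan is to express correlations for $\nu_T$ through the transfer operator $\cL_{r,F}$ and then use the spectral gap from \Cref{cor:spectral_decomposition}.

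\textbf{Step 1: the correlation identity.} Fix $r\in\R^d$, $p,q\ge1$, and write $\rho=\rho(r)$. Take $f,g\in\cC^{\infty,\bdd}(S)$. By \Cref{lem:multiplying_by_smooth}, $g\ell_+\in\cB_{p,q}$. The key identity is
\[
\nu_T(f\circ T^n\cdot g)=\nu\big(f\circ T^n\cdot g\ell_+\big)=\nu\big(\rho^{-n}\cL_{r,F}^n(f\circ T^n\cdot g\ell_+)\big)=\nu\big(f\cdot\rho^{-n}\cL_{r,F}^n(g\ell_+)\big).
\]
Two facts give this. First, $\nu$ is a $\rho$-eigenvector of $\cL'_{r,F}$. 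Second, there is the pointwise-on-leaves relation $\cL_{r,F}^n(h\circ T^n\cdot \ell)=h\cdot\cL_{r,F}^n\ell$. This relation is immediate for smooth $\ell$ from the formula $\cL^n\ell=(e^{r\cdot S_nF}\ell)\circ T^{-n}$. It extends to $\ell\in\cB_{p,q}$ by density and continuity, using \Cref{lem:multiplying_by_smooth} and \eqref{LY-1}. The same computation already appeared in the proof of \Cref{lem:nu_is_measure_2}.

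\textbf{Step 2: the spectral decomposition.} By \Cref{cor:spectral_decomposition}(ii)--(iii), the eigenvalue $\rho(r)$ is simple. By \Cref{prop:unique_max_eigenvalue} applied with $z=r$, it is the only eigenvalue on the circle of radius $\rho$. Hence there is $\tau<\rho$ with
\[
\rho^{-n}\cL_{r,F}^n=\Pi_\rho+O\big((\tau/\rho)^n\big)\quad\text{in }\cB_{p,q},\qquad \Pi_\rho\ell=\nu(\ell)\ell_+.
\]
Applying this to $g\ell_+$ gives
\[
\rho^{-n}\cL^n_{r,F}(g\ell_+)=\nu_T(g)\,\ell_++E_n,\qquad \|E_n\|_{p,q}\le C(\tau/\rho)^n\|g\|_{\cC^{p+q}}.
\]
Substituting into Step 1 yields
\[
\nu_T(f\circ T^n\, g)=\nu_T(g)\,\nu(f\ell_+)+\nu(fE_n)=\nu_T(f)\nu_T(g)+\nu(fE_n).
\]

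\textbf{Step 3: bounding the error.} It remains to show
\[
|\nu(fE_n)|\le C\|f\|_{\cC^{p+q}}\|E_n\|_{p,q}.
\]
This follows because $\nu\in\cB'_{p,q}$, combined with \Cref{lem:multiplying_by_smooth} applied to the product $fE_n$. The conclusion is exponential decay of correlations at rate $\tau/\rho<1$ for $\cC^{p+q}$ observables. Taking $p=q=1$ gives $\cC^2$ observables, and density upgrades this to mixing for all $L^2(\nu_T)$ observables.

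\textbf{Main obstacle.} The delicate step is justifying the intertwining relation $\cL^n(h\circ T^n\,\ell)=h\,\cL^n\ell$ for general $\ell\in\cB_{p,q}$. A secondary issue is that $f\circ T^n$ has $Y$-derivatives growing like $\lambda^n$, so one must not estimate the norm of $f\circ T^n\cdot g\ell_+$ in $\cB_{p,q}$ directly. The identity is exactly what avoids this, since it moves $f$ outside the transfer operator. I would prove the relation on $\Gamma v$ for smooth $v$, where it is trivial, and pass to limits using the continuity bounds already established.
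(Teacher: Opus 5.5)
Your proof is correct and is essentially the paper's argument: both reduce $\nu_T(f\circ T^n\cdot g)$ to $\nu\big(f\cdot\rho(r)^{-n}\cL_{r,F}^n(g\ell_+)\big)$, then use the spectral gap from \Cref{prop:unique_max_eigenvalue} together with $\Pi_{\rho}(\cdot)=\nu(\cdot)\ell_+$ to get $\nu_T(f)\nu_T(g)+O(\sigma^n)$. The only difference is cosmetic: the paper obtains that identity by quoting the $T$-invariance of $\nu_T$ from \Cref{lem:nu_is_measure_2} and using $\rho(r)^{-n}\cL_{r,F}^n(g\ell_+)=g\circ T^{-n}\,\ell_+$, rather than pushing $f\circ T^n$ through $\cL_{r,F}^n$ directly as you do.
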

\begin{proof}
Let $f$ and $g$ be sufficiently smooth functions. Then, we have
\begin{equation}\label{eq:corr}
\nu_T(f\circ T^n \cdot g) = \nu(f \cdot g\circ T^{-n} \cdot \ell_{+}) = \nu( f \cdot \rho(r)^{-n}\cL_{r,F}^n(g\cdot \ell_{+})).
\end{equation}
By \Cref{prop:unique_max_eigenvalue}, there exists $\sigma \in (0,1)$ such that $\rho(r)^{-n}\cL_{r,F}^n (g\cdot \ell_{+})= \nu (g\cdot \ell_{+}) \ell_{+}+ O(\sigma^n)$. Thus,
\[
\nu_T(f\circ T^n \cdot g) = \nu( f \, \ell_{+}) \cdot \nu( g\, \ell_{+}) + O(\sigma^n) = \nu_T(f) \, \nu_T( g ) + O(\sigma^n).
\]
The proof is complete.
\end{proof}

\begin{remark}\label{rmk:nu_T_is_Gibbs}
By following the arguments in~\cite[Section 6.2]{GoLi2}, it is possible to prove that
$\nu_T$ is the unique Gibbs measure of $T$ corresponding to the potential
$r \cdot F - \log \lambda$, for $r\in \R^d$.

In particular, the simple eigenvalue $\rho(r)$ satisfies
\[
	\log \rho(r) = P\big(T,r \cdot F - \log \lambda\big),
\]
where $P(T,\cdot)$ denotes the topological pressure. Equivalently, the variational
principle takes the form
\[
	\log \rho(r) = \sup_{\mu \in \operatorname{Erg}(T)} \left\{
	h_\mu(T) + \int r \cdot F \, \diff\mu - \log \lambda\right\}.
\]
In particular, $\nu_T$ is the unique invariant probability measure achieving the
above supremum.
\end{remark}

\subsection{Perturbations of the maximal eigenvalue} In this section, for $z=r+ 2\pi i\theta$, where $r,\theta \in \R^d$, we consider the operator $\cL_{\theta} \coloneqq \cL_{r+2\pi i\theta,F}$ as a perturbation of $\cL_{r,F}$ with respect to $\theta$, while fixing $r$. From the previous section we know that $\cL_{r,F}$ has a spectral gap with maximal simple eigenvalue $\rho$ with associated eigenfunction $\ell_+>0$. Moreover, the dual operator has $\rho$ as maximal simple eigenvalue as well, with associated eigenfunction $v$ such that $v(\ell_+)=1$\footnote{Note that the quantities $\rho, \ell_+$ and $\nu$ depend on $r$ which is  fixed and, therefore, dropped from the notations.}. Recall also the measure $\nu_T$ from the previous section, which will have a key role in the following.\\
By the Lasota-Yorke inequality and \Cref{prop:unique_max_eigenvalue} we have the following.

\begin{lemma}\label{eq:quasi-twist}
	For each $\theta \neq 0$ the spectrum of $\cL_\theta$ acting on $\cB_{p,q}$
	is contained in the disk $\{w\in \C: |w|\le \rho\}$ and the essential
	spectrum is contained in the disk $\{w\in \C: |w|\le \sigma \rho\}$, for
	some $\sigma\in(0,1)$. Moreover, if, for all $v\in (-\pi,\pi)^d\setminus\{0\}$,
	$F\cdot v$ is not cohomologous to a constant  $\hspace{-0.5em}\pmod{2\pi}$, then the spectrum of
	$\cL_\theta$ is contained in the open disk $\{w\in \C: |w|< \rho\}$.
\end{lemma}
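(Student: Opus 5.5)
The plan is to read the lemma off the results already established for $z=r+2\pi i\theta$, with $r=\Re z$ held fixed.

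\textbf{Spectral radius.} By \Cref{cor:estim_spectral_radius}, $\rho(\cL_\theta)=\rho(z)\le\rho(\Re z)=\rho(r)=\rho$. Hence the whole spectrum of $\cL_\theta$ on $\cB_{p,q}$ lies in the closed disk $\{|w|\le\rho\}$.

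\textbf{Essential spectrum.} By \Cref{lem:estimates_spectral_radius} (equivalently \eqref{eq:ess_rad_cor}), for $p,q\ge1$ we have
\[
\rho_{\ess}(\cL_\theta)\le\lambda^{-\min\{p,q\}}R(z)=\lambda^{-\min\{p,q\}}R(r)=\lambda^{-\min\{p,q\}}\rho(r).
\]
The middle equality is \Cref{rk:Rn_and_R} and the last is \Cref{lem:estimates_spectral_radius}. So we may take $\sigma=\lambda^{-\min\{p,q\}}\in(0,1)$, which does not depend on $\theta$.

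\textbf{Strict inequality.} Assume the non-cohomology hypothesis. Outside the essential disk the spectrum consists only of isolated eigenvalues of finite multiplicity, by \Cref{cor:spectral_decomposition}. So any spectral point $w$ with $|w|=\rho>\sigma\rho$ would be an eigenvalue of $\cL_{z,F}$ with $|w|=\rho(\Re z)$.

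\Cref{prop:unique_max_eigenvalue} rules this out whenever $z\notin\R^d$, provided $\Im z\in(-\pi,\pi)^d$. Here $\Im z=2\pi\theta$, so the condition is that $2\pi\theta$ lies in the fundamental domain, i.e.\ $\theta\in(-\tfrac12,\tfrac12)^d\setminus\{0\}$. This is exactly the range used in the Fourier decomposition \eqref{eq:roadmap_fourier}. For such $\theta$ the spectrum therefore lies in the open disk $\{|w|<\rho\}$.

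\textbf{Main obstacle.} The one delicate point is matching the range of $\theta$ with the $(-\pi,\pi)^d$ condition in \Cref{prop:unique_max_eigenvalue}. For $\theta$ outside $(-\tfrac12,\tfrac12)^d$, write $\theta=\theta'+k$ with $\theta'\in[-\tfrac12,\tfrac12)^d$ and $k\in\Z^d$. If $F$ takes values in $\Z^d$, then $e^{2\pi ik\cdot F}\equiv1$, so $\cL_\theta=\cL_{\theta'}$ and we reduce to $\theta'$. Otherwise, in the paper's setting the relevant statement is the one with $\theta$ in the fundamental domain, and I would state the lemma for that range.

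The boundary case $2\pi\theta'_j=\pm\pi$ is not covered by the hypothesis as stated. It must either be excluded or handled by assuming the non-cohomology condition on the closed cube. I would note this explicitly, since everything else is a direct assembly of earlier results.
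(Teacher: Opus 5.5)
Your proof is correct and is the argument the paper intends: the paper states this lemma as an immediate consequence of the Lasota--Yorke bounds (via \Cref{lem:estimates_spectral_radius}, \Cref{rk:Rn_and_R} and \Cref{cor:estim_spectral_radius}) together with \Cref{prop:unique_max_eigenvalue}. Your caveat about the range of $\theta$ is also justified, and it is more than a formality. The strict inequality only follows for $\theta\in(-\tfrac12,\tfrac12)^d\setminus\{0\}$, which is the range the paper implicitly uses, and it genuinely fails for $\theta=k\in\Z^d\setminus\{0\}$ in the cover setting: there $e^{2\pi i k\cdot F}=h\circ T/h$ with the $\Deck$-invariant function $h=e^{2\pi i k\cdot\xi}$, so $\cL_{r+2\pi i k,F}=h\,\cL_{r,F}(h^{-1}\,\cdot\,)$ still has $\rho$ as an eigenvalue.
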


The following results give the analytic perturbations of the maximal eigenvalue
and provide expressions for the first and second derivatives in the Taylor
approximation.

\begin{proposition}\label{prop:pert}
	There exist $\sigma < 1$ and a neighborhood $\mathcal N_0$ of $\theta=0$ for
  	which the following properties hold.
  	\begin{enumerate}[label=(\Alph*)]
	\item \label{item:pertA} $\|\cL_{\theta}^n f\|_{p,q} \leq C \sigma^n \|f\|_{p,q}
	$ for all $\theta \notin \mathcal N_0$.
	\item \label{item:pertB} For all $\theta \in \mathcal N_0$, there exist $\rho_\theta \in \C$, with $\rho_0=\rho$ and $|\rho_\theta| < \rho$, a rank-1 projector $\Pi_\theta$, and a family of operators $\theta \mapsto Q_{\theta}$ with $\Pi_\theta \, Q_{\theta} = Q_{\theta} \, \Pi_\theta = 0$, such that
	\[
	\cL_{\theta} = \rho_\theta \Pi_\theta + Q_{\theta},
	\]
	and $Q_{\theta}$ satisfies $\|Q_{\theta}^n f\|_{p,q} \leq C \sigma^n \| f\|_{p,q}$.
	\item \label{item:pertC} The functions $\theta \to \rho_\theta,\Pi_\theta, Q_{\theta}$ are analytic on $\mathcal N_0$ and, denoting by $D$ the derivative in direction $\theta$, we have the formula
	\begin{equation}\label{eq:logrho}
	D^2\log \rho_\theta|_{\theta=0}=\dfrac{D^2 \rho_\theta|_{\theta=0}}{\rho}-\dfrac{D\rho_\theta|_{\theta=0} \otimes D\rho_\theta|_{\theta=0}}{\rho^2}\eqqcolon 4\pi^2 \Sigma
	\end{equation}
	where
	\begin{equation}\label{eq:taylor}
		\Re(D\rho_\theta |_{\theta=0})=0 \qquad \text{and} \qquad  \Im(D\rho_\theta |_{\theta=0})= 2\pi \rho \nu_T(F)
	\end{equation}
and, setting $\overline{F}\coloneqq F-\nu_T(F)$,
\begin{equation}\label{eq:sigma}
	\Sigma=-\sum_{m\in \mathbb{Z}} \nu_T(\overline{F}\otimes \overline{F} \circ T^m) \qquad \text{and} \qquad \Im(D^2\rho_\theta |_{\theta=0})=0.
\end{equation}
Finally, the covariance matrix $\Sigma$ is non-degenerate if and only if $F
\cdot v$ is not cohomologous to a constant  $\hspace{-0.5em}\pmod{2\pi}$ for all $v\in (-\pi,\pi)^d\setminus \{0\}$.
  	\end{enumerate}
\end{proposition}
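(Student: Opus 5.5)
Part~\ref{item:pertA} and the spectral decomposition in~\ref{item:pertB} come from analytic perturbation theory applied to the family $\theta\mapsto\cL_\theta$. First, $\theta\mapsto\cL_\theta=\cL_{r,F}(e^{2\pi i\theta\cdot F}\,\cdot\,)$ is analytic as a map into bounded operators on $\cB_{p,q}$. Indeed, by \Cref{lem:multiplying_by_smooth}, multiplication by $e^{2\pi i\theta\cdot F}$ is bounded, and its power series in $\theta$ converges in operator norm because $\|F^{\otimes k}\|_{\cC^{p+q}}$ grows at most geometrically.

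\emph{Near $\theta=0$.} At $\theta=0$, \Cref{cor:spectral_decomposition} and \Cref{prop:unique_max_eigenvalue} give a simple isolated eigenvalue $\rho$, and the rest of the spectrum lies in a disk of radius $\sigma_0\rho$ with $\sigma_0<1$. Kato's perturbation theory, via the Riesz projection $\Pi_\theta=\frac{1}{2\pi i}\oint_{|w-\rho|=\epsilon}(w-\cL_\theta)^{-1}\diff w$, then yields a neighbourhood $\mathcal N_0$ on which $\rho_\theta$, $\Pi_\theta$ and $Q_\theta=\cL_\theta(1-\Pi_\theta)$ are analytic. On $\mathcal N_0$ one also has $\rho(Q_\theta)\le\sigma'\rho$ uniformly. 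We have $|\rho_\theta|<\rho$ for $\theta\neq0$ by \Cref{eq:quasi-twist}; at $\theta=0$, $\rho_0=\rho$.

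\emph{Away from $\theta=0$.} For $\theta$ in the compact set $[-\tfrac12,\tfrac12]^d\setminus\mathcal N_0$, \Cref{eq:quasi-twist} gives $\rho(\cL_\theta)<\rho$. Upper semicontinuity of the spectral radius and compactness then give a uniform bound $\rho(\cL_\theta)\le\sigma''\rho$. A uniform bound on $\|\cL_\theta^n\|$ follows from a uniform resolvent bound on the circle $|w|=\sigma\rho$, using continuity of $\theta\mapsto\cL_\theta$. Here $\sigma$ is understood relative to $\rho$, i.e.\ we normalize $\cL_\theta$ by $\rho$.

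\emph{The derivatives in~\ref{item:pertC}.} We compute them from $\rho_\theta=\nu(\cL_\theta\ell_\theta)/\nu(\ell_\theta)$, normalizing $\nu(\ell_\theta)=1$ where $\ell_\theta=\Pi_\theta\ell_+$. Differentiating $\cL_\theta\ell_\theta=\rho_\theta\ell_\theta$ once and pairing with $\nu$ gives
\[
D\rho_\theta|_0=\nu\big(\cL_{r,F}(2\pi iF\ell_+)\big)=2\pi i\rho\,\nu_T(F),
\]
which is purely imaginary. For the second derivative, introduce the normalized operator $P=\rho^{-1}\cL_{r,F}$. Solving for $D\ell_\theta|_0$ gives
\[
D\ell_\theta|_0=(1-P)^{-1}(1-\Pi_0)\,P(2\pi i\overline F\ell_+)=\sum_{m\ge1}P^m(2\pi i\overline F\ell_+).
\]
The series converges by the spectral gap. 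Substituting, the second derivative of $\log\rho_\theta$ becomes
\[
-4\pi^2\Big[\nu_T(\overline F\otimes\overline F)+2\sum_{m\ge1}\mathrm{Sym}\,\nu_T(\overline F\otimes\overline F\circ T^m)\Big].
\]
To get this we use the identity $\nu(g\,P^m(h\ell_+))=\nu_T(g\circ T^m\,h)$, which follows from the proof of \Cref{lem:nu_is_measure_2}. The sum converges by \Cref{prop:nu_T_is_exponentially_mixing}, and by $T$-invariance it equals the two-sided sum defining $\Sigma$. The same expression being real gives $\Im(D^2\rho_\theta|_0)=0$ once combined with the first-order term. A cleaner bookkeeping route is to write $\rho_\theta=\rho\,e^{2\pi i\theta\cdot\nu_T(F)}\tilde\rho_\theta$, where $\tilde\rho_\theta$ is the eigenvalue for $\overline F$; then $D\tilde\rho_\theta|_0=0$ and the second-order term is directly $\nu$ of the centered expression.

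\emph{Nondegeneracy of $\Sigma$.} This is the main obstacle. Since $-\Sigma$ is the Green--Kubo variance, $v\cdot(-\Sigma)v=\lim_n\frac1n\nu_T\big((S_n\,v\cdot\overline F)^2\big)$, which is nonnegative. If it vanishes, the standard Gordin/Leonov argument with exponential mixing gives $v\cdot\overline F=\beta\circ T-\beta$ with $\beta\in L^2(\nu_T)$. Then $v\cdot F$ is cohomologous to a constant, and hence $(tv)\cdot F$ is cohomologous to a constant $\pmod{2\pi}$ for small $t$ with $tv\in(-\pi,\pi)^d$, which contradicts the hypothesis. The delicate point is that the coboundary is only $\nu_T$-measurable, whereas the definition of cohomologous uses Lebesgue measure. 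I would try to upgrade $\beta$ to be continuous along stable leaves using the leafwise-measure rigidity of \Cref{prop:leafwise_measures}, as in the proof of \Cref{prop:unique_max_eigenvalue}.

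Conversely, if $v\cdot F$ is cohomologous to a constant $\pmod{2\pi}$, then \Cref{prop:unique_max_eigenvalue} fails to exclude a peripheral eigenvalue. The resulting eigenvalue of modulus $\rho$ at $\theta=v/2\pi$ forces $|\rho_\theta|$ to be non-strictly decreasing along that direction, so the quadratic form is degenerate. Making this converse rigorous is where I expect the most friction, and I would accept relying on the Livšic-type regularity argument there.
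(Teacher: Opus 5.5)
Your proofs of (A), (B), the analyticity and the Taylor coefficients are sound. Differentiating the normalised eigenvector equation to get $D\ell_\theta|_{0}=\sum_{m\ge1}P^m(2\pi i\overline F\ell_+)$ is a lighter bookkeeping of the same second-order computation that the paper does with projector identities and a Neumann series for $(\mathbf{1}-\Pi_\theta)D\Pi_\theta$, and it gives the same $\Sigma$. The gaps are in the non-degeneracy statement, which the paper does not prove and simply cites from \cite{Gouezel04}.

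\textbf{Direction ``hypothesis $\Rightarrow$ $\Sigma$ non-degenerate''.} You correctly see that Gordin's transfer function exists only $\nu_T$-a.e., while cohomology is defined Lebesgue-a.e. Your proposed repair does not apply. \Cref{prop:leafwise_measures} needs an element of $\cB_{p,q}$ whose values are continuous leafwise measures, and an $L^2(\nu_T)$ function provides no such object. You can avoid the coboundary altogether, as follows.
\begin{itemize}
\item Suppose $v\cdot(-\Sigma)v=0$ for some $v\in\R^d\setminus\{0\}$. Set $c_k=\nu_T\bigl((v\cdot\overline F)\,(v\cdot\overline F)\circ T^k\bigr)$; these decay exponentially by \Cref{prop:nu_T_is_exponentially_mixing}.
\item Since $\sum_k c_k=0$, we get $\|S_n(v\cdot\overline F)\|_{L^2(\nu_T)}^2=-n\sum_{|k|\ge n}c_k-\sum_{|k|<n}|k|c_k=O(1)$.
\item Hence for $|t|$ small, $|\mathbb{E}_{\nu_T}(e^{2\pi i t v\cdot S_nF})|\ge 1-2\pi|t|\,\|S_n(v\cdot\overline F)\|_{L^2(\nu_T)}\ge\frac12$ for every $n$.
\item By the proof of \Cref{thm:CLT}, this quantity equals $\rho^{-n}|\nu(\cL_{tv}^n\ell_+)|$. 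That tends to zero exponentially, since $\rho(\cL_{tv})<\rho$ by \Cref{eq:quasi-twist}. This is a contradiction.
\end{itemize}
The hypothesis enters only through that lemma. There the Lebesgue-versus-$\nu_T$ issue has already been settled by the continuity of the leafwise phase in \Cref{prop:unique_max_eigenvalue}.

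\textbf{The converse.} This half should be dropped rather than repaired, for three reasons.
\begin{itemize}
\item Saying that \Cref{prop:unique_max_eigenvalue} ``fails to exclude'' a peripheral eigenvalue does not produce one. You would need a transfer function regular enough that $e^{i\alpha}\ell_+\in\cB_{p,q}$.
\item Even a genuine eigenvalue of modulus $\rho$ at some $\theta_0$ away from $0$ says nothing about the Hessian of $\log\rho_\theta$ at $0$. The modulus $|\rho_\theta|$ can drop quadratically near $0$ and return to $\rho$ at $\theta_0$. Moreover, that eigenvalue need not lie on the analytic branch continued from $0$.
\item For general $F$ the implication is false. Take $F$ the Frobenius function, $F'=4F$ and $v=\frac{\pi}{2}e_1$, so $v\cdot F'=2\pi(\xi_1\circ\tT-\xi_1)$. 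Since $\xi_1\circ D-\xi_1\in\Z$, setting $\alpha(x)$ equal to $2\pi$ times the fractional part of $\xi_1(\tilde x)$ gives a function well defined on $S$. It shows $v\cdot F'$ is cohomologous to $0 \pmod{2\pi}$. Yet at $r=0$ one has $\Sigma_{F'}=16\,\Sigma_F$, which is non-degenerate.
\end{itemize}
Degeneracy of $\Sigma$ detects genuine real coboundaries, not coboundaries mod $2\pi$. The ``only if'' half is harmless in the paper only because the non-cohomology hypothesis is already a standing assumption. It is needed for (A), and it is verified directly for the Frobenius function at the end of \Cref{sec:twisted_hilbert_spaces}.
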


\begin{proof}
We first explain why \labelcref{item:pertA,item:pertB}, and the analyticity
statements in \labelcref{item:pertC} follow from standard analytic perturbation
theory, so that it remains only to compute the Taylor coefficients in
\cref{eq:taylor,eq:sigma}.

By \Cref{prop:LY} and Hennion's theorem (as in
\Cref{cor:spectral_decomposition}), each operator $\cL_\theta$ is quasi-compact
on $\cB_{p,q}$, and its essential spectral radius is bounded by
$\rho_{\ess}(\cL_\theta)\le \sigma_0 \rho$ for some $\sigma_0\in(0,1)$,
uniformly for $\theta$ in a neighborhood of $0$. Moreover, by
\Cref{prop:unique_max_eigenvalue} the eigenvalue $\rho=\rho_0$ of $\cL_0$ is
simple and isolated: there exists $R\in(\sigma_0\rho,\rho)$ such that
\[
\operatorname{spec}(\cL_0)\cap\{|\mu|\ge R\}=\{\rho\}.
\]
Since $\theta\mapsto \cL_\theta$ is analytic as a map $\mathcal N_0\to \mathcal L(\cB_{p,q})$
(in the sense of Kato~\cite[Ch.~VII, \S1--3]{Kato}), the Riesz projector
\[
\Pi_\theta \coloneqq \frac{1}{2\pi i}\int_{\Gamma} (\zeta-\cL_\theta)^{-1}\,\diff\zeta,
\]
where $\Gamma$ is a small circle around $\rho$ contained in $\{|\zeta|>R\}$, is well-defined
and depends analytically on $\theta$ for $\theta$ small. In particular, $\Pi_\theta$ has rank
$1$, and $\rho_\theta$ (which is the trace of $\cL_\theta\Pi_\theta$) is the unique eigenvalue of $\cL_\theta$
inside $\Gamma$, depending analytically on $\theta$.
Setting $Q_\theta\coloneqq\cL_\theta-\rho_\theta\Pi_\theta$, we obtain the decomposition
\[
\cL_\theta=\rho_\theta\Pi_\theta+Q_\theta,\qquad \Pi_\theta Q_\theta=Q_\theta\Pi_\theta=0,
\]
and the spectral radius of $Q_\theta$ is $\le R<\rho$. This yields
\labelcref{item:pertB} and the exponential bound on $Q_\theta^n$; moreover
\labelcref{item:pertA} follows by the uniform spectral gap away from $\theta=0$.

All these conclusions are standard consequences of Kato's analytic perturbation
theory for isolated eigenvalues; see~\cite[Ch.~VII, \S1--3]{Kato}. Therefore it
remains to compute the derivatives of $\rho_\theta$ at $\theta=0$, namely
\cref{eq:taylor,eq:sigma}.

In particular, let $\ell_{+,\theta}$ be the eigenfunction of $\cL_\theta$ with eigenvalue $\rho_\theta$, and
$\nu_\theta$ the corresponding dual eigenfunction such that
$\nu_{\theta}(\ell_{+,\theta})=1$. We also consider the corresponding measure
$\nu_{T,\theta}(\cdot )=\nu_\theta(\cdot \ell_{+,\theta})$. Moreover we have,
\begin{enumerate}[label=\roman*)]
	\item \label{item:Pi_i} $\Pi_\theta \cL_\theta=\cL_\theta\Pi_\theta=\rho_\theta \Pi_\theta$,
	\item \label{item:Pi_ii} $\Pi_\theta(f)= \nu_{\theta}(f)\ell_{+,\theta}$,
	\item \label{item:Pi_iii} $\Pi_\theta^2=\Pi_\theta$.
\end{enumerate}

Let $D$ be the derivative in the direction of $\theta$. Differentiating
\labelcref{item:Pi_i}, composing with $\Pi_\theta$ on the left and using
\labelcref{item:Pi_iii}:
\[
	D \rho_\theta \Pi_\theta+\rho_\theta \Pi_\theta D\Pi_\theta=\Pi_\theta D\cL_\theta \Pi_\theta+\Pi_\theta \cL_\theta D\Pi_\theta.
\]
Using again \labelcref{item:Pi_i} for the last term, the above equation yields:
\begin{equation}\label{eq:DLDP}
	D\rho_\theta \Pi_\theta =\Pi_\theta D\cL_\theta \Pi_\theta.
\end{equation}

We observe that if, for some scalar $c$ and some operator $A$, it holds that
$c\Pi_\theta = \Pi_\theta A \Pi_\theta$, then, for any $f$, 
\[
	c \nu_\theta(f) \ell_{+,\theta} = c \Pi_\theta (f) = \Pi_\theta A (\nu_\theta(f) \ell_{+,\theta}) = \nu_\theta(f) \nu_\theta(A \ell_{+,\theta}) \ell_{+,\theta},
\]
and hence
\begin{equation}\label{eq:scalar_formula}
	c = \nu_\theta(A \ell_{+,\theta}).
\end{equation}

Recalling that 
\begin{equation}\label{eq:L1}
\cL_\theta(f)=[e^{(r+2\pi i \theta) \cdot F} f]\circ T^{-1},
\end{equation}
we have
\begin{equation}\label{eq:DL1}
D\cL_\theta (f)|_{\theta=0}=2\pi i\sum_{k=1}^d \cL_{0}(F_k \cdot f) e_k,
\end{equation}
where $F_k$ are the components of $F$. 

Hence 
\begin{equation}\label{eq:DL1.5}
D\cL_\theta|_{\theta=0}(f\ell_{+,\theta}) = 2\pi i\sum_{k=1}^d (F_k f)\circ T^{-1} \rho \ell_{+} e_k.
\end{equation}
Therefore,  
by \eqref{eq:scalar_formula}, \eqref{eq:DL1.5} and the invariance of $\nu_T$,
\[
	D\rho_\theta |_{\theta=0} = \sum_{k=1}^d \nu (D \cL_\theta |_{\theta=0} (F_k \ell_{+})) e_k = \sum_{k=1}^d 2\pi i \rho \nu_T(F_k) e_k,
\]
which proves \eqref{eq:taylor}.

Next, we differentiate \cref{eq:DLDP} and we get
\[
	D^2\rho_\theta \Pi_\theta=D\Pi_\theta D\cL_\theta \Pi_\theta+\Pi_\theta D^2\cL_\theta \Pi_\theta+\Pi_\theta D\cL_\theta D\Pi_\theta-D\rho_\theta D\Pi_\theta.
\]
Composing with $\Pi_\theta$ on the left and on the right, and recalling
\labelcref{item:Pi_iii}, we have
\[
	D^2\rho_\theta \Pi_\theta=\Pi_\theta D\Pi_\theta D\cL_\theta \Pi_\theta+\Pi_\theta D^2\cL_\theta \Pi_\theta+\Pi_\theta D\cL_\theta D\Pi_\theta \Pi_\theta-  D\rho_\theta\Pi_\theta D\Pi_\theta \Pi_\theta.
\]
Note that the last term satisfies $\Pi_\theta D\Pi_\theta \Pi_\theta=0$. Indeed,
by \labelcref{item:Pi_iii}
\[ 
	D\Pi_\theta=D\Pi_\theta \Pi_\theta+\Pi_\theta D\Pi_\theta, 
\] 
which implies the claim by composing with $\Pi_\theta$ on the right and using
\labelcref{item:Pi_iii} again. Hence,
\begin{equation}\label{eq:hell1}
	D^2\rho_\theta \Pi_\theta=\Pi_\theta D\Pi_\theta D\cL_\theta \Pi_\theta+\Pi_\theta D^2\cL_\theta \Pi_\theta+\Pi_\theta D\cL_\theta D\Pi_\theta \Pi_\theta.
\end{equation}
 
Next, we want to exploit the following properties of the operators $(\mathbf{1}-
\Pi_\theta)D\Pi_\theta$ and $D\Pi_\theta(\mathbf{1}- \Pi_\theta)$. We claim that  
\begin{equation}\label{sub:resolvent}
\begin{split}
(\mathbf{1}- \Pi_\theta)D\Pi_\theta &=\sum_{n\ge 0} \rho_\theta^{-n-1} \cL_\theta^n(\mathbf{1}-\Pi_\theta)D\cL_\theta \Pi_\theta ; \\
D\Pi_\theta(\mathbf{1}- \Pi_\theta) &=\sum_{n \ge 0} \rho_\theta^{-n-1} \Pi_\theta D\cL_\theta (\mathbf{1}- \Pi_\theta) \cL_\theta^n 
\end{split}
\end{equation}
where the expressions are well defined in the neighborhood $\mathcal{N}_0$. Let
us assume this claim for the moment, and carry on with the proof.

Recalling that $\Pi_\theta D\Pi_\theta \Pi_\theta=0$, we add the (zero) terms
$-\Pi_\theta D\Pi_\theta \Pi_\theta D\cL_\theta \Pi_\theta$ and $-\Pi_\theta
D\cL_\theta  \Pi_\theta D\Pi_\theta \Pi_\theta$ to \cref{eq:hell1}
and, by simple algebraic manipulation, we get
\[
D^2\rho_\theta \Pi_\theta=\Pi_\theta D\Pi_\theta (\mathbf{1}-\Pi_\theta) D\cL_\theta \Pi_\theta+\Pi_\theta D^2\cL_\theta \Pi_\theta+\Pi_\theta D\cL_\theta (\mathbf{1}-\Pi_\theta) D\Pi_\theta \Pi_\theta.
\]
Using \eqref{eq:scalar_formula}, the above equation yields
\begin{equation}\label{eq:hell2}
D^2\rho_\theta=\nu_\theta\left(D\Pi_\theta\left(\mathbf{1}-\Pi_\theta\right) D\mathcal{L}_\theta \ell_{+,\theta}+D^2\mathcal{L}_\theta \ell_{+,\theta}+D\mathcal{L}_\theta\left(\mathbf{1}-\Pi_\theta\right)D \Pi_\theta \ell_{+,\theta}\right) .
\end{equation}
We now evaluate \eqref{eq:hell2} in $\theta=0$. First, we note that, by \eqref{eq:DL1},
\[
	({\partial}_{jk}^2  \cL_\theta  f) |_{\theta=0}=-4\pi^2\cL_{0}(fF_{j}F_{k} ).
\]
It follows that
\[
	\nu_\theta(D^2\mathcal{L}_\theta \ell_{+,\theta})|_{\theta=0}= - 4\pi^2A_0,
\]
where the matrix is $A_0=\{\rho\nu_T(F_{j}F_{k})\}_{jk}$.

Finally, let us study one of the other terms in \cref{eq:hell2}, (the other one
is similar). By \cref{sub:resolvent},
\begin{equation}\label{eq:final}
	\nu_\theta (D\mathcal{L}_\theta\left(\mathbf{1}-\Pi_\theta\right)D \Pi_\theta \ell_{+,\theta})=\rho_\theta^{-1}\sum_{n \ge 0} \nu_\theta[D\cL_\theta  (\rho_\theta^{-1}\cL_\theta)^{n}(\mathbf{1}-\Pi_\theta)D\cL_\theta \Pi_\theta \ell_{+,\theta}].
\end{equation}
Let us analyze the term inside the square bracket at $\theta=0$. We have by
\eqref{eq:DL1.5}
\[
	D\cL_\theta \Pi_\theta \ell_{+,\theta} |_{\theta=0} =D\cL_\theta(\ell_{+,\theta}) |_{\theta=0}= 2\pi i\rho\sum_{k=1}^d F_k\circ T^{-1}\, \ell_{+}\, e_k.
\]
Therefore
\[
\big((1-\Pi_\theta)D\cL_\theta \Pi_\theta \ell_{+,\theta} \big)\big|_{\theta=0} = 2\pi i\rho \sum_{k=1}^d \overline{F_k} \circ T^{-1}\, \ell_{+}\, e_k,
\]
where $\overline{F_k} = F_k -\nu_{T}(F_k)$. As, for any $f$, $\rho^{-1} \cL_0
(f\ell_{+}) = f\circ T^{-1} \ell_{+}$,
\[
\big( (\rho_\theta^{-1} \cL_\theta)^n(1-\Pi_\theta)D\cL_\theta \Pi_\theta \ell_{+,\theta} \big) \big|_{\theta=0} =
2\pi i\rho \sum_{k=1}^d \overline{F_k} \circ T^{-(n+1)}\, \ell_{+}\, e_k.
\]
Finally, applying $D\cL_\theta |_{\theta=0}$, we see that the $(j,k)$ component
of the term in square brackets is equal to
\[-4\pi^2\rho^2 (F_j \cdot (\overline{F_k} \circ T^{-(n+1)})) \circ T^{-1} \, \ell_{+},\]
from which it follows that 
\[
 \nu_\theta \big(D\mathcal{L}_\theta\left(\mathbf{1}-\Pi_\theta\right)D \Pi_\theta \ell_{+,\theta} \big)\big|_{\theta=0} =-4\pi^2 A_1,
\]
where 
\[
A_1= \left\{\rho \sum_{n \ge 1} \nu_T(\overline{F}_j\circ T^n \cdot {F}_k)\right\}_{jk}.
\]

The computation for the remaining term in \eqref{eq:hell2} is completely
symmetrical and leads to
\[
\nu_\theta( D\Pi_\theta\left(\mathbf{1}-\Pi_\theta\right) D\mathcal{L}_\theta \ell_{+,\theta} )|_{\theta=0} =-4\pi^2 A_2= - 4\pi^2\left\{\rho \sum_{n \ge 1} \nu_T(\overline{F}_{k}\circ T^n \cdot {F}_{j})\right\}_{jk}.
\]
Putting all the above result together we get, starting from \eqref{eq:hell2},
\[
 D^2\rho_\theta|_{\theta=0} =-4\pi^2(A_0+A_1+A_2).
\]

We observe that, since $\nu_T(\overline{F}_k) = 0$, then
$\nu_T(\overline{F}_k\circ T^n \cdot F_j) = \nu_T(\overline{F}_k\circ T^n \cdot
\overline{F}_j)$, and, similarly, $\nu_T(F_j F_k) = \nu_T(\overline{F}_j
\overline{F}_k) + \nu_T(F_j)\nu_T(F_k)$, hence
\[
 D^2\rho_\theta|_{\theta=0} =-4\pi^2\rho \sum_{m\in \Z} \nu_T(\overline{F}\otimes\overline{F}\circ T^m) - 4\pi^2\rho\nu_T(F) \otimes \nu_T(F).
\]
Thus we conclude, using the formula that we obtained for $D\rho_\theta |_{\theta=0}$, that
\[
D^2 \log \rho_\theta|_{\theta=0} = - 4\pi^2\sum_{m\in \Z} \nu_T(\overline{F}\otimes\overline{F}\circ T^m) =: 4\pi^2\Sigma,
\]
where we have used the invariance of $\nu_T$ to obtain the sum over
$m\in\Z$. For the last statement of the \nameCref{prop:pert}, see for example
\cite{Gouezel04}. 

To conclude, we are left only with the proof of \cref{sub:resolvent}. We will
exploit the fact that, by \cref{item:pertB}, for the operator $\rho_\theta
Q_\theta$ the von Neumann formula holds in $\mathcal{N}_0$:
\begin{equation}\label{eq:Neum}
(\mathbf{1}-\rho_\theta^{-1} Q_\theta)^{-1}=\sum_{n\ge 0} (\rho_\theta^{-1} Q_\theta)^n.
\end{equation}
By $\Pi_\theta Q_\theta=Q_\theta \Pi_\theta=0$ and $\cL_\theta=\rho_\theta \Pi_\theta +Q_\theta$, we have
\begin{equation}
\begin{split}
\left(\mathbf{1}-\rho_\theta^{-1} Q_\theta\right)\left(\mathbf{1}-\Pi_\theta\right) D\Pi_\theta
&=(\mathbf{1}-\Pi_\theta-\rho_\theta^{-1}Q_\theta+\rho_\theta^{-1}Q_\theta \Pi_\theta)D\Pi_\theta\\
&=(\mathbf{1}-\rho_\theta^{-1} \cL_\theta)D\Pi_\theta\\
&=\rho_\theta^{-1}(\rho_\theta D\Pi_\theta-\cL_\theta D\Pi_\theta).
\end{split}
\end{equation}
By differentiating \labelcref{item:Pi_i},
\[
	\rho_\theta D\Pi_\theta=D\cL_\theta\Pi_\theta+\cL_\theta D\Pi_\theta-D\rho_\theta \Pi_\theta,
\]
and the previous equation thus becomes
\[
\begin{split}
\left(\mathbf{1}-\rho_\theta^{-1} Q_\theta\right)\left(\mathbf{1}-\Pi_\theta\right) D\Pi_\theta&=\rho_\theta^{-1}(D\cL_\theta \Pi_\theta-D\Pi_\theta)\\
&=\rho_\theta^{-1}(D\cL_\theta \Pi_\theta-\Pi_\theta D\cL_\theta \Pi_\theta)\\
&=\rho_\theta^{-1}(\mathbf{1}-\Pi_\theta)D\cL_\theta \Pi_\theta,
\end{split}
\]
where in the second step we have also used \eqref{eq:DLDP}. We have showed that
\[
\begin{split}
\left(\mathbf{1}-\rho_\theta^{-1} Q_\theta\right)\left(\mathbf{1}-\Pi_\theta\right) D\Pi_\theta  =\rho_\theta^{-1}(\mathbf{1}-\Pi_\theta)D\cL_\theta \Pi_\theta
\end{split}
\]
which, by \eqref{eq:Neum} implies
\[
\begin{split}
\left(\mathbf{1}-\Pi_\theta\right) D\Pi_\theta &=\rho_\theta^{-1}\sum_{n\ge 0}( \rho_\theta^{-1}Q_\theta)^n (\mathbf{1}-\Pi_\theta)D\cL_\theta \Pi_\theta\\
&=\rho_\theta^{-1}\sum_{n\ge 0}(\rho_\theta^{-n} \cL_\theta^n-\Pi_\theta^n ) (\mathbf{1}-\Pi_\theta)D\cL_\theta \Pi_\theta\\
&=\sum_{n\ge 0} \rho_\theta^{-n-1} \cL_\theta^n(\mathbf{1}-\Pi_\theta)D\cL_\theta \Pi_\theta,
\end{split}
\]
since $\Pi_\theta^n=\Pi_\theta$ and $\Pi_\theta(\mathbf{1}-\Pi_\theta)=0$. This concludes the proof of the first statement. The second statement is done in a similar way. $\qedhere$
\end{proof}

We conclude this section with the following theorem which gives the central
limit theorem for the observable $\overline{F}$. The proof follows an approach
used by Guivarc'h and Nagaev (see~\cite{Gou} for a description of the argument
applied to dynamical systems).

\begin{theorem}\label{thm:CLT}
The sequence of random variables
\[
\frac{S_nF-n\nu_T(F)}{\sqrt n}
\]
on the probability space $(S,\nu_T)$ 
converges in distribution to a multivariate centered Gaussian law whose characteristic
function is
\[
t\longmapsto \exp\!\bigl(2\pi^2\, \Sigma(t,t)\bigr),
\]
where $\Sigma$ is the quadratic form in \Cref{prop:pert}. 
Moreover, the limiting Gaussian is non-degenerate if and only if
$F\cdot v$ is not cohomologous to a constant  $\hspace{-0.5em}\pmod{2\pi}$ for every
$v\in (-\pi,\pi)^d\setminus\{0\}$.
\end{theorem}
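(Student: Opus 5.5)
The plan is the Nagaev--Guivarc'h method: express the characteristic function of $S_nF$ under $\nu_T$ through iterates of the twisted operator $\cL_\theta=\cL_{r+2\pi i\theta,F}$, then read off the Gaussian limit from the Taylor expansion of $\rho_\theta$ in \Cref{prop:pert}.

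\textbf{Step 1: characteristic function as an operator iterate.} For $t\in\R^d$ I first compute $\nu_T(e^{2\pi i t\cdot S_nF})$. Using $\rho^{-1}\cL_{r,F}(f\ell_+)=f\circ T^{-1}\ell_+$, one checks inductively that
\[
\cL_\theta^n(g\,\ell_+)=\rho^n\,\bigl(e^{2\pi i\theta\cdot S_nF}g\bigr)\circ T^{-n}\,\ell_+ .
\]
Pairing with $\nu$ and using $T$-invariance of $\nu_T$ (\Cref{lem:nu_is_measure_2}) gives
\[
\nu_T\bigl(e^{2\pi i\theta\cdot S_nF}\bigr)=\rho^{-n}\,\nu\bigl(\cL_\theta^n\ell_+\bigr).
\]
Here $\ell_+\in\cB_{p,q}$ and $\nu\in\cB_{p,q}'$, so every term is well defined.

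\textbf{Step 2: spectral decomposition for small $\theta$.} Set $\theta=t/\sqrt n$. For $n$ large, $\theta\in\mathcal N_0$, so \Cref{prop:pert}\ref{item:pertB} gives
\[
\rho^{-n}\nu(\cL_\theta^n\ell_+)=(\rho_\theta/\rho)^n\,\nu(\Pi_\theta\ell_+)+O\bigl((\sigma_1/\rho)^n\bigr)
\]
for some $\sigma_1<\rho$. By analyticity, $\nu(\Pi_\theta\ell_+)\to\nu(\Pi_0\ell_+)=1$ as $\theta\to0$. The error term tends to $0$ uniformly for bounded $t$.

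\textbf{Step 3: Taylor expansion of $\log\rho_\theta$.} By \eqref{eq:taylor} and \eqref{eq:logrho},
\[
\log(\rho_\theta/\rho)=2\pi i\,\nu_T(F)\cdot\theta+2\pi^2\,\Sigma(\theta,\theta)+O(|\theta|^3).
\]
With $\theta=t/\sqrt n$ this yields
\[
(\rho_\theta/\rho)^n=\exp\bigl(2\pi i\sqrt n\,\nu_T(F)\cdot t+2\pi^2\Sigma(t,t)+O(n^{-1/2})\bigr).
\]
Multiplying by $e^{-2\pi i\sqrt n\,\nu_T(F)\cdot t}$, the characteristic function of $(S_nF-n\nu_T(F))/\sqrt n$ at $t$ converges pointwise to $\exp(2\pi^2\Sigma(t,t))$. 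Since $\Sigma$ is negative semidefinite, being minus a limiting variance (or by \eqref{eq:sigma}), this limit is the characteristic function of a centered Gaussian, continuous at $0$. Lévy's continuity theorem then gives the convergence in distribution. The non-degeneracy claim is exactly the last statement of \Cref{prop:pert}.

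\textbf{Main obstacle.} The delicate point is the second-order term. One must check that the expansion of $\log\rho_\theta$ is genuinely uniform: that $\theta\mapsto\rho_\theta$ is $C^3$ near $0$, which analyticity provides, and that the remainder $O(|\theta|^3)$ multiplied by $n$ vanishes. A second point is that \Cref{prop:pert} is stated for the same $p,q\ge1$ in which $\ell_+$ lives, so the pairing $\nu(\Pi_\theta\ell_+)$ and its continuity in $\theta$ need checking. Both follow from Kato's theory as invoked in that proof. I would therefore spend most care on verifying the identity in Step 1 on $\cB_{p,q}$ via density of smooth functions and \Cref{lem:multiplying_by_smooth}.
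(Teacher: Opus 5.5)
Your proposal is correct and follows essentially the same Nagaev--Guivarc'h argument as the paper. The steps match: the identity $\nu_T\bigl(e^{2\pi i\theta\cdot S_nF}\bigr)=\rho^{-n}\nu(\cL_\theta^n\ell_+)$, the decomposition $\cL_\theta^n=\rho_\theta^n\Pi_\theta+Q_\theta^n$ from \Cref{prop:pert} at $\theta=t/\sqrt n$, the second-order expansion of $\log\rho_\theta$, Lévy's continuity theorem, and non-degeneracy read off from the last part of \Cref{prop:pert}. Your normalization of the error as $O((\sigma_1/\rho)^n)$ is, if anything, slightly more careful than the paper's $O(\sigma^n)$.
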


\begin{proof}
As above let $\ell_+$ be the eigenfunction of $\cL_0$ associated with the eigenvalue
$\rho$, $\nu$ the corresponding eigenfunction of $\cL_0'$ normalized so that $\nu(\ell_+)=1$,  $\nu_T(\cdot) = \nu(\cdot\,\ell_+)$.
Then, by the $T$-invariance of $\nu_T$ and by definition
of the weighted transfer operators, for every $\theta\in\R^d$ and every $n\ge 1$,
\begin{equation}
\begin{split}
\label{eq:char-op}
\mathbb{E}_{\nu_T}\!\left(e^{2\pi i  \theta\cdot S_nF}\right)
&=
\nu\!\left(e^{2\pi i  \theta\cdot S_nF} \circ T^{-n} \ell_+\right) \\
&=
\rho^{-n} \nu\!\left( e^{2\pi i  \theta\cdot S_nF} \circ T^{-n} \cL_0^n(\ell_+)  \right)\\
&=
\rho^{-n}\,\nu\!\left(\cL_\theta^n \ell_+\right).
\end{split}
\end{equation}
Hence, for $t\in\R^d$,
\[
\varphi_n(t)
\coloneqq
\mathbb{E}_{\nu_T}\!\left(
e^{2\pi i t\cdot \frac{S_nF-n\nu_T(F)}{\sqrt n}}
\right)
=
e^{-2\pi i\sqrt n\, t\cdot \nu_T(F)}\,
\rho^{-n}\,
\nu\!\left(\cL_{t/\sqrt n}^n \ell_+\right).
\]

Fix $t\in\R^d$. For $n$ large enough, $\theta_n\coloneqq t/\sqrt n$
belongs to the neighborhood $\mathcal N_0$ of \Cref{prop:pert},
so by the spectral decomposition
\[
\cL_{\theta_n}^n
=
\rho_{\theta_n}^n\Pi_{\theta_n}+Q_{\theta_n}^n,
\qquad
\|Q_{\theta_n}^n \ell_+\|_{p,q}\le C\sigma^n\|\ell_+\|_{p,q}.
\]
Applying $\nu$ gives
\[
\nu(\cL_{\theta_n}^n \ell_+)
=
\rho_{\theta_n}^n\,\nu(\Pi_{\theta_n}\ell_+)
+
\nu(Q_{\theta_n}^n \ell_+),
\]
where the last term is
$O(\sigma^n)$. Therefore
\begin{equation}\label{eq:phi-dec}
\varphi_n(t)
=
e^{-2\pi i\sqrt n\, t\cdot \nu_T(F)}
\left(\frac{\rho_{\theta_n}}{\rho}\right)^n
\nu(\Pi_{\theta_n}\ell_+)
+
O(\sigma^n).
\end{equation}

We now analyze the three factors in \eqref{eq:phi-dec}.
By analyticity 
\[
\log \rho_\theta
=
\log \rho
+
D\log\rho_\theta|_{\theta=0}\cdot \theta
+
\frac12\,D^2\log\rho_\theta|_{\theta=0}(\theta,\theta)
+
o(|\theta|^2).
\]
Using \eqref{eq:logrho} and \eqref{eq:taylor},
\[
D\log\rho_\theta|_{\theta=0}
=
\frac{D\rho_\theta|_{\theta=0}}{\rho}
=
2\pi i\,\nu_T(F),
\]
because $\Re(D\rho_\theta|_{\theta=0})=0$ and
$\Im(D\rho_\theta|_{\theta=0})=2\pi\rho\,\nu_T(F)$.
Also,
\[
\frac12\,D^2\log\rho_\theta|_{\theta=0}(\theta,\theta)
=
2\pi^2\,\Sigma(\theta,\theta).
\]
Hence
\begin{equation}\label{eq:log-exp}
\log\frac{\rho_\theta}{\rho}
=
2\pi i\, \nu_T(F)\cdot \theta
+
2\pi^2\,\Sigma(\theta,\theta)
+
o(|\theta|^2).
\end{equation}

Substituting $\theta=\theta_n=t/\sqrt n$, we obtain
\[
n\log\frac{\rho_{\theta_n}}{\rho}
=
2\pi i\,\sqrt n\, \nu_T(F)\cdot t
+
2\pi^2\,\Sigma(t,t)
+
o(1).
\]
Therefore
\begin{equation}\label{eq:main-exp}
\lim_{n\to \infty }e^{-2\pi i\sqrt n\, t\cdot \nu_T(F)}
\left(\frac{\rho_{\theta_n}}{\rho}\right)^n
=
\exp\!\bigl(2\pi^2\,\Sigma(t,t)\bigr).
\end{equation}

Next, since $\theta\mapsto \Pi_\theta$ is analytic and $\Pi_0 \ell_+=\ell_+$, we have
$
\lim_{n\to \infty}\nu(\Pi_{\theta_n}\ell_+)= \nu(\ell_+)=1
$
and, combining this with \eqref{eq:phi-dec} and \eqref{eq:main-exp},
\[
\lim_{n\to \infty}\varphi_n(t)= \exp\!\bigl(2\pi^2\,\Sigma(t,t)\bigr)
\qquad\text{for every }t\in\R^d.
\]
This limit is continuous at $t=0$, so Lévy's continuity theorem ensures that
$
\frac{S_nF-n\nu_T(F)}{\sqrt n}
$ converges in distribution to a Gaussian with mean $0$ and covariance 
$-\Sigma$.

Finally, by the last part of Proposition ~\ref{prop:pert},  the non-degeneracy of
$\Sigma$ is coded by the condition that
$F\cdot v$ is not cohomologous to a constant  $\hspace{-0.5em}\pmod{2\pi}$ for any
$v\in (-\pi,\pi)^d\setminus\{0\}$.
\end{proof}

\begin{remark}
Proposition ~\ref{prop:pert}  can also be used to prove finer statistical properties, such as the Local Limit Theorem or even a Large Deviations Principle. It would be interesting to discuss the implications of these results in the infinite volume case. These extensions are left for future work.   
\end{remark}


\section{Fourier analysis on Abelian covers}\label{sec:twisted_hilbert_spaces}

In this section, we briefly recall the definition of Abelian covers of the translation surface $S$ and we introduce the Fourier decomposition and the Frobenius function, which provide the connection with the theory of twisted transfer operators we developed so far. 

\subsection{Abelian covers}\label{sec:abelian_covers}

Recall that $S_0 = S \setminus \Sigma$, where $\Sigma$ denotes the finite set of singularities of $S$ (namely, the zeros of the holomorphic 1-form $\alpha$ defining the translation structure). 
Let $G = \pi_1(S_0)$ be the fundamental group of the punctured surface $S_0 $ and let $G' = [G,G]$ denote the derived subgroup. 
The quotient $G^{\abel} = G/G'$ is isomorphic to the first homology $H_1(S_0,\Z)$ of $S_0$, which is a free abelian group of rank $2g+\# \Sigma-1$, where $g \geq 2$ is the genus of $S$ and $\# \Sigma$ is the cardinality of $\Sigma$.

Intermediate subgroups $G' \leq \Gamma \leq G$ are in 1-to-1 correspondence with subgroups of $H_1(S_0,\Z)$ via the projection $\Gamma \mapsto \Gamma^{\abel, G} = \Gamma/G'$. Each $\Gamma$ defines a cover $\mathcal{p}_0 \colon \tSz \to S_0$ with a Galois group
\[
\Deck \coloneqq \Aut(\tSz / S_0)
\]
of deck transformations isomorphic to $G/\Gamma = H_1(S_0,\Z) /  \Gamma^{\abel, G}$. We will assume that the latter has no torsion, which is always the case up to a finite cover, and hence is a free abelian group of rank $1\leq d \leq 2g+\# \Sigma-1$.
We fix $d$ linearly independent primitive homology classes
\[
[\gamma_i] =  \gamma_i \, G' \in H_1(S_0 ,\Z), \qquad \text{for $i=1,\dots , d$}
\]
so that the elements
\[
D_i =  \gamma_i \, \Gamma \in \Deck
\]
form a basis of $\Deck \simeq \Z^d$.
We say that the associated cover $\mathcal{p}_0 \colon \tSz \to S_0$ is a $\Z^d$-cover of $S_0$.
The cover $\tSz$ is equipped with the pullback flat metric under $\mathcal{p}_0$, so that $\mathcal{p}_0 \colon \tSz \to S_0$ is a Riemannian cover, and the deck transformations act isometrically.

We equip the cover $\tSz$ with the pullback Riemannian area form under $\mathcal{p}_0$. By a little abuse of notation, we still denote by $\vol$ the area form induced by $\alpha$ on $\tSz$. It induces an infinite measure, normalized so that $\vol(S_0)=1$. We also denote by $X$ and $Y$ the horizontal and vertical vector fields on $\tSz$.

We assume that the cover $\mathcal{p}_0 \colon \tSz \to S_0$ is \emph{locally compact} in the following sense: let $\gamma$ be any small loop going around a puncture $x\in \Sigma$; then, we say that the cover is locally compact if the homology class $[\gamma]$ of $\gamma$ is in $\Gamma$. This implies that there exists a locally compact surface $\tS$ and a cover $\mathcal{p} \colon \tS \to S$ so that $\tSz \subset \tS$ and the restriction of $\mathcal{p}$ to $\tSz$ coincides with $\mathcal{p}_0$. Moreover, for each $x\in \widetilde{\Sigma} \coloneqq \mathcal{p}^{-1}\Sigma$, the restriction of $\mathcal{p}$ to a neighborhood of $x$ is 1-to-1. 

\begin{remark}
	More generally, we could relax the above condition by asking that there exists $l\in \N$ so that $l\cdot [\gamma] \in \Gamma$. This would imply that $\mathcal{p}\colon \tS \to S$ is a branched cover at the points $x\in \widetilde{\Sigma}$. The group $\Deck$ would then have torsion elements. 
	
	In any case, up to going to a finite cover (corresponding precisely to the subgroup $\Gamma'$ generated by the torsion elements), we could always reduce to the case where the locally compact assumption above is satisfied.
\end{remark}

Let us fix a compact connected subset $\cF \subset \tSz$ whose boundary has zero measure such that the restriction of $\mathcal{p}_0$ to the interior of $\cF$ is injective and $\mathcal{p}_0(\cF) = S_0$. We say that $\cF$ is a fundamental domain for the cover $\mathcal{p}$.
Any function on $S$ can be seen as a $\Deck$-invariant function on $\tS$, and vice-versa. Under this identification, it is not hard to see that 
\[
\int_S f \diff\!\vol = \int_{\cF} f \diff\!\vol.
\]
In particular, for all $q\geq 1$, the Banach spaces $L^q(S)$ and $L^q(\cF)$ are naturally isomorphic.

\subsection{A Fourier decomposition}

For any $i=1,\dots, d$, let $\omega_i \in H^1(S,\R)$ be the cohomology class defined by
\[
\omega_i(\Gamma)=0, \qquad \text{and} \qquad \omega_i([\gamma_j]) = \delta_{i,j}.
\]
By Hodge Theory, we can identify each $\omega_i$ with a harmonic 1-form on $S$. Let $\omega \coloneqq (\omega_1,\dots,\omega_d)$ be the vector of 1-forms.


For any $z \in  \R^d \oplus 2\pi i \left(-\frac12,\frac12\right)^d$, where $z=r+2\pi i \theta$ with $r \in \R^d$ and $\theta \in \left(-\frac12,\frac12\right)^d$, we define
\[
E_{z} \colon \Deck \to \C, \qquad E_{z}([\gamma] + \Gamma^{\abel, G}) = e^{z \cdot \omega ([\gamma])}.
\] 
Given $z \in \R^d \oplus 2\pi i \left(-\frac12,\frac12\right)^d$, we define
\[
\mathscr{C}^\ell(S,z) \coloneqq \left\{ f \in \mathscr{C}^\ell(\tS) \mid  f \circ D^{-1} = E_{z}(D) \, f \text{\ for all $D \in \Deck$ }\right\}. 
\]
Moreover, for any continuous function $f \in \mathscr{C}^0_c(\tS)$ with compact support, let us define
\[
\pi_{z}(f)(x)= \sum_{D \in \Deck} E_{z}(D) \cdot f\circ D(x).
\]
Note that, since $f$ has compact support, the sum on the right hand side above is finite for any $x \in \tS$.

\begin{lemma}\label{lem:proj_cl}
	For every $\ell \geq 0$ and $z=r+2\pi i \theta$, 
	\[
	\pi_{z} \colon \mathscr{C}^\ell_c(\tS) \to \mathscr{C}^\ell(S,z).
	\]
	Moreover, for any $r \in \R^d$, for every $f \in \mathscr{C}^\ell_c(\tS)$ and for any $x \in \tS$, we have
	\[
	f(x) = \int_{\left(-\frac12,\frac12\right)^d} \pi_{r+2\pi i \theta}(f)(x) \diff \theta.
	\]
\end{lemma}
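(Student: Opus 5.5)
The proof is a direct computation in two parts: first the equivariance and regularity of $\pi_z(f)$, then a Fourier inversion over the dual torus of $\Deck\simeq\Z^d$.

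\emph{Well-definedness and regularity.} Fix $f\in\mathscr{C}^\ell_c(\tS)$. Since the $\Deck$-action is properly discontinuous and $\supp f$ is compact, for every $x$ there is a neighbourhood $U$ of $x$ such that only finitely many $D\in\Deck$ satisfy $D(U)\cap\supp f\neq\emptyset$. On $U$ the series defining $\pi_z(f)$ is therefore a finite sum of $\mathscr{C}^\ell$ functions. The deck transformations are translations in the flat charts, so they commute with $X$ and $Y$. Hence $\partial^\alpha \pi_z(f)=\pi_z(\partial^\alpha f)$ for $|\alpha|\le\ell$, and the sup-norms are bounded, since the number of overlapping translates of $\supp f$ is uniformly bounded (cover $\supp f$ by finitely many translates of the fundamental domain $\cF$).

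\emph{Equivariance.} For $D'\in\Deck$, reindex with $D''=D\circ D'^{-1}$ and use that $E_z\colon\Deck\to\C^*$ is a homomorphism, because $\omega$ is additive on homology:
\[
\pi_z(f)\circ D'^{-1}(x)=\sum_{D}E_z(D)\,f(D D'^{-1}x)=\sum_{D''}E_z(D''D')\,f(D''x)=E_z(D')\,\pi_z(f)(x).
\]
This is exactly the defining relation of $\mathscr{C}^\ell(S,z)$.

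\emph{Inversion formula.} Identify $D=\sum_i n_iD_i$ with $n\in\Z^d$. Then $\omega([\gamma])=n$, so $E_{r+2\pi i\theta}(D)=e^{r\cdot n}e^{2\pi i\theta\cdot n}$. For fixed $x$ the sum is finite, so we may exchange sum and integral:
\[
\int_{(-\frac12,\frac12)^d}\pi_{r+2\pi i\theta}(f)(x)\diff\theta=\sum_{n}e^{r\cdot n}f(D_nx)\int_{(-\frac12,\frac12)^d}e^{2\pi i\theta\cdot n}\diff\theta .
\]
The inner integral equals $\delta_{n,0}$, so the right-hand side is $f(x)$.

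The only delicate point is the local finiteness and the uniform bound on the number of contributing deck translates. This follows from the compactness of $\supp f$ and the proper discontinuity of the $\Deck$-action. A secondary point is checking that the sign conventions for $E_z$ make it a homomorphism compatible with the stated relation $f\circ D^{-1}=E_z(D)f$; if the convention were reversed, one would use $D^{-1}$ in the reindexing instead.
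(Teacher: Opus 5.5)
Your proposal is correct and follows the paper's proof. Equivariance comes from reindexing the sum using that $E_z$ is a homomorphism, and the inversion formula from exchanging the pointwise finite sum with the integral and using $\int_{(-\frac12,\frac12)^d} e^{2\pi i\theta\cdot n}\diff\theta=\delta_{n,0}$; your local-finiteness argument just spells out the regularity step the paper treats as immediate.

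One small correction, which does not affect the lemma: the derivatives of $\pi_z(f)$ are bounded only on compact sets such as a fundamental domain, not on all of $\tS$. For $r\neq 0$, the relation $\pi_z(f)\circ D^{-1}=E_z(D)\,\pi_z(f)$ with $|E_z(D)|=e^{r\cdot n}$ makes $\pi_z(f)$ unbounded unless it vanishes identically. This is harmless, because membership in $\mathscr{C}^\ell(S,z)$ only requires $\mathscr{C}^\ell$ regularity, which is local, together with the equivariance relation.
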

\begin{proof}
	Fix $z=r+2\pi i \theta \in \R^d \oplus 2\pi i \left(-\frac12,\frac12\right)^d$. Let us show that $\pi_{z}(f) \circ D_0^{-1} = E_{z}(D_0) \cdot \pi_{z}(f)$.
	For every $D_0 = [\gamma_0] + \Gamma^{\abel, G} \in \Deck$, we have $E_{z}(D_0^{-1}) = E_{z}( -[\gamma_0] + \Gamma^{\abel, G}) = E_{-z}(D_0) = E_{z}(D_0)^{-1}$, so that 
	\[
	\begin{split}
		\pi_{z}(f) \circ D_0^{-1} &= \sum_{D \in \Deck} E_{z}(D) \cdot f\circ (D_0^{-1} \cdot D ) \\
		&= \sum_{D \in \Deck} E_{z}(D_0^{-1} \cdot D) \cdot  E_{z}(D_0) \cdot f\circ (D_0^{-1} \cdot D ) =E_{z}(D_0) \cdot \pi_{z}(f).
	\end{split}
	\]
	By definition, it follows that $\pi_{z}(f)$ is a $\mathscr{C}^\ell$-function whenever $f \in \mathscr{C}^\ell_c(\tS)$.
	For the last claim, we note that
	\[
	\int_{\left(-\frac12,\frac12\right)^d} e^{2 \pi \imath \theta \cdot \omega([\gamma])} \diff \theta = 0 \qquad \text{if and only if} \qquad D = [\gamma] + \Gamma^{\abel, G} \neq 0,
	\]
	and is equal to 1 otherwise. Therefore,
	\[
	\int_{\left(-\frac12,\frac12\right)^d} \pi_{z}(f)(x) \diff \theta = \sum_{D \in \Deck}  f\circ D(x)e^{r \cdot \omega([\gamma])} \int_{\left(-\frac12,\frac12\right)^d} e^{2 \pi \imath \theta \cdot \omega([\gamma])} \diff \theta  = f(x),
	\]
	which completes the proof.
\end{proof}
Henceforth, we will simply write $f_{z}$ in place of $\pi_{z}(f)$.

Note that the pullback $\mathcal{p}_0^{\ast}\omega_i$ of $\omega_i$ on $\tS$ is an exact 1-form.
Fix $x_0 \in \cF$. For any $x \in \tS$, the vector
\begin{equation}\label{eq:roof}
	\xi(x)\coloneqq \left(\int_{x_0}^x \mathcal{p}^\ast \omega_1,\dots,\int_{x_0}^x \mathcal{p}^\ast \omega_d\right) \in \R^d
\end{equation}
is well defined, since each integral does not depend on the choice of path connecting $x_0$ to $x$. 
For any measurable function $f$ on $\tS$, we define 
\begin{equation}\label{eq:Xi}
	\Xi_{z}(f) = f \cdot e^{z\cdot \xi}.
\end{equation}

\begin{lemma}\label{lem:proj_Xi}
	For every $z=r+2\pi i \theta\in \R^d \oplus 2\pi i \left(-\frac12,\frac12\right)^d$, we have
	\[
	\pi_{z} = \Xi_{-z} \circ \pi_{0} \circ \Xi_{z}.
	\]
\end{lemma}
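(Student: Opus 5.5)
The identity $\pi_{z} = \Xi_{-z} \circ \pi_{0} \circ \Xi_{z}$ should follow from a direct pointwise computation: expand both sides on a compactly supported $f$ and compare term by term in the sum over $\Deck$. The only input needed is how $\xi$ transforms under deck transformations.

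The first step is that transformation law. For $D = [\gamma] + \Gamma^{\abel, G} \in \Deck$ and $x \in \tS$, I claim
\[
\xi(D x) - \xi(x) = \omega([\gamma]) \qquad \text{(componentwise)}.
\]
Write $\xi(Dx) - \xi(x) = \int_x^{Dx} \mathcal{p}^\ast\omega$ along any path in $\tS$ from $x$ to $Dx$. Such a path projects to a closed loop in $S_0$ (after a small perturbation avoiding $\widetilde\Sigma$), and the homology class of that loop differs from $[\gamma]$ by an element of $\Gamma^{\abel, G}$. Since $\omega_i$ vanishes on $\Gamma$, the integral equals $\omega([\gamma])$. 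This is also independent of the representative $[\gamma]$ of $D$, for the same reason.

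One sign convention has to be fixed: whether $D$ acting on $\tS$ corresponds to the class $[\gamma]$ or to $-[\gamma]$. I would fix it to be consistent with the convention $f\circ D^{-1} = E_z(D) f$ from Lemma~\ref{lem:proj_cl}. This bookkeeping is the main (and only) potential obstacle, and it must be handled consistently with how $E_z$ is used there.

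The second step is the computation itself. For $f \in \mathscr{C}^0_c(\tS)$,
\[
(\Xi_{-z}\circ\pi_0\circ\Xi_z)(f)(x) = e^{-z\cdot\xi(x)} \sum_{D\in\Deck} f(Dx)\, e^{z\cdot \xi(Dx)} = \sum_{D} e^{z\cdot(\xi(Dx)-\xi(x))} f(Dx),
\]
using $E_0 \equiv 1$. By the first step, $e^{z\cdot(\xi(Dx)-\xi(x))} = e^{z\cdot\omega([\gamma])} = E_z(D)$, so the right-hand side is $\pi_z(f)(x)$.

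All sums are finite because $f$ has compact support, so there are no convergence issues. Note also that $\Xi_z(f)$ is still compactly supported, so $\pi_0$ applies to it.
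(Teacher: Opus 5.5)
Your proposal is correct and is essentially the paper's proof. The paper also derives $\xi(D(x)) = \xi(x) + \omega([\gamma])$ and then compares $\pi_0\circ\Xi_z$ with $\Xi_z\circ\pi_z$ term by term; it gets the transformation law by splitting the path through $x_0$ and $D(x_0)$ and using $D^\ast\mathcal{p}^\ast\omega = \mathcal{p}^\ast\omega$, where you project a path from $x$ to $Dx$ to a loop. One nuance: Lemma~\ref{lem:proj_cl} cannot fix the sign, since its proof only uses that $E_z$ is a homomorphism; the sign comes from the geometric identification of $\Deck$ with $H_1(S_0,\Z)/\Gamma^{\abel,G}$ (a loop in the class $[\gamma]$ lifted from $y$ ends at $Dy$), which the paper uses implicitly.
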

\begin{proof}
	Note that $D^\ast \mathcal{p}^\ast \omega = \mathcal{p}^\ast \omega$ for every deck transformation $D$, since $\mathcal{p} \circ D = \mathcal{p}$. Thus, the $i$-th component $\xi(D(x))_i$ of the vector $\xi(D(x))$ satisfies
	\[
	\xi(D(x))_i = \int_{x_0}^{D(x)} \mathcal{p}^\ast \omega_i = \int_{x_0}^{D(x_0)} \mathcal{p}^\ast \omega_i + \int_{D(x_0)}^{D(x)} \mathcal{p}^\ast \omega_i = \int_{x_0}^{D(x_0)} \mathcal{p}^\ast \omega_i + \int_{x_0}^{x} \mathcal{p}^\ast \omega_i.
	\]
	Hence, it follows that $\xi(D(x))= \omega([\gamma])+\xi(x)$. Therefore,
	\[
	e^{z \cdot \xi(D(x))}=E_{z}(D)e^{z \cdot \xi(x)}.
	\]
	From this, we conclude
	\[
	\pi_{0} \circ \Xi_{z}(f) = \sum_{D\in \Deck} ( f \cdot e^{z \cdot \xi}) \circ D =  \sum_{D\in \Deck} f \circ D \cdot E_{z}(D) \cdot e^{z \cdot \xi}= \Xi_{z}\circ \pi_{z}(f),
	\]
	which proves the result.
\end{proof}

\begin{lemma}\label{lem:isom_L2omega}
	Let $z \in \R^d \oplus 2\pi i \left(-\frac12,\frac12\right)^d$. For every $z' \in \R^d \oplus 2\pi i \left(-\frac12,\frac12\right)^d $, the map $\Xi_{z}$ is a linear isomorphism between $\mathscr{C}^\ell(S,z+z')$ and $\mathscr{C}^\ell(S,z')$ for every $\ell \geq 0$.
\end{lemma}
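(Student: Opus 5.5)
The plan is to verify directly that multiplication by $e^{z\cdot\xi}$ maps the equivariance class for the parameter $z+z'$ onto the class for $z'$, using the identity from the proof of \Cref{lem:proj_Xi},
\[
e^{z\cdot \xi(D(x))}=E_{z}(D)\,e^{z\cdot \xi(x)}, \qquad D\in\Deck,\ x\in\tS .
\]
We also need the multiplicativity $E_{z+z'}(D)=E_z(D)\,E_{z'}(D)$. This is immediate from the definition $E_z(D)=e^{z\cdot\omega([\gamma])}$, since the exponent is linear in $z$. There is one subtlety: $z+z'$ may leave the strip $\R^d\oplus 2\pi i(-\tfrac12,\tfrac12)^d$. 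The formula defining $E$ and $\mathscr{C}^\ell(S,\cdot)$ still makes sense for any complex parameter, so I will read $\mathscr{C}^\ell(S,z+z')$ with that formula. Only the exponential identity is used.

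First, show that $\Xi_z$ maps into the right space. Let $f\in\mathscr{C}^\ell(S,z+z')$, so $f\circ D^{-1}=E_{z+z'}(D)f$. Applying the identity above with $D^{-1}$ gives $e^{z\cdot\xi}\circ D^{-1}=E_z(D^{-1})e^{z\cdot\xi}=E_z(D)^{-1}e^{z\cdot\xi}$. Therefore
\[
(\Xi_z f)\circ D^{-1}=E_{z+z'}(D)E_z(D)^{-1}\,\Xi_z f=E_{z'}(D)\,\Xi_z f .
\]
For regularity, $\xi$ is smooth on $\tS$: its differential is $\mathcal{p}^*\omega$, a pullback of smooth harmonic forms. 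Hence $e^{z\cdot\xi}$ is $\mathscr{C}^\infty$, and the product $\Xi_z f$ is $\mathscr{C}^\ell$ whenever $f$ is.

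If the spaces carry uniform $\mathscr{C}^\ell$ bounds (bounded derivatives), I would note the following. The derivatives $X^aY^b(e^{z\cdot\xi})$ equal $e^{z\cdot\xi}$ times polynomials in derivatives of $\xi$, and those derivatives are the coefficients of $\mathcal{p}^*\omega$ and their derivatives, which are $\Deck$-invariant and hence bounded. The factor $e^{z\cdot\xi}$ itself is unbounded on $\tS$, but equivariance makes this harmless: $\Xi_z f\in\mathscr{C}^\ell(S,z')$ is determined by its restriction to the compact fundamental domain $\cF$. Accordingly, the natural norm is the $\mathscr{C}^\ell$ norm over $\cF$, on which $e^{\pm z\cdot\xi}$ is bounded with all derivatives.

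For the inverse, apply the same computation to $\Xi_{-z}$. It maps $\mathscr{C}^\ell(S,z')$ into $\mathscr{C}^\ell(S,z'+z)$, because $E_{z'}E_{-z}^{-1}=E_{z'+z}$. Clearly $\Xi_{-z}\circ\Xi_z=\id$ and $\Xi_z\circ\Xi_{-z}=\id$, and linearity is obvious, so $\Xi_z$ is a linear isomorphism. The only step needing any care is the bookkeeping of norms and regularity on the non-compact $\tS$; the algebra itself is immediate.
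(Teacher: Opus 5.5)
Your proposal is correct and follows the paper's proof: the paper checks $(\Xi_z f)\circ D^{-1}=e^{z\cdot\xi\circ D^{-1}}\,f\circ D^{-1}=E_{-z}(D)\,e^{z\cdot\xi}\,E_{z+z'}(D)\,f=E_{z'}(D)\,\Xi_z f$ and then notes that $e^{z\cdot\xi}$ is smooth, as you do, while leaving the inverse $\Xi_{-z}$ implicit. Your remark on uniform $\mathscr{C}^\ell$ bounds is not needed for the statement. As written it is also incomplete: Deck-invariance only makes the coefficients of $\mathcal{p}^*\omega$ functions on the non-compact $S_0$, so their boundedness near the lifted singularities would need a separate justification.
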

\begin{proof}
	We note that, for each $D\in \Deck$, we have
	\[
	\begin{split}
		E_{z'}(D)\Xi_{z}(f)&=E_{-z}(D)e^{z \cdot \xi}E_{z+z'}(D)f=E_{-z}(D)e^{z\cdot \xi}f\circ D^{-1}=e^{z\cdot \xi\circ D^{-1}}f\circ D^{-1}\\
		&=(\Xi_{z}f)\circ D^{-1}.
	\end{split}
	\]
	Since $e^{z\cdot \xi}$ is a smooth map on $\tSz$, if $f\in \mathscr{C}^\ell(\tS)$, also $\Xi_{z}(f) \in \mathscr{C}^\ell(\tS)$.
	
\end{proof}


\subsection{Pseudo-Anosov map and twisted transfer operators}\label{sec:twisted_transfer_operators}
We now assume that we can lift the linear pseudo-Anosov $T\colon S \to S$ to a linear pseudo-Anosov homeomorphism $\tT\colon \tS\to \tS$ which commutes with the deck transformations. 

In this setting, if $\{\tphi_s\}_{s\in \R}$ denotes the straight-line flow on $\tS$ in the contracted direction of $\tT$, we know:

\begin{theorem}[See~\cite{Tum} {or~\cite[Theorem 2.10]{BFRT}}]
\label{thm:flow_ergodic}
The flow $\{\tphi_s\}_{s\in \R}$ on $\tS$ is ergodic with respect to Lebesgue measure.
\end{theorem}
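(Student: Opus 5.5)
The plan is to reduce ergodicity of the horizontal flow on $\tS$ to ergodicity of a skew-product cocycle over the base flow, and to prove that via the Hopf argument adapted to the renormalization $\tT$. Fix a fundamental domain $\cF$ and realize $\{\tphi_s\}$ as a $\Z^d$-extension of $\{\phi_s\}$ on $S$: choose a transversal to $\phi_s$ on $S$ (a union of vertical segments). The first return map is an interval exchange $\mathcal{T}$, and the displacement in the cover is a piecewise constant cocycle $\kappa\colon \Z^d$-valued, constant on the exchanged intervals. By the standard correspondence between flows and their suspensions, Lebesgue ergodicity of $\tphi_s$ is equivalent to ergodicity of the skew product $(x,m)\mapsto(\mathcal{T}x,m+\kappa(x))$ on the transversal times $\Z^d$. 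This in turn is equivalent, by Schmidt's theory of essential values, to the group of essential values $E(\kappa)$ being all of $\Z^d$.

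The structure coming from $T$ makes the cocycle self-similar. Choose the transversal invariant under $T$, for instance a vertical segment through a fixed point of $T$ or a power of it, so that $T^{-1}$ maps it into itself. Then $\mathcal{T}$ is a self-induced IET and $\kappa$ is renormalized by a fixed substitution: the induced cocycle on $T^{-1}$ of the transversal equals $\kappa$ conjugated by the affine map, up to the homological action of $T$ on $\Z^d$. The lift $\tT$ commutes with $\Deck$, so this action is trivial; the latter point uses the lifting assumption.

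Essential values are then obtained by a rigidity and tower argument. At level $n$ the Rokhlin towers of the renormalized IET have heights comparable to $\lambda^n$, and they fill a definite proportion of the measure, by unique ergodicity and the fact that $T$ is linear. On each tower, the Birkhoff sum of $\kappa$ along the height equals the cover displacement of the corresponding closed curve, namely the $\omega$-periods of the homology classes given by the loops $T^{-n}\gamma$. Since $T$ acts trivially on the $\Deck$-quotient, these sums take all values in a fixed finite set that generates the period lattice. Each such value $a$ is carried on a set of measure bounded below uniformly in $n$, for towers that return close to themselves. By the standard criterion (\cite{ANSS}, or Conze--Frączek), $a\in E(\kappa)$. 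As these values generate $\Z^d$ because the cover is connected, $E(\kappa)=\Z^d$.

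The main obstacle is this last step: showing that every generator of $\Deck$ appears as a tower displacement with uniformly positive mass. I would handle it by using primitivity of the substitution matrix of $T$, some power of which is strictly positive. The images $T^{-k}$ of each basic loop then pass through every rectangle of a Markov partition, so every generator $\gamma_i$ is realized as the displacement of some tower of positive proportion. If this is delicate, an alternative is to cite the argument of \cite{Tum}, or \cite[Theorem 2.10]{BFRT}, which proceeds exactly this way.
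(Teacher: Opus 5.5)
The paper gives no proof of this statement; it only cites \cite{Tum} and \cite[Theorem 2.10]{BFRT}, so your closing fallback is what the paper does. Your own reductions are sound: a suspension of a self-induced IET $\mathcal{T}$ on a $T^{-1}$-invariant vertical segment $J$, Schmidt's criterion $E(\kappa)=\Z^d$, and invariance of $\kappa$ under renormalization because $\tT$ commutes with $\Deck$.

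The gap is in the last step, and it is not where you locate it. Write $I_\alpha$ for the intervals of $\mathcal{T}$, $\kappa_\alpha$ for the value of $\kappa$ on $I_\alpha$, and $q_n^\alpha$ for the height of the tower over $T^{-n}I_\alpha$. Your self-similarity shows that the sum of $\kappa$ over the full height of that tower equals $\kappa_\alpha$ at every point of the tower, for every $n$. So realizing each value with positive mass is automatic, and primitivity has nothing to add there. What the essential-value criterion also requires is rigidity: sets $C_n$ with $\mathrm{Leb}(C_n)\ge\delta$, $\mathrm{Leb}(C_n\triangle\mathcal{T}C_n)\to0$ and $\sup_{x\in C_n}|\mathcal{T}^{N_n}x-x|\to0$, on which $S_{N_n}\kappa$ is constant. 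Take $x=\mathcal{T}^jy$ with $y\in T^{-n}I_\alpha$ and $j<q_n^\alpha$. Then $\mathcal{T}^{q_n^\alpha}x=\mathcal{T}^j(\mathcal{T}_ny)$, where $\mathcal{T}_n=T^{-n}\mathcal{T}T^n$ is the induced map on $T^{-n}J$. This is controlled only when $\mathcal{T}_ny$ lands in the same base, i.e.\ when $T^ny\in I_\alpha\cap\mathcal{T}^{-1}I_\alpha$. That set can be empty, as already happens for a rotation with $|I_b|<|I_a|$, where $\mathcal{T}I_b\subset I_a$. Primitivity only says that every letter occurs in every $\sigma^k(\beta)$, not that the factor $\alpha\alpha$ occurs. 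Passing to $T^k$ changes neither the tower sums nor this condition. So the Markov-rectangle argument does not produce the rigid sets you need.

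The fix is to use return words instead of single letters. Let $A\subset I_{\alpha_0}$ be an interval of positive length with $\mathcal{T}^iA\subset I_{\alpha_i}$ for $0\le i<m$ and $\mathcal{T}^mA\subset I_{\alpha_0}$; for example, a cylinder of the first-return map to $I_{\alpha_0}$. Set $C_n=\bigcup_{0\le j<q_n^{\alpha_0}}\mathcal{T}^j(T^{-n}A)$ and $N_n=\sum_{i<m}q_n^{\alpha_i}$. Then:
\begin{itemize}
\item $\mathrm{Leb}(C_n)\ge c\,|A|$;
\item $\mathrm{Leb}(C_n\triangle\mathcal{T}C_n)\le 2\lambda^{-n}|A|$;
\item $|\mathcal{T}^{N_n}x-x|\le\lambda^{-n}|J|$ on $C_n$;
\item $S_{N_n}\kappa\equiv\kappa_{\alpha_0}+\dots+\kappa_{\alpha_{m-1}}$ on $C_n$.
\end{itemize}
The last value is the $\Deck$-displacement of the corresponding first-return loop to $I_{\alpha_0}$. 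These loops generate $H_1(S,\Z)$, as they do for any transversal of a minimal flow. By the local compactness assumption, $\Deck$ is a quotient of $H_1(S,\Z)$, and the quotient map is onto because the cover is connected. Since $E(\kappa)$ is a group, $E(\kappa)=\Z^d$. With this replacement your argument goes through.
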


As $\tT$ commutes with deck transformations, the Koopman operator $U \colon f \mapsto f \circ \tT^{-1}$ is well-defined as an operator
\[
U \colon \mathscr{C}^\ell(S,z) \to \mathscr{C}^\ell(S,z).
\]
By \Cref{lem:isom_L2omega}, we are interested in studying
\[
\Xi_{z} \circ  U\circ \Xi_{-z} \colon \mathscr{C}^\ell(S) \to \mathscr{C}^\ell(S),
\]
under the identification $\mathscr{C}^\ell(S,0) = \mathscr{C}^\ell(S)$.

We define the function $F=F_T \colon \tS \to \R^d$ as
\begin{equation}\label{eq:F}
	F(x)=\xi(\tT(x))-\xi(x), 
\end{equation}
known as the \textit{Frobenius function}. From the proof of \Cref{lem:proj_Xi}, it is easy to see that $F$ is $\Deck$-invariant, hence can be identified with a $\R^d$-valued function on $S$.

\begin{lemma}\label{lem:invar}
	The Frobenius function $F$ belongs to $\mathscr{C}^{\infty, \bdd}(S)$. Furthermore, 
	for each $z \in \R^d \oplus 2\pi i \left(-\frac12,\frac12\right)^d$, we have 
	\begin{equation}\label{eq:twisted}
		\Xi_{z} \circ  U\circ \Xi_{-z} (f) =\cL_{z,F}(f),
	\end{equation}
	where $\cL_{z,F}$ is the twisted transfer operator as defined in \S\ref{sec:weighted_transfer_operators}.
\end{lemma}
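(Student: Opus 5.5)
The lemma has two parts: the regularity of $F$, and the conjugation identity \eqref{eq:twisted}. I will treat them in that order.

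\emph{Regularity of $F$.} First I rewrite $F$ as a path integral. By \eqref{eq:roof}, for $x\in\tS$,
\[
F(x)=\xi(\tT x)-\xi(x)=\int_{x}^{\tT x}\mathcal{p}^\ast\omega,
\]
and this is independent of the path since $\mathcal{p}^\ast\omega$ is exact. Deck invariance was already observed: $\xi\circ D=\xi+\omega([\gamma])$ and $\tT$ commutes with $D$, so the constants cancel. Next I differentiate. Since $X,Y$ on $\tS$ are lifts of the vector fields on $S$, for $V\in\{X,Y\}$ we have
\[
V\xi=\mathcal{p}^\ast\omega(V)=\omega(V)\circ\mathcal{p}.
\]
Moreover $\tT$ is linear in flat charts, with $d\tT\,X=\lambda^{-1}X$ and $d\tT\,Y=\lambda Y$. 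Hence
\[
VF=(V(\xi\circ\tT))-V\xi=c_V\,\omega(V)\circ T\circ\mathcal{p}-\omega(V)\circ\mathcal{p},
\]
with $c_X=\lambda^{-1}$ and $c_Y=\lambda$. The forms $\omega_i$ are harmonic, hence smooth on $S$. So all higher derivatives of $F$ are combinations of $X^aY^b$-derivatives of $\omega(X),\omega(Y)$ composed with $T$ or the identity, with powers of $\lambda$ as coefficients. These are bounded on $S_{\reg}$ because $S$ is compact, and $\omega(X),\omega(Y)$ are smooth functions on $S$, including at the cone points, since $\omega$ is smooth on $S$. The only subtle point is near $\Sigma$: derivatives along $X,Y$ of smooth functions on $S$ in a cone chart involve $|z|^{-k_x}$ factors. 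Here I will argue that $\omega(X)$ is smooth in the flat coordinates. Harmonic forms are locally $\Re$ and $\Im$ of holomorphic 1-forms $h(z)\,dz$, while $X$ corresponds to $\partial_a$ with $dw=z^{k}dz$. So $\omega(X)$ is built from $h(z)z^{-k}$, which has a singularity unless $h$ vanishes to order $k$. This is the main obstacle. I would handle it by choosing the representative of $\omega_i$ in $H^1(S,\R)$ to be a combination of $\Re,\Im$ of holomorphic differentials divided appropriately. Alternatively, and more simply, I would replace $\omega_i$ by a smooth closed form cohomologous to it that vanishes identically in a neighbourhood of each singularity; this is possible because $\omega_i$ is exact on small discs. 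This changes $\xi$ only by a bounded $\Deck$-invariant function. With that representative, $F$ is locally constant near $\widetilde\Sigma$, so all $X^aY^b F$ are bounded and $F\in\mathscr{C}^{\infty,\bdd}(S)$.

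\emph{The identity \eqref{eq:twisted}.} For $f\in\mathscr{C}^\ell(S)$, viewed as a $\Deck$-invariant function, I compute directly:
\[
\Xi_z U\Xi_{-z}f(x)=e^{z\cdot\xi(x)}e^{-z\cdot\xi(\tT^{-1}x)}f(\tT^{-1}x)=\big(e^{z\cdot(\xi\circ\tT-\xi)}f\big)\circ\tT^{-1}(x)=(e^{z\cdot F}f)\circ T^{-1}(\mathcal{p}x).
\]
The last equality uses the $\Deck$-invariance of $F$ and $f$ and $\mathcal{p}\circ\tT=T\circ\mathcal{p}$. The right-hand side is exactly $\cL_{z,F}f$. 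Finally, \Cref{lem:isom_L2omega} ensures that the domains match: $\Xi_{-z}$ maps into $\mathscr{C}^\ell(S,z)$, $U$ preserves this space, and $\Xi_z$ maps back to $\mathscr{C}^\ell(S,0)$.
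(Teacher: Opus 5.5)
Your computation of \eqref{eq:twisted} is the same as the paper's. For the regularity of $F$ you take a different and more careful route.

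\textbf{How your regularity argument differs from the paper's.} The paper's argument is one line: $\omega_j$ is the real part of a holomorphic $1$-form, hence smooth on $S$, hence $F_j(x)=\int_x^{Tx}\omega_j$ is smooth. This never addresses the fact that $\mathscr{C}^{\infty,\bdd}(S)$ is defined through the flat fields $X,Y$, which blow up like $|\zeta|^{-k}$ at a zero of order $k$ of $\alpha$. Your diagnosis is right: if $\omega=\Re\phi$, then $\omega(X)=\Re(\phi/\alpha)$, which has poles on $\Sigma$.

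\textbf{Why a harmonic representative cannot work.} The problem cannot be avoided for harmonic $\omega_i$, so the cutoff representative is necessary, not just convenient.
\begin{itemize}
\item The polar part of $\Re(\phi_i/\alpha)$ at a cone point $\sigma$ is a sum of terms homogeneous of degree $\delta\in(-1,0)$ in the flat coordinate $w$, and $T$ is linear in $w$.
\item Boundedness of $XF_i=\lambda^{-1}\omega_i(X)\circ T-\omega_i(X)$ therefore forces the degree-$\delta$ pieces to satisfy $G_\sigma=\lambda^{-1}G_{T\sigma}\circ T$. By homogeneity this relation holds on the whole tangent cone.
\item Iterating, and using $|G(u)|\leq C|u|^{\delta}$ together with $|T^nw|\geq\lambda^n|\Im w|$, gives $|G_\sigma(w)|\leq C\lambda^{-n}(\lambda^n|\Im w|)^{\delta}\to0$. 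Hence $G_\sigma=0$.
\item So $\phi_i/\alpha$ would be holomorphic, hence constant, and $[\omega_i]$ would lie in $\operatorname{span}\{[\Re\alpha],[\Im\alpha]\}$.
\item $T^*$ acts on that span with eigenvalues $\lambda^{\pm1}$. This contradicts $T^*[\omega_i]=[\omega_i]\neq0$, which follows from $\tT$ commuting with $\Deck$.
\end{itemize}
Thus with harmonic $\omega_i$ one actually has $F_i\notin\mathscr{C}^{1,\bdd}(S)$.

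\textbf{Clean-ups.} Delete the sentence asserting that $\omega(X),\omega(Y)$ are bounded ``including at the cone points''. It is false and contradicts what you write next. Also delete the first suggested fix (``divided appropriately''), since by the argument above no harmonic representative works.

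Then commit to $\omega_i'=\omega_i-\diff(\chi h_i)$, with $h_i$ a local primitive near $\Sigma$ and $\chi$ a cutoff. Say explicitly that this redefines $\xi$, $\Xi_z$ and $F$:
\begin{itemize}
\item $\xi$ changes by the bounded $\Deck$-invariant function $-(\chi h)\circ\mathcal{p}$, plus a constant;
\item $F$ changes by the coboundary $v\circ T-v$, with $v=-\chi h$.
\end{itemize}
The identity \eqref{eq:twisted} is purely algebraic and holds for any closed representative. The bounds then follow as you say: $F$ is locally constant near $\Sigma$ by continuity of $T$ at the cone points, and the complement of a neighbourhood of $\Sigma$ is compact.

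\textbf{Trade-off.} The paper's harmonic choice is what it later feeds into Forni's twisted holomorphic forms. Yours is what the Lasota--Yorke estimates actually require, since they involve $\|F\|_{\mathscr{C}^{p+q+2}}$ measured along $X,Y$.
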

\begin{proof}
	First of all, recalling \eqref{eq:Xi}, we notice that
	\[
	\begin{split}
		{\left(\Xi_{z} \circ U \circ \Xi_{-z} \right)(f) } & =\Xi_{z} (e^{-z\cdot \xi\circ \tT^{-1}}f\circ T^{-1})=(e^{z\cdot F} f)\circ T^{-1}=\cL_{z,F}(f).
	\end{split}
	\]
	We need to show that $F$ is smooth. The  $j$-th component $F_j$ can be written as
	\[
	F_j(x) = \int_{x}^{T x} \omega_j, \qquad \text{for $x\in S$.}
	\]
	Since the 1-form $\omega_j$ is the real part of a holomorphic 1-form on $S$, it is smooth, and hence so is $F_j$.
\end{proof}

We conclude this section by showing that the Frobenius function $F$ satisfies the assumption of \Cref{prop:unique_max_eigenvalue}.

\begin{proposition}
	For any $v\in (-\pi,\pi)^d$, the function $v\cdot F$ is cohomologous to a constant  $\hspace{-0.5em}\pmod{2\pi}$ if and only if $v =0$.
\end{proposition}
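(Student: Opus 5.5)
The direction ``$v=0 \Rightarrow$ cohomologous to a constant'' is trivial: take $c=0$ and $\alpha=0$. For the converse, suppose that $v\cdot F - c - (\alpha\circ T - \alpha) \in 2\pi\Z$ Lebesgue-a.e. on $S$, for some constant $c$ and measurable $\alpha\colon S\to\R$. The plan is to build from $\alpha$ a function on the cover that is invariant under the horizontal flow, use ergodicity to force it to be constant, and then read off $v$ from how it transforms under deck transformations.

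\emph{Step 1: the function on the cover.} Define $h\colon \tS\to\R/2\pi\Z$ by $h(x) = v\cdot\xi(x) - \alpha(\mathcal{p}(x))$ and set $\Psi = e^{ih}$. By the definition \eqref{eq:F} of $F$ and the identity $T\circ\mathcal{p}=\mathcal{p}\circ\tT$, we get
\[
h\circ\tT - h = v\cdot F\circ\mathcal{p} - (\alpha\circ T-\alpha)\circ\mathcal{p} = c \pmod{2\pi},
\]
so $\Psi\circ\tT = e^{ic}\Psi$ a.e. By the proof of \Cref{lem:proj_Xi}, $\xi(D(x)) = \xi(x)+\omega([\gamma])$ for $D=[\gamma]+\Gamma^{\abel,G}$. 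In particular $\xi\circ D_j = \xi + e_j$, since $\omega_i([\gamma_j])=\delta_{ij}$. Hence
\[
\Psi\circ D_j = e^{iv_j}\,\Psi .
\]

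\emph{Step 2: invariance under the horizontal flow (main obstacle).} Fix $t\in\R$. I claim $\Psi\circ\tphi_t=\Psi$ a.e. on $\tS$. Since $\tT$ contracts horizontal leaves, $\tT^n\tphi_t = \tphi_{\lambda^{-n}t}\tT^n$, and $c$ cancels in differences. Therefore, for every $n$,
\[
h(\tphi_t x)-h(x) = h(\tphi_{\lambda^{-n}t}\tT^n x)-h(\tT^n x).
\]
The contribution of $v\cdot\xi$ to the right-hand side is at most $|v|\,\|\omega\|_\infty\lambda^{-n}|t|$, which tends to $0$ uniformly. For the $\alpha$ part, I use Lusin's theorem: given $\varepsilon>0$, choose a compact $K\subset S$ with $\vol(S\setminus K)<\varepsilon$ on which $e^{i\alpha}$ is uniformly continuous. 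Both $T$ and $\phi_s$ preserve $\vol$, so the set of $y\in S$ with $T^ny\in K$ and $\phi_{\lambda^{-n}t}T^ny\in K$ has measure at least $1-2\varepsilon$. On that set, $|e^{i\alpha(\phi_{\lambda^{-n}t}T^n y)}-e^{i\alpha(T^ny)}|$ is small once $n$ is large. Consequently the $\Deck$-invariant function $|\Psi\circ\tphi_t-\Psi|$, viewed as a function on $S$, exceeds any fixed $\eta>0$ only on a set of measure at most $2\varepsilon$. Letting $\varepsilon\to0$ gives $\Psi\circ\tphi_t=\Psi$ a.e. on $\cF$, hence on $\tS$ by $\Deck$-equivariance (the twist factor is unimodular). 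A Fubini argument over rational $t$ then upgrades this to genuine flow invariance of a version of $\Psi$. The delicate point is the order of quantifiers: I argue with fixed $t$ and take $n\to\infty$ inside a measure estimate, not pointwise.

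\emph{Step 3: conclusion.} By \Cref{thm:flow_ergodic}, the flow $\{\tphi_s\}$ is ergodic for Lebesgue measure on $\tS$. Hence the bounded invariant function $\Psi$ is a.e. equal to a constant, of modulus $1$ and in particular nonzero. The relation $\Psi\circ D_j=e^{iv_j}\Psi$ then forces $e^{iv_j}=1$, that is $v_j\in2\pi\Z$. Since $v\in(-\pi,\pi)^d$, this gives $v=0$.
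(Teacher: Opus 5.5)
Your proof is correct. It uses the same three ingredients as the paper: Lusin's theorem, the relation $\tT^n\tphi_t=\tphi_{\lambda^{-n}t}\tT^n$, and ergodicity of $\{\tphi_t\}$ on $\tS$ (\Cref{thm:flow_ergodic}). You organize them differently, however.

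The paper argues pointwise. It picks $x\in K$ generic for $T$ whose lift $\tx$ is generic for the flow, and uses density of the flow orbit to find $t$ with $\tphi_t(\tx)$ close to $D_n(\tx)$. It then pushes the segment from $\tx$ to $\tphi_t(\tx)$ forward by $\tT^N$, so that uniform continuity of $\alpha$ on $K$ yields $v\cdot n\in 2\pi\Z+O(\epsilon)$ for every $n\in\Z^d$.

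You instead package everything into the $\Deck$-equivariant function $\Psi=e^{i(v\cdot\xi-\alpha\circ\mathcal{p})}$. You prove $\Psi\circ\tphi_t=\Psi$ almost everywhere for each fixed $t$ by a volume estimate, and apply ergodicity in the form ``invariant functions are a.e.\ constant''. The twist $\Psi\circ D_j=e^{iv_j}\Psi$ then gives $v=0$ immediately.

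\emph{What your route buys} is measure-theoretic robustness. For a time $t$ chosen depending on $\tx$ and $\epsilon$, the paper's pointwise version needs that both $T^Nx$ and $\phi_{\lambda^{-N}t}T^Nx$ lie in $K$. It also needs the cohomological equation to hold along the forward orbit of $\phi_t(x)$. The paper treats these points somewhat informally; for instance, it asks that a whole horizontal segment lie in the Lusin set $K$, which may have empty interior. Your fixed-$t$ argument needs only that $T^n$ and $\phi_{\lambda^{-n}t}\circ T^n$ preserve volume.

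\emph{What the paper's route buys} is a more geometric picture: the obstruction appears directly as the displacement $n$ accumulated along a dense orbit.

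One minor point: your Step~2 already holds for every real $t$, so restricting to rational $t$ is unnecessary.
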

\begin{proof}
Assume that there exists $v\in (-\pi,\pi)^d \setminus \{0\}$ such that $v\cdot F$ is cohomologous to a constant $c$ $\hspace{-0.5em}\pmod{2\pi}$; namely, there exists a measurable function $\alpha$ such that $ v\cdot F - c - \alpha\circ T+\alpha \in 2\pi \Z$ for Lebesgue almost every $x$. 

By Lusin's theorem, as $v\cdot F$ is defined on the locally compact space $S_{\reg}$, there exists a compact set $K \subset S_{\reg}$ of positive Lebesgue measure, such that $\alpha$ is continuous (and moreover uniformly continuous) on $K$.

As $K$ has positive Lebesgue measure, we can take a point $x\in K$ that is generic for Lebesgue under $T$, and has a preimage $\tx \in \mathcal{p}^{-1}(x)$ that is generic for Lebesgue under the horizontal flow (using the ergodicity of $\{\tphi_t\}_{t\in \R}$). Then for any $n \in \Z^d$, and any $\epsilon > 0$, there exists a $t>0$ such that $\tphi_t(\tx)$ is close enough to $D_n(\tx)$ and $N \in \N$ is large enough that the following hold:
\begin{enumerate}
\item $|\alpha(\phi_t(x)) - \alpha(x)| < \epsilon$,
\item $|\xi(\tphi_t(\tx)) - \xi(\tx) - n| < \epsilon$,
\item The segment $\phi_{[0,\lambda^{-N}t]}(T^Nx)$ lies inside $K$ and $|\alpha(T^N x) - \alpha(\phi_{\lambda^{-N}t} T^N x)| < \epsilon$,
\item $|\xi(\tT^N(\tx)) - \xi(\tphi_{\lambda^{-N}t} \tT^N \tx)| < \epsilon$.
\end{enumerate}

It follows that 
\begin{align*}
v \cdot (\xi(\tphi_t(\tx)) - \xi(\tx)) & = v \cdot (\xi(\tphi_t(\tx)) - \xi(\tT^N \tphi_t(\tx)) + \xi(\tT^N \tx) - \xi(\tx)) +O(\epsilon)\\
&= v \cdot S_N F(x) - v\cdot S_N F(\phi_t(x)) +O(\epsilon)\\
&= O(\epsilon) + 2\pi\Z.
\end{align*}

Hence we see that $v\cdot n = O(\epsilon)+2\pi \Z$. As this holds for any $n$ and any $\epsilon$, $v=0$.

\end{proof}


\section{Maharam measures, Maharam distributions, and a description of the spectra}\label{sec:Maharam_stuff}

In this section we consider the operators $\cL_{z,F}$, where the function $F$ is defined in \eqref{eq:F}. Note that $F$ is Deck-invariant, and hence projects to a function $F: S\to \R^d$.

First we show that for any real parameter $r\in \R^d$, the eigenfunctions $\upsilon$ of $\cL'_{r,F}$  with eigenvalues of modulus greater than $\lambda^{-1}\rho(r)$ can be used to construct $\phi_s$-invariant Maharam distributions on $\tS$. 
In particular, starting from the eigenfunction $\nu_r$ one obtains a Maharam measure. This gives a $\R^d$-parametrised family of invariant measures for the horizontal flow on $\tS$.

Second, we study the spectrum of $\cL_{z,F}$ on $\cB_{p,q}$, and show that the spectrum is closely connected with the spectrum of an associated linear map, namely the induced action of $\tT$ on a twisted cohomology group on $S_0$, where the twisting is determined by $z$. We will denote this induced action by $\cL_z^\#$.

In particular, denoting by $\Theta_0(z)$ the spectrum of $\cL_z^\#$ acting on $H^1_z(S_0,\C)$, and by $\Theta(z)$ the spectrum of the action restricted to $H^1_z(S,\C)$, we see that the eigenvalues of $\cL_{z,F}$ on $\cB_{p,q}$ that are of modulus greater than $\lambda^{-p}\rho(z)$, are contained in the set $\{\lambda^{-n}\mu: 1 \leq n \leq p, \mu \in \Theta_0(z)\}$. 
Further we recover the eigenvalues $\mu \in \Theta(z)$ that satisfy $|\mu| > \lambda^{-\min\{p,q\}+1}\rho(z)$ as having corresponding eigenvalues in the spectrum of $\cL_{z,F}$.

As a corollary we identify the spectral radius $\rho(z)$ of $\cL_{z,F}$ as being $\lambda^{-1}$ times the spectral radius of $\cL_z^\#$.

\subsection{Twisted differential operators and twisted cohomology.}
A key tool in this section will be the following two twisted differential operators $X_z,Y_z$, which act on the eigenspaces of $\cL_{z,F}$.

\begin{definition}
The differential operator $X_z: \cB_{p,q} \to \cB_{p,q-1}$ is defined as:
\[X_z \ell = X(\ell) - z\cdot \omega(X)  \ell .\]
The differential operator $Y_z: \cB_{p,q} \to \cB_{p-1,q}$ is defined as:
\[Y_z \ell = Y(\ell) - z\cdot \omega(Y) \ell .\]
Here $\omega = (\omega_1,\dots,\omega_d)$ is the vector of 1-forms on $S$ corresponding to the cover $\tS$ (see \cref{sec:twisted_hilbert_spaces}).
\end{definition}

\begin{lemma}\label{lem:X_z commutation}
The following identities hold:
\begin{align*}
X_z \circ \cL_{z,F} &= \lambda \cL_{z,F} \circ X_z\\
Y_z \circ \cL_{z,F} &= \lambda^{-1} \cL_{z,F} \circ Y_z.
\end{align*}
\end{lemma}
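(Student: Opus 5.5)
The plan is to verify both identities first on the dense subspace $\mathscr{C}^{\infty,\bdd}(S)\subset\cB_{p,q}$ by a direct computation, and then extend them by continuity. The computation rests on two facts.

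\emph{(a) How $X,Y$ interact with $T^{-1}$.} In translation charts $T(s,t)=(\lambda^{-1}s,\lambda t)$, so $DT^{-1}X=\lambda X$ and $DT^{-1}Y=\lambda^{-1}Y$. Hence for smooth $g$,
\[
X(g\circ T^{-1})=\lambda\,(Xg)\circ T^{-1},\qquad Y(g\circ T^{-1})=\lambda^{-1}(Yg)\circ T^{-1}.
\]

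\emph{(b) Derivatives of the Frobenius function.} By \eqref{eq:roof}, $\diff\xi=\mathcal{p}^\ast\omega$. Since $\tT$ lifts $T$ and $F=\xi\circ\tT-\xi$, this gives
\[
XF=\lambda^{-1}\,\omega(X)\circ T-\omega(X),\qquad YF=\lambda\,\omega(Y)\circ T-\omega(Y),
\]
where we use $D\tT X=\lambda^{-1}X$ and $D\tT Y=\lambda Y$, and $\omega(X)$ denotes the vector $(\omega_1(X),\dots,\omega_d(X))$.

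For smooth $f$, we expand with (a) and then (b):
\[
X\cL_{z,F}f=\lambda\big[e^{z\cdot F}\big(Xf+(z\cdot XF)f\big)\big]\circ T^{-1}
=\lambda\,\cL_{z,F}(Xf)-\lambda\,\cL_{z,F}\big((z\cdot\omega(X))f\big)+(z\cdot\omega(X))\,\cL_{z,F}f .
\]
The last term arises because $\lambda\cdot\lambda^{-1}\big(\omega(X)\circ T\big)\circ T^{-1}=\omega(X)$, so it factors out of $\cL_{z,F}$. Subtracting $(z\cdot\omega(X))\cL_{z,F}f$ from both sides gives exactly $X_z\cL_{z,F}f=\lambda\,\cL_{z,F}X_zf$. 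The $Y$ identity follows by the identical computation with $\lambda$ replaced by $\lambda^{-1}$.

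For the extension, $\cL_{z,F}$ is bounded on every $\cB_{p',q'}$ by \eqref{LY-1}. $X$ maps $\cB_{p,q}$ continuously into $\cB_{p,q-1}$ (more precisely, into $\cC^{-q-1}$-valued maps, by the definition of the norms and \Cref{lem:derivative_Gamma}), and $Y$ maps $\cB_{p,q}$ into $\cB_{p-1,q}$. Multiplication by $z\cdot\omega(X)$ and by $z\cdot\omega(Y)$ is bounded by \Cref{lem:multiplying_by_smooth}. Both sides of each identity are therefore continuous operators that agree on the dense subspace $\mathscr{C}^{\infty,\bdd}(S)$ (dense by \Cref{lem:inclusion}), so they agree on all of $\cB_{p,q}$.

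The only delicate point, and the one I expect to be the main obstacle, is that \Cref{lem:multiplying_by_smooth} requires $\omega(X),\omega(Y)\in\mathscr{C}^{\infty,\bdd}(S)$. This must hold near the cone points, where $X$ and $Y$ degenerate in smooth coordinates. I would secure it by noting that the identities only use that $\omega$ is a closed form representing the classes $\omega_i$ with $\diff\xi=\mathcal{p}^\ast\omega$. One can therefore choose a representative that coincides with a constant-coefficient combination of $\Re\alpha,\Im\alpha$ in each singular chart $U_x$; this is possible because the cover is locally compact, so small loops around cone points lie in $\Gamma$. With this choice, $\omega(X)$ and $\omega(Y)$ are locally constant near $\Sigma$, hence bounded with all derivatives. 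Changing the representative of $\omega$ changes $\xi$ by a bounded smooth $\Deck$-invariant function $h$, and $F$ by the coboundary $h\circ T-h$. This conjugates $\cL_{z,F}$ by multiplication by $e^{z\cdot h}$ and does not affect the argument.
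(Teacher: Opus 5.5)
Your proof is correct and is the same computation as the paper's. The paper also differentiates $F\circ T^{-1}$ using $\diff\xi=\mathcal{p}^{\ast}\omega$, obtaining $X(F\circ T^{-1})=\omega(X)-\lambda\,\omega(X)\circ T^{-1}$ (equivalent to your formula for $XF$), then expands $X_z\bigl(e^{z\cdot F\circ T^{-1}}\, f\circ T^{-1}\bigr)$ for smooth $f$ and treats $Y$ the same way.

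Your density extension and your choice of a representative of $\omega$ with $\omega(X),\omega(Y)$ bounded near $\Sigma$ are sensible points that the paper leaves implicit. For the density step, use the targets the definition of the norms actually gives, namely $\cB_{p,q+1}$ for $X_z$ and $\cB_{p-1,q+1}$ for $Y_z$ ($\mathcal{C}^{-q-1}$-valued, as your parenthetical suggests), rather than $\cB_{p,q-1}$ and $\cB_{p-1,q}$.
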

\begin{proof}
We prove the first statement, the second follows by an analogous computation.
First, observe that for $x\in S$, if $\tx $ is a lift of $x$ to $\tS$, then
\[X(F\circ T^{-1})(x) = X\big(\xi(\tx)-\xi(\tT^{-1}\tx)\big) = \omega(X)(x) - \lambda\omega(X)(T^{-1}x),\]
where we use the fact that $X(\xi)$ is a Deck-invariant function on $\tS$, that projects to $\omega(X)$ on $S$.

Hence,
\begin{align*}
X_z (\cL_{z,F} f) &= X(\cL_{z,F}f) - z\cdot \omega(X)\cL_{z,F}(f)\\
&= X(e^{z\cdot F\circ T^{-1}}\cdot f\circ T^{-1}) - z\cdot \omega(X) \cL_{z,F}(f)\\
&= X(e^{z\cdot F\circ T^{-1}})\cdot f\circ T^{-1} +e^{z\cdot F\circ T^{-1}}\cdot X(f\circ T^{-1}) - z\cdot \omega(X) \cL_{z,F}(f) \\
&= z \cdot  \big(\omega(X)-\lambda \omega(X)\circ T^{-1} \big) \, \cL_{z,F}(f) + e^{z\cdot F\circ T^{-1}} \lambda X(f)\circ T^{-1} - z\cdot \omega(X)\cL_{z,F}(f)\\
&= \lambda e^{z\cdot F\circ T^{-1}} \big( X(f) \circ T^{-1} - z \cdot \omega(X)\circ T^{-1}  f\circ T^{-1} \big)\\
&= \lambda  \cL_{z,F} \big(X(f)- z\cdot  \omega(X)f \big) \\
&= \lambda  \cL_{z,F}(X_z f).\qedhere
\end{align*}
\end{proof}

As in~\cite{Forni:twisted}, we also define the twisted differential as
\[
\diff_z = \diff - z \cdot \omega \wedge.
\]
Note that, given a function $f$, if we consider its restriction to $S_0$, then we have $\diff_z f = X_zf \diff x + Y_z f \diff y$.  
It is easy to see that $\diff_z^2 = 0$ since the forms $\omega_i$ are closed. In particular, $\diff_z$ defines a  \emph{twisted cohomology complex} $H_z(S_0,\C)$ for the punctured surface $S_0$, as well as $H_z(S,\C)$ for the closed surface $S$.

\begin{lemma}
For $z \notin 2\pi i \Z^d$, we have $H^1_z(S,\C) \cong \C^{2g-2}$ and $H^1_z(S_0,\C) \cong \C^{2g-2+|\Sigma|}$.
\end{lemma}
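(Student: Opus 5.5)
The plan is to identify the twisted de Rham complex $(\Omega^\bullet(S_0),\diff_z)$ with cohomology with coefficients in a flat complex line bundle $L_z$, and then compute $H^1$ by an Euler characteristic argument. The key fact is that $\diff_z$ is locally gauge equivalent to $\diff$. Indeed, on a simply connected open set $U$ we can pick a primitive $h$ of $\omega$, that is $\diff h=\omega$ (this is exactly the function $\xi$ of \eqref{eq:roof}, read on a local lift to $\tS$). Then
\[
\diff_z f=e^{z\cdot h}\,\diff\big(e^{-z\cdot h}f\big).
\]
Hence $H^\bullet_z(S_0,\C)\cong H^\bullet(S_0,L_z)$, where $L_z$ is the rank-one local system with monodromy $\gamma\mapsto e^{\pm z\cdot\omega([\gamma])}$ (the sign is irrelevant for what follows). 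The same holds on $S$, since the $\omega_i$ are smooth closed forms on all of $S$. This is a twisted de Rham theorem: the sheaf of $\diff_z$-flat functions is a locally constant sheaf, and it has a fine resolution by forms.

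Next I compute the Euler characteristics. For a rank-one local system, $\chi(M,L)=\chi(M)$. This gives $\chi(S,L_z)=2-2g$ and $\chi(S_0,L_z)=2-2g-|\Sigma|$.

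Then I check that $H^0$ and $H^2$ vanish, and this is where the hypothesis $z\notin 2\pi i\Z^d$ enters. $H^0$ consists of global flat sections. A nonzero flat section exists only if the monodromy character $\gamma\mapsto e^{z\cdot\omega([\gamma])}$ is trivial, that is, $z\cdot\omega([\gamma])\in 2\pi i\Z$ for all $\gamma$. Taking $\gamma=\gamma_j$ gives $z_j\in 2\pi i\Z$ for every $j$, so $z\in 2\pi i\Z^d$, which we have excluded. Here I use that the classes $[\gamma_j]$ satisfy $\omega_i([\gamma_j])=\delta_{ij}$ and are realized by loops in $S_0$, hence also in $S$. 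For the closed surface, Poincaré duality gives $H^2(S,L_z)\cong H^0(S,L_z^{*})^{*}$. The dual system has the inverse character, which is also nontrivial, so $H^2(S,L_z)=0$. For the punctured surface, $S_0$ is a noncompact surface and deformation retracts onto a graph, so $H^2(S_0,L_z)=0$ automatically.

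Finally, $\dim H^1=-\chi$ gives $\dim H^1_z(S,\C)=2g-2$ and $\dim H^1_z(S_0,\C)=2g-2+|\Sigma|$.

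The main obstacle is making the twisted de Rham isomorphism precise on the punctured surface $S_0$. One must be careful about which class of forms is used (smooth forms on $S_0$ with no growth condition) so that the sheaf-theoretic argument applies. Alternatively, one can avoid sheaves and compute directly with a CW structure: $S_0$ retracts to a wedge of $2g+|\Sigma|-1$ circles, and the twisted cellular complex has $H^0=0$ and $\chi=1-(2g+|\Sigma|-1)$. Either route gives the stated dimensions.
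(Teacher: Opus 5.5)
Your proof is correct and follows the same route as the paper: you identify the $\diff_z$-complex with the cohomology of the rank-one local system whose monodromy is $\gamma\mapsto e^{z\cdot\omega([\gamma])}$. The only difference is in the dimension count. The paper cites Forni's Lemma 4.3 for $S$ and Watanabe's Lemma 1.1 for $S_0$, whereas you carry out the standard Euler characteristic computation yourself. This treats both surfaces uniformly and shows exactly where $z\notin 2\pi i\Z^d$ is used: the character is nontrivial because $\omega_i([\gamma_j])=\delta_{ij}$, so $H^0=0$, and $H^2=0$ either by duality on $S$ or because $S_0$ retracts onto a graph.
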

\begin{proof}
The result for $H^1_z(S,\C)$ is the content of \cite[Lemma 4.3]{Forni:twisted}.
For $S_0$, we use the observation from \cite{Forni:twisted} that the twisted cohomology $H^*_z(S_0,\C)$ is isomorphic to the cohomology with local coefficients $H^*(S_0,\mathscr{L}_z)$, where $\mathscr{L}_z$ is the local system defined by the representation $\varrho_z: \pi_1(S_0)~\to~\C^\ast$ given by 
\[\varrho_z(\gamma) = e^{\int_\gamma z\cdot \omega }.\]

By \cite[Lemma 1.1]{Watanabe}, $\dim H^1(S_0,\mathscr{L}_z) = 2g-2+|\Sigma|$, and hence $H^1_z(S_0,\C) \cong \C^{2g-2+|\Sigma|}$.
\end{proof}

We can identify the twisted classes on $S_0$ with regular cohomology classes on $\tS_0$, which 
transform in a Maharam way under the Deck group action. 

\begin{definition}
Define the $z$-Maharam cohomology group $H^1(\tS_0, z, \C)$ as
\[H^1(\tS_0, z, \C) = \{h \in H^1(\tS_0, \C): h\circ D = e^{z\cdot \omega( [\gamma])}h \text{ for } D = [\gamma] +\Gamma^{\abel, G} \in \Deck\}.\]
\end{definition}

\begin{lemma}
There is an isomorphism between $H^1_z(S_0,\C)$ and $H^1(\tS_0, -z,\C)$.
\end{lemma}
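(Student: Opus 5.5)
The plan is to realize the isomorphism through the gauge multiplication $\Xi_{-z}$ by $e^{-z\cdot\xi}$ (the same device as in \Cref{lem:proj_Xi,lem:isom_L2omega}). First pass to forms: any twisted closed $1$-form $\beta$ on $S_0$, meaning $\diff_z\beta=0$, is pulled back via $\mathcal{p}_0$ to a $\Deck$-invariant form $\mathcal{p}_0^*\beta$ on $\tS_0$. We then set
\[
\Psi(\beta)\coloneqq e^{-z\cdot\xi}\,\mathcal{p}_0^*\beta .
\]

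The key identity is the conjugation
\[
\diff\bigl(e^{-z\cdot\xi}\eta\bigr)=e^{-z\cdot\xi}\bigl(\diff\eta - z\cdot \diff\xi\wedge\eta\bigr),
\qquad \diff\xi=\mathcal{p}_0^*\omega .
\]
Together with $\mathcal{p}_0^*\circ\diff_z=\diff_z\circ\mathcal{p}_0^*$, it gives $\diff\circ\Psi=\Psi\circ\diff_z$ on functions and on $1$-forms. Hence $\Psi$ is a cochain map sending $\diff_z$-closed forms to closed forms and $\diff_z$-exact forms $\diff_z f$ to exact forms $\diff(e^{-z\cdot\xi}\mathcal{p}_0^*f)$. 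For the equivariance, use $\xi\circ D=\xi+\omega([\gamma])$ (proved in \Cref{lem:proj_Xi}) and the $\Deck$-invariance of $\mathcal{p}_0^*\beta$. These give
\[
D^*\Psi(\beta)=e^{-z\cdot\omega([\gamma])}\Psi(\beta),
\]
so $\Psi$ induces a map $H^1_z(S_0,\C)\to H^1(\tS_0,-z,\C)$.

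For the inverse, take a closed form $\eta$ on $\tS_0$ whose class is $(-z)$-Maharam. The first step is to choose a representative that is itself equivariant at the level of forms. Since $\Deck\cong\Z^d$ is finitely generated abelian, one can average: choose a compactly supported cutoff $\chi$ with $\sum_D\chi\circ D=1$ and set $\tilde\eta=\sum_D E_{z}(D)\,D^*(\chi\eta)$, in the spirit of $\pi_z$. Then show $\tilde\eta$ is closed and cohomologous to $\eta$, using that $D^*\eta-E_{-z}(D)\eta$ is exact. The second step is to observe that $e^{z\cdot\xi}\tilde\eta$ is $\Deck$-invariant, hence descends to a $\diff_z$-closed form on $S_0$. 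The third step is to check that exact equivariant forms correspond to $\diff_z$-exact forms. If $\tilde\eta=\diff g$ is equivariant, then $g\circ D-E_{-z}(D)g$ is locally constant, hence constant, and defines a twisted cocycle $c\colon\Deck\to\C$. For $z\notin2\pi i\Z^d$ some $E_{-z}(D_0)\neq1$, which makes $c$ a coboundary; shifting $g$ by a constant then makes it equivariant, so $e^{z\cdot\xi}g$ descends and $e^{z\cdot\xi}\tilde\eta=\diff_z(e^{z\cdot\xi}g)$. The case $z=0$ has to be handled separately, or excluded as elsewhere in the section.

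The main obstacle is the inverse direction: showing that every $(-z)$-Maharam class admits an equivariant closed representative, and that the cohomology class determines that representative up to twisted-exact forms. The averaging must be made rigorous, since $\Deck$ is infinite and the sum over $D$ of the exact corrections need not converge in the presence of the weights $e^{r\cdot\omega}$. I would first try to avoid this issue by working with de Rham cohomology as $\Hom(H_1(\tS_0),\C)$, equivalently via periods. A class $h$ with $h\circ D=e^{-z\cdot\omega([\gamma])}h$ corresponds to a $\pi_1(S_0)$-twisted cocycle, that is, an element of $H^1(S_0,\mathscr L_z)$. This identification uses the Shapiro-type description of the cohomology of a local system in terms of the cover, and I would then invoke the isomorphism $H^*_z(S_0,\C)\cong H^*(S_0,\mathscr L_z)$ already used in the previous lemma. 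If the periods route works cleanly, it bypasses the convergence issue entirely.
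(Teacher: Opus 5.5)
Your forward map $\Psi(\beta)=e^{-z\cdot\xi}\mathcal{p}_0^*\beta$, together with the chain-map identity and the equivariance computation, is exactly the paper's argument. Your cocycle argument that an exact equivariant form comes from a $\diff_z$-exact form on $S_0$ is also correct.

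The paper takes the same route, but it defines the inverse by projecting $e^{z\cdot\xi}\tilde\alpha$ for a ``$(-z)$-Maharam'' form $\tilde\alpha$. That is, it treats the Maharam condition as holding at the level of forms, and it dismisses well-definedness with ``one checks as above''. So the two issues you raise are real, and the paper does not address them.

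The case $z\in 2\pi i\Z^d$ cannot be ``handled separately''; the statement is false there. At $z=0$, for instance, the nonzero classes $[\omega_i]\in H^1(S_0,\C)$ pull back to the exact forms $\diff\xi_i$. For $d=1$ the two spaces even have different dimensions, $2g-1+|\Sigma|$ and $2g-2+|\Sigma|$. This case has to be excluded, as in the preceding lemma.

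The genuine gap is surjectivity, i.e.\ producing a closed equivariant representative of a class $[\eta]$ with $D^*[\eta]=E_{-z}(D)[\eta]$. Your averaged form $\tilde\eta=\sum_D E_z(D)\,D^*(\chi\eta)$ is equivariant but in general not closed. The obstruction is not convergence, since with a compactly supported cutoff all these sums are locally finite. Already for $\eta=\diff u$, with $u$ supported in a small disc $B$ disjoint from its nontrivial $\Deck$-translates, one gets
\[
\diff\tilde\eta=\sum_D E_z(D)\,D^*(\diff\chi\wedge\diff u).
\]
This is nonzero as soon as $\diff\chi\wedge\diff u\not\equiv 0$, because the terms have disjoint supports.

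What has to be averaged is the primitive. Write $D^*\eta=E_{-z}(D)\eta+\diff h_D$. The $h_D$ are determined only up to constants, and
\[
c(D,D')=h_{DD'}-E_{-z}(D)h_{D'}-h_D\circ D'
\]
is a constant $2$-cocycle of $\Deck\cong\Z^d$ with coefficients in $\C$ twisted by the character $E_{-z}$. This character is nontrivial for $z\notin 2\pi i\Z^d$, so $H^2(\Z^d;\C_{E_{-z}})=0$, just as the vanishing of $H^1$ is what your injectivity argument uses. Hence the $h_D$ can be renormalized so that $c\equiv 0$. Then $u=-\sum_D E_z(D)(\chi\circ D)h_D$ satisfies $h_D=u\circ D-E_{-z}(D)u$, and $\eta-\diff u$ is a closed equivariant representative of $[\eta]$.

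Your periods route needs the same input. Passing from an $E_{-z}$-equivariant homomorphism $H_1(\tS_0,\Z)\to\C$ to a class in $H^1(S_0,\mathscr{L}_z)$ is the inflation--restriction (Hochschild--Serre) sequence for $1\to\pi_1(\tS_0)\to\pi_1(S_0)\to\Deck\to 1$, not Shapiro's lemma alone. Its surjectivity is exactly the vanishing of $H^2(\Z^d;\C_{E_{-z}})$, which your sketch never invokes.
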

\begin{proof}
For $[\alpha] \in H^1_z(S_0,\C)$, define $\tilde \alpha$ on $\tS_0$ by 
\[\tilde \alpha = e^{-z\cdot \xi} \mathcal{p}^*\alpha.\]
Then, observing that $\mathcal{p}^*\omega = \diff\xi$, we see that
\[\diff \tilde \alpha = e^{-z\cdot\xi}(\mathcal{p}^*\diff \alpha - z\cdot\diff\xi \wedge \mathcal{p}^*\alpha) = e^{-z\cdot \xi}\mathcal{p}^*(\diff{\alpha} - z\cdot \omega \wedge \alpha) = e^{-z\cdot \xi}\mathcal{p}^*(\diff_z \alpha) =0,\]
so $\tilde \alpha$ is closed and does indeed define a cohomology class. 
For $D = [\gamma]+\Gamma^{\abel, G} \in \Deck$,
\[\tilde \alpha \circ D= e^{-z\cdot \xi}e^{-z\cdot (\xi\circ D -\xi)} \mathcal{p}^*\alpha = e^{-z\cdot \omega ([\gamma])} \tilde \alpha,\]
thus $\tilde \alpha$ is a closed $-z$-Maharam 1-form.

The map is a well-defined map on cohomology, since if $\alpha = \diff_z \beta$ for $\beta \in \Omega^0(S_0)$, then 
$\tilde \alpha = \diff (e^{-z\cdot \xi}\mathcal{p}^*\beta)$. 
Thus the map $\alpha \mapsto \tilde \alpha$ is a homomorphism.

One similarly defines the inverse map, as for $(-z)$-Maharam $\tilde \alpha$, the 1-form $e^{z\cdot \xi}\tilde \alpha$ is $\Deck$-invariant and so projects down to 
a 1-form $\alpha$ on $S_0$. One checks as above that this is also a well-defined homomorphism, proving the isomorphism.
\end{proof}

We will come back to the twisted cohomology in \Cref{sec:description_spectrum}.

\subsection{Maharam distributions from eigenfunctions of the dual transfer operator}
In this subsection we consider real parameters $r\in \R^d$.
\begin{definition}
\label{defn:maharam}
For $r\in \R^d$, a distribution $\zeta$ on $\tS$ is said to be \emph{$e^{r\cdot \omega}$-Maharam} if  
\[\zeta \circ D = e^{r\cdot \omega([\gamma])}\zeta\]
for $D = [\gamma] + \Gamma^{\abel, G} \in \Deck$.
\end{definition}

We will show that $e^{r\cdot \omega}$-Maharam $\phi_s$-invariant distributions can be constructed starting from eigenfunctions of the dual transfer operator $\cL'_{r,F}$.

In particular, starting with the eigenfunction $\nu_r$ of maximal eigenvalue $\rho(r)$, which is a measure, we obtain a Maharam measure.\\

Let $\upsilon$ be an eigenfunction of $\cL'_{r,F}$.
To define the corresponding distribution on $\tS$ we use the Fourier decomposition developed in \Cref{sec:twisted_hilbert_spaces}. 
Given $g\in \mathscr{C}_c^{\infty,\bdd}(\tS)$, the function $\pi_0 \circ \Xi_{r} g$ is a Deck-invariant function on $\tS$. Hence we can identify it with its projection to $S$, and define
\begin{equation}\label{eq:Maharam_distribution}
\zeta(g) = \upsilon(\pi_0 \circ \Xi_{r} g).
\end{equation}

We will show that $\zeta$ is $\phi_s$-invariant, provided that its associated eigenvalue has modulus greater than $\lambda^{-1}\rho(r)$. We use the following lemma.
\begin{lemma}\label{lem:Xi_z}
For any $z \in \C^d, g \in \mathscr{C}_c^{\infty,\bdd}(\tS)$,
\[\pi_0 \circ \Xi_{z} (Xg) = X_z(\pi_0 \circ \Xi_{z} g).\]
\end{lemma}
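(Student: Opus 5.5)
The argument is a direct computation in two moves. First I use the product rule for the weight $e^{z\cdot\xi}$. Then I check that $X$ commutes with $\pi_0$, because $X$ is invariant under deck transformations. Let $g \in \mathscr{C}_c^{\infty,\bdd}(\tS)$.

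\textbf{The weight.} On $\tS_0$ we have $\diff \xi = \mathcal{p}^*\omega$, so
\[
X(\xi) = \omega(X)\circ \mathcal{p}.
\]
This is the same identity used in the proof of \Cref{lem:X_z commutation}. The product rule then gives
\[
\Xi_z(Xg) = e^{z\cdot\xi}\, Xg = X\big(e^{z\cdot \xi} g\big) - \big(z\cdot \omega(X)\circ\mathcal{p}\big)\, e^{z\cdot\xi} g.
\]
In other words,
\[
\Xi_z(Xg) = X(\Xi_z g) - (z\cdot\omega(X)\circ\mathcal{p})\,\Xi_z g
\]
as functions on $\tS$.

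\textbf{Commuting with $\pi_0$.} Recall that $\pi_0 h = \sum_{D\in\Deck} h\circ D$, and the sum is finite because $h=\Xi_z g$ has compact support. Every $D \in \Deck$ is a translation-surface automorphism covering the identity: since $\mathcal{p}\circ D=\mathcal{p}$ and $\mathcal{p}$ is a local translation, $D$ preserves the horizontal vector field. Hence
\[
X(h\circ D) = (Xh)\circ D, \qquad \text{so} \qquad \pi_0(Xh) = X(\pi_0 h).
\]
The multiplier $z\cdot\omega(X)\circ\mathcal{p}$ is $\Deck$-invariant, so it factors out of the sum defining $\pi_0$. Applying $\pi_0$ to the identity from the first step therefore gives
\[
\pi_0\Xi_z(Xg) = X(\pi_0\Xi_z g) - z\cdot\omega(X)\,\pi_0\Xi_z g.
\]
After identifying $\Deck$-invariant functions with functions on $S$, the right-hand side is $X_z(\pi_0\circ\Xi_z g)$ by the definition of $X_z$.

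\textbf{Regularity.} The only possible obstacle is that the function spaces are set up correctly. Everything is smooth on $\tS_{\reg}$: $\xi$ is smooth there and $\omega$ is a smooth harmonic form. The identity holds pointwise on $S_{\reg}$ and hence in $\cB_{p,q}$ via $\Gamma$; note that $\omega(X)$ is bounded and smooth, so multiplication by it preserves $\cB_{p,q}$ by \Cref{lem:multiplying_by_smooth}. Since $g$ has compact support, the sums are finite and no convergence issue arises. I expect no real difficulty beyond this bookkeeping.
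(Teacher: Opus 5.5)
Your proposal is correct and follows the paper's proof: the product rule gives $\Xi_z(Xg) = X_z(\Xi_z g)$ on $\tS$, and then you use that $\pi_0$ commutes with $X_z$. Your version also justifies that commutation explicitly (deck transformations preserve $X$, and $\omega(X)\circ\mathcal{p}$ is $\Deck$-invariant), which the paper leaves implicit.
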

\begin{proof}
Observe that
\[\Xi_z(Xg) = e^{z\cdot \xi} Xg = X\big(  e^{z\cdot\xi} g\big) - z\cdot \omega(X)  e^{z\cdot \xi}g = X_z(\Xi_z g).\]
As $\pi_0$ commutes with $X_z$, we get the desired relation.
\end{proof}

From \Cref{lem:Xi_z} it follows that to show that $\zeta$ is $\phi_s$ invariant, it suffices to show that $\upsilon(X_r f) = 0$ for all $f\in \mathscr{C}^{\infty, \bdd}(S)$.

\begin{proposition}\label{prop:Maharam_distribution}
Let $r\in \R^d$ and let $\upsilon$ be an eigenfunction of $\cL'_{r,F}$ with eigenvalue $\eta$,  $|\eta| > \lambda^{-1}\rho(r)$. Define $\zeta$ on $\tS$ as in \eqref{eq:Maharam_distribution}. Then $\zeta$ is Maharam and invariant under the horizontal flow on $\tS$.
\end{proposition}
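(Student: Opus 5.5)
The proof has two parts: the Maharam property, which is formal, and flow invariance, which uses the commutation relation of \Cref{lem:X_z commutation} together with the size restriction on $\eta$.

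\textbf{Maharam property.} For $D = [\gamma]+\Gamma^{\abel,G}\in\Deck$ and $g\in\mathscr{C}_c^{\infty,\bdd}(\tS)$, write $\zeta\circ D$ for the distribution $g\mapsto \zeta(g\circ D^{-1})$ (or $g\circ D$, depending on the convention, which only flips a sign). By the proof of \Cref{lem:proj_Xi}, $e^{r\cdot\xi\circ D^{-1}} = e^{-r\cdot\omega([\gamma])}e^{r\cdot\xi}$. Hence
\[
\Xi_r(g\circ D^{-1}) = e^{r\cdot\omega([\gamma])}\,(\Xi_r g)\circ D^{-1}.
\]
Since $\pi_0$ sums over all of $\Deck$, it is invariant under precomposition with $D^{-1}$. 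It follows that $\pi_0\Xi_r(g\circ D^{-1}) = e^{r\cdot\omega([\gamma])}\pi_0\Xi_r g$, and applying $\upsilon$ gives $\zeta\circ D = e^{r\cdot\omega([\gamma])}\zeta$, as in \Cref{defn:maharam}. I would also note that $\zeta$ is a well-defined distribution: $\pi_0\Xi_r g$ lies in $\mathscr{C}^{\infty,\bdd}(S)\subset\cB_{p,q}$ with norm controlled by finitely many derivatives of $g$ on its compact support.

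\textbf{Invariance.} By \Cref{lem:Xi_z}, $\zeta(Xg) = \upsilon(X_r f)$ with $f=\pi_0\Xi_r g$. So it suffices to show that $\upsilon\circ X_r=0$ on $\mathscr{C}^{\infty,\bdd}(S)$, and by density on $\cB_{p,q}$ as a functional of the form $\cB_{p,q}\to\cB_{p,q-1}\to\C$. For this to make sense, $\upsilon$ should be viewed as a continuous functional on $\cB_{p,q-1}$ (or on $\cB_{p,q}$ with $q$ replaced by $q+1$). I would fix $p,q\ge1$ and use that the eigenvector $\upsilon$ extends consistently across the scale of spaces. I expect this to be the main technical point: the discrete spectrum and its dual eigenvectors do not depend on $(p,q)$ once $|\eta|$ exceeds the essential radius, because the spaces are nested with a dense inclusion of smooth functions.

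Granting this, define the functional $\beta\coloneqq\upsilon\circ X_r\in\cB'_{p,q}$, with $\upsilon$ regarded on $\cB_{p,q-1}$. Using \Cref{lem:X_z commutation},
\[
\beta(\cL_{r,F}f) = \upsilon(X_r\cL_{r,F}f) = \lambda\,\upsilon(\cL_{r,F}X_r f) = \lambda\eta\,\beta(f).
\]
So $\beta$ is an eigenvector of $\cL'_{r,F}$, acting on $\cB'_{p,q}$, with eigenvalue $\lambda\eta$. But $|\lambda\eta|>\rho(r)$ by hypothesis. The spectral radius of $\cL'_{r,F}$ equals that of $\cL_{r,F}$, which is $\rho(r)$ by \Cref{lem:estimates_spectral_radius}. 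Hence $\beta=0$.

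Alternatively, one can iterate directly: $|\beta(f)| = |\lambda\eta|^{-n}|\beta(\cL^n_{r,F}f)|\le C|\lambda\eta|^{-n}R_n(r)^n\|f\|$ by \eqref{LY-1}, and this tends to zero because $R_n(r)\to\rho(r)<|\lambda\eta|$. This second version avoids any spectral theory for the dual operator.

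In either version, I expect the only delicate point to be making the regularity bookkeeping precise: $X_r$ loses one unit of $q$, so one must check that $\upsilon$ pairs with $X_r f$. I would handle this by working with $\upsilon$ on $\cB_{p,q}$ for some $q\ge1$ and $f$ smooth. The estimate \eqref{LY-1} then applies on $\cB_{p,q-1}$ with the same $R_n$, and density finishes the argument.
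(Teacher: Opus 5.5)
Your proposal is correct and follows essentially the paper's argument. The Maharam property comes from $\xi\circ D=\xi+\omega([\gamma])$ together with the $\Deck$-invariance of $\pi_0$, and flow invariance comes from $\upsilon\circ X_r$ (up to sign, the paper's $X'_r\upsilon$) being an eigenvector of $\cL'_{r,F}$ with eigenvalue $\lambda\eta$, whose modulus exceeds $\rho(r)$.

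The ``extension across the scale'' you flag as the main technical point is unnecessary, because $\cB_{p,q-1}\subset\cB_{p,q}$ (not the reverse) and $X_r$ maps $\cB_{p,q-1}$ boundedly into $\cB_{p,q}$. So $\upsilon\circ X_r\in\cB'_{p,q-1}$ automatically, and your final-paragraph estimate via \eqref{LY-1} on $\cB_{p,q-1}$ already closes the argument.
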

\begin{proof}
Suppose $\upsilon \in \cB'_{p,q}$ satisfies $\cL'_{r,F} \upsilon = \eta \upsilon$.
We define the operator $X'_r : \cB'_{p,q-1} \to \cB'_{p,q}$ to be the dual of $X_r$, namely
such that for $f\in \mathscr{C}^{\infty,\bdd}(S)$, 
\[\langle X'_r \upsilon, f \rangle \coloneqq \langle \upsilon, -X_r f \rangle.\]

Then we compute that $X'_r \upsilon$ is also an eigenfunction of $\cL'_{r,F}$, with eigenvalue $\lambda \eta$. Indeed, 
using \Cref{lem:X_z commutation}, we see that for any smooth function $f$,
\[
\langle \cL'_{r,F}\, X'_r \upsilon, f \rangle = \langle \upsilon, -X_r\, \cL_{r,F} f \rangle
= \langle \upsilon, -\lambda\, \cL_{r,F}\, X_r\, f \rangle
= \lambda \langle X'_r\, \cL'_{r,F} \upsilon, f \rangle
=\lambda \eta \langle X'_r \upsilon,f \rangle.
\]
Hence by the density of $\mathscr{C}^{\infty,\bdd}(S)$, we see that $X'_r\upsilon$ is an eigenfunction of $\cL'_{r,F}$ with eigenvalue 
$\lambda \eta$. But by assumption, $\lambda \eta$ is greater than $\rho(r)$, which is the spectral radius of $\cL'_{r,F}$, so this is only possible if $X'_r \upsilon  = 0$.

Thus we deduce that for any $f\in \mathscr{C}^{\infty,\bdd}(S)$, $\upsilon(X_r f) = 0.$
Now, recalling \Cref{lem:Xi_z}, for $g \in \mathscr{C}_c^{\infty,\bdd}(\tS)$, 
\[\zeta(Xg) = \upsilon(\pi_0\circ\Xi_r Xg) = \upsilon(X_r(\pi_0\circ\Xi_r g)) = 0.\]

Hence we see that $\zeta$ is an invariant distribution for the horizontal flow on $\tS$. Further, $\zeta$ is a Maharam distribution as for $D = [\gamma] + \Gamma^{\abel, G}\in \Deck$, 
\[e^{r \cdot\xi(Dx) - r \cdot \xi(x)} = e^{ r \cdot \omega([\gamma]) },\]
so $\Xi_r (g\circ D) = e^{r \cdot \omega([\gamma])} \Xi_r g$, and thus
\[\zeta \circ D = e^{r\cdot \omega([\gamma])} \zeta. \qedhere \]
\end{proof}

\begin{corollary}
\label{cor:maharam_measures}
For every $r\in \R^d$ there exists a $\phi_s$-invariant Radon measure $\mu_r$ on $\tS$ which is $e^{r\cdot \omega}$-Maharam.
\end{corollary}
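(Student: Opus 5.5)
The plan is to take $\upsilon = \nu = \nu_r$, the eigenvector of $\cL'_{r,F}$ for the eigenvalue $\rho(r)$, and define $\mu_r \coloneqq \zeta$ via \eqref{eq:Maharam_distribution}. That is, for $g \in \mathscr{C}^{\infty,\bdd}_c(\tS)$ we set
\[
\mu_r(g) = \nu\big(\pi_0 \circ \Xi_r g\big) = \nu\Big(\sum_{D\in\Deck} (e^{r\cdot\xi} g)\circ D\Big).
\]

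Since $\rho(r) > \lambda^{-1}\rho(r)$, \Cref{prop:Maharam_distribution} applies directly. It shows that $\mu_r$ is $e^{r\cdot\omega}$-Maharam and annihilates $Xg$ for all compactly supported smooth $g$. Invariance under the horizontal flow then follows by integrating $\frac{\diff}{\diff s}\,\mu_r(g\circ\tphi_s) = \mu_r((Xg)\circ\tphi_s) = 0$. This step is legitimate because $g\circ\tphi_s$ is again compactly supported and smooth away from the singular set, so the identity holds at every $s$.

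It remains to upgrade $\mu_r$ from a distribution to a positive Radon measure. By \Cref{lem:nu_is_measure_2}, $\nu$ is a positive probability measure on $S$. The map $g \mapsto \pi_0\circ\Xi_r g$ is positivity preserving, since $e^{r\cdot\xi}>0$ and $\pi_0$ is a sum of translates. Hence $\mu_r(g)\ge 0$ whenever $g\ge 0$. Moreover, if $g$ is supported in a compact set $K\subset\tS$, only finitely many translates $D(K)$ meet a fundamental domain $\cF$, and $e^{r\cdot\xi}$ is bounded on $K$. This gives the local bound
\[
|\mu_r(g)| \le C_K \|g\|_{\mathscr{C}^0}, \qquad \text{with } C_K = \#\{D : D(K)\cap \cF\neq\emptyset\}\cdot \sup_K e^{r\cdot\xi},
\]
using $|\nu(h)|\le \|h\|_{\mathscr{C}^0}$ for the probability measure $\nu$. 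Thus $\mu_r$ extends by density to a positive linear functional on $\mathscr{C}_c(\tS)$. Here one must handle the singular points in $\widetilde\Sigma$: I will approximate continuous compactly supported functions uniformly by functions in $\mathscr{C}^{\infty,\bdd}_c$, which is possible since $\nu$ is a genuine measure and the bound above only uses the sup norm. The Riesz representation theorem then yields a Radon measure. Flow invariance and the Maharam property pass to the limit by density, because both are closed conditions under uniform convergence on compacts.

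Finally, I need $\mu_r\neq 0$. I will pick $g\ge 0$ whose projection to $S$ is positive on a set of positive $\nu$-measure; for instance, let $g$ be a cutoff that equals $1$ on $\cF$. Then $\pi_0\circ\Xi_r g \ge c>0$ on $S$, so $\mu_r(g)\ge c\,\nu(\one)=c>0$.

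I expect the main obstacle to be this density/extension step: confirming that $\mu_r$, a priori defined only on smooth functions (including behaviour near the cone points), is controlled by the sup norm. That control is exactly what \Cref{lem:nu_is_measure_1} and \Cref{lem:nu_is_measure_2} provide.
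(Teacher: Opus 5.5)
Your proposal is correct and follows the paper's proof. You apply \Cref{prop:Maharam_distribution} with $\upsilon=\nu_r$, which is legitimate since $\rho(r)>\lambda^{-1}\rho(r)$, define $\mu_r(g)=\nu_r(\pi_0\circ\Xi_r g)$, and use that $\nu_r$ is a finite positive measure (\Cref{lem:nu_is_measure_2}) to get a Radon measure; your local sup-norm bound, the Riesz-representation extension, and the nontriviality check with a cutoff equal to $1$ on $\cF$ simply spell out what the paper compresses into the remark that $\mu_r$ is Radon because $\nu_r$ is finite.
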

\begin{proof}
By \Cref{lem:nu_is_measure_2}, we know that the eigenfunction $\nu_r$ of the dual transfer operator $\cL'_{r,F}$ corresponding to eigenvalue $\rho(r)$ is a probability measure on $S$.

Taking $\nu_r$ as $\upsilon$ in \Cref{prop:Maharam_distribution}, we see that if we define the measure $\mu_r$ by 
\begin{equation}\label{eq:Maharam_measure}
\mu_r(g) = \nu_r(\pi_0 \circ \Xi_{r} g),
\end{equation}
then $\mu_r$ is a $\phi_s$-invariant measure on $\tS$, that satisfies 
\[\mu_r \circ D = e^{r \cdot \omega ([\gamma])} \mu_r.\]
The measure $\mu_r$ is Radon since $\nu_r$ is finite.
\end{proof}

\subsection{The description of the spectrum}\label{sec:description_spectrum}
The goal of this section is to understand the structure of the spectrum of $\cL_{z,F}$ acting on $\cB_{p,q}$. 
The arguments are inspired by the work \cite{DaRi}. 

The twisted transfer operator $\cL_z = \cL_{z,F}$ induces a natural action $\cL_z^\#$ on differential forms $\alpha$ by $\cL_z^\# (\alpha) = T_{\ast}(e^{z\cdot F} \alpha)$; e.g., given a 1-form $\alpha = f \diff x + g\diff y$, we have
\begin{equation}\label{eq:hash_action}
\cL_z^\# ( f \diff x + g\diff y) = \lambda \, \cL_zf \, \diff x + \lambda^{-1} \, \cL_z g\, \diff y.
\end{equation}
We note that 
\[
\diff_z \circ \cL_z^\# = \cL_z^\# \circ \diff_z;
\]
in particular, the action of $\cL_z^\#$ induces an action on the twisted cohomology complex as follows: let $[\alpha] \in H^k_z(S,\C)$ be a twisted cohomology class, then 
\[
\cL_z^\#[\alpha] = [\cL_z^\# \alpha].
\]
Our goal is to relate the discrete spectrum of $\cL_z$ with the eigenvalues of $\cL_z^\#$ acting on $H^1_z(S,\C)$.

\subsubsection{Preliminaries}

Denote by $E_\alpha$ the generalised eigenspace of $\cL_{z,F}$ with eigenvalue $\alpha$.
\begin{lemma}\label{lem:eigenspaces}
For a generalised eigenspace $E_\alpha$, $X_z(E_\alpha) \subset E_{\lambda^{-1}\alpha}$ and 
$Y_z(E_\alpha) \subset E_{\lambda\alpha}$.
\end{lemma}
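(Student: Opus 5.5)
The claim follows from the intertwining relations of \Cref{lem:X_z commutation}. Those relations are stated for smooth functions, so the first step is to extend them to all of $\cB_{p,q}$, and the second is a purely algebraic argument on generalised eigenvectors.

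For the extension, note that by \Cref{lem:derivative_Gamma} and \Cref{lem:multiplying_by_smooth} the operator $X_z=X-z\cdot\omega(X)$ is bounded $\cB_{p,q}\to\cB_{p,q-1}$. Likewise $Y_z$ is bounded $\cB_{p,q}\to\cB_{p-1,q}$; here $\omega(X),\omega(Y)$ are smooth and bounded, since $\omega$ is harmonic on $S$. By \Cref{prop:LY}, $\cL_{z,F}$ extends continuously to every space $\cB_{p',q'}$. Since $\cC^{\infty,\bdd}(S)$ is dense (\Cref{lem:inclusion}), the identities
\[
X_z\cL_{z,F}=\lambda\,\cL_{z,F}X_z,\qquad Y_z\cL_{z,F}=\lambda^{-1}\cL_{z,F}Y_z
\]
extend by continuity to operators $\cB_{p,q}\to\cB_{p,q-1}$ and $\cB_{p,q}\to\cB_{p-1,q}$ respectively.

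For the algebra, the first identity gives $X_z(\cL_{z,F}-\alpha)=\lambda(\cL_{z,F}-\lambda^{-1}\alpha)X_z$, and iterating yields
\[
X_z(\cL_{z,F}-\alpha)^k=\lambda^k(\cL_{z,F}-\lambda^{-1}\alpha)^kX_z .
\]
Take $\ell\in E_\alpha$, so $(\cL_{z,F}-\alpha)^k\ell=0$ for some $k$. Then $(\cL_{z,F}-\lambda^{-1}\alpha)^kX_z\ell=0$, so $X_z\ell$ lies in the generalised eigenspace for $\lambda^{-1}\alpha$. Replacing $\lambda$ by $\lambda^{-1}$ gives the statement for $Y_z$ with eigenvalue $\lambda\alpha$.

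The only delicate point is deciding in which space $E_{\lambda^{\mp1}\alpha}$ is taken, because $X_z\ell$ lives in $\cB_{p,q-1}$ and $Y_z\ell$ lives in $\cB_{p-1,q}$. The statement should therefore be read with $E_{\lambda^{\mp1}\alpha}$ the generalised eigenspace of $\cL_{z,F}$ on the target space. No spectral isolation is needed for the argument above, since it only uses the algebraic definition of a generalised eigenvector. If the target eigenvalue happens not to be in the spectrum there, the argument still shows $X_z\ell=0$ (respectively $Y_z\ell=0$), so the inclusion holds trivially.
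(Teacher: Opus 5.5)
Your core argument is the paper's proof. You iterate the intertwining relation of \Cref{lem:X_z commutation} to get $\lambda^{n}(\cL_{z,F}-\lambda^{-1}\alpha)^nX_z\ell=X_z(\cL_{z,F}-\alpha)^n\ell=0$, and likewise for $Y_z$. The paper applies the relation directly to $\ell\in\cB_{p,q}$ and does not discuss the extension from smooth functions that you add.

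That extra step uses the wrong target spaces; they appear to come from the paper's definition of $X_z,Y_z$, which has the same slip. The norm measures $\partial^\alpha\ell(x)$ in $\mathcal{C}^{-q-|\alpha|}$, and $\langle X\ell(x),u\rangle=-\langle\ell(x),u'\rangle$. So a derivative costs one order in $q$ rather than gaining one, and the inclusions run $\cB_{p,q-1}\subset\cB_{p,q}\subset\cB_{p,q+1}$. The bounded maps are $X_z\colon\cB_{p,q}\to\cB_{p,q+1}$ and $Y_z\colon\cB_{p,q}\to\cB_{p-1,q+1}$. The map $X_z\colon\cB_{p,q}\to\cB_{p,q-1}$ you wrote is genuinely unbounded: for $v=\sin(Nt)\psi$, with $t$ the flow time and $\psi$ a bump, one has $\|v\|_{p,1}=O(N^{-1})$ but $\|Xv\|_{p,0}\gtrsim N$.

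This is easy to repair. $\cL_{z,F}$ is bounded on every $\cB_{p',q'}$ by \Cref{prop:LY}, so your density argument goes through verbatim with the corrected targets, as does your remark on where $E_{\lambda^{\mp1}\alpha}$ is taken. Your observation that a non-eigenvalue target forces $X_z\ell=0$ is also unaffected.

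One more justification needs adjusting: boundedness of $\omega(X),\omega(Y)$ on $S_{\reg}$ does not follow from harmonicity. In a chart where $\alpha=w^k\diff w$ and $\omega=\Re(h(w)\diff w)$, one gets $\omega(X)=\Re(h(w)w^{-k})$, which is generally unbounded near the zero. The fix is to represent the classes $\omega_i$ by cohomologous closed forms vanishing near $\Sigma$; this does not affect the argument.
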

\begin{proof}
Suppose $\ell \in E_\alpha$, so for some $n$, $(\cL_{z,F} - \alpha \id)^n \ell = 0$. Then by \Cref{lem:X_z commutation}, 
\[\lambda^{n}(\cL_{z,F}-\lambda^{-1}\alpha \id)^n X_z\ell = X_z \left( (\cL_{z,F} -\alpha \id)^n \ell  \right) = 0.\]
Hence $X_z \ell \in E_{\lambda^{-1}\alpha}$. Similarly for $Y_z$.
\end{proof}

Hence for $\ell\in E_\alpha$, if $n$ is the least integer such that $|\alpha|\lambda^n > \rho(z)$, we must have $Y_z^n \ell = 0$. 
Thus, we will first aim to understand the space $\cB_{p,q} \cap \ker Y_z$.

\begin{lemma}\label{lem:Xz_injectivity}
Let $\ell\in \cB_{p,q}$ be such that $X_z \ell = 0$. If $z \in 2\pi i \Z^d$, then $\ell$ is a constant; otherwise, $\ell = 0$.
\end{lemma}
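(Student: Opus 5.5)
Multiplying by $e^{z\cdot\xi}$ turns the equation $X_z\ell=0$ into ordinary flow-invariance upstairs, and ergodicity of $\tphi$ then forces the lifted object to be a constant. The first step is to pass to the cover. Given $\ell\in\cB_{p,q}$, regard it (via \Cref{lem:inclusion}) as a distribution on $S$, and let $\tilde\ell$ be its $\Deck$-invariant lift to $\tS$. Then set $m\coloneqq e^{-z\cdot\xi}\tilde\ell$, which makes sense since $e^{-z\cdot\xi}$ is smooth on $\tS_0$. Because $X(\xi)$ projects to $\omega(X)$, the computation in \Cref{lem:Xi_z} gives
\[
X m = e^{-z\cdot\xi}\big(X\tilde\ell - z\cdot\omega(X)\tilde\ell\big) = e^{-z\cdot\xi}\,\widetilde{X_z\ell}=0 .
\]
So $m$ is a $\tphi_s$-invariant distribution on $\tS$. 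It also transforms under deck transformations as $m\circ D = e^{-z\cdot\omega([\gamma])}m$, by the identity $\xi\circ D=\xi+\omega([\gamma])$ from \Cref{lem:proj_Xi}.

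The second step uses the structure of $\cB_{p,q}$ to reduce the flow-invariance to a statement about functions on transversals. For each $x\in S_{\reg}$, $\ell(x)\in\mathcal{C}^{-q}$ is a distribution on the unit horizontal segment, and $X_z\ell=0$ means that $\partial_s$ of it equals $z\cdot\omega(X)\circ\phi_s(x)$ times it. A distribution solving a first-order linear ODE with smooth coefficient on an interval is a smooth multiple of the fundamental solution. Hence $\langle \ell(x),u\rangle=c(x)\int_0^1 e^{z\cdot(\xi(\phi_s\tx)-\xi(\tx))}u(s)\diff s$ for some function $c$. The map $x\mapsto \ell(x)$ is $\mathscr{C}^p$, so $c$ is continuous on $S_{\reg}$, and $c(\phi_t x)=e^{-z\cdot(\xi(\phi_t\tx)-\xi(\tx))}c(x)$. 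Equivalently, $\tilde c\coloneqq e^{z\cdot\xi}\,c\circ\mathcal{p}$ is a continuous, bounded on compact pieces, $\tphi$-invariant function on $\tS_{\reg}$.

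The third step applies ergodicity. By \Cref{thm:flow_ergodic}, the real and imaginary parts of $\tilde c$ are $\tphi$-invariant, so $\tilde c$ is Lebesgue-a.e.\ constant, and by continuity along the dense set of generic points it is constant, say $\tilde c\equiv\kappa$. Comparing with the deck behaviour gives $\kappa = \kappa\, e^{z\cdot\omega([\gamma])}$ for all $[\gamma]$. If $\kappa\neq0$ this forces $e^{z_j}=1$ for every $j$, since $\omega([\gamma_i])=e_i$; that is, $z\in 2\pi i\Z^d$. In that case $e^{z\cdot\xi}$ is $\Deck$-invariant, and projecting gives $c\,e^{z\cdot(\xi-\xi)}$ constant, so $\ell$ is a constant. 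Otherwise $\kappa=0$, so $c\equiv0$ and $\ell=0$.

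The main obstacle is the first half of the second step: making rigorous that $\ell(x)$ is an absolutely continuous density along leaves and that the resulting coefficient $c$ is genuinely continuous (not merely measurable) in $x$, uniformly up to singular leaves. I would first argue locally in rectangles (\Cref{prop:localize}), pairing with $u$ and a mollified $u$ and using the $Y$-derivative bounds of $\mathscr{C}^p$ maps to control the transverse dependence. If continuity is delicate, ergodicity still suffices with a measurable, locally bounded $\tilde c$, since a.e.\ constancy already determines $\ell$ as an element of $\cB_{p,q}$ through \Cref{lem:inclusion}.
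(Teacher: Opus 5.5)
Your proposal is correct and is essentially the paper's proof: the paper's $\tilde\ell$ is your $m=e^{-z\cdot\xi}\tilde\ell$, which is $X$-invariant and $(-z)$-Maharam. Ergodicity of $\{\tphi_t\}_{t\in\R}$ (which the paper uses through the existence of a dense bi-infinite leaf rather than a.e.\ constancy of an invariant function), together with transverse continuity of $\ell$, forces $m$ to be constant, and the Maharam scaling then kills the constant unless $z\in 2\pi i\Z^d$.

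The obstacle you flag is mild: continuity of $c$ on $S_{\reg}$ follows directly from continuity of $x\mapsto\ell(x)$ in $\mathcal{C}^{-q}$, by dividing $\langle\ell(x),u\rangle$ by $\int_0^1 e^{z\cdot(\xi(\tphi_s\tx)-\xi(\tx))}u(s)\diff s$ for a locally chosen $u$. Do fix the sign slip: consistency on overlapping segments gives $c(\phi_t x)=e^{z\cdot(\xi(\tphi_t\tx)-\xi(\tx))}c(x)$, so the flow-invariant function is $e^{-z\cdot\xi}\,c\circ\mathcal{p}$. Consequently, for $z\in 2\pi i\Z^d$ you obtain $\ell=\kappa e^{z\cdot\xi}$, which is a genuine constant only when $z=0$; this looseness is already present in the statement.
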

\begin{proof}
Consider the lift $\tilde \ell$ to $\tS$. If $X_z \ell = 0$, then $X \tilde \ell = 0$. This implies that the distribution induced by $\tilde \ell$ on a bi-infinite horizontal leaf is invariant under horizontal translation. Since we know from~\Cref{thm:flow_ergodic} that the horizontal flow on $\tS$ is ergodic, it follows that there exists a dense bi-infinite horizontal leaf. As $\ell$ is continuous under vertical translation, and hence so is $\tilde \ell$, $\tilde \ell$ must be constant on $\tS$.
But it is also $(-z)$-Maharam, so, if  $z \notin 2\pi i \Z^d$, it must be 0. Hence also $\ell$ must be 0.
\end{proof}

We now show that if a smooth form is an exact current, then it is an exact form as well.

\begin{lemma}\label{lem:exact_current}
Let $\alpha$ be a $\diff_z$-closed 1-form on $S$ or on $S_0$, and assume that $\alpha = \diff_z \ell$ (in the sense of currents) for some $\ell \in \cB_{p,q}$. Then, $\alpha$ is $\diff_z$-exact.
\end{lemma}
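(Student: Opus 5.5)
Since $\diff_z$ is an elliptic-type operator and $\alpha$ is smooth, the natural route is a regularity/de Rham argument on the cover, or equivalently via twisted de Rham theorem. I would pass to $\tS_0$ using the isomorphism $H^1_z(S_0,\C)\cong H^1(\tS_0,-z,\C)$: set $\tilde\alpha = e^{-z\cdot\xi}\mathcal{p}^*\alpha$, which is a closed smooth $(-z)$-Maharam 1-form, and $\tilde\ell = e^{-z\cdot\xi}\,\mathcal{p}^*\ell$, a $(-z)$-Maharam distribution (well-defined by \Cref{lem:inclusion}, since $\ell$ is a genuine distribution in $\cC^{-q}(S)$). The computation used in that isomorphism (namely $\diff(e^{-z\cdot\xi}\beta)=e^{-z\cdot\xi}\mathcal{p}^*\diff_z\beta$) holds verbatim in the sense of currents, so $\tilde\alpha=\diff\tilde\ell$ as currents on $\tS_0$.

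The key step is then local regularity: on $\tS_0$ (or on $S_0$ locally, which is a smooth surface), a distribution whose differential is a smooth 1-form is itself smooth. Indeed $\partial_x\tilde\ell$ and $\partial_y\tilde\ell$ are smooth, so $\Delta\tilde\ell=\partial_x\tilde\alpha_x+\partial_y\tilde\alpha_y$ is smooth and elliptic regularity (or simply: locally on a disc, $\tilde\alpha=\diff h$ for smooth $h$ by Poincaré, and $\diff(\tilde\ell-h)=0$ forces $\tilde\ell-h$ constant) gives $\tilde\ell\in\mathscr{C}^\infty(\tS_0)$. I would do this on $S_0$ directly: on each small disc $U\subset S_0$ pick smooth $h_U$ with $\diff_z h_U=\alpha$ (local twisted Poincaré lemma, obtained by writing $\omega=\diff\xi$ locally and conjugating by $e^{z\cdot\xi}$), so $\diff_z(\ell-h_U)=0$ in $U$, i.e. $\diff(e^{-z\cdot\xi}(\ell-h_U))=0$, hence $e^{-z\cdot\xi}(\ell-h_U)$ is a constant distribution on the connected $U$. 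Thus $\ell$ coincides on $U$ with a smooth function, and these local smooth representatives patch to a smooth function $\beta$ on $S_0$ equal to $\ell$ as distributions, with $\diff_z\beta=\alpha$ pointwise. This proves exactness in $S_0$.

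For the case of $S$, the remaining issue (and the main obstacle) is behavior at the singularities: $\beta$ must extend smoothly across $\Sigma$. Near a cone point $x\in\Sigma$, $\alpha$ is smooth on a neighbourhood $U_x$ in $S$, so by the local twisted Poincaré lemma on the disc $U_x$ there is a smooth $h$ on $U_x$ with $\diff_z h=\alpha$; on the punctured disc $U_x\setminus\{x\}$, $\beta-h$ satisfies $\diff_z(\beta-h)=0$, so $e^{-z\cdot\xi}(\beta-h)$ is locally constant; since the cover is locally compact, a small loop around $x$ has trivial $\omega$-period, so $e^{-z\cdot\xi}$ is single-valued on the punctured disc and $\beta-h=c\,e^{z\cdot\xi}$ is smooth across $x$. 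Hence $\beta$ extends smoothly to $S$ and $\alpha=\diff_z\beta$ on $S$. One should also double-check that $\beta$ (a priori only smooth on $S_0$) is bounded so that the identification with $\ell\in\cB_{p,q}$ is consistent, but this follows from the same local representation near each singularity.
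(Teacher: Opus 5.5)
Your proof is correct, but it follows a different route from the paper's.

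\textbf{The paper's route (duality).} The paper argues globally. It pairs $\alpha$ with closed $1$-forms $\beta$ on $S$ vanishing at $\Sigma$ and moves $\diff_z$ onto $\beta$, obtaining $|\int_{S_0}\alpha\wedge\beta|=|\langle \ell,\diff_z\beta\rangle|=0$. It then concludes $[\alpha]=0$ from the perfect pairing between $H^1(S_0,\C)$ and $H^1(S,\Sigma,\C)$. The case of $S$ is declared analogous.

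\textbf{Your route (local regularity).} On a disc $U$ you conjugate by $e^{z\cdot\xi_U}$, with $\diff\xi_U=\omega$, which turns the twisted equation into an untwisted one. Since a distribution with vanishing gradient on a connected open set is constant, $\ell$ is itself a smooth function on $S_0$ with $\diff_z\ell=\alpha$. At a cone point you use that $\omega$ has a primitive on the whole disc $U_x\subset S$. Hence $\beta=h+c\,e^{z\cdot\xi_{U_x}}$ extends smoothly, which is exactly the injectivity of $H^1_z(S,\C)\to H^1_z(S_0,\C)$.

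\textbf{What each buys.} The duality argument is shorter and never looks at the regularity of $\ell$. However, it relies on twisted Poincar\'e--Lefschetz duality: the test forms should be $\diff_{-z}$-closed and the pairing is between $z$- and $(-z)$-twisted groups, whereas the paper states it in untwisted form. It also needs the integration by parts against an element of $\cB_{p,q}$ to be justified. Your argument has several advantages:
\begin{itemize}
\item It is elementary and self-contained, and works directly with twisted coefficients.
\item It gives the stronger conclusion that $\ell$ itself is a smooth primitive on $S_0$.
\item It only uses the current identity on $S_0$, so you never need to give meaning to $\ell$ at the cone points.
\item It treats the closed surface explicitly rather than by analogy.
\item It adapts without change to forms of finite regularity, which is the situation in which the lemma is used in \Cref{prop:de_Rham_vice_versa}.
\end{itemize}

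\textbf{Minor remarks.} The detour through the cover in your first paragraph and the closing remark on boundedness of $\beta$ are not needed. Single-valuedness near a cone point is immediate once you note that $\omega$ is a smooth closed form on the full disc $U_x$.
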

\begin{proof}
Assume $\alpha$ is a 1-form on $S_0$, and let $\beta$ be any closed 1-form on $S$ vanishing at $\Sigma$. Then, 
\[
\left\lvert \int_{S_0} \alpha \wedge \beta \right\rvert = \left\lvert \langle \diff_z \ell, \beta \rangle \right\rvert = \left\lvert \langle \ell, \diff_z \beta \rangle \right\rvert =0.
\]
Since the pairing between $H^1(S_0,\C)$ and $H^1(S,\Sigma,\C)$ is perfect, we deduce that $[\alpha] = 0$. The proof in the case of a 1-form on $S$ is analogous.
\end{proof}

\subsubsection{From twisted cohomology to the anisotropic Banach spaces}

Inspired by the work \cite{DaRi}, we now prove that for all eigenvalues of the induced action on the twisted cohomology there is an associated eigenvalue of the operator $\cL_z$ (unless the eigenvalue is $\lambda^{-1}$ and $z \in 2\pi i \Z^d$).

\begin{proposition}\label{prop:recovering spectrum}
Let $[\alpha] \in H^1_z(S,\C)$ be a cohomology class which is an eigenvector for $\cL_z^\#$; i.e., $\cL_z^\#[\alpha] = \mu [\alpha]$.
For any $p,q \in \Z_{\geq 0}$ satisfying $\rho_{\ess}(z) < \lambda^{-1} |\mu|$, either $\mu = \lambda^{-1}$ and  $z \in 2\pi i \Z^d $, or there exists a $\ell \in E_{\lambda^{-1}\mu} \setminus \{0\}$.
\end{proposition}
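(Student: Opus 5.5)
Take a smooth $\diff_z$-closed representative $\alpha = f\,\diff x + g\,\diff y$ of the class, following the Dang--Rivière strategy. Since $\cL_z^\#[\alpha]=\mu[\alpha]$, there is a smooth function $h$ on $S$ such that
\[
\cL_z^\#\alpha = \mu\alpha + \diff_z h .
\]
By \eqref{eq:hash_action}, the $\diff x$-component reads
\[
\lambda\,\cL_z f = \mu f + X_z h ,\qquad\text{so}\qquad \cL_z f = \lambda^{-1}\mu f + \lambda^{-1}X_z h .
\]
The candidate eigenvector is $\ell = \Pi_{\lambda^{-1}\mu} f$, the spectral projection of $f\in\mathscr{C}^{\infty,\bdd}(S)\subset\cB_{p,q}$ onto the generalized eigenspace $E_{\lambda^{-1}\mu}$. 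This projection makes sense because $|\lambda^{-1}\mu|>\rho_{\ess}(z)$, so $\lambda^{-1}\mu$ is either an isolated eigenvalue of finite multiplicity or lies in the resolvent set (then $\Pi=0$). It suffices to show $\Pi_{\lambda^{-1}\mu}f\neq 0$, unless $\mu=\lambda^{-1}$ and $z\in 2\pi i\Z^d$.

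Argue by contradiction and assume $\Pi_{\lambda^{-1}\mu}f = 0$. Then $(\cL_z - \lambda^{-1}\mu)$ is invertible on the complementary spectral subspace $(1-\Pi)\cB_{p,q}$, which contains $f$. The relation above gives
\[
(\cL_z-\lambda^{-1}\mu)f=\lambda^{-1}X_z h .
\]
Apply the projection $1-\Pi$, which commutes with $X_z$ up to the shift of \Cref{lem:eigenspaces}. Writing $R = (\cL_z-\lambda^{-1}\mu)^{-1}(1-\Pi)$, we want to express $f$ as $X_z$ of something. \Cref{lem:X_z commutation} gives $X_z(\cL_z-\mu)=\lambda(\cL_z-\lambda^{-1}\mu)X_z$, hence formally
\[
f = X_z\,\ell_0, \qquad \ell_0 := (\cL_z - \mu)^{-1}h .
\]
The difficulty is that $\mu$ may lie in the spectrum of $\cL_z$ on $\cB_{p,q}$, since $|\mu|$ can be as large as $\lambda\rho$. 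I would therefore work with the class of $h$ modulo the relevant eigenspaces, using the decomposition of \Cref{cor:spectral_decomposition} on $\cB_{p,q}$ with $q$ replaced by $q+1$, so that $X_z:\cB_{p,q+1}\to\cB_{p,q}$. If $\mu$ is an eigenvalue, its $\Pi_\mu h$-part would map under $X_z$ into $E_{\lambda^{-1}\mu}$, and it must then vanish after projection. This is the step I expect to be the main obstacle: carefully producing a distribution $\ell_0\in\cB_{p,q+1}$ with $X_z\ell_0=f$ and, simultaneously, $Y_z\ell_0=g$.

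For the $\diff y$-component, the same construction with $\lambda^{-1}\cL_z g = \mu g + Y_z h$ gives $g = Y_z\ell_0'$. The resolvents involved are consistent: both are built from $h$, and commuting $X_z$ with $Y_z$ via closedness ($X_z g = Y_z f$), together with \Cref{lem:Xz_injectivity}, forces $\ell_0=\ell_0'$. Indeed, $X_z(\ell_0-\ell_0')=0$ makes the difference vanish when $z\notin 2\pi i\Z^d$, and a constant otherwise. Hence $\alpha=\diff_z\ell_0$ in the sense of currents. \Cref{lem:exact_current} then says $[\alpha]=0$, contradicting the assumption that $[\alpha]$ is an eigenvector.

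In the exceptional case $z\in 2\pi i\Z^d$, the constants lie in $\ker X_z$ and the untwisted class $[\diff x]$ (with $\mu=\lambda^{-1}$) corresponds to the constant eigenfunction. This case is excluded in the statement, and it is exactly where the injectivity argument breaks down.
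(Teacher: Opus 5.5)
Your treatment of the $\diff x$-component is essentially the paper's. With the correction you anticipate, $\ell_0:=(\cL_z-\mu)^{-1}(\id-\Pi_\mu)h$, the relations $X_z(\cL_z-\mu)=\lambda(\cL_z-\lambda^{-1}\mu)X_z$ and $X_z\Pi_\mu=\Pi_{\lambda^{-1}\mu}X_z$ give $(\id-\Pi_{\lambda^{-1}\mu})f=X_z\ell_0$. So the possibility that $\mu$ lies in the spectrum is harmless.

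The genuine gap is the $\diff y$-component. Your ``same construction'' for $g$ requires inverting $\cL_z-\lambda\mu$, so it only yields $(\id-\Pi_{\lambda\mu})g=Y_z\ell_0$. This comes with the same $\ell_0$, so your argument for $\ell_0=\ell_0'$ is not needed. What you actually get is $\alpha=\Pi_{\lambda\mu}g\,\diff y+\diff_z\ell_0$, and nothing in your proof controls $\Pi_{\lambda\mu}g$. Note that $\lambda\mu$ lies outside the essential spectrum and is not excluded by the spectral radius once $|\mu|\le\lambda^{-1}\rho(z)$, so it can be an eigenvalue.

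A concrete instance: take $z=0$ and $\alpha=\diff y$. Then $\cL_0^\#\alpha=\lambda^{-1}\alpha$ with $f=0$ and $h=0$, so $\Pi_{\lambda^{-1}\mu}f=0$. Yet $g=1\in E_1=E_{\lambda\mu}$ is not $Y_z$ of anything. This is exactly the exceptional case of the statement. It comes from $[\diff y]$, not from $[\diff x]$, which has eigenvalue $\lambda$, and it sits precisely in the component your argument skips.

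Closedness and \Cref{lem:Xz_injectivity} are the right tools, but they must be aimed at $\Pi_{\lambda\mu}g$. Within your contradiction scheme, apply the projections to $X_zg=Y_zf$ and use $X_z\Pi_{\lambda\mu}=\Pi_\mu X_z$ and $Y_z\Pi_{\lambda^{-1}\mu}=\Pi_\mu Y_z$. This gives $X_z\Pi_{\lambda\mu}g=\Pi_\mu Y_zf=Y_z\Pi_{\lambda^{-1}\mu}f=0$. Hence $\Pi_{\lambda\mu}g\in\ker X_z$ vanishes outside the exceptional case, and $\alpha=\diff_z\ell_0$ then contradicts \Cref{lem:exact_current}.

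The paper argues directly instead. The component $\Pi^\ast\alpha=\Pi_{\lambda^{-1}\mu}f\,\diff x+\Pi_{\lambda\mu}g\,\diff y$ is nonzero by \Cref{lem:exact_current}. If $\Pi_{\lambda^{-1}\mu}f\neq0$ you are done. If instead $\Pi_{\lambda\mu}g\neq0$ lies outside $\ker X_z$, then $X_z^2\Pi_{\lambda\mu}g$ is a nonzero element of $E_{\lambda^{-1}\mu}$ by \Cref{lem:eigenspaces} and \Cref{lem:Xz_injectivity}.

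With either fix, your stronger claim $\Pi_{\lambda^{-1}\mu}f\neq0$ does hold. As written, however, the proposal asserts $g=Y_z\ell_0'$ from a construction that cannot produce it when $\Pi_{\lambda\mu}g\neq0$.
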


\begin{proof}
By \cite[Section 5]{Forni:twisted}, we can assume that $\alpha$ is a twisted holomorphic 1-form on $S$; in particular, it is a smooth bounded function. The restriction to $S_0$ can be written as $\alpha = f\diff x  + g \diff y$, with $f,g \in \mathscr{C}^{\infty,\bdd}(S)$. By assumption and \eqref{eq:hash_action}, there exists a smooth function $h\in \mathscr{C}^{\infty,\bdd}(S)$ such that 
\[
\mu ( f\diff x  + g \diff y) =  \lambda \cL_zf \diff x + \lambda^{-1} \cL_zg \diff y + \diff_zh.
\]
Since, by assumption, $\lambda^{-1} |\mu|$ is larger than the essential spectral radius of $\cL_z$, we can find a small loop $\gamma$ going around $\mu$ 
such that $\gamma$, $\lambda^{-1} \gamma$ and $\lambda \gamma$ do not intersect the spectrum of~$\cL_z$ (equivalently, $\gamma$ does not intersect the spectra of $\lambda\cL_z$ and of $\lambda^{-1}\cL_z$). 
We can then define
\[
\begin{split}
    P_1&:= \Pi_{\lambda^{-1}\mu}
    = \frac{1}{2 \pi i} \int_{\lambda^{-1}\gamma} (w-\cL_{z})^{-1} \diff w
    = \frac{1}{2 \pi i} \int_{\gamma} (w-\lambda \cL_{z})^{-1} \diff w, \\
    P_2&:= \Pi_{\lambda \mu}
    = \frac{1}{2 \pi i} \int_{\lambda\gamma} (w-\cL_{z})^{-1} \diff w
    = \frac{1}{2 \pi i} \int_{\gamma} (w-\lambda^{-1} \cL_{z})^{-1} \diff w.
\end{split}
\]

The $P_j$ are bounded operators on $\cB_{p,q}$ and satisfy $P_j^2= P_j$ (namely, they are projections), so that 
\[
\cB_{p,q} = P_j(\cB_{p,q}) \oplus \ker P_j.
\]
Note that the projection on $\ker P_j$ is simply $ \id - P_j$. 
We also note that, since $\lambda\cL_{z}$ and $P_1$ commute, we have 
\[
(\mu - \lambda\cL_z) \circ (\id - P_1) = (\id - P_1) \circ (\mu -\lambda\cL_z),
\]
and similarly for $\lambda^{-1}\cL_z$ and $P_2$.

The spectrum of the restriction of $\lambda\cL_z$ on $\ker P_1$ coincides with the spectrum of $\lambda\cL$ outside the loop $\gamma$; therefore, it follows that $\mu - \lambda\cL_z$ is invertible on $\ker P_1$. 
Reasoning similarly for $P_2$, we have
\[
\begin{split}
f &= P_1f + (\mu - \lambda \cL_z)^{-1}(\mu - \lambda \cL_z)(\id - P_1)f, \qquad \text{and}\\
g&=P_2g + (\mu - \lambda^{-1} \cL_z)^{-1}(\mu - \lambda^{-1} \cL_z)(\id - P_2)g.
\end{split}
\]

Define the operator $\Pi^{\ast}$ acting on $\cB_{p,q}$ 1-currents (namely, on 1-forms on $S_0$ with coefficients in $\cB_{p,q}$) by 
\[
\Pi^{\ast}(f\diff x + g\diff y) = P_1f \diff x + P_2g\diff y.
\]
Let $P_0$ be the Riesz projection of $\cL_z$ associated to $\mu$, that is 
\[
P_0\coloneqq \frac{1}{2\pi i}\int_\gamma (w-\cL_z)^{-1}\,\diff w.
\]
 By the
commutation relations
\[
X_z\cL_z=\lambda\cL_zX_z,
\qquad
Y_z\cL_z=\lambda^{-1}\cL_zY_z,
\]
we have
\[
P_1X_z=X_zP_0,\qquad P_2Y_z=Y_zP_0.
\]
Moreover,
\[
(\mu-\lambda\cL_z)^{-1}(\id-P_1)X_z
=
X_z(\mu-\cL_z)^{-1}(\id-P_0),
\]
and
\[
(\mu-\lambda^{-1}\cL_z)^{-1}(\id-P_2)Y_z
=
Y_z(\mu-\cL_z)^{-1}(\id-P_0).
\]
Therefore,
\[
\begin{split}
\alpha
&= \Pi^{\ast}\alpha
+(\mu-\lambda\cL_z)^{-1}(\id-P_1)(X_zh)\,\diff x
+(\mu-\lambda^{-1}\cL_z)^{-1}(\id-P_2)(Y_zh)\,\diff y \\
&= \Pi^{\ast}\alpha
+\diff_z\bigl((\mu-\cL_z)^{-1}(\id-P_0)h\bigr).
\end{split}
\]

We have shown that there exists $\widetilde{h} \in \cB_{p,q}$ such that $\alpha=\Pi^{\ast} \alpha +\diff_z \widetilde{h}$. 

We now claim that $\Pi^{\ast} \alpha \neq 0$. Indeed, if this was not the case, then $\alpha$ would be an exact 1-current $\diff_z \widetilde{h}$. By \Cref{lem:exact_current}, the cohomology class $[\alpha]$ would be 0, contradicting the assumptions.
Since $\Pi^{\ast} \alpha \neq 0$, we have that either $P_1 f \neq 0$ or $P_2 g \neq 0$. In the first case, we obtain that $\cL_z \circ P_1 f = \lambda^{-1}\mu P_1 f \neq 0 $, which is our desired conclusion.
Let us assume that $\widetilde{g} = P_2g \neq 0$, with $\cL_z \widetilde{g} = \lambda\mu \widetilde{g}$. Then, either $\widetilde{g}$ is a constant, and hence $\lambda\mu =1$ and $z \in 2\pi i \Z^d$, or $\widetilde{g}$ is not constant. In this case, $X_z^2\widetilde{g} \neq 0$ by \Cref{lem:Xz_injectivity} and it is an eigenvector with eigenvalue $\lambda^{-1}\mu$. 
In all cases, the proof is complete.
\end{proof}
\subsubsection{From the anisotropic Banach spaces to the twisted cohomology}

Consider an element $\ell \in \cB_{p,q} \cap \ker Y_z$. As before, since $ \cB_{p,q}$ is a space of distributions, we will 
associate to $\ell$ a current $\ell \diff x$. 
Then, $\ell \diff x$ is a $\diff_z$-closed current, since
\begin{align*}
\diff_z (\ell \diff x) &= \diff(\ell \diff x) - z\cdot  \omega \wedge \ell \diff x\\
&= (X (\ell) \diff x + Y (\ell) \diff y) \wedge \diff x - \ell \big( z\cdot \omega(X) \diff x + z\cdot \omega(Y) \diff y \big)  \wedge \diff x\\
&= \big(Y (\ell) - z\cdot \omega(Y) \ell\big) \diff y \wedge \diff x = Y_z(\ell) \diff y \wedge \diff x = 0.
\end{align*}

We now show a de Rham Theorem for our space $ \cB_{p,q}$; namely, we prove that, for any given cohomology class $[\ell \diff x]$ in the space of 1-currents, we can find a smooth representative.
\begin{proposition}\label{prop:de_Rham}
Let $\ell \in \cB_{p,q}$. There exist $R\ell \in \mathscr{C}^{p}(S_{\reg})$ and $A\ell \in  \cB_{p,q-1}$ such that $R\ell - \ell = X_z(A\ell)$. Furthermore, if  $\ell \in \ker Y_z$, then $R\ell \in \ker Y_z$ and $A\ell \in \ker Y_z$.
\end{proposition}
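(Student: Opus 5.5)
We will construct $R$ and $A$ by regularizing along the horizontal direction with a convolution in the flow time, in the spirit of the de Rham regularization operators used in \cite{DaRi}. Fix a smooth compactly supported bump $\psi\in\mathscr{C}^\infty_c(\R)$ with $\int\psi=1$, supported in a small interval. For $\ell\in\cB_{p,q}$ we set
\[
R\ell(x) \coloneqq \big\langle \ell,\ \text{twisted horizontal average of } \psi \text{ near } x\big\rangle ,
\]
which we realise concretely by lifting to $\tS$. On the lift, $X_z$ becomes the plain derivative $X$ conjugated by $e^{z\cdot\xi}$ (cf.\ \Cref{lem:Xi_z}). We therefore define, on $\tS$,
\[
\widetilde{R}\tilde\ell = \int \psi(t)\, \tilde\ell\circ\phi_t \,\diff t
\]
(in the distributional sense along leaves), and $R\ell$ is the projection of $e^{z\cdot\xi}\widetilde{R}(e^{-z\cdot\xi}\tilde\ell)$. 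Since everything is written in terms of $\xi$-differences along short horizontal segments, the result is Deck-invariant and descends to $S$.

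The first step is to check that $R\ell$ is a genuine function of class $\mathscr{C}^p$ on $S_{\reg}$. Its value at $x$ is the pairing of $\ell(x)\in\mathcal{C}^{-q}$ with a smooth test function, namely $\psi$ shifted, times the smooth factor $e^{z\cdot(\xi(\phi_t\tx)-\xi(\tx))}$, which is localized as in \Cref{prop:localize}. Vertical derivatives up to order $p$ pass onto $\ell$, using the $\mathscr{C}^p$ regularity of $x\mapsto\ell(x)$ from \Cref{lem:derivative_Gamma}. Horizontal derivatives fall on the test function. We obtain $\mathscr{C}^p$ regularity, where near singularities we use that short horizontal segments at regular points avoid $\Sigma$.

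The second step is the homotopy formula. We write
\[
\tilde\ell\circ\phi_t-\tilde\ell=\int_0^t X(\tilde\ell\circ\phi_s)\,\diff s ,
\]
so that $\widetilde R\tilde\ell-\tilde\ell = X\widetilde{A}\tilde\ell$ with
\[
\widetilde A\tilde\ell=\int\psi(t)\int_0^t\tilde\ell\circ\phi_s\,\diff s\,\diff t,
\]
that is, convolution with the compactly supported primitive $\Psi(s)=\int_s^\infty \psi - \one_{s<0}$. Conjugating by $e^{\pm z\cdot\xi}$ turns $X$ into $X_z$, which gives $R\ell-\ell=X_z(A\ell)$. The main obstacle is proving $A\ell\in\cB_{p,q-1}$ with norm control. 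Here $\Psi$ is only bounded, with a jump at $0$ compensated by the identity term, so $A\ell(x)$ is essentially $\ell$ paired against a test function that loses one degree of regularity. This forces the loss $q\to q-1$. The plan for this step is to verify the estimate first on $\Gamma v$ with $v\in\mathscr{C}^{p,\bdd}$, where $A\Gamma v=\Gamma(Av)$ and the bound is explicit, and then extend by density, checking closure in $\mathscr{C}^p(S_{\reg},\mathcal{C}^{-q+1})$.

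Finally, for $\ell\in\ker Y_z$ we use that horizontal and vertical translations commute on the flat surface and that $Y_z$ is conjugate to $Y$ on the lift. The operators $\widetilde R$ and $\widetilde A$ then commute with $Y$, apart from the effect of singularities, which is avoided by the short support of $\psi$ at regular points. Hence $Y_zR\ell=RY_z\ell=0$ and $Y_zA\ell=AY_z\ell=0$.
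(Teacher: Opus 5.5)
Your proposal is correct and is essentially the paper's proof. Your primitive $\Psi(s)=\int_s^\infty\psi-\one_{s<0}$ is exactly the paper's kernel $J(t)=H(t)-\int_{-\infty}^t\theta(r)\diff r$, and conjugating by $e^{\pm z\cdot\xi}$ on the cover reproduces exactly its twisted weight $K(x,t)=\exp\bigl(-z\cdot\int_x^{\phi_t(x)}\omega\bigr)$. The paper likewise obtains $A\ell$ as the $\cB_{p,q-1}$-limit of $If_n$ for smooth $f_n\to\ell$, and checks $Y_zR\ell=R(Y_z\ell)$ and $Y_zA\ell=(Y_z\ell)\ast[KJ]$.

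One correction to your heuristic: passing from $q$ to $q-1$ is a gain, not a loss, of horizontal regularity. It comes from $(\Psi\ast u)'=u-\psi\ast u$, so convolution with $\Psi$ turns a $\mathscr{C}^{m}$ test function into a $\mathscr{C}^{m+1}$ one. The paper also takes the bump supported in $(0,1)$, so that every pairing is with $\ell(x)$ itself rather than with distributions at shifted points.
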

\begin{proof}
Let $\ell \in \cB_{p,q}$, and let $f_n \in \mathscr{C}^{\infty,\bdd}(S)$ be so that $f_n \to \ell$ in $\cB_{p,q}$.
We start from observing that
\[
Y_zf_n \to Y_z\ell \; \text{\ in\ } \; \cB_{p-1,q}, \qquad \text{and} \qquad X_zf_n \to X_z\ell \; \text{\ in\ } \; \cB_{p,q-1}.
\]
Indeed, let us justify the former claim (the second is analogous): from the definition of the norms, it follows immediately that the sequence $Y_zf_n$ is Cauchy in $\cB_{p-1,q}$ and hence has a limit $\widetilde{\ell}$. As distributions in $\mathscr{C}^{-q}(S)$, we clearly have that  $\widetilde{\ell} - \ell = 0$, so that $\widetilde{\ell} = \ell$ in $\cB_{p-1,q}$ by \Cref{lem:inclusion}.

Let us fix $\theta$ a smooth, nonnegative, compactly supported function in $(0,1)$ such that $\int \theta = 1$; we define
\[
J(t) = H(t) - \int_{-\infty}^t \theta(r)\diff r,
\]
where $H$ is the Heaviside step function, and
\[
K(x,t) = \exp\left(-z\cdot\int_{x}^{\phi_t(x)}\omega\right) = \exp\left(-z\cdot\int_{0}^t \omega(X) \circ \phi_r(x) \diff r \right).
\]
We notice that $J(t)=0$ for all $t\in \R\setminus [0,1]$ and, as a distribution, $J'(t) = \delta_0(t) - \theta(t)$, where $\delta_0$ is the Dirac delta.

Let $f$ be any continuous function on $S_{\reg}$. For any $x \in S_{\reg}$, we define
\[
If(x) = \int f \circ \phi_t(x) \, K(x,t) \, J(t) \diff t.
\]  
The integral above (over $\R$ or over $[0,1]$ equivalently) defines a function on $S_{\reg}$ with the same regularity as $f$, and with comparable norms, since $\|K(\cdot,t)\|_{\mathscr{C}^k} \ll 1$ uniformly in $t \in [0,1]$, for all fixed $k \geq 0$.
We make the following claim.

\medskip

\noindent \textbf{Claim:} the sequence $If_n$ converges in $\cB_{p,q-1}$.

\medskip

Let us assume the claim, and denote $A\ell \in \cB_{p,q-1}$ the limit. For the sake of notation, we also denote $\omega_X = \omega(X)$ and $\omega_Y = \omega(Y)$. Then, by our initial observation, we have
\[
\begin{split}
X_zA\ell(x) &= \lim_{n \to \infty} XIf_n(x) - z\cdot \omega_X(x) If_n(x) \\
&= \lim_{n \to \infty}  \int \Big[ \frac{\diff}{\diff t}(f_n \circ \phi_t(x)) \, K(x,t) + f_n \circ \phi_t(x) \, XK(x,t) \Big] \, J(t) \diff t  - z\cdot \omega_X(x) If_n(x)\\
&= \lim_{n \to \infty}  \int - f_n \circ \phi_t(x) \, \partial_t[ K(x,t)\, J(t)]  -f_n \circ \phi_t(x) \, z\cdot \omega_X\circ \phi_t(x) K(x,t)\, J(t) \diff t \\
&= \lim_{n \to \infty} \int - f_n \circ \phi_t(x) \, K(x,t) \, (\delta_0(t) -\theta(t))\diff t  = -\ell(x) + R\ell(x),
\end{split}
\]
where 
\[
R\ell(x) = \langle \ell(x), K(x,\cdot)\theta(\cdot)\rangle
\]
is a $\mathscr{C}^{p,\bdd}$-function on $S$.

Finally, let us assume that $Y_z\ell =0$. From the definition, we have $Y_zR\ell = R(Y_z\ell)=0$. Moreover, analogous computations as the ones above show that
\[
\begin{split}
Y_zA\ell(x) &= \lim_{n \to \infty} YIf_n(x) - z\cdot \omega_Y(x) If_n(x) \\
&= \lim_{n \to \infty}  \int [Yf_n \circ \phi_t(x) \, K(x,t) + f_n \circ \phi_t(x) \, YK(x,t) ] \, J(t) \diff t  - z\cdot \omega_Y(x) If_n(x)\\
&= \lim_{n \to \infty}  \int [Yf_n \circ \phi_t(x) \, K(x,t) + f_n \circ \phi_t(x) (-z\cdot  \omega_Y\circ \phi_t(x)) \, K(x,t)]\, J(t) \diff t \\
&= \lim_{n \to \infty} \int Y_zf_n \circ \phi_t(x) \, K(x,t)\, J(t) \diff t = Y_z\ell(x) \ast [K(x,\cdot)\, J(\cdot)],
\end{split}
\]
where, in the last equality, $\ast$ denotes the convolution of distributions. Since $ Y_z\ell(x) = 0$, we deduce that $Y_zA\ell(x) = 0$ in $(\mathscr{C}^{q-1}_c(0,1) )^{\ast}$, showing  $A\ell \in \ker Y_z$.

\medskip

It remains to prove the claim. We need to show that the sequence of functions $If_n$ is Cauchy with respect to $\|\cdot\|_{p,q-1}$. Let us then fix $x\in S_{\reg}$, $k_1,k_2 \in \Z_{\geq 0}$ with $k = k_1+k_2 \leq p$, and $u\in \mathscr{C}^{q-1+k}_c(0,1)$ with $\|u\|_{\mathscr{C}^{q-1+k}} \leq 1$; we estimate
\[
\begin{split}
|\langle X^{k_1}Y^{k_2}(If_n - If_m)(x),u\rangle| &= \left\lvert\int \left(\int Y^{k_2}[(f_n-f_m) \circ \phi_t(\phi_r(x)) \, K(\phi_r(x),t)] \, J(t) \diff t \right) u^{(k_1)}(r)\diff r\right\rvert \\
&\ll_p  \left\lvert\int \int Y^{k_3}(f_n-f_m) \circ \phi_{t+r}(x) \, Y^{k_4}K(\phi_r(x),t) \, J(t) \, u^{(k_1)}(r) \diff t  \diff r\right\rvert,
\end{split}
\]
where $k_3+k_4 \leq k_2$. Then, we have
\[
|\langle X^{k_1}Y^{k_2}(If_n - If_m)(x),u\rangle| \ll_p  \left\lvert \int Y^{k_3}(f_n-f_m) \circ \phi_{s}(x) U(s)\diff s\right\rvert,
\]
where 
\[
U(s) = \int Y^{k_4}K(\phi_{s-t}(x),t) \, J(t) \, u^{(k_1)}(s-t) \diff t.
\]
Since $u^{ (k_1)}$ is of class $\mathscr{C}^{q-1+k_2}$, the function $U$ is of class  $\mathscr{C}^{q+k_2}$; furthermore, its support is contained in $(0,2)$. Using a partition of unity $\{v_1,v_2, v_3\}$, with each $v_i$ supported on an interval of length one, we deduce
\[
\begin{split}
&|\langle X^{k_1}Y^{k_2}(If_n - If_m)(x),u\rangle| \ll_p  \left\lvert \int Y^{k_3}(f_n-f_m) \circ \phi_{s}(x) U(s)v_i\diff s\right\rvert \\
& \qquad \qquad \ll_p \|Y^{k_3}(f_n-f_m) \|_{0,q+k} \, \|U v_i\|_{\mathscr{C}^{q+k}} \ll_p \|f_n-f_m\|_{p,q} \, \|u\|_{\mathscr{C}^{q-1+k}} \ll_p \|f_n-f_m\|_{p,q}.
\end{split}
\]
This shows that $If_n$ is Cauchy and hence completes the proof.

\end{proof}

Using \Cref{prop:de_Rham}, we can prove the following result.
\begin{proposition}\label{prop:de_Rham_vice_versa}
Let $\ell \in \cB_{p,q} \cap \ker Y_z$, and assume that $\cL_z\ell = \lambda^{-1}\mu \ell$. Then, either $\mu$ is an eigenvalue of $\cL_z^\#$ on $H^1_z(S_0,\C)$, or there exists $\widetilde{\ell}\in \cB_{p,q} \cap \ker Y_z$ such that $\cL_z\widetilde{\ell} =\mu \widetilde{\ell}$.
\end{proposition}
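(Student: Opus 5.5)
Apply \Cref{prop:de_Rham} to $\ell$. This gives $R\ell\in\mathscr{C}^p(S_{\reg})\cap\ker Y_z$ and $A\ell\in\cB_{p,q-1}\cap\ker Y_z$ with $R\ell-\ell=X_z(A\ell)$. At the level of currents,
\[
R\ell\,\diff x-\ell\,\diff x=\diff_z(A\ell),
\]
because $\diff_z(A\ell)=X_z(A\ell)\diff x+Y_z(A\ell)\diff y$ and $Y_z A\ell=0$. Hence $\alpha\coloneqq R\ell\,\diff x$ is a smooth $\diff_z$-closed 1-form on $S_0$ (closedness follows from $Y_zR\ell=0$, exactly as in the computation preceding \Cref{prop:de_Rham}), and it defines a class $[\alpha]\in H^1_z(S_0,\C)$ representing the current $\ell\,\diff x$.

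Next I check the equivariance. By \eqref{eq:hash_action}, $\cL_z^\#(\ell\,\diff x)=\lambda\cL_z\ell\,\diff x=\mu\,\ell\,\diff x$. Since $\cL_z^\#$ commutes with $\diff_z$, it follows that $\cL_z^\#\alpha-\mu\alpha=\diff_z\big(\cL_z^\#A\ell-\mu A\ell\big)$, where $A\ell$ is a $0$-current. The right-hand side is a smooth form that is $\diff_z$-exact in the sense of currents, with primitive in $\cB_{p,q-1}$. By \Cref{lem:exact_current}, applied with $(p,q-1)$, it is $\diff_z$-exact, so $\cL_z^\#[\alpha]=\mu[\alpha]$ in $H^1_z(S_0,\C)$. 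There are now two cases. If $[\alpha]\neq0$, then $\mu$ is an eigenvalue of $\cL_z^\#$ on $H^1_z(S_0,\C)$. If $[\alpha]=0$, then $\alpha=\diff_z h$ for some smooth $h$ on $S_0$, and I set $\widetilde\ell\coloneqq h-A\ell$. Then $\diff_z\widetilde\ell=\ell\,\diff x$, that is, $X_z\widetilde\ell=\ell$ and $Y_z\widetilde\ell=0$, so $\widetilde\ell\in\ker Y_z$.

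It remains to show that $\widetilde\ell$ is an eigenvector for $\mu$. By \Cref{lem:X_z commutation},
\[
X_z(\cL_z\widetilde\ell-\mu\widetilde\ell)=\lambda\cL_z X_z\widetilde\ell-\mu\ell=\lambda\cL_z\ell-\mu\ell=0.
\]
By \Cref{lem:Xz_injectivity}, $\cL_z\widetilde\ell-\mu\widetilde\ell$ is therefore $0$, or a constant when $z\in2\pi i\Z^d$. In the constant case one can adjust $\widetilde\ell$ by a constant $c$ when $\mu\neq1$, since $X_z$ kills constants in that situation. The residual case $\mu=1$ with $z\in2\pi i\Z^d$ I would treat separately, or observe that it is excluded by the hypotheses, e.g.\ $\ell\neq0$ forces a nontrivial class. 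Note also that $\widetilde\ell\neq0$ whenever $\ell\neq0$, since $X_z\widetilde\ell=\ell$.

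The main obstacle is regularity and membership. Here $h$ is smooth on $S_0$ but may not be bounded, or may not belong to $\cB_{p,q}$, near the singularities; and $A\ell$ lies a priori only in $\cB_{p,q-1}$, not in $\cB_{p,q}$. To handle the first point I would use that $R\ell$ is bounded with bounded derivatives, and construct $h$ along horizontal segments via the kernel $K$ of \Cref{prop:de_Rham}, choosing the primitive so that it stays in $\mathscr{C}^{p,\bdd}$. To handle the second point I would replace $\widetilde\ell$ by its image under the Riesz projector $\Pi_\mu$. The identity $X_z\Pi_\mu=\Pi_{\lambda^{-1}\mu}X_z$ (used in the proof of \Cref{prop:recovering spectrum}) preserves the relation $X_z\widetilde\ell=\ell$, because $\ell\in E_{\lambda^{-1}\mu}$. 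The image is finite dimensional, and by a Lasota--Yorke smoothing argument such eigenvectors lie in $\cB_{p,q}$. If that argument fails, I would settle for $\widetilde\ell\in\cB_{p,q-1}$ and reconcile this with the statement via the compact inclusion of \Cref{lem:compact}.
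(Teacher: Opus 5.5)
Your argument is correct and is essentially the paper's proof: the decomposition $\ell = R\ell - X_z(A\ell)$ from \Cref{prop:de_Rham}, the class $[R\ell\,\diff x]\in H^1_z(S_0,\C)$ with \Cref{lem:exact_current} giving $\cL_z^\#[R\ell\,\diff x]=\mu[R\ell\,\diff x]$ when that class is nonzero, and otherwise $\ell = X_z\widetilde\ell$ with $\widetilde\ell\in\ker Y_z$, after which \Cref{lem:X_z commutation} and \Cref{lem:Xz_injectivity} yield $\cL_z\widetilde\ell=\mu\widetilde\ell$. One correction to your last paragraph: a smaller $q$ gives the stronger norm, since $\|\cdot\|_{p,q}\le\|\cdot\|_{p,q-1}$ (the supremum defining the $\mathcal{C}^{-(q-1)}$ norm runs over a larger set of test functions), so $\cB_{p,q-1}\hookrightarrow\cB_{p,q}$ and $\widetilde\ell = h - A\ell$ already lies in $\cB_{p,q}$. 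The Riesz-projection and Lasota--Yorke detour is therefore unnecessary, and as you stated it, it would also need a lower bound on $|\mu|$ that the proposition does not assume.
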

\begin{proof}
Given $\ell$ as in the assumptions, by  \Cref{prop:de_Rham}, there exist $f \in \mathscr{C}^{p}(S_{\reg})\cap \ker Y_z$ and $h \in  \cB_{p,q-1} \cap \ker Y_z$ such that $\ell = f + X_zh$. The 1-form $f\diff x$ on $S_{\reg}$ is then $\diff_z$-closed, and hence defines a twisted cohomology class $[f \diff x] \in H^1_z(S_{\reg},\C) \simeq   H^1_z(S_0,\C)$. We distinguish two cases.

If $[f \diff x] \neq 0$, using \Cref{lem:X_z commutation}, we have
\[
\cL_z^\# (f \diff x) = \lambda (\cL_z f) \, \diff x = \lambda (\cL_z \ell - \cL_z X_zh) \, \diff x = (\mu\ell -  X_z \cL_zh) \, \diff x = \mu f \diff x + X_z(\mu h-\cL_zh)  \diff x. 
\]
Since $Y_z h =0$, we can rewrite the equation above as $ \cL_z^\# (f \diff x) = \mu f \diff x + \diff_z \widetilde{h}$, for the distribution $ \widetilde{h} = \mu h-\cL_zh$. By \Cref{lem:exact_current}, we obtain $\cL_z^\#[f \diff x] = \mu [f \diff x]$, which holds in $ H^1_z(S_0,\C)$, thus proving that $\mu$ is an eigenvalue of $\cL_z^\#$ on $H^1_z(S_0,\C)$.

Otherwise, if $[f \diff x] = 0$, then $f\diff x = \diff_z \widetilde{f}$ for some $\widetilde{f} \in \mathscr{C}^{p}(S_{\reg})$, which implies that $\ell = X_z\widetilde{h}$, where $\widetilde{h} = \widetilde{f} + h \in \cB_{p,q-1}$ satisfies $Y_z\widetilde{h}=0$. \Cref{lem:X_z commutation} implies that $X_z(\cL_z\widetilde{h} - \lambda \mu \widetilde{h})=0$ and \Cref{lem:Xz_injectivity} completes the proof.
\end{proof}

\subsubsection{Maharam-Pollicott-Ruelle resonances}

We can finally describe the spectrum of the operator $\cL_z$ on $\cB_{p,q}$; in particular its discrete spectrum.

The case $z = 0$ is already treated in \cite{FaGoLa}, here we focus on the case $z \in \R^d \oplus i(-\pi,\pi)^d \setminus \{0\}$.

\begin{theorem}
\label{thm:cohomological_Maharam}
Let $p,q \geq 1$ and $z \in \R^d \oplus i(-\pi,\pi)^d \setminus \{0\}$. Denote by $\sigma_{p,q}(\cL_{z,F})$ the spectrum of $\cL_{z,F}$ on $\cB_{p,q}$, and let
\[\sigma^+_{p,q}(\cL_{z,F}) = \{\alpha \in \sigma_{p,q}: |\alpha| > \lambda^{-p} \rho(z)\}.\]
Let $\Theta(z) \subset \Theta_0(z)$ denote the spectrum of the action of $\cL_z^\#$ on $H^1_z(S,\C)$ and on $H^1_z(S_0,\C)$ respectively.
Then 
\[
\{\lambda^{-n}\mu: n \geq 1, \, \mu \in \Theta(z), \, |\mu|> \lambda^{-\min\{p,q\}+1} \rho(z)\} \subseteq \sigma_{p,q}(\cL_{z,F}),
\]
and 
\[ 
\sigma^+_{p,q}(\cL_{z,F}) \subseteq \{ \lambda^{-n}\mu: 1 \leq n \leq p, \mu \in \Theta_0(z) \}.
\]
\end{theorem}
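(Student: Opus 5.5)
We prove the two inclusions separately, using \Cref{prop:recovering spectrum} for the first and \Cref{prop:de_Rham_vice_versa} together with \Cref{lem:eigenspaces} for the second. Throughout, $z \in \R^d\oplus i(-\pi,\pi)^d\setminus\{0\}$, so $z\notin 2\pi i\Z^d$. Hence \Cref{lem:Xz_injectivity} says that $X_z$ is injective on every $\cB_{p,q}$, and the exceptional case in \Cref{prop:recovering spectrum} never occurs.

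\emph{First inclusion.} Let $\mu\in\Theta(z)$ with $|\mu|>\lambda^{-\min\{p,q\}+1}\rho(z)$, and take an eigenclass $[\alpha]$ with $\cL_z^\#[\alpha]=\mu[\alpha]$. By \Cref{cor:spectral_decomposition}, on $\cB_{p,q}$ we have $\rho_{\ess}(z)\le\lambda^{-\min\{p,q\}}\rho(r)$. We would like this bound with $\rho(z)$ in place of $\rho(r)$, since the statement is phrased with $\rho(z)$.

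\begin{itemize}
\item The inequality $\rho_{\ess}(z)\le \lambda^{-\min\{p,q\}}\rho(z)$ is not literally available. When $\Im z\neq 0$, \Cref{prop:unique_max_eigenvalue} only gives $\rho(z)<\rho(r)$. This is the point I expect to need care.
\item Plan: read the threshold in the statement through the Lasota--Yorke bound, i.e.\ use $\rho_{\ess}(z)<\lambda^{-1}|\mu|$ directly. If needed, sharpen $\rho_{\ess}$ by running the Lasota--Yorke argument of \Cref{prop:LY} with $R_n(z)$ replaced by the actual growth rate of $\|\cL_z^n\|$.
\item Fallback: work in $\cB_{p',q'}$ with larger $p',q'$, where the essential radius is smaller. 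The point spectrum outside the essential radius does not depend on $(p,q)$, because $\cB_{p',q'}\hookrightarrow\cB_{p,q}$ densely and the eigenvectors produced are smooth along the relevant directions.
\end{itemize}

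Granting this, \Cref{prop:recovering spectrum} gives a nonzero $\ell\in E_{\lambda^{-1}\mu}$. So $\lambda^{-1}\mu\in\sigma_{p,q}$, which is the case $n=1$. For general $n\ge1$, apply $X_z$ repeatedly: \Cref{lem:eigenspaces} gives $X_z^{n-1}\ell\in E_{\lambda^{-n}\mu}$. This vector is nonzero by injectivity of $X_z$, applied to the genuine eigenvector inside $E_{\lambda^{-1}\mu}$.

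There is one gap here: $X_z$ maps $\cB_{p,q}\to\cB_{p,q-1}$ and loses regularity. Two remedies:
\begin{itemize}
\item Note that the generalized eigenspaces are independent of $(p,q)$ once $|\lambda^{-n}\mu|$ exceeds the essential radii involved.
\item Alternatively, observe that $\lambda^{-n}\mu$ with $|\lambda^{-n}\mu|\le\rho_{\ess}$ already lies in $\sigma_{p,q}$ as soon as it lies in the essential disk. This is false in general, so I would use the first remedy, possibly choosing $q$ large at the start.
\end{itemize}

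\emph{Second inclusion.} Let $\alpha\in\sigma^+_{p,q}$. Since $|\alpha|>\lambda^{-p}\rho(z)\ge\rho_{\ess}(z)$, $\alpha$ is an isolated eigenvalue; pick an eigenvector $\ell$, so $\cL_z\ell=\alpha\ell$.
\begin{itemize}
\item Let $m\ge0$ be maximal with $Y_z^m\ell\ne0$. Then $m\le p-1$: by \Cref{lem:eigenspaces}, $Y_z^{k}\ell$ is an eigenvector with eigenvalue $\lambda^{k}\alpha$, and $|\lambda^p\alpha|>\rho(z)$ forces $Y_z^{p}\ell=0$ (using the domain convention $Y_z^p:\cB_{p,q}\to\cB_{0,q}$).
\item Set $\ell'=Y_z^m\ell\in\cB_{p-m,q}\cap\ker Y_z$, with eigenvalue $\lambda^m\alpha=\lambda^{-1}\mu_1$, i.e.\ $\mu_1=\lambda^{m+1}\alpha$.
\item Apply \Cref{prop:de_Rham_vice_versa}. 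Either $\mu_1\in\Theta_0(z)$, or there is $\tilde\ell\in\ker Y_z$ with eigenvalue $\mu_1=\lambda^{m+1}\alpha$. In the latter case iterate: eigenvalue $\lambda^{j}\alpha$ with $\tilde\ell\in\ker Y_z$, giving successive candidates $\lambda^{m+2}\alpha,\dots$.
\item The iteration stops before the eigenvalue modulus exceeds $\rho(z)$. Hence $\alpha=\lambda^{-n}\mu$ for some $\mu\in\Theta_0(z)$, with $n\le p$ coming from the bound $|\lambda^{n}\alpha|\le\rho(z)$ combined with $|\alpha|>\lambda^{-p}\rho(z)$.
\end{itemize}

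The main technical obstacle is bookkeeping of the $(p,q)$ indices when applying $Y_z$, $X_z$ and $A$ (each loses one derivative). I will handle it by using that the eigenspaces are independent of the space for eigenvalues outside the essential radius. This needs $\min\{p,q\}$ large enough relative to $n$, and is where the hypotheses $p,q\ge1$ and the threshold $\lambda^{-\min\{p,q\}+1}$ enter.
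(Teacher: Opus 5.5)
Your argument follows the paper's proof step by step: \Cref{prop:recovering spectrum} plus iteration of $X_z$ via \Cref{lem:eigenspaces} and \Cref{lem:Xz_injectivity} for the first inclusion, and the $Y_z$-descent plus repeated use of \Cref{prop:de_Rham_vice_versa} for the second. The two difficulties you flag are genuine, and the paper's proof passes over both without comment. However, none of your remedies resolves them, so the step you mark ``Granting this'' remains open.

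\emph{The threshold.} The only available bound is $\rho_{\ess}(z)\le\lambda^{-\min\{p,q\}}R(z)=\lambda^{-\min\{p,q\}}\rho(\Re z)$. By \Cref{prop:unique_max_eigenvalue}, $\rho(z)<\rho(\Re z)$ whenever $\Im z\neq 0$. Hence $|\mu|>\lambda^{-\min\{p,q\}+1}\rho(z)$ does not give the hypothesis $\rho_{\ess}(z)<\lambda^{-1}|\mu|$ of \Cref{prop:recovering spectrum}.
\begin{itemize}
\item Re-running \Cref{prop:LY} with ``the actual growth of $\|\cL_z^n\|$'' does not deliver this. The strong-norm coefficient there comes from bounding each piece by $\|e^{z\cdot S_nF}|_{I_a}\|_{\mathscr{C}^0}$, which sees only $e^{\Re z\cdot S_nF}$. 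The cancellations that make $\rho(z)<\rho(\Re z)$ never enter.
\item The change-of-space fallback fails on the direction of the inclusions. Raising $q$ \emph{weakens} the norm, so $\cB_{p',q}\hookrightarrow\cB_{p,q}\hookrightarrow\cB_{p,q'}$ for $p'\ge p$, $q'\ge q$ (cf.\ \Cref{lem:compact}). For the same reason $X_z$ actually maps $\cB_{p,q}$ into the larger space $\cB_{p,q+1}$; the target $\cB_{p,q-1}$ in its definition is a slip, consistent with $A\ell\in\cB_{p,q-1}$ in \Cref{prop:de_Rham}.
\item Raising $p$ keeps eigenvectors inside $\cB_{p,q}$, but cannot push the essential-radius bound below $\lambda^{-q}\rho(\Re z)$. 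Raising $q$ does lower it, but the eigenvector you obtain lives in a larger space and need not belong to $\cB_{p,q}$.
\end{itemize}
The same obstruction affects ``for all $n\ge1$''. The vector $X_z^{n-1}\ell$ lies in $\cB_{p,q+n-1}$. Pulling it back into $\cB_{p,q}$ by space-independence of eigenspaces only works while $|\lambda^{-n}\mu|>\lambda^{-\min\{p,q\}}\rho(\Re z)$. As your last paragraph concedes, this does not cover all $n$ for the fixed $p,q$ of the statement.

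\emph{The second inclusion.}
\begin{itemize}
\item The claim ``$|\alpha|>\lambda^{-p}\rho(z)\ge\rho_{\ess}(z)$'' is unjustified. \Cref{cor:spectral_decomposition} only gives $\lambda^{-\min\{p,q\}}\rho(\Re z)$, which is $\le\lambda^{-p}\rho(z)$ only if $p\le q$ and $\rho(z)=\rho(\Re z)$, i.e.\ for real $z$. Otherwise points of $\sigma^+_{p,q}$ may be essential spectrum, and there is no eigenvector to descend from. The paper simply starts from an eigenfunction.
\item The vector $Y_z^p\ell$ lies in a $\cB_{0,\cdot}$-space, where \Cref{lem:estimates_spectral_radius} only bounds the spectral radius by $\rho(\Re z)$. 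So ``$|\lambda^p\alpha|>\rho(z)$ forces $Y_z^p\ell=0$'' needs an extra step for non-real $z$, for example transferring $Y_z^k\ell$, $k\le p-1$, back to $\cB_{p,q}$.
\item The bound giving $n\le p$ is $\lambda^{n-1}|\alpha|\le\rho(z)$, not $\lambda^{n}|\alpha|\le\rho(z)$. Here $\lambda^n\alpha\in\Theta_0(z)$ is a cohomological eigenvalue and can exceed $\rho(z)$: for real $z=r\neq0$ and $\alpha=\rho(r)$, the descent ends at $\lambda\rho(r)\in\Theta_0(r)$.
\end{itemize}

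\emph{What the argument does support.} Add the standard lemma that, for a continuous dense inclusion $\cB\hookrightarrow\cB'$ intertwining $\cL_z$, generalized eigenspaces for eigenvalues outside both essential radii coincide. With it, your argument (like the paper's) supports the statement after three changes: $\rho(z)$ replaced by $\rho(\Re z)$; the threshold in $\sigma^+_{p,q}$ replaced by $\lambda^{-\min\{p,q\}}\rho(\Re z)$; and $n$ restricted by $|\lambda^{-n}\mu|>\lambda^{-\min\{p,q\}}\rho(\Re z)$. The statement as written needs an argument that neither your proposal nor the paper's proof supplies.
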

\begin{proof}
Let $\mu \in \Theta(z)$, with $|\mu|> \lambda^{-\min\{p,q\}+1} \rho(z)$. By \Cref{prop:recovering spectrum}, $\lambda^{-1}\mu \in\sigma_{p,q}(\cL_{z,F})$, so that $E_{\lambda^{-1}\mu} \neq \{0\}$. By \Cref{lem:eigenspaces}, 
$X_z$ maps $E_{\lambda^{-k}\mu}$ into $E_{\lambda^{-k-1}\mu}$, and by \Cref{lem:Xz_injectivity} this map is injective, so by induction $E_{\lambda^{-n}\mu} \neq \{0\}$ for all $n \geq 1$. This proves the first claim.

We now prove the second statement. 
Suppose that $\ell$ is an eigenfunction of $\cL_{z,F}$ with eigenvalue $\mu$, and $|\mu| > \lambda^{-p}\rho(z)$. By \Cref{lem:eigenspaces}, $Y_z^p \ell$ is an eigenfunction with eigenvalue $\lambda^p \mu$ of modulus exceeding $\rho(z)$, hence $Y_z^p \ell = 0$. Let $n \leq p$ be minimal such that $Y_z^n \ell = 0$, so $Y_z^{n-1} \ell$ is a non-zero eigenfunction in $\ker Y_z$, with eigenvalue $\lambda^{n-1}\mu$. 
By \Cref{prop:de_Rham_vice_versa}, either  $\lambda^{n}\mu \in  \Theta_0(z)$, or there exists $0 \neq \ell_1 \in E_{\lambda^{n}\mu} \cap \ker Y_z$. We repeat the reasoning with $\ell_1$ instead of $\ell$: either  $\lambda^{n+1}\mu \in  \Theta_0(z)$, or there exists $0 \neq \ell_2 \in E_{\lambda^{n+1}\mu} \cap \ker Y_z$. This process has to stop, since all eigenvalues of $\cL_{z,F}$ are of modulus less than $\rho(z)$. In particular, there exists  $\lambda^{k}\mu \in  \Theta_0(z)$, which completes the proof.

\end{proof}


\begin{corollary}
Let $r\in \R^d$ and let $\rho_T(r)$ denote the spectral radius of $\cL_r^\#$ on $H^1_r(S_0, \C)$. Then 
\[\rho(r) = \lambda^{-1} \rho_T(r).\]
\end{corollary}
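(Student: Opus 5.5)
The plan is to prove the two inequalities $\lambda\rho(r)\le \rho_T(r)$ and $\rho_T(r)\le\lambda\rho(r)$ separately. Both use the correspondence between $\cL_r$ and $\cL_r^\#$ from \Cref{prop:de_Rham_vice_versa,prop:recovering spectrum}, together with the facts that $\rho(r)$ is an eigenvalue of $\cL_{r,F}$ (\Cref{cor:spectral_decomposition}) and that every eigenvalue has modulus at most $\rho(r)$. We fix $p,q\ge 1$ large enough that $\rho_{\ess}(r)\le\lambda^{-\min\{p,q\}}\rho(r)$ is well below $\lambda^{-1}\rho(r)$. The case $r=0$ is the untwisted one, where $\rho(0)=1$ and the spectral radius on $H^1(S_0,\C)$ is $\lambda$, as in \cite{FaGoLa}. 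In that case we simply quote the result, or note that the argument below goes through except for the constant class. So assume $r\neq 0$, so that $r\notin 2\pi i\Z^d$.

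\emph{Lower bound $\rho_T(r)\ge\lambda\rho(r)$.} Take the positive eigenvector $\ell_+$ with $\cL_r\ell_+=\rho(r)\ell_+$. By \Cref{lem:eigenspaces}, $Y_r\ell_+$ lies in $E_{\lambda\rho(r)}$. Since $\lambda\rho(r)>\rho(r)$ exceeds the spectral radius, this forces $Y_r\ell_+=0$, so $\ell_+\in\cB_{p,q}\cap\ker Y_r$. Now apply \Cref{prop:de_Rham_vice_versa} with $\mu=\lambda\rho(r)$. Either $\lambda\rho(r)$ is an eigenvalue of $\cL_r^\#$ on $H^1_r(S_0,\C)$, or there is a nonzero $\widetilde\ell\in\ker Y_r$ with $\cL_r\widetilde\ell=\lambda\rho(r)\widetilde\ell$. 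The second alternative is again impossible by the spectral radius bound. Hence $\lambda\rho(r)\in\Theta_0(r)$, and therefore $\rho_T(r)\ge\lambda\rho(r)$.

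\emph{Upper bound $\rho_T(r)\le\lambda\rho(r)$.} Suppose $\mu\in\Theta_0(r)$ with $|\mu|>\lambda\rho(r)$. If $\mu\in\Theta(r)$, i.e.\ $\mu$ comes from an eigenclass in $H^1_r(S,\C)$, then \Cref{prop:recovering spectrum} applies, since $\rho_{\ess}(r)<\lambda^{-1}|\mu|$ and $r\notin 2\pi i\Z^d$. It produces a nonzero element of $E_{\lambda^{-1}\mu}$ with $|\lambda^{-1}\mu|>\rho(r)$, a contradiction. The remaining eigenvalues come from the part of $H^1_r(S_0,\C)$ not coming from $S$. Via the exact sequence of the pair relating $H^1_r(S)\to H^1_r(S_0)$ and the local cohomology at the punctures, this part is spanned by twisted classes dual to small loops around the singularities. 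On it, $\cL_r^\#$ acts as a weighted permutation of the punctures: a puncture $\sigma$ in a $T$-orbit of period $k$ contributes eigenvalues of modulus $\exp\big(\tfrac1k\, r\cdot S_kF(\sigma)\big)$, up to the normalization in \eqref{eq:hash_action}, which must be tracked carefully.

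To bound these, I would compare with the variational characterization: $\lambda\rho(r)=\exp P(T,r\cdot F)$ is at least $\exp(r\cdot\int F\,\diff\mu)$ for the periodic orbit measure on the singular orbit. Alternatively, and avoiding \Cref{rmk:nu_T_is_Gibbs}, I would use the definition of $R_n(r)$ directly. A horizontal unit segment emanating from $\sigma$ stays near the orbit of $\sigma$ under $T^{-n}$. This gives $\lambda^nR_n^n\gtrsim e^{r\cdot S_nF(\sigma)}\lambda^n$ along a subsequence, hence $\lambda\rho(r)=\lambda R(r)\ge e^{r\cdot S_kF(\sigma)/k}$.

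I expect this last step, the puncture classes and the precise bookkeeping of the factor $\lambda^{\pm1}$ in their eigenvalues, to be the main obstacle. If the direct $R_n$ estimate is awkward, I would instead extend \Cref{prop:recovering spectrum} to classes in $H^1_r(S_0,\C)$ represented by $\diff_r$-closed forms bounded near $\Sigma$. This can be done by choosing representatives supported away from $\Sigma$ plus explicit local twisted forms. Such an extension gives the contradiction uniformly, exactly as in the closed-surface case.
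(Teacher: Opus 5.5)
Your proof is correct. For the lower bound and for eigenvalues in $\Theta(r)$ it is the paper's argument. The paper obtains $\lambda^n\rho(r)\in\Theta_0(r)$ for some $n\ge1$ via \Cref{thm:cohomological_Maharam}; by going directly through \Cref{prop:de_Rham_vice_versa} you get $n=1$.

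The difference is in the rest of the upper bound. The paper simply asserts that every $\mu\in\Theta_0(r)$ with $|\mu|>\lambda\rho(r)$ already lies in $\Theta(r)$. You instead observe that $H^1_r(S,\C)$ is an invariant subspace of $H^1_r(S_0,\C)$ whose quotient is the residue space $\C^{|\Sigma|}$. So $\Theta_0(r)\setminus\Theta(r)$ consists of eigenvalues on that quotient and needs its own bound. Your argument supplies exactly the justification the paper omits. Its $R_n$ version also avoids the unproved \Cref{rmk:nu_T_is_Gibbs}.

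Some bookkeeping needs fixing.
\begin{itemize}
\item No factor $\lambda^{\pm1}$ appears on the quotient. Residues are integrals of the locally untwisted form $e^{-r\cdot\xi}\alpha$ over small loops, and $T_*$ transports these loops. Normalize the local primitives by $\xi$ near lifts, and write $\tT^k\widetilde{\sigma}=D\widetilde{\sigma}$ with $\omega(D)=S_kF(\sigma)$. Then the return factor is $e^{r\cdot S_kF(\sigma)}$, so the moduli are exactly $e^{r\cdot S_kF(\sigma)/k}$.
\item Your displayed inequality has a spurious $\lambda^n$ on the right. As written it would amount to $\rho(r)\ge e^{r\cdot S_kF(\sigma)/k}$, which the argument does not give. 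A single interval $I_a$ yields only $\lambda^nR_n^n\ge\sup_{I_a}e^{r\cdot S_nF}\gtrsim e^{r\cdot S_nF(\sigma)}$, and that is precisely what gives your conclusion $\lambda R(r)\ge e^{r\cdot S_kF(\sigma)/k}$.
\item The geometry runs the other way round. Take $y$ on the outgoing horizontal separatrix of $\sigma$ at distance $\epsilon$, so that $T^jy$ is $\lambda^{-j}\epsilon$-close to $T^j\sigma$. Choose $x\in S_{\reg}$ with $I(x)$ passing within $\delta$ of $T^ny$. Then $T^{-n}I(x)$ contains a point $y'$ with $|S_nF(y')-S_nF(\sigma)|=O(\epsilon+\delta)$, uniformly in $n$.
\item Discard the fallback of extending \Cref{prop:recovering spectrum}. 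A class with nonzero residue at $\sigma$ has no representative with bounded coefficients near $\sigma$. The integral of $e^{-r\cdot\xi}\alpha$ over the loop at flat distance $\epsilon$ from $\sigma$ is independent of $\epsilon$, but for bounded coefficients it would be $O(\epsilon)$.
\end{itemize}
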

\begin{proof}
Fix any $p,q \geq 1$, then $\rho(r) = \max \{|\mu|: \mu \in \sigma_{p,q}(\cL_{r,F})\}$.
Since $\ell_+$ is an eigenfunction of $\cL_{r,F}$ with eigenvalue $\rho(r)$, we see that for some $n \geq 1$, $\lambda^n \rho(r) \in \Theta_0(r)$. Hence $\rho_T(r) \geq \lambda \rho(r)$. 

Conversely, if there is any eigenvalue $\mu \in \Theta_0(r)$ such that $|\mu| > \lambda \rho(r)$, then also $\mu \in \Theta(r)$, and $|\mu| > \lambda^{-\min\{p,q\}+1}\rho(r)$, so $\lambda^{-1}\mu \in \sigma_{p,q}(\cL_{r,F})$. This would be a contradiction as $\lambda^{-1} |\mu| > \rho(r)$, so we conclude that $\rho_T(r) = \lambda \rho(r)$.
\end{proof}

\begin{corollary}
For every $r\in \R^d$ there is an infinite family of $\phi_s$-invariant, $e^{r\cdot \omega}$-Maharam distributions on $\tS$, parametrised by the set $\{\lambda^{-n}\mu: n \geq 1, \mu \in \Theta(r)\}$.
\end{corollary}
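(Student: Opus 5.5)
The plan is to combine \Cref{thm:cohomological_Maharam} with \Cref{prop:Maharam_distribution}, using the fact that the spectrum of the dual operator coincides with that of $\cL_{r,F}$. Fix $r\in\R^d$. For $r=0$ one can fall back on \cite{FaGoLa}; in what follows assume $r\neq 0$, so that $z=r$ lies in $\R^d\oplus i(-\pi,\pi)^d\setminus\{0\}$ and \Cref{thm:cohomological_Maharam} applies. Take $\mu\in\Theta(r)$ and $n\ge 1$. Choose $p=q$ large enough that
\[
|\mu|>\lambda^{-p+1}\rho(r) \quad\text{and}\quad \lambda^{-n}|\mu|>\lambda^{-p}\rho(r).
\]
The first inclusion of \Cref{thm:cohomological_Maharam} then gives $\lambda^{-n}\mu\in\sigma_{p,p}(\cL_{r,F})$. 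The second condition places this point outside the essential spectral radius bound $\lambda^{-p}\rho(r)$ of \Cref{cor:spectral_decomposition}, so $\lambda^{-n}\mu$ is an isolated eigenvalue of finite multiplicity. By \Cref{rk:discrete_spectrum_of_dual} it is therefore also an eigenvalue of $\cL'_{r,F}$ on $\cB'_{p,p}$, with an eigenvector $\upsilon_{n,\mu}$.

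The next step is to feed $\upsilon_{n,\mu}$ into \eqref{eq:Maharam_distribution} to produce $\zeta_{n,\mu}$, which is $e^{r\cdot\omega}$-Maharam. The main obstacle is the hypothesis $|\eta|>\lambda^{-1}\rho(r)$ in \Cref{prop:Maharam_distribution}: it fails once $n\ge 2$, or when $|\mu|\le\rho(r)\lambda$. To get around it, I would not apply the proposition directly. Instead I would use the ascending chain built from $X'_r$. Its proof shows that $X'_r$ maps an $\eta$-eigenvector of $\cL'_{r,F}$ to a $\lambda\eta$-eigenvector, and the chain must terminate because $\rho(\cL'_{r,F})=\rho(r)$.

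This yields a dichotomy. If $X'_r\upsilon=0$, then $\zeta$ is $\phi_s$-invariant by \Cref{lem:Xi_z}, exactly as in that proof. Otherwise, take $k$ maximal with $(X'_r)^k\upsilon\ne0$, and replace $\upsilon$ by $(X'_r)^k\upsilon$, an eigenvector for $\lambda^k\eta$ that is killed by $X'_r$ and hence gives an invariant Maharam distribution. Whenever $\eta\lambda^{k}$ is again of the form $\lambda^{-m}\mu$ with $m\ge1$, this keeps us inside the parametrising set. To make the parametrisation honest and injective, I would instead choose, for each point $\lambda^{-n}\mu$, an eigenvector in $\ker X'_r$ directly. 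The intended way to do this is to dualise the chain of injections $X_z\colon E_{\lambda^{-k}\mu}\to E_{\lambda^{-k-1}\mu}$ used in the proof of \Cref{thm:cohomological_Maharam}: dual eigenvectors at level $\lambda^{-n}\mu$ that annihilate $X_r(E_{\lambda^{-n+1}\mu})$ lie in $\ker X'_r$ on the relevant finite-dimensional spectral subspace. I expect this step to need care, and if it fails I would settle for the weaker statement in which the family is indexed by the eigenvalues actually attained in $\ker X'_r$.

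The final step is to check that the resulting distributions are nonzero and pairwise distinct. Nonvanishing follows because $\pi_0\circ\Xi_r$ maps $\mathscr{C}_c^\infty(\tS)$ onto a dense subset of $\mathscr{C}^{\infty,\bdd}(S)$, using the Fourier inversion of \Cref{lem:proj_cl} together with a partition of unity, so $\zeta=0$ would force $\upsilon=0$. Distinctness follows because eigenvectors for distinct eigenvalues are linearly independent, and the map $\upsilon\mapsto\zeta$ is linear and injective. Since $\Theta(r)\neq\emptyset$ (indeed $H^1_r(S,\C)\cong\C^{2g-2}$) and $n$ ranges over all of $\N$, the family is infinite.
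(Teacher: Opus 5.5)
Your setup is fine. With $p=q$ large, \Cref{thm:cohomological_Maharam} and \Cref{rk:discrete_spectrum_of_dual} give an $\cL'_{r,F}$-eigenvector at every $\lambda^{-n}\mu$. Your last step (injectivity of $\upsilon\mapsto\zeta$, linear independence across distinct eigenvalues) also works, \emph{once} you have, for every $n\ge1$, a nonzero eigenvector at $\lambda^{-n}\mu$ lying in $\ker X'_r$. That step is what the proposal is missing.

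\textbf{The ascending chain.} The $X'_r$-chain lands at a different level $\lambda^{k}\eta$. So it gives no map $n\mapsto\zeta_{n,\mu}$. Different $n$ may produce the same distribution, so it does not guarantee infinitely many distinct ones either.

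\textbf{The annihilator criterion.} Your criterion is correct: since $\upsilon=\upsilon\circ\Pi_\eta$ and $\Pi_\eta X_r=X_r\Pi_{\lambda\eta}$, an eigenvector at $\eta$ lies in $\ker X'_r$ if and only if it vanishes on $X_r(E_{\lambda\eta})$. But $X_r(E_{\lambda\eta})$ is $\cL_{r,F}$-invariant and $\cL_{r,F}-\eta$ is nilpotent on $E_\eta$. Hence a nonzero such eigenvector exists if and only if $X_r\colon E_{\lambda^{-n+1}\mu}\to E_{\lambda^{-n}\mu}$ is \emph{not surjective}. Nothing in the paper gives this. \Cref{lem:Xz_injectivity} only yields injectivity, i.e.\ $\dim E_{\lambda^{-n}\mu}\ge\dim E_{\lambda^{-n+1}\mu}$. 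If equality held, your construction would produce nothing at level $\lambda^{-n}\mu$, so the infinite family is not established.

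\textbf{The missing idea: go \emph{down} with the other twisted derivative.} Since $\omega$ is closed, $X_r$ and $Y_r$ commute, and $Y_r\cL_{r,F}=\lambda^{-1}\cL_{r,F}Y_r$ by \Cref{lem:X_z commutation}. Dually, $Y'_r$ commutes with $X'_r$ and sends $\cL'_{r,F}$-eigenvectors at $\eta$ to eigenvectors at $\lambda^{-1}\eta$. The paper takes one eigenvector $\upsilon\in\ker X'_r$ at $\lambda^{-1}\mu$ and uses $(Y'_r)^{n-1}\upsilon\in\ker X'_r$ at $\lambda^{-n}\mu$. Two checks are left implicit there, and you should supply both.

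First, the base state. \Cref{prop:Maharam_distribution} only gives $X'_r\upsilon=0$ when $|\mu|>\rho(r)$. In general your criterion works at $n=1$, because in the proof of \Cref{prop:recovering spectrum} one has $P_1f\notin X_r(E_\mu)$. Indeed, if $P_1f=X_rk$, then $\diff_r$-closedness of $\Pi^{\ast}\alpha$ forces $X_r(P_2g-Y_rk)=0$. Hence $P_2g=Y_rk$ and $\alpha=\diff_r(k+\widetilde{h})$, contradicting \Cref{lem:exact_current}.

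Second, nonvanishing. Suppose $w=(Y'_r)^{k-1}\upsilon\neq0$ but $Y'_rw=0$. Then $w$ is killed by both $X'_r$ and $Y'_r$. By \Cref{lem:Xi_z} and its analogue for $Y$, the associated distribution on $\tS$ has zero gradient, hence equals $c\,\vol$. But $\vol$ is $\Deck$-invariant while the distribution is $e^{r\cdot\omega}$-Maharam with $r\neq0$, so $c=0$. Your injectivity of $\upsilon\mapsto\zeta$ then gives $w=0$, a contradiction. This shows, after the fact, exactly the non-surjectivity your route needed. Note that this step uses $r\neq0$.
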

\begin{proof}
For every $\mu \in \Theta(r)$, we can fix some $p,q \geq 1$ such that $|\mu| > \lambda^{-\min\{p,q\}+1}\rho(r)$. (Note that $0 \notin \Theta(r)$ since $T$ is invertible.) Hence for $n\geq 1$, $\lambda^{-n}\mu$ belongs to the spectrum of $\cL'_{r,F}$ on $\cB'_{p,q}$. If $\upsilon$ is the corresponding eigenfunction, by \Cref{prop:Maharam_distribution}, to $\upsilon$ corresponds a $\phi_s$-invariant $e^{r\cdot \omega}$-Maharam distribution. 

As $X$ and $Y$ commute when acting on smooth functions on $\tS$, $X'_z$ and $Y'_z$ commute on $\cB'_{p,q}$. Hence for any $n\geq 1$, $(Y'_r)^n \upsilon$ is in $\ker X'_r$, and hence as in the proof of \Cref{prop:Maharam_distribution}, $(Y'_r)^n \upsilon$ can be also used to construct a $\phi_s$-invariant $e^{r\cdot \omega}$-Maharam distribution.
\end{proof}


\section{Ergodic integrals}\label{sec:ergodic_integrals}

The goal of this section is to describe the ergodic integrals of smooth functions for points which are generic for the Maharam measures. 
%
We first make some preliminary assumptions. 
Let $C = C_{\omega} >0$ be such that, for any $\tx,\ty \in \tS$, the norm of the vector
\[
\int_{\tx}^{\ty} \mathcal{p}^{\ast}\omega = \left( \int_{\tx}^{\ty} \mathcal{p}^{\ast}\omega_1,\dots, \int_{\tx}^{\ty} \mathcal{p}^{\ast}\omega_d \right)
\]
can be bounded by $C$ times the distance between $\tx$ and $\ty$.
Up to replacing $T$ with a power of itself, we can assume that $\lambda >4$.  

Fix any $ \eta \in (\lambda^{-1},1)$.
Since $r\mapsto \rho(r)$ is smooth and $\rho(0) = 1$, there exists $c=c_{\omega} >0$ such that, for every $r\in \R^d$ with $\|r\|_{\infty} <c$, we have 
\begin{equation}\label{eq:constraint_on_r}
    (\lambda \rho(r))^{-1} \, e^{\|r\|_{\infty} \, (\|F\|_{\infty} + C)} < \eta < 1.
\end{equation}
In the following, we will use the notation $O(A)$ to refer to a term (which might change from line to line) which can be bounded in absolute value by $C' \cdot A$, where the constant $C'$ depends only on $C$ and on $c$.

Let us fix $r\in \R^d$, with $\|r\|_{\infty} <c$ as above. We call the $r$-displacement at time $t$ the function on $\tS$ defined by
\[
\delta_r(\tx,t) \coloneqq r \cdot \int_{\tx}^{\tphi_t(\tx)} \mathcal{p}^{\ast}\omega.
\]
It is easy to verify that $\delta_r(\tx,t)$ is invariant by $\Deck$ and thus can be identified with a function in $S$. 
The name is motivated by the fact that the vector $\int_{\tx}^{\phi_t(\tx)} \mathcal{p}^{\ast}\omega$, whose components are $\int_{\tx}^{\phi_t(\tx)} \mathcal{p}^{\ast}\omega_j$ for $j=1,\dots, d$, represents the displacement in the cover $\tS$ between the point $\tx$ and $\phi_t(\tx)$.

For any $L\geq 0$, let us also define 
\begin{equation}\label{eq:definition_D_r}
    \Delta_r(x,L) \coloneqq \langle \ell_{+,-r}(x), \exp(\delta_r(x,s)) \, \one_{[0,L]}(s) \rangle = \int_0^L \exp(\delta_r(x,s)) \, \diff \widehat{\ell_{+,-r}(x)}(s);
\end{equation}
in other words, it is the integral of the function $ \exp(\delta_r(x,s))$ over a horizontal segment of length $L$ starting at $x \in S_{\reg}$ with respect to the measure induced by $\ell_{+,-r}(x)$.
The parametrization $\Delta_r(x,\cdot)$ of the horizontal orbit starting at $x$ has the following \emph{self-similarity} property.
\begin{lemma}\label{lem:self_similarity_of_Delta}
For every  $x \in S_{\reg}$ and $L\geq 0$, we have
\[
\Delta_r(Tx,L) = (\lambda \rho(-r))^{-1} \exp(r\cdot F(x) ) \, \Delta_r(x, \lambda L).
\]
\end{lemma}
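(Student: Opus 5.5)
Two ingredients combine here: the eigen-equation $\cL_{-r,F}\ell_{+,-r} = \rho(-r)\ell_{+,-r}$, read on leafwise measures, and the cocycle behaviour of the displacement $\delta_r$ under $T$. The eigen-equation turns a segment of length $L$ starting at $Tx$ into the corresponding segment of length $\lambda L$ starting at $x$; the displacement cocycle then produces the factor $\exp(r\cdot F(x))$.

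\textbf{Step 1: leafwise form of the eigen-equation.} As in the proof of \Cref{prop:unique_max_eigenvalue}, the eigen-equation translates into
\[
\widehat{\ell_{+,-r}} = \rho(-r)^{-1} e^{(-r\cdot F)\circ T^{-1}}\, T_\ast \widehat{\ell_{+,-r}}.
\]
Evaluated on the leaf through $Tx$, this says that for a test function $u$ on $[0,L]$,
\[
\int_0^L u(s)\, \diff\widehat{\ell_{+,-r}(Tx)}(s) = \rho(-r)^{-1}\int u(\lambda^{-1} s')\, e^{-r\cdot F(\phi_{s'}x)}\, \diff \widehat{\ell_{+,-r}(x)}(s').
\]
This uses $T\phi_{s'}(x)=\phi_{\lambda^{-1}s'}(Tx)$. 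The normalization of the pushforward also brings in the Jacobian factor $\lambda^{-1}$, since $\cL$ is defined on densities with respect to $\diff s$ and $T$ contracts horizontal length by $\lambda$. I would check this factor against the bookkeeping in \Cref{lem:estimates_spectral_radius}, where $\langle \cL^n f(x),u\rangle$ carries $\lambda^{-n}$. The result should be
\[
\Delta_r(Tx,L)=(\lambda\rho(-r))^{-1}\int_0^{\lambda L}\exp\big(\delta_r(Tx,\lambda^{-1}s')\big)\,e^{-r\cdot F(\phi_{s'}x)}\,\diff\widehat{\ell_{+,-r}(x)}(s').
\]
Because the indicator $\one_{[0,L]}$ is not smooth, I would first apply the identity to smooth cutoffs and then pass to the limit, using the fact from \Cref{lem:evectors_are_measures} that $\ell_{+,-r}$ is a measure.

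\textbf{Step 2: the displacement cocycle.} Lift $x$ to $\tx$ and set $F=\xi\circ\tT-\xi$. Then
\[
\delta_r(Tx,\lambda^{-1}s') = r\cdot\big(\xi(\tT\tphi_{s'}\tx)-\xi(\tT\tx)\big) = r\cdot\big(\xi(\tphi_{s'}\tx)+F(\phi_{s'}x)-\xi(\tx)-F(x)\big),
\]
which simplifies to
\[
\delta_r(Tx,\lambda^{-1}s') = \delta_r(x,s') + r\cdot F(\phi_{s'}x) - r\cdot F(x).
\]
Substituting this into Step 1, the factor $e^{r\cdot F(\phi_{s'}x)}$ cancels $e^{-r\cdot F(\phi_{s'}x)}$. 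What remains is $(\lambda\rho(-r))^{-1}\,e^{-r\cdot F(x)}\,\Delta_r(x,\lambda L)$.

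\textbf{Step 3: the sign of the exponent.} The computation above gives $e^{-r\cdot F(x)}$, whereas the statement has $\exp(r\cdot F(x))$. This suggests that the convention for $\delta_r$ versus $\ell_{+,-r}$, or for the direction of the pushforward, is opposite to the one I used. I expect the main obstacle to be tracking these signs and the $\lambda^{-1}$ factor precisely, in particular whether $T_\ast$ or $T^\ast$ appears in the leafwise identity. I would settle this by redoing Step 1 directly from $\cL f=(e^{z\cdot F}f)\circ T^{-1}$ on smooth $f$, using the change of variables from the proof of \Cref{prop:LY}, and then extending by density.
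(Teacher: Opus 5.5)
Your Steps 1 and 2 are correct, and they are essentially the paper's argument. The paper inserts $\ell_{+,-r}=\lim_{n}\rho(-r)^{-n}\cL_{-r,F}^n(\one)$, shifts $n$ by one and rescales $s\mapsto\lambda s$. This amounts to your leafwise eigen-equation combined with the cocycle relation $\xi\circ\tT=\xi+F$; the Jacobian factor $\lambda^{-1}$ you add is right. Your Step 1 identity is an equality of measures on the leaf, so passing from smooth cutoffs to $\one_{[0,L]}$ is harmless.

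The problem is Step 3. The sign you found is the correct one, and redoing Step 1 from $\cL_{z,F}f=(e^{z\cdot F}f)\circ T^{-1}$ will only reproduce it. With $F=\xi\circ\tT-\xi$ and $T\circ\phi_s=\phi_{s/\lambda}\circ T$ there is no convention left to flip, and the true identity is
\[
\Delta_r(Tx,L)=(\lambda\rho(-r))^{-1}\,e^{-r\cdot F(x)}\,\Delta_r(x,\lambda L).
\]
The $+$ in the statement is a sign slip. It comes from the paper's own proof, where the exponent $r\cdot\int_{\tx}^{\tT^{-1}\tx}\mathcal{p}^{\ast}\omega$, which equals $-r\cdot F(T^{-1}x)$, is rewritten as $+r\cdot F(T^{-1}x)$.

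Two checks confirm this:
\begin{itemize}
\item In \Cref{sec:ergodic_integrals} the iterated self-similarity for $\ell_{+,r}$ and $\delta_{-r}$ (your identity with $r$ replaced by $-r$) is used with the factor $(\lambda\rho)^{n-j}e^{-r\cdot S_{n-j}F(T^jx)}$. That is what your sign gives; the printed sign would give $e^{+r\cdot S_{n-j}F(T^jx)}$.
\item Since $\ell_{+,-r}$ is bounded away from zero, $\Delta_r(x,\lambda L)>0$ for $L>0$. So the printed identity actually fails wherever $r\cdot F(x)\neq 0$.
\end{itemize}
So drop the search for a convention error and finish with the corrected identity above.
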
 
\begin{proof}
Let $\tx \in \tS$ such that $\mathcal{p}(\tx)=x$. 
Since $\ell_{+,-r} = \lim_{n \to \infty} \rho(-r)^{-n} \cL_{-r,F}^n(\one)$, it follows that 
\[
\begin{split}
&\Delta_r(x,L) = \lim_{n \to \infty} \rho(-r)^{-n-1} \int_0^L \exp(\delta_r(\tx,s) ) \exp\left( - r \cdot \int_{\tT^{-n-1}\circ \tphi_s(\tx)}^{\tphi_s(\tx)} \mathcal{p}^{\ast}\omega \right) \diff s \\
&= \rho(-r)^{-1} \lim_{n \to \infty} \rho(-r)^{-n} \int_0^L \exp\left(  r \cdot \int_{\tx}^{\tT^{-1}(\tx)} \mathcal{p}^{\ast}\omega +  r \cdot \int_{\tT^{-1}(\tx)}^{\tphi_{\lambda s}(\tT^{-1}\tx)} \mathcal{p}^{\ast}\omega - r \cdot \int_{\tT^{-n}\circ \tphi_{\lambda s}(\tT^{-1}\tx)}^{ \tphi_{\lambda s}(\tT^{-1}\tx)} \mathcal{p}^{\ast}\omega \right) \diff s \\
&=(\lambda \rho(-r))^{-1}\lim_{n \to \infty} \rho(-r)^{-n} \int_0^{\lambda L} \exp(r\cdot F(T^{-1}x) ) \, \exp(\delta_r(T^{-1}x,s) ) \cL_{-r,F}^n(\one)\circ \phi_{ s}(T^{-1}x)\diff s \\
&=(\lambda \rho(-r))^{-1} \exp(r\cdot F(T^{-1}x) )  \Delta_r(T^{-1}x, \lambda L). 
\end{split}
\]
This proves the lemma.
\end{proof}
We fix a point $\tx_0 \in \tS$. 
According to the definition in \eqref{eq:Maharam_measure},
we remark that the choice of the point $\tx_0 \in \tS$ corresponds to fixing a normalization of the (infinite) Maharam measure associated to the parameter $r\in \R^d$.
Let $\cF \subset \tS$ be a fundamental domain for the cover containing the point $\tx_0$. Let us also fix an isomorphism of $\Deck$ with $\Z^d$, and we will enumerate elements $D_m \in \Deck$ with $m\in \Z^d$.

Given $t\geq 1$, we will denote $n=n(t) \in \Z_{\geq 0}$ the integer $n= \lceil \frac{\log t}{\log \lambda} \rceil$.

We will prove the following result.
\begin{theorem}\label{thm:ergodic_integrals_asymptotics}
There exists a constant $c=c_{\omega} >0$ such that for any $r\in \R^d$ with $\|r\|_{\infty} < c$ the following holds, where $\Sigma = \Sigma(r)$ is the $d\times d$ symmetric negative definite matrix given by \Cref{prop:pert}.
    
    For any $f\in \mathscr{C}^1_c(\tS)$ with $\mu_r(f) \neq 0$, any $\tx\in \tS$ with an infinite forward orbit satisfying
    \[
    \lim_{j \to \infty} \frac{S_jF(\mathcal{p}(\tx))}{j} = \nu_{T,r}(F),
    \]
    and for all $t\geq 1$ we have
    \[
    \int_0^{t} f\circ \phi_s(\tx)\diff s = \frac{(\lambda \rho(r))^n e^{-r\cdot \xi(\tT^n \tx)}}{(2\pi n)^{d/2} \sqrt{\det(-\Sigma)}} \mu_{r}(f) \Bigg( \Delta_{r}\big(T^n \mathcal{p}(\tx),\frac{t}{\lambda^n}\big) e^{Z_n(\tx)} + o(1) \Bigg),
    \]
    where $o(1)$ denotes a term which tends to zero as $t\to \infty$, and where $Z_n(\tx)$ is explicitly defined in \eqref{eq:definition_Z_j} and satisfies
    \[
    Z_n(\tx) \to  -\frac{1}{2} (\xi(\tx)+Z) \cdot \Sigma^{-1} (\xi(\tx)+Z) \text{ \ in distribution (w.r.t.~$\mu_r$)}, \qquad \text{ where }Z \sim \matholdcal{N}(0,-\Sigma).
    \]
\end{theorem}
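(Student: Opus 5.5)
The plan is to follow the road map of \Cref{sec:road_map}, with a fixed $r$ satisfying \eqref{eq:constraint_on_r}. First, write $f=\int_{(-\frac12,\frac12)^d} f_{r+2\pi i\theta}\diff\theta$ using \Cref{lem:proj_cl}. By \Cref{lem:proj_Xi}, each component has the form $f_z=e^{-z\cdot\xi}g_z$ with $g_z=\pi_0\circ\Xi_z f$, a function on $S$ of class $\mathscr{C}^1$, hence an element of $\cB_{p,q}$ for suitable small $p,q$. Next, renormalize with $n=\lceil\log_\lambda t\rceil$ as in \eqref{eq:roadmap_renormalization}. Using \Cref{lem:invar}, we have $f_z\circ\tT^{-n}=e^{-z\cdot\xi}\cL_z^n g_z$. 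This gives \eqref{eq:roadmap_combined_1}, with an inner integral over the unit-scale segment $\phi_{[0,t/\lambda^n]}(\tT^n\tx)$. That integral is the pairing of $\cL^n_z g_z(T^n\mathcal{p}(\tx))$ with the test function $e^{-z\cdot\xi}\circ\tphi_s(\tT^n\tx)\one_{[0,t\lambda^{-n}]}(s)$. The indicator has to be smoothed or handled via \Cref{lem:evectors_are_measures}, since the leading term is a measure.

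Next, split the $\theta$-integral into $\theta\notin\mathcal N_0$ and $\theta\in\mathcal N_0$. On the first region, \Cref{prop:pert}\ref{item:pertA} gives a contribution $O(\lambda^n\sigma^n\rho(r)^n)$. On the second, write $\cL^n_\theta=\rho_\theta^n\Pi_\theta+Q^n_\theta$, so that $Q_\theta$ contributes $O(\lambda^n\sigma^n\rho^n)$. Both remainders must be compared with the main term, of size $(\lambda\rho)^n n^{-d/2}e^{-r\cdot\xi(\tT^n\tx)}$. Since $|\xi(\tT^n\tx)-\xi(\tx)-S_nF|$ is bounded and $S_nF\approx n\nu_T(F)$ by the genericity assumption, $e^{-r\cdot\xi(\tT^n\tx)}$ can shrink at most like $e^{-\|r\|_\infty n(\|F\|_\infty+C)}$. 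Here the constraint \eqref{eq:constraint_on_r} is exactly what makes these errors $o(1)$ relative to the main term. For the main part, write $\Pi_\theta g_z=\nu_\theta(g_z)\ell_{+,\theta}$ and replace it by $\nu_r(g_r)\ell_+=\mu_r(f)\ell_+$, using \eqref{eq:Maharam_measure} together with analyticity in $\theta$. The error is $O(|\theta|)$, which after the Gaussian integration contributes an extra $O(n^{-1/2})$. Similarly, replace $e^{-z\cdot\xi}$ on the segment by $e^{-r\cdot\xi(\tT^n\tx)}e^{-2\pi i\theta\cdot\xi(\tT^n\tx)}e^{-\delta_r}$ up to $O(|\theta|)$. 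The leafwise $\ell_+$-integral of $e^{-\delta_r}$ should be identified with $\Delta_r$ from \eqref{eq:definition_D_r}; the sign conventions, $\ell_{+,-r}$ versus $\ell_+$, need care here, and \Cref{lem:self_similarity_of_Delta} supplies the consistent normalization.

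What remains is the oscillatory integral
\[
\int_{\mathcal N_0}\Big(\frac{\rho_\theta}{\rho}\Big)^n e^{-2\pi i\theta\cdot\xi(\tT^n\tx)}\diff\theta .
\]
By \eqref{eq:log-exp}, the exponent is $2\pi i\theta\cdot(n\nu_T(F)-\xi(\tT^n\tx))+2\pi^2 n\Sigma(\theta,\theta)+n\,o(|\theta|^2)$. Substitute $\theta=u/\sqrt n$ and apply dominated convergence, using that $\Re\log(\rho_\theta/\rho)\le c\,\Sigma(\theta,\theta)$ near $0$. Completing the square yields
\[
(2\pi n)^{-d/2}\det(-\Sigma)^{-1/2}e^{Z_n}+o(n^{-d/2}),
\]
where one defines
\[
Z_n:=\frac12 w_n\cdot\Sigma^{-1}w_n, \qquad w_n:=\frac{\xi(\tT^n\tx)-n\nu_T(F)}{\sqrt n}
\]
(up to sign). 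I expect this to be the main obstacle: the Gaussian asymptotics must be uniform in the shift $w_n$, which need not be bounded; the cubic Taylor remainder must be controlled; and the result must be written as $e^{Z_n}$ times the leading constant plus an error that is $o(1)$ even when $e^{Z_n}$ is small.

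Finally, for the distributional limit, write $\xi(\tT^n\tx)=\xi(\tx)+S_nF(\mathcal{p}\tx)$. Then $w_n=(\xi(\tx)+S_nF-n\nu_T(F))/\sqrt n$, and \Cref{thm:CLT} gives convergence to $\mathcal{N}(0,-\Sigma)$. To pass from $\nu_T$ to $\mu_r$-distributed points on $\tS$, use \Cref{cor:transverse_measure}: $\nu$ and $\nu_T$ share the transverse measure $\vartheta$, and $S_nF$ varies by only $O(1)$ along horizontal unit segments. Hence the CLT transfers to $\nu$, and thus to $\mu_r$ restricted to a fundamental domain and then to all of $\tS$ by the Maharam property. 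The scaling by $\sqrt n$ is to be matched against the stated limit $-\frac12(\xi(\tx)+Z)\cdot\Sigma^{-1}(\xi(\tx)+Z)$, adjusting normalizations as needed. Genericity of the assumed points for $\mu_r$ follows from the ergodic theorem for $\nu_T$ combined with the same transverse argument.
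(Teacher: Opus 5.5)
Your proposal has a genuine gap at the step you dispose of in one sentence: the sharp cutoff $\one_{[0,t\lambda^{-n}]}$ in the renormalized test function. Every estimate you use afterwards is a bound in $\cB_{p,q}$ with $p,q\ge 1$. This covers $\|\cL_{z,F}^n\|_{p,q}\le C\sigma^n$ for $\theta\notin\mathcal N_0$, $\|Q_\theta^n\|_{p,q}\le C\sigma^n$, and the $O(|\theta|)$ bound for replacing $\Pi_\theta g_z$ by $\mu_r(f)\ell_+$. Such bounds only control pairings $\langle \ell(y),u\rangle$ with $u\in\mathscr{C}^q_c(0,1)$, and these norms do not control integrals over sharp intervals at all. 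For instance, a function whose horizontal profile is $N^2\psi'(N(s-s_0))$, with $\psi$ a smooth bump, has bounded $\cB_{1,1}$-norm, yet its integral over $[0,s_0]$ is $N\psi(0)$.

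\Cref{lem:evectors_are_measures} does not help here. It concerns only eigenvectors with $|\eta|=\rho(r)$, i.e.\ $\ell_+$. It says nothing about $\ell_{+,\theta}$ for $\theta\neq 0$ (where $|\rho_\theta|<\rho$) or about the remainders, so it only justifies the final identification with $\Delta_r$.

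A single smoothing at scale $\epsilon$ does not suffice either. Its cost is the ergodic integral of $f$ over two end segments of length $\epsilon\lambda^n$, whose only a priori bound is $\epsilon\lambda^n\|f\|_\infty$. The main term has exponential growth rate $\lambda\rho(r)e^{-r\cdot\nu_T(F)}$, in general smaller than $\lambda$ (cf.\ \Cref{thm:Hausdorff_dim_nu}). So this bound is negligible only if $\epsilon$ is exponentially small. But then the $\mathscr{C}^1$-norm $\approx\epsilon^{-1}$ of the smoothed test function destroys the $O(|\theta|)$ replacement, which after the Gaussian integration only gains a factor $n^{-1/2}$.

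The missing idea is the paper's multiscale decomposition. \Cref{lem:smoothen_integral} and \Cref{cor:smoothen_integral} write the integral, up to $O(\|f\|_\infty)$, as a sum of smoothed integrals. These are taken against $w$ and against bumps $w^{\pm}$ of size $\lambda^j$, $1\le j\le n-1$, placed at the two endpoints. Each piece is renormalized by $\tT^j$ rather than $\tT^n$, so it becomes a unit-scale $\mathscr{C}^2$ test function of norm $O(1)$. The spectral bounds, the $\theta$-replacement and the stationary phase (at scale $j$) then all apply uniformly.

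The self-similarity of $\ell_+$ (\Cref{lem:dimension_of_l+}) transports the scale-$j$ main term to level $n$. There it becomes the $\ell_+(T^nx)$-mass of a function supported on an interval of length $\lambda^{-(n-j)}$, hence $O(\eta^{n-j})$. This is exactly where \eqref{eq:constraint_on_r} is used. It makes the errors summable in $j$. It lets one replace $(n/j)^{d/2}e^{Z_j}$ by $e^{Z_n}$, which is where the genericity assumption on $\tx$ enters. It also gives the bound needed to replace $W_n$ by the indicator.

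You instead invoke \eqref{eq:constraint_on_r} to beat the spectral remainders, which is unnecessary. Since $|e^{-z\cdot\xi}|=e^{-r\cdot\xi}$, those remainders carry the same factor $e^{-r\cdot\xi(\tT^n\tx)}$ as the main term and are negligible for every $r$. So the hypothesis $\|r\|_\infty<c$ plays no real role in your argument, which is a symptom of the missing endpoint control.

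A single-scale route might be rescued for $f\ge 0$ by sandwiching the indicator between smooth functions of width $\epsilon_n\to 0$ with $\epsilon_n\sqrt n\to\infty$, and then using linearity. This would again need uniform smallness of the $\ell_+$-mass of short intervals, and nothing of the sort is in your proposal.
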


Let $K \in \N$, and let $f\in \mathscr{C}^{1}_c(\tS)$ be so that the support of $f$ is contained in the union $\cup \{D_m(\cF) \ : \ m\in \Z^d, \ \|m\|_{\infty} \leq K\}$.
In the following, the implicit constant in the big-$O$ notation is allowed to depend on $K$ as well. 

Let $\tx\in \tS$ and let $t\geq 1$ be large enough
; our goal is to study the integral $\int_0^{t}f\circ \phi_s(\tx)\diff s$. Let $x = \mathcal{p}(\tx)$.

To exploit the theory developed so far, we need to \lq\lq smoothen\rq\rq\ the indicator function $\one_{[0,t]}$, for which we will use the following lemma.

\begin{lemma}\label{lem:smoothen_integral}
    There exist non-negative $\mathscr{C}^2$-functions $w, w^{\pm}$ compactly supported on $\pm (0,1)$ such that, for every $n\geq 1$, we have 
    \[
    \one_{[\lambda^{-n},1-\lambda^{-n}]}(s) \leq w(s) + \sum_{j=1}^{n-1}\big(w^{+}(\lambda^{j}s) + w^{-}(\lambda^{j}(s-1))\big) \leq \one_{[0,1]}(s).
    \]
\end{lemma}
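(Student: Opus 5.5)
We build the family from a single dyadic-type partition of unity on $\R$ adapted to the scaling $s\mapsto\lambda s$, and then truncate it. Fix a non-negative $\mathscr{C}^2$ function $\psi$ supported in $(\lambda^{-1},1)$ such that
\[
\sum_{j\in\Z}\psi(\lambda^{j}s)=1 \qquad \text{for all } s>0 .
\]
To construct it, take any non-negative $\mathscr{C}^2$ bump $\chi$ on $\R_{>0}$ with $\chi\equiv 1$ on $[0,\lambda^{-1}a]$ and $\chi\equiv 0$ on $[b,\infty)$, where $\lambda^{-1}<a<b<1$ are chosen so that the increments below are supported where we want; for instance $\chi$ equal to $1$ near $(0,\lambda^{-1}]$ and $0$ on $[1,\infty)$. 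Then set $\psi(s)=\chi(s)-\chi(\lambda s)$, which we also want non-negative. The sum telescopes to $1$ for $s>0$. Since $\lambda>4$, non-negativity holds when $\chi$ is non-increasing; support in $(\lambda^{-2},1)$ follows from $\chi\equiv1$ on $(0,\lambda^{-1}]$. Since we only need support in $(0,1)$, this suffices.

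Next we define the boundary pieces. Put $w^{+}(s)=\psi(s)$, supported in $(0,1)$, and $w^{-}(s)=\psi(-s)$, supported in $(-1,0)$. For $1\le j\le n-1$ the function $w^{+}(\lambda^j s)=\psi(\lambda^js)$ is supported in $(\lambda^{-j-2},\lambda^{-j})$. By the telescoping,
\[
\sum_{j=1}^{n-1}\psi(\lambda^js)=\chi(\lambda s)-\chi(\lambda^{n}s),
\]
which equals $1$ on $[\lambda^{-n+1}\cdot(\text{const}),\lambda^{-2}]$ roughly. In general it equals $\chi(\lambda s)-\chi(\lambda^n s)$ exactly, and it is bounded between $0$ and $1$ by monotonicity of $\chi$. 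The left piece is therefore $\chi(\lambda s)-\chi(\lambda^n s)$ and the right piece is $\chi(\lambda(1-s))-\chi(\lambda^n(1-s))$.

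Now define $w(s)=1-\chi(\lambda s)-\chi(\lambda(1-s))$ on $[0,1]$. Because $\lambda>4$, the supports of $\chi(\lambda\cdot)$ and $\chi(\lambda(1-\cdot))$ are in $[0,\lambda^{-1})$ and $(1-\lambda^{-1},1]$, which are disjoint. Hence $w\in[0,1]$, $w$ is $\mathscr{C}^2$, and $w$ vanishes near $0$ and $1$ since $\chi(\lambda s)=1$ for $s\le\lambda^{-2}$. So $w$ is compactly supported in $(0,1)$. The total sum is
\[
1-\chi(\lambda^n s)-\chi(\lambda^n(1-s)).
\]
It is at most $\one_{[0,1]}$, because it is non-negative on $[0,1]$ by disjointness of supports, and we extend everything by $0$ outside $[0,1]$, where all the pieces vanish. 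It equals $1$ whenever $\lambda^n s\ge1$ and $\lambda^n(1-s)\ge1$, that is on $[\lambda^{-n},1-\lambda^{-n}]$, since $\chi\equiv0$ on $[1,\infty)$.

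The only delicate point is choosing $\chi$ with $\chi=1$ on $(0,\lambda^{-1}]$, $\chi=0$ on $[1,\infty)$, non-increasing, and $\mathscr{C}^2$. This is a standard cutoff, and it guarantees that both $w$ and $w^{\pm}$ have compact support in the open intervals.
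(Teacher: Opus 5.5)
Your construction is correct and is essentially the paper's. With $\chi=1-u$, where $u$ is the paper's $\mathscr{C}^2$ step function, your $\psi(s)=\chi(s)-\chi(\lambda s)$, $\psi(-s)$ and $w$ coincide with the paper's $w^{+}(s)=u^{-}(s)u^{+}(\lambda s)$, $w^{-}$ and $w$, and you telescope $\chi(\lambda^{j}s)-\chi(\lambda^{j+1}s)$ directly instead of using the paper's identity $u^{-}(\lambda s)u^{-}(\lambda^{2}s)=u^{-}(\lambda^{2}s)$.

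There are two slips to fix. First, no continuous $\psi$ supported in $(\lambda^{-1},1)$ can satisfy $\sum_{j}\psi(\lambda^{j}s)=1$, since every term vanishes at $s=1$; you do rightly end up with support in $(\lambda^{-2},1)$. Second, your last paragraph needs $\chi\equiv0$ on some $[b,\infty)$ with $b<1$, as in your first choice: $\chi\equiv0$ only on $[1,\infty)$ does not keep the support of $\psi$, and hence of $w^{\pm}$, away from $\pm1$.
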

\begin{proof}
    Let us define 
    \[
    u''(x) \coloneqq 
    \begin{cases}
        0 & \text{ if $x \leq 1/4$ or $x \geq 3/4$,} \\
        64(4x-1) & \text{ if $x\in [1/4,3/8]$,} \\
        128(1-2x) & \text{ if $x\in [3/8,5/8]$,} \\
        64(4x-3) & \text{ if $x\in [5/8,3/4]$.} \\
    \end{cases}
    \]
    Then, the function $u$, with $u(0) = u'(0) = 0$ is a $\mathscr{C}^2$, non-decreasing function on $\R$, equal to 0 on $(-\infty, 0]$ and to 1 on $[1,+\infty)$. 
    We let $u^{+}(s) = u(s)$ and $u^{-}(s) = 1-u(s)$; moreover, we define
    \[
    w^{+}(s) = u^{-}(s) u^{+}(\lambda s), \qquad \text{and} \qquad w^{-}(s) = u^{+}(s+1) u^{-}(\lambda s+1). 
    \]

Let us define $w$ as follows: we define $w(s)$ to be $1$ for all $s \in [\lambda^{-1}, 1-\lambda^{-1}]$, to be $u(\lambda s)$ for $s \in [0, \lambda^{-1}]$, and to equal $u^{-}(\lambda (s-1) + 1)$ for $s \in [1-\lambda^{-1}, 1]$.
\begin{figure}[h!]
\centering
\includegraphics[width=0.7\textwidth]{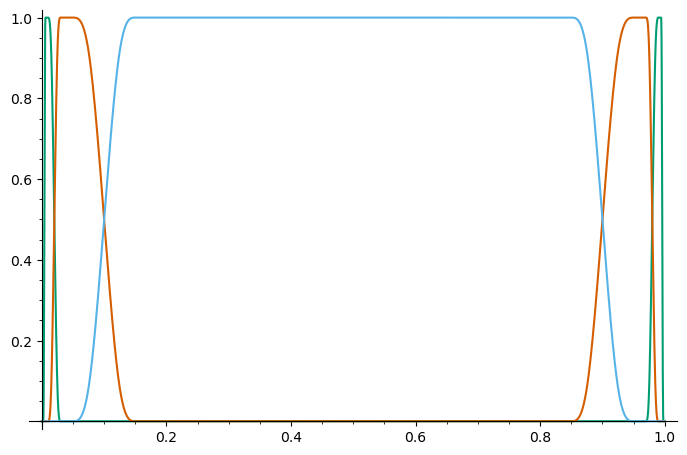}
\caption{The functions $w$ (in blue), $w^+(\lambda s) + w^-(\lambda(s-1))$ (in red) and $w^+(\lambda^2 s) + w^-(\lambda^2(s-1))$ (in green), plotted for $\lambda=5$.}
\end{figure}

    By construction, we have $\one_{[0,1]}(s) - w(s) = u^{-}(\lambda s) + u^{+}(\lambda (s-1) + 1)$; let us focus on the first term $u^{-}(\lambda s) $. We have
    \[
    \begin{split}
    u^{-}(\lambda s) &= u^{-}(\lambda s) \big( u^{+}(\lambda^2 s) + u^{-}(\lambda^2s)\big) = w^{+}(\lambda s) + u^{-}(\lambda s)u^{-}(\lambda^2s) \\
    & = w^{+}(\lambda s) + u^{-}(\lambda^2s), 
    \end{split}
    \]
    where we used the fact that $\lambda>4$.
    Inductively,
    \[
    \begin{split}
    u^{-}(\lambda s) &= w^{+}(\lambda s) + u^{-}(\lambda^2s) \\
    &=  w^{+}(\lambda s) + w^{+}(\lambda^2s) + \cdots + w^{+}(\lambda^{n-1} s) + u^{-}(\lambda^ns).
    \end{split}
    \]
    We reason similarly for the second term, namely
    \[
    \begin{split}
    u^{+}(\lambda (s-1) + 1) & = u^{+}(\lambda (s-1)  + 1) \big( u^{+}(\lambda^2(s-1)  + 1) + u^{-}(\lambda^2(s-1) + 1)\big) \\
    &= w^{-}(\lambda (s-1) ) + u^{+}(\lambda^2(s-1)  + 1), 
    \end{split}
    \]
    since $u^{+}(\lambda (s-1)  + 1) \cdot u^{+}(\lambda^2(s-1)  + 1)=u^{+}(\lambda^2(s-1) + 1)$.
    We conclude inductively as before.
\end{proof}
By applying \Cref{lem:smoothen_integral} to $\one_{[0,t]}(s) = \one_{[0,1]}(t^{-1}s)$, we immediately obtain the following corollary.

\begin{corollary}\label{cor:smoothen_integral}
    Let $\tx=\tx^{+}$ and $\phi_{t}(\tx) = \tx^{-}$, then
    \begin{multline*}
    \left\lvert \int_0^{t} f\circ \phi_s(\tx)\diff s - \Bigg( \int_0^{\lambda^{n}} f\circ \phi_s(\tx) w(t^{-1}s)\diff s + \sum_{j=1}^{n-1} \int_0^{\lambda^{j}} f\circ \phi_{\pm s}(\tx^{\pm}) w^{\pm}(\pm t^{-1} \lambda^{n-j}s)\diff s \Bigg)
    \right\rvert \\ = O(\|f\|_{\infty}).
    \end{multline*}
\end{corollary}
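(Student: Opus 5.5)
The plan is to rescale \Cref{lem:smoothen_integral} and then account for the part of $[0,t]$ that the smooth pieces miss. We work in the variable $\sigma = t^{-1}s \in [0,1]$, so that $\one_{[0,t]}(s)=\one_{[0,1]}(t^{-1}s)$. Since $t\le \lambda^n$, the integrand of $\int_0^t f\circ\phi_s(\tx)\diff s$ vanishes for $s>t$. Applying the lemma with this $n$ gives, pointwise in $s$,
\[
0\le \one_{[0,1]}(t^{-1}s) - W_n(t^{-1}s) \le \one_{[0,\lambda^{-n}]}(t^{-1}s)+\one_{[1-\lambda^{-n},1]}(t^{-1}s),
\]
where $W_n(\sigma) = w(\sigma)+\sum_{j=1}^{n-1}\big(w^+(\lambda^j\sigma)+w^-(\lambda^j(\sigma-1))\big)$. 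Multiplying by $f\circ\phi_s(\tx)$ and integrating in $s$, the error is therefore bounded by $\|f\|_\infty$ times the Lebesgue measure of $\{s: t^{-1}s\in[0,\lambda^{-n}]\cup[1-\lambda^{-n},1]\}$. That measure is $2t\lambda^{-n}\le 2$, because $t\le\lambda^n$. So the error is $O(\|f\|_\infty)$, and we may replace $\one_{[0,t]}$ by $W_n(t^{-1}\cdot)$.

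It remains to rewrite each term of $W_n(t^{-1}s)$ in the form displayed in the statement. The term with $w$ stays as it is: $w(t^{-1}s)$ is supported in $s\in[0,t]\subset[0,\lambda^n]$, so integrating over $[0,\lambda^n]$ changes nothing.

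For the $w^+$ terms, $w^+$ is supported in $(0,1)$, so $w^+(\lambda^j t^{-1}s)$ is supported in $s\in(0,t\lambda^{-j})$. After reindexing $j\mapsto n-j$, the summand reads $w^+(t^{-1}\lambda^{n-j}s)$ and is supported in $(0,t\lambda^{j-n})\subset(0,\lambda^j)$. This gives the $+$ terms based at $\tx^+=\tx$.

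For the $w^-$ terms, substitute $s=t-s'$, so that $\phi_s(\tx)=\phi_{-s'}(\phi_t\tx)=\phi_{-s'}(\tx^-)$ and $\lambda^j(t^{-1}s-1)=-\lambda^j t^{-1}s'$. The term becomes $w^-(-t^{-1}\lambda^{j}s')$, supported in $s'\in(0,t\lambda^{-j})$. After the same reindexing this is exactly the $-$ term of the statement.

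The reindexing $j\mapsto n-j$ maps $\{1,\dots,n-1\}$ to itself, so the index ranges match. The one point needing care is the support bookkeeping, which uses $\lambda^{n-1}<t\le\lambda^n$. It is also worth confirming that $\lambda>4$ was used only inside the lemma itself. Otherwise the argument is routine.
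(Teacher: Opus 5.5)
Your proposal is correct and takes the same route as the paper, which just applies \Cref{lem:smoothen_integral} to $\one_{[0,t]}(s)=\one_{[0,1]}(t^{-1}s)$. You supply the bookkeeping the paper leaves implicit: the error bound $2t\lambda^{-n}\|f\|_\infty\le 2\|f\|_\infty$, the substitution $s=t-s'$ for the $w^-$ terms, and the reindexing $j\mapsto n-j$, all of which check out (and only $t\le\lambda^n$ is actually needed).
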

We will now study each of the smoothened integrals in \Cref{cor:smoothen_integral}. All of them are treated in the same way, so we now consider 
\[
\int_0^{\lambda^{j}} f\circ \phi_{s}(\tx) w(\lambda^{-j}s)\diff s,
\]
where, by a little abuse of notation, $w(s)$ above can be either $w(t^{-1}\lambda^{n}s)$ or any of the $w^{\pm}(\pm  t^{-1} \lambda^{n} s)$ and $\tx$ can be either $\tx^{+}$ or $\tx^{-}$; we remark that the difference in the orientation of the integral will not play a role. 

For $\theta \in \left(-\frac12,\frac12\right)^d$, we write $z=r+2\pi i\theta\in \R^d \oplus 2\pi i \left(-\frac12,\frac12\right)^d$.  
By \Cref{lem:proj_cl} and \Cref{lem:proj_Xi}, we have
\[
f = \int_{\left(-\frac12,\frac12\right)^d} \Xi_{-z}(g_{z}) \diff \theta, \qquad \text{where} \qquad g_{z} = \pi_{0} \circ \Xi_{z}(f) \in \mathscr{C}^{1}(S).
\]
We remark that, in the definition of $\Xi_{z}(f) = e^{z\cdot \xi}f$, the base point $\tx_0 \in \cF$ was fixed at the beginning of this section.

\begin{lemma}\label{lem:estimates_on_gz}
    We have
    \[
    \|g_z\|_{\mathscr{C}^1} 
    = O \big( \|f\|_{\mathscr{C}^1}\big).
    \]
\end{lemma}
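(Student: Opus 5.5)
We bound $g_z$ directly from its definition
\[
g_z(x) = \pi_0\big(e^{z\cdot\xi} f\big)(x) = \sum_{D\in\Deck} e^{z\cdot \xi(D x)} f(Dx),
\]
viewed as a $\Deck$-invariant function on $\tS$. It therefore suffices to estimate $g_z$ and its first derivatives $Xg_z$, $Yg_z$ at points $x$ of the fundamental domain $\cF$.

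The first step is to control the number of terms in the sum. Since $\supp f \subset \bigcup\{D_m(\cF) : \|m\|_\infty\le K\}$, for $x\in\cF$ only the deck transformations $D=D_m$ with $D_m x\in\supp f$ contribute. Because the translates $D_m(\cF)$ have disjoint interiors and $\cF$ is compact, only $O(1)$ indices are involved; there are at most $(2K+1)^d$ of them. For each such $m$, the point $D_m x$ lies in the compact set $\bigcup_{\|m\|_\infty\le K} D_m(\cF)$.

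The second step is to bound the exponential weight on this set. By the Lipschitz bound with constant $C_\omega$, together with $\xi(\tx_0)=0$ and the compactness of that set, we get $|\xi(D_m x)| = O(1)$. Hence
\[
|e^{z\cdot\xi(D_m x)}| = e^{r\cdot \xi(D_m x)} = O(1),
\]
uniformly in $\theta$, using $\|r\|_\infty<c$. This already gives $\|g_z\|_{\mathscr{C}^0} = O(\|f\|_{\infty})$.

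The third step handles the derivatives. Deck transformations are translations in the flat charts, so they commute with $X$ and $Y$. Therefore
\[
X g_z = \sum_{D} \Big( z\cdot \omega(X)\, e^{z\cdot\xi}\, f + e^{z\cdot \xi}\, Xf\Big)\circ D,
\]
and the analogous formula holds for $Y$. Here $\omega(X)$ and $\omega(Y)$ are bounded, since the $\omega_i$ are smooth harmonic forms on compact $S$. Also $|z|\le |r| + \pi\sqrt d = O(1)$. Each summand is thus $O(\|f\|_{\mathscr{C}^1})$, and with $O(1)$ summands we obtain the claim.

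The only mild obstacle is making the counting and compactness rigorous near singular points and the boundary of $\cF$, since $\cF$ is only compact up to measure-zero boundary. This is handled by working on $\tS_{\reg}$ and using continuity, noting that the $\mathscr{C}^1$ norm is a supremum over regular points.
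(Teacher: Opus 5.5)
Your proposal is correct and follows essentially the same route as the paper. On $\cF$ only the $O(K^d)$ translates $D_m$ with $\|m\|_\infty\le K$ contribute to $g_z=\pi_0(e^{z\cdot\xi}f)$, the weights satisfy $|e^{z\cdot\xi(D_m x)}|=e^{r\cdot\xi(D_m x)}=O(1)$, and termwise differentiation using $X\xi=\omega(X)$ (and likewise for $Y$), together with the boundedness of $z$ and $\omega$, gives the $\mathscr{C}^1$ bound; your extra care about the counting near $\partial\cF$ and the singular points only makes explicit what the paper states in one line.
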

\begin{proof}
    Let $\tx \in \cF$. Then, by assumption, for all $m \in \Z^d$ with $\|m\|_{\infty}> K$, we have $f\circ D_m(\tx) = 0$.
    Thus, it follows that 
    \[
    |g_z(x)| \leq \sum_{D_m\in \Deck} e^{|\Re z \cdot \xi(D_m\tx)|} |f| \circ D_m(\tx) = O \left(  K^d e^{C_{\omega} \, \|r\|_{\infty} \, K} \|f\|_{\infty} \right) = O\big( \|f\|_{\mathscr{C}^0}\big).
    \]
    Moreover, for $V\in \{X,Y\}$, we have
    \[
    |V g_z(x)| \leq \sum_{D_m\in \Deck} \Big(\|z\|_{\infty} \, \|\omega(V)\|_{\infty} |f| \circ D_m(\tx) + |V f| \circ D_m(\tx) \Big) e^{|\Re z \cdot \xi(D_m\tx)|} = O\big( \|f\|_{\mathscr{C}^1}\big),
    \]
    where the last equality follows exactly as before. 
    The proof is complete.
\end{proof}

Using \Cref{lem:invar},  we have
\begin{equation}\label{eq:ergod_int_2}
	\begin{split}
		& \int_0^{\lambda^{j}}f\circ \phi_s(\tx) w(\lambda^{-j}s)\diff s = \int_{\left(-\frac12,\frac12\right)^d} \int_0^{\lambda^{j}}\Xi_{-z}(g_{z})\circ \phi_s(\tx) w(\lambda^{-j}s)\diff s \diff \theta \\
		&\qquad \qquad =\int_{\left(-\frac12,\frac12\right)^d} \int_0^{\lambda^{j}}\Xi_{-z}(g_{z})\circ \tT^{-j} \circ\phi_{\lambda^{-j}s}(\tT^{j}\tx) w(\lambda^{-j}s)\diff s \diff \theta \\
		&\qquad \qquad =\int_{\left(-\frac12,\frac12\right)^d} \lambda^{j}\int_0^{1}[U^{j} \circ \Xi_{-z}(g_{z})] \circ\phi_{s}(\tT^{j}\tx) w(s)\diff s \diff \theta \\
		&\qquad \qquad =\int_{\left(-\frac12,\frac12\right)^d} \lambda^{j}\int_0^{1}[\Xi_{-z} \circ \cL_{z,F}^{j}(g_{z})] \circ\phi_{s}(\tT^{j}\tx) w(s)\diff s \diff \theta.
	\end{split}
\end{equation}
Note that, by the properties in \Cref{lem:smoothen_integral}, we have $\|w(s)\|_{\mathscr{C}^2} = \|w(t^{-1}\lambda^{n}s)\|_{\mathscr{C}^2}= O(1)$.

By definition, $\Xi_{-z} \circ \cL_{z,F}^{j}(g_z) =  e^{-z \cdot \xi} \cdot \cL_{z,F}^{j}(g_z)$. We rewrite
\[
e^{-z \cdot \xi\circ \phi_{s}(\tT^{j}\tx)} = e^{-z \cdot \xi(\tT^{j}\tx)} \, \exp\left(-z\cdot \int_{\tT^{j}\tx}^{\phi_{s}(\tT^{j}\tx)} \mathcal{p}^{\ast} \omega \right) = e^{-z \cdot \xi(\tT^{j}\tx)} \exp(\delta_{-z}(\tT^{j}\tx,s)).
\]
Substituting back into \eqref{eq:ergod_int_2}, we get
\begin{equation}\label{eq:ergod_int_4}
	\begin{split}
		& \int_0^{\lambda^{j}}f\circ \phi_s(\tx) w(\lambda^{-j}s)\diff s \\
		&\qquad \qquad =\int_{\left(-\frac12,\frac12\right)^d} \lambda^{j}  e^{-z \cdot \xi(\tT^{j}\tx)} \int_0^{1}\cL_{z,F}^{j}g_{z}  \circ\phi_{s}(T^{j}x) \exp(\delta_{-z}(T^{j}x,s)) w(s)\diff s  \diff \theta \\
		&\qquad \qquad =\int_{\left(-\frac12,\frac12\right)^d} \lambda^{j}  e^{-z \cdot \xi(\tT^{j}\tx)}  \langle \cL_{z,F}^{j}g_{z}(T^jx), \exp(\delta_{-z}(T^jx)) \, w\rangle \diff \theta.
	\end{split}
\end{equation}
We need the following estimates on the function $e^{\delta_{-z}(T^jx)} \, w \colon s \mapsto \exp(\delta_{-z}(T^jx,s)) \, w(s)$, with $s\in [0,1]$.

\begin{lemma}\label{lem:estimates_on_Gzju}
    We have $\|e^{\delta_{-z}(T^jx)} \cdot w\|_{\infty} = O(1)$ and $\| (e^{\delta_{-z}(T^jx)} \cdot w)' \|_{\infty} = O(1)$.
    Moreover
    \[
    \left\| \frac{\diff}{\diff \theta}e^{\delta_{-z}(T^jx)} \cdot w \right\|_{\mathscr{C}^1} \leq C_{\omega}(\pi+\|r\|_\infty) e^{C_{\omega} \|r\|_\infty} = O(1).
    \]
\end{lemma}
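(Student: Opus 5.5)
The plan is to write the function explicitly and bound its pieces directly. For $s\in[0,1]$ we have
\[
\delta_{-z}(T^jx,s) = -z\cdot \int_{\tT^j\tx}^{\tphi_s(\tT^j\tx)} \mathcal{p}^{\ast}\omega = -(r+2\pi i\theta)\cdot \int_0^s \omega(X)\circ\phi_u(T^jx)\,\diff u .
\]
By the choice of $C=C_\omega$, the integral vector has norm at most $C_\omega s\le C_\omega$. Since $|\theta_k|<\frac12$, only the real part $r$ affects the modulus.

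\emph{Sup bound.} First,
\[
|e^{\delta_{-z}(T^jx,s)}| = e^{-r\cdot\int_0^s\omega(X)} \le e^{C_\omega\|r\|_\infty},
\]
with the $\ell^1$/$\ell^\infty$ constant absorbed into $C_\omega$. We also know $\|w\|_\infty\le 1$, since $w$ is dominated by $\one_{[0,1]}$ (\Cref{lem:smoothen_integral}). Hence the product is $O(1)$, and the bound is uniform in $j$, $x$ and $\theta$ because $\|r\|_\infty<c$.

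\emph{Derivative in $s$.} We have
\[
\partial_s e^{\delta_{-z}(T^jx,s)} = -z\cdot\omega(X)(\phi_s(T^jx))\, e^{\delta_{-z}(T^jx,s)} ,
\]
and its modulus is at most $C_\omega(\|r\|_\infty+\pi)e^{C_\omega\|r\|_\infty}$, because $\omega(X)$ is bounded (the harmonic forms are smooth on compact $S$) and $|2\pi\theta|\le\pi$ componentwise. By the Leibniz rule, the derivative of the product is this term times $w$ plus $e^{\delta_{-z}}w'$. For the second piece, $\|w'\|_\infty=O(1)$ by the remark following \eqref{eq:ergod_int_2}, which says the rescaled $w$ has $\mathscr{C}^2$ norm $O(1)$.

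\emph{Derivative in $\theta$.} Differentiating in $\theta$ gives
\[
\frac{\diff}{\diff\theta}e^{\delta_{-z}(T^jx,s)} = -2\pi i\Big(\int_0^s\omega(X)\circ\phi_u(T^jx)\,\diff u\Big)e^{\delta_{-z}(T^jx,s)} .
\]
This is bounded by $2\pi C_\omega e^{C_\omega\|r\|_\infty}$. Differentiating once more in $s$ produces two terms:
\begin{itemize}
\item $-2\pi i\,\omega(X)(\phi_s(T^jx))\,e^{\delta}$;
\item the previous expression times $-z\cdot\omega(X)$.
\end{itemize}
Both are bounded by constants of the form $C_\omega(\pi+\|r\|_\infty)e^{C_\omega\|r\|_\infty}$, after enlarging $C_\omega$ if necessary. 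Multiplying by $w$ and using $\|w\|_{\mathscr{C}^1}=O(1)$ then gives the stated $\mathscr{C}^1$ bound.

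\emph{Main obstacle.} There is no real difficulty here; the one point requiring care is uniformity. All constants must be independent of $j$, $x$, $\theta$ and the particular choice of $w$ (or of $w^\pm$ rescaled by $t^{-1}\lambda^n$). This holds because every bound depends only on $\|\omega(X)\|_\infty$, on $\|r\|_\infty<c$, and on the uniform $\mathscr{C}^2$ bound for the cutoffs.
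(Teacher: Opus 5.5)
Your proof is correct and follows the paper's argument: the pointwise bound $|e^{\delta_{-z}(T^jx,s)}|\le e^{C_\omega\|r\|_\infty}$ together with $|w|\le 1$, then the Leibniz rule with $\|\Im z\|_\infty\le\pi$ and $\|w'\|_\infty=O(1)$ for the $s$-derivative, and finally the same estimates applied to the explicit $\theta$-derivative. Your attention to uniformity in $j$, $x$, $\theta$ and in the rescaled cutoffs $w(t^{-1}\lambda^n\cdot)$, $w^{\pm}(\pm t^{-1}\lambda^n\cdot)$ is exactly what the paper's $O(1)$ notation implicitly requires.
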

\begin{proof}
    From its definition, it follows immediately that $|e^{\delta_{-z}(T^jx,\cdot)}| \leq e^{C \|r\|_{\infty}}$ on $[0,1]$. Since $|w(s)| \leq 1$, the first claim is proved. 

    Secondly, we have $(e^{\delta_{-z}(T^jx)} )'(s) = -z \cdot \mathcal{p}^{\ast}\omega(X)\circ \phi_s(\tT^j\tx) \, e^{\delta_{-z}(\tT^j\tx,s)}$, from which we deduce $|(e^{\delta_{-z}(T^jx)}) '(s)| \leq C \|z\|_{\infty} e^{C \|r\|_{\infty}}$. Since $\|\Im z\|_{\infty} \leq \pi $, we can replace $\|z\|_{\infty} \leq C (\pi+\|r\|_{\infty})$. In turn, this implies the second claim since $|w'(s)|= O(1)$.

    Finally, we compute
    \[
    \frac{\diff}{\diff \theta}e^{\delta_{-z}(T^jx)}(s) = -2\pi i e^{\delta_{-z}(T^jx)}(s) \, \theta \cdot \int_{\tT^j\tx}^{\phi_s(\tT^j\tx)} \mathcal{p}^{\ast} \omega.
    \]
    From here, the last estimate can then be obtained exactly as before.
\end{proof}

By \Cref{prop:pert}, there exist $\sigma \in (0,\rho(r))$, $\varepsilon \in (0,1)$, a constant $C$, and a negative definite symmetric matrix $\Sigma$ so that 
\[
\|\cL_{z,F}^{n}\|_{1,1} \leq C \sigma^{n},
\]
for all $\theta = \frac{1}{2\pi}\Im z \in \left(-\frac12,\frac12\right)^d \setminus B(0,\varepsilon)$,
and
\[
\begin{split}
&\log \rho(z) = \log \rho(r) + 2\pi i\nu_T(F)\cdot \theta +2\pi^2 \theta \cdot \Sigma \theta + E_1(\theta), \qquad \text{where\ } | E_1(\theta)|\leq \|\theta\|_{2}^3,\\
&\text{and} \qquad \|Q_z^{n}\|_{1,1} \leq C \sigma^{n},
\end{split}
\]
for all $\theta = \frac{1}{2\pi}\Im z \in B(0,\varepsilon)$. In particular, for all $\theta \in B(0,\varepsilon)$, we have 
\[
\rho(z)^{n} = \rho(r)^{n} \exp \left(n(2\pi i\nu_T(F)\cdot \theta +2\pi^2 \theta \cdot \Sigma \theta +  E_1(\theta))\right).
\]
Plugging this into \eqref{eq:ergod_int_4}, by \Cref{lem:estimates_on_gz}, we deduce
\begin{equation}\label{eq:ergod_int_5}
	\begin{split}
		&\int_0^{\lambda^{j}}f\circ \phi_s(\tx) w(\lambda^{-j}s)\diff s = \lambda^{j} e^{-r\cdot\xi(\tT^j\tx)} \Bigg( O\Big(\sigma^j \|f\|_{\mathscr{C}^1}\Big)  \\
        & \quad  + \rho(r)^j \int_{B(0,\varepsilon)}  e^{2 \pi i[-\xi(\tT^j\tx) + j \nu_T(F)]\cdot \theta +2 \pi^2 j \theta \cdot \Sigma \theta +j E_1(\theta)} \langle \Pi_z g_{z}(T^jx), e^{\delta_{-z}(T^jx)} \, w\rangle \diff \theta  \Bigg),
	\end{split}
\end{equation}
where 
we used \Cref{lem:estimates_on_Gzju} and the fact that $\|g_z\|_{1,1} \leq \|g_z\|_{\mathscr{C}^1}$. 
We notice that $\xi(\tT^j\tx) = \xi(\tx) + S_jF(x)$, since the forms $\omega_i$ are closed.
Thus, if we call $F_0 = F - \nu_T(F)$ the centered Frobenius function, from \eqref{eq:ergod_int_5} we get
\begin{equation}\label{eq:ergod_int_6}
	\begin{split}
		&\int_0^{\lambda^{j}}f\circ \phi_s(\tx) w(\lambda^{-j}s)\diff s = (\lambda \rho(r))^j e^{-r\cdot\xi(\tT^j\tx)} \Bigg( O\Big(\|f\|_{\mathscr{C}^1} \frac{\sigma^j}{\rho(r)^j} \Big)  \\ 
        & \qquad + \int_{B(0,\varepsilon)}  e^{-2 \pi i[\xi(\tx) + S_jF_0(x)]\cdot \theta +2 \pi^2 j \theta \cdot \Sigma \theta +j E_1(\theta) } \langle \Pi_z g_{z}(T^jx), e^{\delta_{-z}(T^jx)} \, w\rangle \diff \theta \Bigg).
	\end{split}
\end{equation}
Let us call 
\[
P_j(\theta) =  \langle \Pi_z g_{z}(T^jx), e^{\delta_{-z}(T^jx)} \, w\rangle,
\]
and note that, by \Cref{lem:estimates_on_gz} and \Cref{lem:estimates_on_Gzju}, we have $|P_j(\theta)| = O( \|f\|_{\mathscr{C}^1})$. 

Using \Cref{lem:estimates_on_gz} and \Cref{lem:estimates_on_Gzju} again, we also have
\begin{equation*}
\begin{split}
|P_j(\theta) - P_j(0)|  \leq &|\langle \Pi_z g_z, (e^{\delta_{-z}(T^jx)}-e^{\delta_{-r}(T^jx)}) \, w \rangle |  \\
& + |\langle \Pi_z (g_z-g_r), e^{\delta_{-r}(T^jx)} \, w \rangle | +
  |\langle (\Pi_z - \Pi_r) g_r, e^{\delta_{-r}(T^jx)} \, w \rangle |\\
\leq& \|g_z\|_{1,1} \|\theta\|_2 C_\omega(\pi+\|r\|_\infty) e^{C_\omega \|r\|_\infty} + C \|\theta\|_2 K^d \|f\|_{\mathscr{C}^0} e^{C_\omega \|r\|_\infty} + \|\theta\|_{2} \|g_r\|_{1,1} e^{C_\omega \|r\|_\infty}\\
 = & O(\|\theta\|_2 \cdot \|f\|_{\mathscr{C}^1}).
\end{split}
\end{equation*}
We can thus rewrite \eqref{eq:ergod_int_6} as 
\begin{equation}\label{eq:ergod_int_7}
	\begin{split}
		&\int_0^{\lambda^{j}}f\circ \phi_s(\tx) w(\lambda^{-j}s)\diff s \\
        & \qquad = (\lambda \rho(r))^j e^{-r\cdot\xi(\tT^j\tx)}  \left(P_j(0) \int_{B(0,\varepsilon)}  e^{-2 \pi i[\xi(\tx) + S_jF_0(x)]\cdot \theta +2\pi^2 j \theta \cdot \Sigma \theta} \diff \theta + O\big( \|f\|_{\mathscr{C}^1} j^{-\frac{d+1}{2}} \big) \right),
	\end{split}
\end{equation} 
where we also used the fact that $| E_1(\theta)|\leq \|\theta\|_{2}^3$ and that $\int_{B(0,\varepsilon)}e^{j \theta \cdot \Sigma \theta} (1-e^{j \|\theta\|_{2}^3})\diff \theta = O( j^{-\frac{d+1}{2}})$.
 
\sloppy Since $\Pi_r g_r = \Pi_r (\pi_0\circ \Xi_r f)$, from the definition of $\mu_r$ in \eqref{eq:Maharam_measure} it follows that  $P_j(0) = \mu_{r}(f) \langle \ell_{+,r}(T^jx), e^{\delta_{-r}(T^jx)} \, w\rangle$.

The same proof as in \Cref{lem:self_similarity_of_Delta} gives us the following lemma.
\begin{lemma}\label{lem:dimension_of_l+}
    Let $a\in\mathscr{C}^1_c(0,1)$ and $y\in S_{\reg}$. 
    For any $k \in \N$, we have
    \[
    \langle \ell_{+}(y), a(s) \rangle = (\lambda \rho)^{k} \langle \ell_{+}( T^ky), e^{r\cdot S_kF}\circ \phi_{\lambda^ks}(y) \, a(\lambda^ks) \rangle.
    \]
\end{lemma}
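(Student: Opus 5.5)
The identity follows from the eigenvalue equation $\cL_{r,F}^k\ell_+ = \rho^k \ell_+$, tested against $a$ and unwound with the explicit formula for $\cL^k_{r,F}$ on smooth functions.

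First we reduce to smooth functions. By the normalization of \Cref{sec:periph_spectrum_dual} we have $\ell_+ = \lim_{n\to\infty}\rho^{-n}\cL_{r,F}^n(\one)$ in $\cB_{p,q}$. Pairing with a fixed test function is continuous in this topology, since convergence in $\cB_{p,q}$ is uniform in the base point. Hence it suffices to prove
\[
\langle \rho^{-n-k}\cL_{r,F}^{n+k}\one(y), a\rangle = (\lambda\rho)^{k}\,\langle \rho^{-n}\cL_{r,F}^{n}\one(T^ky), e^{r\cdot S_kF}\circ\phi_{\lambda^k\cdot}(y)\, a(\lambda^k\cdot)\rangle
\]
for every $n$, and then let $n\to\infty$. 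Equivalently, we may prove the smooth identity $\langle \cL_{r,F}^k g(y),a\rangle = \lambda^k\langle g(T^ky), e^{r\cdot S_kF}\circ\phi_{\lambda^k\cdot}(y)\,a(\lambda^k\cdot)\rangle$ for $g=\cL^n_{r,F}\one$, and apply it with $g$ replaced by $\rho^{-n}g$.

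The smooth identity is a change of variables. Write $\cL^k_{r,F}g = (e^{r\cdot S_kF}g)\circ T^{-k}$, so that
\[
\langle \cL^k g(y),a\rangle=\int_0^1 (e^{r\cdot S_kF}g)(T^{-k}\phi_s y)\,a(s)\,\diff s .
\]
The stated formula evaluates the weight along the orbit of $y$ and $g$ at $T^ky$, so I will read the intended relation in the direction where $T$ expands horizontal segments. Concretely, the pairing at $T^ky$ of a function supported on $[0,\lambda^{-k}]$ pulls back along $T^{-k}$, via $T^{-k}\phi_{u}(T^ky)=\phi_{\lambda^k u}(y)$, to a pairing over a unit segment at $y$. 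Substituting $u=\lambda^{-k}s$ gives $\diff s=\lambda^k\diff u$, which produces the factor $\lambda^k$. Combining this with the factor $\rho^{-n}$ versus $\rho^{-n-k}$ produces $(\lambda\rho)^k$, and the weight $e^{r\cdot S_kF}$ ends up evaluated at $\phi_{\lambda^k u}(y)$. This is exactly the manipulation carried out in \Cref{lem:self_similarity_of_Delta} with $k=1$, where $r\cdot F(T^{-1}x)$ and the relation $\phi_{s}(x)=T\phi_{\lambda s}(T^{-1}x)$ appear. I will iterate that computation $k$ times, or do it directly using $S_kF$ in place of $F$.

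The main obstacle is bookkeeping, not analysis. I must check the direction of rescaling, namely that $a(\lambda^k s)$ is supported in $[0,\lambda^{-k}]\subset(0,1)$ so that it is a legitimate test function at $T^ky$ after the partition-of-unity identification. I must also check that the Birkhoff sum $S_kF$ is indexed so that no stray $F\circ T^{-1}$ terms remain, which can be done by matching against the $k=1$ case in \Cref{lem:self_similarity_of_Delta}. Finally, $a\in\mathscr{C}^1_c$ rather than $\mathcal{C}^q$ is harmless, because $\ell_+(y)$ is a measure by \Cref{lem:evectors_are_measures}, so the identity extends by density to continuous $a$.
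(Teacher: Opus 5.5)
There is a genuine gap. Both identities your argument rests on are false. Moreover, the formula as printed cannot be proved, because its sign is wrong.

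Your ``smooth identity'' puts $\cL^k_{r,F}$ on the wrong side. Since $T^{-k}\circ\phi_s=\phi_{\lambda^k s}\circ T^{-k}$ and $\phi_s\circ T^k=T^k\circ\phi_{\lambda^k s}$, its left-hand side is $\int_0^1 (e^{r\cdot S_kF}g)(T^{-k}\phi_s y)\,a(s)\diff s$. Its right-hand side is $\int_0^1 g(T^{k}\phi_u y)\,e^{r\cdot S_kF(\phi_u y)}\,a(u)\diff u$. One sees $g\circ T^{-k}$ and the other $g\circ T^{k}$ along the same unit segment.

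The constants show the same problem. Inserting $g=\rho^{-n}\cL^n_{r,F}\one$ and dividing by $\rho^k$ gives the factor $(\lambda/\rho)^k$, not the $(\lambda\rho)^k$ of your finite-$n$ display.

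The eigen-equation must be used at the point $T^ky$. For $c$ supported in $(0,\lambda^{-k})$,
\[
\rho^k\langle \ell_+(T^ky),c\rangle=\langle \cL^k_{r,F}\ell_+(T^ky),c\rangle=\lambda^{-k}\bigl\langle \ell_+(y),\,e^{r\cdot S_kF}\circ\phi_u(y)\,c(\lambda^{-k}u)\bigr\rangle,
\]
which you can justify through $\ell_+=\lim_n\rho^{-n}\cL^n_{r,F}\one$ exactly as you planned. The weight now sits on the $y$-side. To have a prescribed $a$ there you must take $c(s)=e^{-r\cdot S_kF}\circ\phi_{\lambda^k s}(y)\,a(\lambda^k s)$, which yields
\[
\langle \ell_+(y),a\rangle=(\lambda\rho)^k\bigl\langle \ell_+(T^ky),\,e^{-r\cdot S_kF}\circ\phi_{\lambda^k s}(y)\,a(\lambda^k s)\bigr\rangle .
\]

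This minus sign is forced. Apply the last display to $e^{2r\cdot S_kF}\circ\phi_{\cdot}(y)\,a$: it shows that the printed right-hand side equals $\langle \ell_+(y),e^{2r\cdot S_kF}\circ\phi_{\cdot}(y)\,a\rangle$. Since $\ell_+(y)$ is a positive measure bounded away from zero, this differs from $\langle\ell_+(y),a\rangle$ for suitable $a$ as soon as $r\cdot S_kF$ does not vanish identically on $\phi_{[0,1]}(y)$.

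So the factor $e^{r\cdot S_kF}$ in the statement is a sign slip. The version with $e^{-r\cdot S_kF}$ is the one the paper actually uses immediately afterwards: with $y=T^jx$ and $k=n-j$ it produces $e^{-r\cdot S_{n-j}F}$.

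Your planned check against the case $k=1$ of \Cref{lem:self_similarity_of_Delta} would not have caught this, because that proof has the same slip. There $r\cdot\int_{\tx}^{\tT^{-1}\tx}\mathcal{p}^{\ast}\omega=-r\cdot F(T^{-1}x)$, so its factor $\exp(r\cdot F(T^{-1}x))$ should be $\exp(-r\cdot F(T^{-1}x))$.

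Once the direction and the sign are fixed, your strategy is exactly the paper's argument and goes through. That strategy is to pass through the limit of $\rho^{-n}\cL^n_{r,F}\one$ and then substitute $u=\lambda^k s$.
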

Applying the previous lemma to $\langle \ell_{+,r}(T^{j}x), e^{\delta_{-r}(T^jx)} \, w\rangle$ with $y=T^{j}x$ and $k=n-j$, we get
\[
\begin{split}
    &\langle \ell_{+,r}(T^{j}x), e^{\delta_{-r}(T^{j}x)} \, w_j\rangle \\
    &\qquad = (\lambda \rho)^{n-j} \langle \ell_{+,r}(T^{n}x), e^{-r\cdot S_{n-j}F}\circ \phi_{\lambda^{n-j}s}(T^{j}x) \, e^{\delta_{-r}(T^{j}x)}(\lambda^{n-j}s) \, w(\lambda^{n-j}s)\rangle.
\end{split}
\]
We notice that
\[
\begin{split}
    &\exp \Big[-r \cdot  S_{n-j}F\circ \phi_{\lambda^{n-j}s}(T^{j}x)\Big] \\
    &\qquad = \exp \Big[-r \cdot  S_{n-j}F(T^{j}x)\Big] \cdot \exp \Big[-r \cdot  \Big( S_{n-j}F\circ \phi_{\lambda^{n-j}s}(T^jx) -  S_{n-j}F(T^jx)\Big)\Big] \\
    &\qquad = \exp \Big[-r \cdot  S_{n-j}F(T^{j}x)\Big] \cdot \exp \Bigg[-r \cdot  \Bigg( \int_{\phi_{\lambda^{n-j}s}(\tT^j\tx)}^{\tT^{n-j}\circ \phi_{\lambda^{n-j}s}(\tT^j\tx)} \mathcal{p}^{\ast}\omega - \int_{\tT^j\tx}^{\tT^n\tx} \mathcal{p}^{\ast}\omega\Bigg)\Bigg]\\
    &\qquad = \exp \Big[-r \cdot  S_{n-j}F(T^{j}x)\Big] \cdot \exp \Bigg[-r \cdot  \Bigg( \int_{\tT^n\tx}^{\phi_{s}(\tT^n\tx)} \mathcal{p}^{\ast}\omega - \int_{\tT^j\tx}^{\phi_{\lambda^{n-j}s}(\tT^j\tx)} \mathcal{p}^{\ast}\omega\Bigg)\Bigg],
\end{split}
\]
where, in the last equality, we used Stokes' Theorem. Therefore, we obtained
\[
\exp \Big[-r\cdot S_{n-j}F\circ \phi_{\lambda^{n-j}s}(T^{j}x)\Big] = \exp \Big[-r\cdot S_{n-j}F(T^{j}x)\Big] \cdot e^{\delta_{-r}(T^{n}x)}(s) \cdot [e^{\delta_{-r}(T^{j}x)}(\lambda^{n-j}s)]^{-1},
\]
which yields
\[
\langle \ell_{+,r}(T^{j}x), e^{\delta_{-r}(T^{j}x)} \, w_j\rangle = (\lambda \rho)^{n-j} e^{-r\cdot S_{n-j}F(T^{j}x)}\langle \ell_{+,r}(T^{n}x), e^{\delta_{-r}(T^{n}x)}(s) \, w(\lambda^{n-j}s)\rangle.
\]
In conclusion, let
\[
P_j = \langle \ell_{+,r}(T^{n}x), e^{\delta_{-r}(T^{n}x)}(s) \, w(\lambda^{n-j}s)\rangle;
\]
then, $P_j(0) = \mu_{r}(f)(\lambda \rho)^{n-j} e^{-r\cdot S_{n-j}F(T^jx)} P_j$.
Since  $(\lambda \rho)^{-(n-j)}e^{(n-j)\|r\|_{\infty} \cdot \|F\|_{\infty}} = O(\eta^{n-j})$ by \eqref{eq:constraint_on_r}, 
substituting back into \eqref{eq:ergod_int_7}, we obtain
\begin{equation}\label{eq:ergod_int_8}
	\begin{split}
		&\int_0^{\lambda^{j}}f\circ \phi_s(\tx) w(\lambda^{-j}s)\diff s= (\lambda \rho(r))^n e^{-r\cdot \xi(\tT^n\tx)} \\
        &\qquad \qquad \times \Big( \mu_{r}(f) P_j   \int_{B(0,\varepsilon)}  e^{-2 \pi i[\xi(\tx) + S_jF_0(x)]\cdot \theta +4 \pi^2 j \theta \cdot \Sigma \theta} \diff \theta + O\big( \|f\|_{\mathscr{C}^1}\eta^{n-j} j^{-\frac{d+1}{2}} \big) \Big).
	\end{split}
\end{equation}

Let us define the following norm in $\R^d$: given a vector $v \in \R^d$, we set
\[
\|v\|^2_{\Sigma} \coloneqq- v\cdot \Sigma^{-1} v.
\]
Applying a standard stationary-phase type estimate to \eqref{eq:ergod_int_8}, and since $P_j =  O(\eta^{n-j})$, gives us
\begin{equation}\label{eq:ergod_int_9}
	\begin{split}
		&\int_0^{\lambda^{j}}f\circ \phi_s(\tx) w(\lambda^{-j}s)\diff s \\
        & \quad =  (\lambda \rho(r))^n e^{-r\cdot\xi(\tT^n\tx)} \left(  \frac{ \mu_{r}(f) P_j  }{(2\pi j)^{d/2} \sqrt{\det(-\Sigma)}} e^{-\frac{1}{2j} \|\xi(\tx) + S_jF_0(x)\|^2_{\Sigma}} + O\big( \|f\|_{\mathscr{C}^1}\eta^{n-j} j^{-\frac{d+1}{2}} \big) \right).
	\end{split}
\end{equation}
In conclusion, we have shown that
\begin{equation}\label{eq:ergod_int_10}
		\int_0^{\lambda^{j}}f\circ \phi_s(\tx) w(\lambda^{-j}s)\diff s =  \frac{(\lambda \rho(r))^n e^{-r\cdot\xi(\tT^n\tx)}}{(2\pi n)^{d/2} \sqrt{\det(-\Sigma)}} \mu_{ r}(f) \Big(  P_j \Big( \frac{n}{j} \Big)^{d/2} e^{Z_j} + E_j\Big),
\end{equation}
where 
\begin{equation}\label{eq:definition_Z_j}
E_j =  O(\|f\|_{\mathscr{C}^1} n^{d/2} \eta^{n-j}j^{-\frac{d+1}{2}}), \qquad \text{and} \qquad  Z_j = Z_j(\tx) =-\frac{1}{2j} \|\xi(\tx) + S_jF_0(x)\|^2_{\Sigma}.
\end{equation}

We now sum the contributions for all the different values of $j$.
\begin{lemma}
    We have
    \[
    |E_n(x)| + \sum_{j=1}^{n-1} | E_j(x^{\pm})| =O(\|f\|_{\mathscr{C}^1} n^{-\frac{1}{2}}).
    \]
\end{lemma}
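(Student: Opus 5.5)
The plan is a direct summation using the explicit bound in \eqref{eq:definition_Z_j}, namely $|E_j| \le C'\|f\|_{\mathscr{C}^1} n^{d/2}\eta^{n-j} j^{-\frac{d+1}{2}}$ with $C'$ uniform in $j$, $n$ and in the choice of $\tx^{+}$ or $\tx^{-}$. The constants only depend on $C_\omega$, $c$, $K$, and on the bounds of \Cref{lem:estimates_on_gz}, \Cref{lem:estimates_on_Gzju} and \Cref{prop:pert}, none of which see the base point. There are at most $2(n-1)+1$ terms, so it suffices to bound
\[
n^{d/2}\sum_{j=1}^{n}\eta^{n-j}j^{-\frac{d+1}{2}}
\]
by $O(n^{-1/2})$, with the factor $2$ absorbed into the constant.

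First I handle the term $j=n$: it equals $n^{d/2}n^{-\frac{d+1}{2}}=n^{-1/2}$. Next I split the remaining sum at $j=\lfloor n/2\rfloor$.

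For $n/2\le j\le n-1$, use $j^{-\frac{d+1}{2}}\le (n/2)^{-\frac{d+1}{2}}$. This gives
\[
n^{d/2}(n/2)^{-\frac{d+1}{2}}\sum_{k\ge 1}\eta^{k}\le 2^{\frac{d+1}{2}}\frac{\eta}{1-\eta}\,n^{-1/2},
\]
since $\eta<1$ is fixed by \eqref{eq:constraint_on_r}.

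For $1\le j<n/2$, use $\eta^{n-j}\le \eta^{n/2}$ and $j^{-\frac{d+1}{2}}\le 1$. This gives at most $n^{d/2+1}\eta^{n/2}$, which is $O(n^{-1/2})$ because exponential decay beats any polynomial.

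Summing the three pieces yields the claim.

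The only point needing care is the uniformity of the implicit constants in $E_j$ across $j$ and across the two endpoints $\tx^{\pm}$, and I expect this to be the main obstacle to state rigorously. In particular, one must check that the stationary-phase error leading to \eqref{eq:ergod_int_9} is uniform in the phase $\xi(\tx)+S_jF_0(x)$. This holds because the Gaussian integral is bounded in absolute value independently of the linear phase, and the cubic remainder $E_1$ and the boundary of $B(0,\varepsilon)$ contribute errors independent of the phase.
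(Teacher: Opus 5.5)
Your proposal is correct and follows essentially the same route as the paper: it bounds $E_n$ directly by $n^{-1/2}$, splits the remaining sum at $n_0=\lfloor n/2\rfloor$, controls the lower half by $n^{d/2}\eta^{n_0}$ (exponential decay beats the polynomial factor), and controls the upper half by $n_0^{-\frac{d+1}{2}}$ times a convergent geometric series in $\eta$. Your remarks on the uniformity of the implicit constant in $E_j$ (in $j$ and in the two endpoints $\tx^{\pm}$, including the phase-independence of the stationary-phase error) are left implicit in the paper, and they are accurate.
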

\begin{proof}
    By the estimate on $E_j$ above, we have $|E_n(x)| =O( \|f\|_{\mathscr{C}^1} n^{-\frac{1}{2}}) $ and, denoting $n_0 = \lfloor n/2\rfloor$, 
    \[
    \begin{split}
    \sum_{j=1}^{n-1} | E_j(x^{\pm})| &\ll  \|f\|_{\mathscr{C}^1} n^{d/2} \sum_{j=1}^{n-1} \eta^{n-j}j^{-\frac{d+1}{2}} \ll \|f\|_{\mathscr{C}^1} n^{d/2} \Big(\sum_{j=1}^{n_0} \eta^{n-j} + n_0^{-\frac{d+1}{2}} \sum_{j=n_0}^{n-1} \eta^{n-j}\Big) \\
    &=O( \|f\|_{\mathscr{C}^1}  n^{d/2} \eta^{n_0} ) + O(\|f\|_{\mathscr{C}^1} n^{-\frac{1}{2}}) = O(\|f\|_{\mathscr{C}^1} n^{-\frac{1}{2}}),
    \end{split}
    \]
    for all $t$ sufficiently large. The result is proven.
\end{proof}

\begin{lemma}
    Let $x \in S_{\reg}$ be such that $\frac{S_nF_0(x)}{n} \to 0$. Then
    we have
    \[
    \Bigg\lvert \sum_{j=1}^{n-1} P_j \Big(\frac{n}{j} \Big)^{\frac{d}{2}} e^{Z_j} -  e^{Z_n}\sum_{j=1}^{n-1} P_j\Bigg\rvert  \to 0. 
    \]
\end{lemma}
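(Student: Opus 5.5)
The plan is to use the exponential decay of $P_j$ in $n-j$ to reduce to indices $j$ close to $n$. On that range, $(n/j)^{d/2}\to 1$ and $Z_j - Z_n \to 0$ uniformly, thanks to the sublinear growth of $S_jF_0(x)$.

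First I record the a priori bound. By definition, $P_j = \langle \ell_{+,r}(T^n x), e^{\delta_{-r}(T^n x)}(s)\, w(\lambda^{n-j}s)\rangle$, and $w(\lambda^{n-j}\cdot)$ is supported on an interval of length $O(\lambda^{-(n-j)})$. Applying \Cref{lem:dimension_of_l+} backwards (equivalently, the estimate already invoked before \eqref{eq:ergod_int_9}) gives
\[
|P_j| = O(\eta^{n-j}),
\]
uniformly in $x$ and $n$. Also $Z_j \le 0$, since $\Sigma$ is negative definite, so $e^{Z_j}\le 1$.

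Next I split the sum at $j_0 = n - M$ with $M = \lfloor A \log n\rfloor$, choosing $A$ large enough that $n^{d/2}\eta^{M} \to 0$. For $j \le j_0$, using $(n/j)^{d/2} \le n^{d/2}$, both sums are bounded by $n^{d/2}\sum_{k \ge M} \eta^{k} = O(n^{d/2}\eta^M) \to 0$.

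For $j_0 < j \le n-1$ I write
\[
\Big|\Big(\tfrac{n}{j}\Big)^{d/2} e^{Z_j} - e^{Z_n}\Big| \le \Big|\Big(\tfrac{n}{j}\Big)^{d/2} - 1\Big| + |e^{Z_j} - e^{Z_n}|.
\]
The first term is $O(M/n) \to 0$. For the second, since $e^{Z}$ is $1$-Lipschitz on $Z \le 0$, it suffices to show $\max_{j_0<j\le n}|Z_j - Z_n| \to 0$. Set $v_j = \xi(\tx) + S_jF_0(x)$. The hypothesis gives $v_j = o(n)$ uniformly for $j \le n$. Moreover, $\|v_n - v_j\| \le M \|F_0\|_\infty = O(\log n)$. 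Hence
\[
\Big|\tfrac{\|v_j\|_\Sigma^2}{j} - \tfrac{\|v_n\|_\Sigma^2}{n}\Big| \le \tfrac{\big|\|v_j\|_\Sigma^2 - \|v_n\|_\Sigma^2\big|}{j} + \|v_n\|_\Sigma^2\Big(\tfrac1j - \tfrac1n\Big) \ll \tfrac{o(n)\log n}{n} + \tfrac{o(n^2)\log n}{n^2}.
\]

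This last step is the main obstacle, because the bound is not obviously $o(1)$: $o(n)\cdot \log n / n$ need not vanish. I would first try to recover the missing decay by exploiting the Lipschitz bound on $e^{Z}$ more carefully. Note that $\|v_n\|_\Sigma^2 / n$ large forces $e^{Z_n}$ to be tiny, so the difference is then trivially small. If $\|v_n\|^2/n \le R$, then $\|v_n\| \le \sqrt{Rn}$ and the error becomes $O(\sqrt{R}\log n/\sqrt n) \to 0$. If instead $\|v_n\|^2/n > R$, then $\|v_j\|^2/j > R/2$ as well, so both exponentials are $\le e^{-cR}$. Letting $R \to \infty$ slowly closes the argument.

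Summing over the at most $M$ indices with weights $|P_j| = O(\eta^{n-j})$ then gives a total of $o(1)$, which proves the lemma.
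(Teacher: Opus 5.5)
Your proposal is correct, but it reaches the conclusion by a somewhat different route from the paper's.

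\textbf{The paper's route.} The paper treats the two approximations separately. It first removes the factor $(n/j)^{d/2}$ using a cutoff $k\sim\sqrt n$. After that, both remaining sums $\sum_j P_j e^{Z_j}$ and $e^{Z_n}\sum_j P_j$ are bounded termwise by $|P_j|$, with no $n^{d/2}$ loss. So it can compare $e^{Z_j}$ with $e^{Z_n}$ using a \emph{fixed} cutoff $k=k(\varepsilon)$ with $\eta^k<\varepsilon$. Then $\|v_n-v_j\|\le k\|F_0\|_\infty$ stays bounded, and the hypothesis $S_nF_0(x)/n\to 0$ directly gives $|Z_j-Z_n|=O(\varepsilon)$ for $n-k\le j<n$.

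\textbf{Your route.} You use a single growing cutoff $M\sim A\log n$ for both steps. This is what creates the $O(\log n)$ displacement and the apparent obstacle. Your case split on $\|v_n\|_\Sigma^2/n$ resolves it correctly. For fixed $R$, the first case gives $O(\sqrt R\log n/\sqrt n+R\log n/n)$, and the second gives at most $2e^{-R/4}$ once $n$ is large. Hence the $\limsup$ is at most $2e^{-R/4}$ for every $R$.

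\textbf{What each buys.} After your case split, the hypothesis on $x$ is never used. Equivalently, $v\mapsto e^{-\|v\|_\Sigma^2/(2n)}$ is $O(n^{-1/2})$-Lipschitz, and $|e^{-a/j}-e^{-a/n}|\le e^{-1}(n-j)/j$ for $a\ge 0$. Together these give $|e^{Z_j}-e^{Z_n}|=O(M/\sqrt n)$ uniformly in $x$, so the lemma in fact holds without the assumption $S_nF_0(x)/n\to0$. The paper's decoupled argument is shorter and avoids the obstacle you ran into, but it relies on that genericity assumption, which is not actually needed here.
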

\begin{proof}
    Let us first observe that $|e^{Z_j}| \leq 1$. 
    We first claim that 
    \[
    \Bigg\lvert \sum_{j=1}^{n-1} P_j \Big(\frac{n}{j} \Big)^{\frac{d}{2}} e^{Z_j} -  \sum_{j=1}^{n-1} P_j e^{Z_j} \Bigg\rvert \to 0.
    \]
    Indeed, for any $k \leq n-1$, we have
    \[
    \begin{split}
    \Bigg\lvert \sum_{j=1}^{n-1} P_j \Big(\frac{n}{j} \Big)^{\frac{d}{2}} e^{Z_j} - \sum_{j=1}^{n-1} P_j e^{Z_j} \Bigg\rvert &\ll \sum_{j=1}^{n-1} P_j \Big(\frac{n-j}{j}\Big)^{\frac{d}{2}} \ll  \sum_{j=1}^{n-k-1} \eta^{n-j}n^{\frac{d}{2}} + \sum_{j=n-k}^{n-1} \eta^{n-j} \Big(\frac{k}{n-k}\Big)^{\frac{d}{2}}\\
    &\ll \eta^kn^{\frac{d}{2}} + \Big(\frac{k}{n-k}\Big)^{\frac{d}{2}},
    \end{split}
    \]  
    and the last term can be made arbitrarily small by choosing $k$ appropriately (e.g., $k = O( \sqrt{n})$). This proves our claim.

    It remains to show that 
    \[
    \Bigg\lvert \sum_{j=1}^{n-1} P_j e^{Z_j} - e^{Z_n} \sum_{j=1}^{n-1} P_j \Bigg\rvert \to 0.
    \]
    Fix $\varepsilon >0$, and let $k \in \N$ be such that $\eta^k<\varepsilon$. Let now $N\in \N$ be such that $|S_nF_0(x)/n| < \varepsilon/k$ for all $n\geq N$. For $n \geq N+k$, for any $j\leq n-1$ such that $n-j \leq k$, using the cocycle relation $S_nF_0(x) = S_jF_0(x) + S_{n-j}F_0(T^jx) = S_jF_0(x) + O(\|F\|_{\infty} (n-j)) $, we have 
    \[
    \begin{split}
    \Big|Z_n - Z_j\Big| &\ll \left( \frac{1}{2j} - \frac{1}{2n}\right) \|\xi(\tx) + S_jF_0(x)\|_{\Sigma}^2 + O \left(\frac{k |S_jF_0(x)|}{n}\right) + O \left(\frac{k^2}{n}\right)= O(\varepsilon).
    \end{split}
    \]
    Therefore,
    \[
    \begin{split}
    \Bigg\lvert \sum_{j=1}^{n-1} P_j e^{Z_j} -  e^{Z_n}\sum_{j=1}^{n-1} P_j \Bigg\rvert &\ll  \sum_{j=1}^{n-k-1} P_j+ \sum_{j=n-k}^{n-1} P_j |e^{Z_j} - e^{Z_n}| \\
    &\ll \eta^k + \sum_{j=n-k}^{n-1} \eta^{n-j} O(\varepsilon) = O(\varepsilon).
    \end{split}
    \]
    This completes the proof.
\end{proof}

Since we also have that 
\[
|e^{Z_n(\tx^{+})} - e^{Z_n(\tx^{-})}| \to 0
\]
if $\tx=\tx^{+}$ is such that $\frac{S_nF_0(x)}{n} \to 0$, we can sum all the contributions in \eqref{eq:ergod_int_10} for $j = 1, \dots, n-1$ and for both $\tx=\tx^{+}$ and for $\phi_t(\tx) =\tx^{-}$. We conclude that
\[
\int_0^{t} f\circ \phi_s(\tx)\diff s  =  \frac{(\lambda \rho(r))^n e^{-r\cdot\xi(\tT^n\tx)}}{(2\pi n)^{d/2} \sqrt{\det(-\Sigma)}} \mu_{ r}(f) \left( e^{Z_n(x)} \Big(  P_n(x) + \sum_{j=1}^{n-1}  P_j(x^{\pm}) \Big) +o(1)\right).
\]
Let $W_n(s)$ be the $\mathscr{C}^1$ function compactly supported on $(0,1)$ defined by
\[
W_n(s) =  w(s) + \sum_{j=1}^{n-1}\big(w^{+}(\lambda^{j}s) + w^{-}(\lambda^{j}(s-1))\big), 
\]
as in \Cref{lem:smoothen_integral}; then, 
\[
 P_n(x) + \sum_{j=1}^{n-1}  P_j(x^{\pm}) =  \langle \ell_{+,r}(T^{n}x), e^{\delta_{-r}(T^{n}x)}(s) \, W_n(t^{-1}\lambda^ns)\rangle.
\]
The last step is to replace $W_n$ with $\one_{[0,t^{-1}\lambda^n]}$. Note that the difference is a function whose support is contained in a union of two intervals, each of diameter $\lambda^{-n}$. The following lemma comes to aid.
\begin{lemma}
    Let $a$ be a $\mathscr{C}^1$ function supported on an interval of diameter $\lambda^{-n}$, and let $x\in S_{\reg}$. Then $\langle \ell_{+}(x), a \rangle = O(\eta^{n}) $.
\end{lemma}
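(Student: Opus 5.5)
The bound will come from the self-similarity of $\ell_+$ in \Cref{lem:dimension_of_l+}: rescale the short interval up to unit length and pay the factor $(\lambda\rho)^{-n}e^{n\|r\|_\infty\|F\|_\infty}$, which is $O(\eta^n)$ by \eqref{eq:constraint_on_r}. Throughout, $\ell_+=\ell_{+,r}$ and $\rho=\rho(r)$. Since the $\mathcal{C}^1$ norm of $a$ is not controlled, the goal is really a bound $O(\eta^n\|a\|_\infty)$; this is what is needed downstream, where $a$ is a difference of bounded cutoffs. By \Cref{lem:evectors_are_measures} and \Cref{cor:evectors_are_measures_2}, $\ell_+(x)$ is a positive measure on the horizontal leaf, so $|\langle \ell_+(x),a\rangle|\le \|a\|_\infty\langle \ell_+(x),\one_J\rangle$, where $J$ is the interval of length $\lambda^{-n}$ containing $\supp a$. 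We can therefore take $a\ge0$, and by monotonicity replace it with a fixed bump profile adapted to $J$.

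Concretely, assume after translating the base point along the leaf that $J\subset[0,\lambda^{-n}]$. Choose a fixed nonnegative $b\in\mathscr{C}^1_c(0,2)$ with $b\ge1$ on $[0,1]$ (splitting into two unit-supported pieces if needed), and set $a_n(s)=b(\lambda^n s)$. Then $\one_J\le a_n$, and $a_n$ is supported in $(0,2\lambda^{-n})$. Next apply \Cref{lem:dimension_of_l+} with $k=n$ and $y=T^{-n}x$. Since $T$ contracts horizontal leaves by $\lambda^{-1}$, this expresses $\langle \ell_+(x), b(s)\rangle$ in terms of $\ell_+$ at the point $x$. Equivalently, read in the direction we need,
\[
\langle \ell_+(x), b(\lambda^n s)\rangle=(\lambda\rho)^{-n}\langle \ell_+(T^{-n}x), e^{-r\cdot S_nF}\circ\phi_s(T^{-n}x)\,\ldots\, b(s)\rangle .
\]
The exact placement of the exponential weight comes from the identity $\rho^{-1}\cL_{r,F}\ell_+=\ell_+$ iterated $n$ times, as in the proof of \Cref{lem:self_similarity_of_Delta}. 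The weight is bounded by $e^{n\|r\|_\infty\|F\|_\infty}$.

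For the unit-scale pairing, use the uniform bound $\|\ell_+\|_{0,0}$ (finite because $\ell_+\in\cB_{p,q}$ is a measure on unit segments, with total mass uniformly bounded by \eqref{eq:ineq_H2}-type bounds, as in the proof of \Cref{lem:evectors_are_measures}). This gives $\langle \ell_+(T^{-n}x), \cdot\, b\rangle=O(\|b\|_\infty)=O(1)$. Combining,
\[
|\langle \ell_+(x),a\rangle|\le \|a\|_\infty\,(\lambda\rho)^{-n}e^{n\|r\|_\infty\|F\|_\infty}\,O(1)=O(\eta^n\|a\|_\infty),
\]
by \eqref{eq:constraint_on_r}.

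The step that needs the most care is the direction of the rescaling. One has to check that the identity in \Cref{lem:dimension_of_l+} can indeed be used to pass from the tiny interval at $x$ to a unit interval at $T^{-n}x$. Unwinding it, we write $\ell_+=(\rho^{-1}\cL_{r,F})^n\ell_+$ and use that $\cL^n_{r,F}$ pulls back through $T^{-n}$, which stretches horizontals by $\lambda^n$. The Jacobian $\lambda^{-n}$ and the factor $\rho^{-n}$ then produce exactly $(\lambda\rho)^{-n}$. A second point to verify is that the segment $\phi_{[0,2]}(T^{-n}x)$ avoids singularities; if it does not, we split $b$ with a good partition of unity as in \Cref{def:pou}, at the cost of a bounded number of pieces.
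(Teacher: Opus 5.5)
Your proposal is correct and follows the paper's proof. You apply the self-similarity identity of \Cref{lem:dimension_of_l+} backwards by $n$ steps, which blows the interval of length $\lambda^{-n}$ at $x$ up to a unit interval at $T^{-n}x$; you then use that $\ell_+(T^{-n}x)$ is a measure of uniformly bounded mass, and absorb $(\lambda\rho)^{-n}e^{n\|r\|_\infty\|F\|_\infty}$ into $O(\eta^n)$ via \eqref{eq:constraint_on_r}. Your preliminary reduction to a fixed bump via positivity is harmless but unnecessary, since the paper rescales $a$ directly and only $\|a\|_\infty$ enters; your observation that the bound is really $O(\eta^n\|a\|_\infty)$ is exactly what the downstream application to the non-smooth difference of cutoffs needs.
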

\begin{proof}
    By \Cref{lem:dimension_of_l+} , we have that $\langle \ell_{+}(x), a(s) \rangle = (\lambda \rho)^{-n} \langle \ell_{+}(T^{-n}x), e^{r\cdot S_{-n}F} \circ \phi_{\lambda^{-n}s}a(\lambda^{-n}s) \rangle $. Since $\ell_{+}(T^{-n}x)$ is a measure, the right-hand side is $O((\lambda \rho)^{-n} e^{\|r\|_{\infty}n \|F\|_{\infty}}) = O(\eta^n)$, by our assumption on $r$. The proof is complete.
\end{proof}

From the definition \eqref{eq:definition_D_r}, we conclude
\begin{equation*}
    \int_0^{t} f\circ \phi_s(\tx)\diff s = \frac{(\lambda \rho(r))^n e^{-r\cdot \xi(\tT^n\tx)}}{(2\pi n)^{d/2} \sqrt{\det(-\Sigma)}} \mu_{ r}(f) \Big( \Delta_r(T^nx,t\lambda^{-n} )e^{Z_n(x)} + o(1) \Big).
\end{equation*}
Finally, we have the convergence of $Z_n$ in distribution by \Cref{thm:CLT},
which completes the proof of \Cref{thm:ergodic_integrals_asymptotics}.


\section{Hausdorff dimension of Maharam measures}\label{sec:Hausdorff}
In this section we compute the Hausdorff dimension of the measure $\nu_r$ for $r\in \R^d$, which is also the Hausdorff dimension of the corresponding Maharam measure. This was recently done by Berk, Fr\k{a}czek, Kotlewski and Trujillo in~\cite[Theorem 1.5]{BFKT} and we recover their result with our method. The main difference in the proofs is that while in~\cite{BFKT} the authors obtain good control of the measure of the intervals of certain dynamical partitions arising from renormalisation, we can, using our transfer operators, control the measure of arbitrary intervals. This significantly simplifies the proof.

Let us fix $r\in \R^d$ and let $\nu, \nu_T$ and $\vartheta$ be the corresponding
measures, as defined in \Cref{sec:weighted_transfer_operators}. First we observe
that by \Cref{lem:expression_for_nu} it suffices to compute the Hausdorff
dimension of $\vartheta$, as $\dim_H \nu = 1+\dim_H \vartheta$. To compute the
dimension of $\vartheta$ we apply the following classical result, known as
Frostman's lemma~\cite{PrzytyckiUrbanski}:
\begin{lemma}\label{lem:Frostman}
Suppose that $\mu$ is a measure on $\R$, and there exist $\delta_1 \leq \delta_2$ such that for $\mu$-a.e. point $x$, 
\[\delta_1 \leq \lim_{\epsilon \to 0} \frac{\log \mu(x-\epsilon,x+\epsilon)}{\log \epsilon} \leq \delta_2.\]
Then $\delta_1 \leq \dim_H \mu \leq \delta_2$.
\end{lemma}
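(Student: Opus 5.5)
This is Frostman's lemma (the mass distribution principle in its local-dimension form), quoted from \cite{PrzytyckiUrbanski}. I would prove each inequality separately using Vitali-type covering arguments on $\R$. Set
\[
\underline{d}(x)=\liminf_{\epsilon\to0}\frac{\log\mu(x-\epsilon,x+\epsilon)}{\log\epsilon},\qquad
\overline{d}(x)=\limsup_{\epsilon\to0}\frac{\log\mu(x-\epsilon,x+\epsilon)}{\log\epsilon}.
\]
The hypothesis, read as a statement about both the $\liminf$ and the $\limsup$, says that $\delta_1\le\underline d(x)\le\overline d(x)\le\delta_2$ for $\mu$-a.e.\ $x$. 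I use the convention that $\dim_H\mu=\inf\{\dim_H A : \mu(A^c)=0\}$.

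\emph{Lower bound.} Fix $s<\delta_1$ and a Borel set $A$ with $\mu(A)>0$. For $\mu$-a.e.\ $x\in A$ there is $\epsilon_x>0$ with $\mu(x-\epsilon,x+\epsilon)\le\epsilon^{s}$ for all $\epsilon<\epsilon_x$. Choose $\epsilon_0$ so that $A_0=\{x\in A:\epsilon_x\ge\epsilon_0\}$ has $\mu(A_0)>0$. Take any cover of $A_0$ by sets $U_i$ of diameter less than $\epsilon_0$ that meet $A_0$. Each such $U_i$ lies in an interval centred at a point of $A_0$ with radius $\operatorname{diam}U_i$, so $\mu(U_i)\le(\operatorname{diam}U_i)^s$. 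Summing over $i$ gives $\sum_i(\operatorname{diam}U_i)^s\ge\mu(A_0)>0$. Hence $\mathcal H^s(A)>0$ and $\dim_H A\ge s$. Letting $s\uparrow\delta_1$ gives $\dim_H\mu\ge\delta_1$.

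\emph{Upper bound.} Fix $s>\delta_2$. Let $A$ be the full-measure set where $\overline d\le\delta_2$; I will show $\dim_H A\le s$. Restrict to the pieces $A_M=A\cap[-M,M]$, which is enough by countable stability of Hausdorff dimension. For every $x\in A_M$ and every $\eta>0$ there is $\epsilon<\eta$ with $\mu(x-\epsilon,x+\epsilon)\ge\epsilon^{s}$; in fact this holds for all small $\epsilon$. By the Vitali (or Besicovitch) covering lemma on $\R$, choose a disjoint subfamily of such intervals whose $5\times$ enlargements cover $A_M$. Then
\[
\sum_i(10\epsilon_i)^s\le 10^s\sum_i\mu(x_i-\epsilon_i,x_i+\epsilon_i)\le10^s\,\mu([-M-1,M+1])<\infty .
\]
Letting $\eta\to0$ gives $\mathcal H^{s}(A_M)<\infty$, hence $\dim_H A_M\le s$. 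Letting $s\downarrow\delta_2$ gives $\dim_H\mu\le\delta_2$.

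The main obstacle is bookkeeping rather than anything deep. The two bounds use different limits: the lower bound needs the $\liminf$ and the upper bound needs the $\limsup$. The hypothesis covers both, provided its ``$\lim$'' is read as holding for both of them. The upper bound also requires the covering step with bounded overlap, which on $\R$ is elementary. Since the statement is classical, in the paper it suffices to cite \cite{PrzytyckiUrbanski}.
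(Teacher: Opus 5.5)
Your proof is correct and is the standard mass-distribution (Vitali-covering) argument. The paper gives no proof and simply quotes the lemma from \cite{PrzytyckiUrbanski}, as you anticipated.

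One correction to your closing remark. Your upper-bound step only uses, for each $x$, arbitrarily small $\epsilon$ with $\mu(x-\epsilon,x+\epsilon)\ge\epsilon^{s}$. That is, it only needs $\liminf_{\epsilon\to0}\log\mu(x-\epsilon,x+\epsilon)/\log\epsilon<s$. So both bounds on the Hausdorff dimension need only the lower local dimension; the $\limsup$ is what controls packing dimension, not Hausdorff dimension.

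Also, the upper bound relies on $\mu$ being locally finite, through $\mu([-M-1,M+1])<\infty$. This holds for the finite measure $\vartheta$ to which the paper applies the lemma.
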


Firstly let us identify the $\vartheta$ -a.e. points for which we will compute the limit in the statement of Frostman's lemma.

\begin{lemma}\label{lem:generic_points}
Let $I$ be a vertical interval of length 1 on $S_0$ and let $\vartheta$ be the measure on it. Then for $\vartheta$-a.e. points in $I$, 
\[\lim_{n \to \infty} \frac{1}{n} S_n F(x) = \nu_T(F).\]
\end{lemma}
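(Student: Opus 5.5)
We will deduce the statement from the Birkhoff ergodic theorem for $(T,\nu_T)$ and then transfer it from $\nu_T$ to the transverse measure $\vartheta$. The transfer uses the disintegration in \Cref{cor:transverse_measure} together with the fact that $S_nF$ varies only by a bounded amount along horizontal segments.

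First, $\nu_T$ is $T$-invariant and mixing by \Cref{prop:nu_T_is_exponentially_mixing}, hence ergodic. The function $F$ is bounded and continuous, so Birkhoff's theorem gives a set $G\subset S$ with $\nu_T(S\setminus G)=0$ on which $\frac1n S_nF\to\nu_T(F)$.

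Second, we will show that $G$ is saturated by horizontal segments. Take $x\in G$ and $y=\phi_s(x)$ with $|s|\le 1$. Since $T^k$ contracts horizontal distances by $\lambda^{-k}$, we get
\[
|S_nF(y)-S_nF(x)|\le \sum_{k\ge0}\lambda^{-k}\|F\|_{\mathscr{C}^1}=O(1),
\]
as in the bound on $K$ in the proof of \Cref{lem:estimates_spectral_radius}. This holds as long as the segment $\phi_{[0,s]}(x)$ avoids singularities. Hence $\frac1nS_nF(y)\to\nu_T(F)$ as well, and by iterating the same estimate, $G$ is a union of full horizontal leaves (away from singular leaves).

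Third, we localize to a rectangle $\Omega$ whose vertical side is $I$; a small enough piece of $I$ can be used and the pieces then glued together. By \Cref{cor:transverse_measure}, in the local coordinates of $\Omega$,
\[
\nu_T(\Omega\setminus G)=\int_{I}\int_0^{1}\one_{\Omega\setminus G}(s,x)\,G_x(s)\,\diff\widehat{\ell_+}(x)(s)\,\diff\vartheta(x).
\]
Since $G$ is horizontally saturated, $\one_{\Omega\setminus G}(s,x)=\one_{I\setminus G}(x)$. The left-hand side vanishes, so
\[
\int_{I\setminus G}\bigl(\text{leafwise mass } \textstyle\int G_x\,\diff\widehat{\ell_+}(x)\bigr)\,\diff\vartheta(x)=0 .
\]
The function $G_x$ is bounded below uniformly, and $\ell_+$ is bounded away from zero by \Cref{lem:eigenfunction_positivity}. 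Pairing with a fixed positive bump function then bounds the leafwise mass below by a positive constant, so $\vartheta(I\setminus G)=0$.

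The main obstacle is justifying the use of \Cref{cor:transverse_measure}, which is stated for smooth test functions, on the indicator of a measurable saturated set. We will handle this by monotone-class or regularity arguments, approximating $\one_{I\setminus G}(x)\,u(s)$ by continuous functions, since both sides define finite positive measures. We also need to check that the singular leaves form a $\vartheta$-null set. This holds because there are only countably many such leaves and $\vartheta$ has no atoms, which follows from $\nu$ being non-atomic on segments via the leafwise estimates.
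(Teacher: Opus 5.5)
Your proposal is correct and follows the paper's argument: Birkhoff's theorem for the ergodic measure $\nu_T$, constancy of the Birkhoff limit along horizontal segments (which $T$ contracts), and transfer to $\vartheta$ through the common transverse measure of \Cref{cor:transverse_measure}. You simply spell out details the paper leaves implicit, such as extending the disintegration to indicator functions and using that $\ell_+$ is bounded away from zero; your closing concern about singular leaves is unnecessary, since after localizing to small rectangles inside $S_0$ every horizontal segment you use avoids $\Sigma$, so you never need non-atomicity of $\vartheta$ (which your sketch does not actually justify).
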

\begin{proof}
By the Birkhoff ergodic theorem for $\nu_T$, $\lim_{n \to \infty} \frac{1}{n} S_n F(x) = \nu_T(F)$ for $\nu_T$-almost every point $x$. 
Since $\lim_{n \to \infty} \frac{1}{n} S_n F(x)$ is constant on horizontal leaves we see that the limit holds for $\vartheta$-almost every point $x\in I$. 
\end{proof}

\begin{theorem}\label{thm:Hausdorff_dim_nu}
For $x\in S_\mathrm{reg}$ satisfying $\lim_{n \to \infty} \frac{1}{n} S_n F(x) = \nu_T(F)$, the following limit holds:
\[\lim_{\epsilon \to 0} \frac{\log \vartheta( \phi^\perp_{(-\epsilon,\epsilon)}(x))}{\log \epsilon} = \frac{\log \lambda + \log \rho(r) - r\cdot \nu_T(F)}{\log \lambda}.\]
Thus 
\[\dim_H \nu = 1+ \frac{\log \lambda + \log \rho(r) - r\cdot \nu_T(F)}{\log \lambda}.\]
\end{theorem}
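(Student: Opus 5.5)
The plan is to compute the local dimension of $\vartheta$ at $x$ by renormalizing the vertical interval $J_\epsilon=\phi^\perp_{(-\epsilon,\epsilon)}(x)$ with $T$ until it has unit length. Then we read off how $\vartheta$ scales under one application of $T$ from the eigen-equation $\cL'_{r,F}\nu=\rho(r)\nu$.

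First, derive the transformation rule of $\vartheta$ under $T$. Take $f$ supported in a thin rectangle $\Omega$ with vertical side $\Omega_v$. Combine $\nu(\cL_{r,F}f)=\rho(r)\nu(f)$ with the disintegration of \Cref{lem:expression_for_nu}/\Cref{cor:transverse_measure}. Since $T$ contracts horizontal segments by $\lambda^{-1}$ and expands vertical ones by $\lambda$, and $G_x$ is uniformly bounded above and below, this should give
\[
\vartheta(T(A)) = \lambda\rho(r)\, e^{-r\cdot F(y)}\,\vartheta(A)\,e^{O(\mathrm{diam}\,A)}
\]
for small vertical intervals $A$ through a point $y$. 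The key fact is that $F$ varies by $O(\lambda^{-k})$ on vertical pieces whose $T^k$-image has unit size. (I would double-check the orientation of $T$ versus $\cL$ and the sign of $r\cdot F$ here.) The cleanest way to get the rule is to write $\vartheta$ as the limit $\rho(r)^{-N}e^{r\cdot S_NF}\diff x$ from the proof of \Cref{lem:expression_for_nu}. Comparing the limits at $N$ and $N+1$ gives the rule directly, with Lebesgue measure rescaled by $\lambda^{-1}$.

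Next, iterate this rule $k$ times with $\epsilon\approx\lambda^{-k}$. Then $T^kJ_\epsilon$ is a vertical interval of length comparable to $1$ around $T^kx$, and we get
\[
\vartheta(J_\epsilon)= (\lambda\rho(r))^{-k}\, e^{r\cdot S_kF(x)}\,\vartheta(T^kJ_\epsilon)\, e^{O(1)}.
\]
Here the distortion is summable because the vertical diameters $\lambda^{-(k-i)}$ form a geometric series, and $F$ is Lipschitz by \Cref{lem:invar}.

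The main obstacle is a uniform two-sided bound on $\vartheta(T^kJ_\epsilon)$, since $\vartheta$ of a unit vertical segment need not be bounded below uniformly near singularities. For the upper bound, $\vartheta(I)\le C$ for unit vertical segments follows from \Cref{lem:nu_is_measure_1} with a bump supported on a rectangle over $I$ (\Cref{lem:rectangle}). For the lower bound, I would use that $\ell_+$ is bounded away from zero and that $\nu$ gives positive mass to every open set. The latter holds because $\nu=\rho^{-n}(\cL')^n\nu$ spreads mass along $T^{-n}$ of unstable segments, which are dense, exactly as in the proof of \Cref{lem:eigenfunction_positivity}. Compactness of the family of unit vertical segments away from singularities then gives $\vartheta(I)\ge c>0$. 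If $T^kx$ approaches a singularity, the segment can be shifted slightly, since the bound only needs an interval of length $\ge\delta$ inside $T^kJ_\epsilon$, and \Cref{lem:rectangle} supplies such a rectangle.

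Finally, take logarithms and divide by $\log\epsilon=-k\log\lambda+O(1)$. The hypothesis $S_kF(x)/k\to\nu_T(F)$ yields the limit
\[
\frac{\log\lambda+\log\rho(r)-r\cdot\nu_T(F)}{\log\lambda}.
\]
By \Cref{lem:generic_points}, this holds for $\vartheta$-a.e. $x$, so \Cref{lem:Frostman} applied to $\vartheta$ gives $\dim_H\vartheta$. The local product structure from \Cref{cor:transverse_measure}, with smooth positive density $G_x$ along leaves, then adds $1$ for $\nu$.
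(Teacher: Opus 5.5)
Your proposal is correct and follows essentially the paper's argument. Like you, the paper renormalizes with $\nu=\rho^{-n}(\cL'_{r,F})^n\nu$: it applies this $n$ times at once to bump functions $f_n$ on thin rectangles, rather than iterating a one-step rule for $\vartheta$. It then controls $S_nF$ by bounded distortion, takes the upper bound from \Cref{lem:nu_is_measure_1}, and gets the uniform lower bound on unit vertical segments by a compactness and contradiction argument that handles singularities by passing to a half-plane.

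The only other difference is how full support of $\vartheta$ is obtained. The paper uses minimality of the horizontal flow together with $\diff\vartheta(\phi_s x)=G_x(s)\diff\vartheta(x)$. You instead use the forward images $T^{n}U$, which contain long dense vertical segments; note that this should read $T^{n}$, not $T^{-n}$ as you wrote.
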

\begin{proof}
Let us fix a point $x$ satisfying the assumption. There exists an $N\in \N$ such that the rectangle $\phi_{[-\frac12,\frac12]} \circ \phi^\perp_{[-\lambda^{-N},\lambda^{-N}]}(x)$ is contained in $S_0$. Define for all $n \geq N$ the test function $f_n \in \mathscr{C}^{\infty, \bdd}(S)$, such that $f_n$ is supported in the rectangle $\phi_{[-\frac12,\frac12]} \circ \phi^\perp_{[-\lambda^{-n},\lambda^{-n}]}(x)$ and is equal to 1 on the rectangle $\phi_{[-\frac14,\frac14]} \circ \phi^\perp_{[-\lambda^{-(n+1)},\lambda^{-(n+1)}]}(x)$.

Let $G^-$ and $G^+$ be respectively the minimum and maximum of $G_x(s)$ taken over $x\in S_\mathrm{reg}$ and $s\in [-\frac12,\frac12]$. Then observe that if $\lambda^{-(n+1)} \leq \epsilon \leq \lambda^{-n}$, then 
\begin{equation}\label{eq:nu_estimate_f}
 \frac{\nu(f_{n})}{G^+} \leq \vartheta(\phi^\perp_{(-\epsilon,\epsilon)}(x)) \leq \frac{2 \nu(f_{n-1})}{G^-}.
\end{equation}
Hence we focus on computing $\nu(f_n)$.\\

By the definition of $\nu$, $\nu = \rho^{-n} (\cL'_{r,F})^n \nu$, hence 
\[\nu(f_n) = \rho^{-n} \nu(f_n \circ T^{-n} e^{r\cdot S_n F} \circ T^{-n}).\]

The function $f_n \circ T^{-n}$ is supported on a rectangle of width $2\lambda^{-n}$ and of height 1, centred at $T^n(x)$. For any point $y$ in this rectangle, 
\[\|S_n F \circ T^{-n} (y) - S_n F \circ T^{-n} (T^n x)\| < C \|F\|_{\mathscr{C}^1},\]
where the constant $C$ is independent of $x$ or $n$. Hence up to a uniform multiplicative constant, 
$\nu(f_n \circ T^{-n} e^{r\cdot S_n F} \circ T^{-n}) = \nu(f_n\circ T^{-n}) e^{r\cdot S_n F(x)}$.\\

It remains to bound $\nu(f_n\circ T^{-n})$. By \Cref{lem:nu_is_measure_1}, an upper bound is $C \lambda^{-n}$. For the lower bound, let us show that 
$\liminf_{n\to \infty} \lambda^n \nu(f_n\circ T^{-n}) > 0$. Indeed, suppose for contradiction that there exists a sequence $(n_k)_k$ such that 
$\lambda^{n_k} \nu(f_{n_k} \circ T^{-n_k}) \to 0$. 

By compactness of $S$ the sequence of points $T^{n_k}(x)$ accumulates to some $x_\infty$. If $x_\infty$ happens to be a singularity, we can pass to a subsequence so that the accumulation happens in a half-plane either to the left or right of $x_\infty$. Assume without loss of generality the case of the right half-plane, and let $m$ be large enough such that the rectangle 
$R = \phi_{[0,\lambda^{-m}]} \circ \phi^\perp_{[-\frac15,\frac15]} (x_\infty)$ does not contain any singularities. Observe that for large enough $k$, $\nu(R) \leq \frac{G^+}{G^-} \lambda^{n_k-m} \nu(f_{n_k}\circ T^{n_k})$. Hence $\nu(R) = 0$, and hence $\vartheta$ on any vertical interval inside $R$ is identically 0.

Using the fact that $\diff \vartheta (\phi_s(x)) = G_x(s) \diff \vartheta(x)$ (see the proof of \Cref{lem:expression_for_nu}) and the minimality of the horizontal flow on $S$, we deduce that $\vartheta$ is identically equal to 0, and hence also $\nu=0$, which is a contradiction.

Thus we deduce that
\[\lim_{n\to \infty} \frac{\log \nu(f_n)}{-n\log \lambda} = \lim_{n\to \infty} \frac{-n \log \rho +n r\cdot \nu_T(F) -n \log \lambda}{-n\log \lambda}.\]
Combining with \eqref{eq:nu_estimate_f}, we deduce the first statement of the theorem, and the second follows by \Cref{lem:Frostman} and \Cref{lem:generic_points}.
\end{proof}

Observe that if the statement of \Cref{rmk:nu_T_is_Gibbs} holds, then for any real $r$, $\log \rho(r) = h_{\nu_{T,r}}(T) + r \cdot \nu_{T,r}(F) - \log \lambda$. Hence the Hausdorff dimension can also be written as 
\[\dim_H \vartheta = \frac{h_{\nu_T}(T)}{\log \lambda}.\]


\appendix

\section{Examples}\label{sec:examples}
In this appendix we present some examples of $\Z^d$-covers of translation surfaces which admit a pseudo-Anosov map $T$, and consider the twisted $\tT$-action on Maharam cohomology in these cases.

\subsection{Example: the $3\times1$ staircase, a $\Z$-cover}

Consider the translation surface $S$ shown in \Cref{fig:3staircaseS}, which is a square-tiled surface of genus 2 in the stratum $\mathcal{H}(2)$.

\begin{figure}[h]
\centering
\begin{tikzpicture}[scale=1.6,square/.style={regular polygon,regular polygon sides=4,inner sep=1.6,fill}]
\draw (0,0) -- (3,0) -- (3,1) -- (0,1) -- cycle;
\draw[dotted] (1,0) -- (1,1) (2,0) -- (2,1);
\node[skyblue,draw=none] at (0.5,1) {>};
\node[skyblue,draw=none] at (2.5,0) {>};
\node[orange,draw=none] at (1.5,0) {\large /};
\node[orange,draw=none] at (1.5,1) {\large /};
\node[reddishpurple,draw=none,font=\huge] at (0,0.5) {--};
\node[reddishpurple,draw=none,font=\huge] at (3,0.5) {--};
\node[bluishgreen,draw=none,font=\large] at (0.5,0) {$\wr$};
\node[bluishgreen,draw=none,font=\large] at (2.5,1) {$\wr$};

\foreach \i in {0,...,3}{
\node[square,vermillion] at (\i,0) {};
\node[square,vermillion] at (\i,1) {};

}
\end{tikzpicture}
\caption{The compact surface $S$ with its one singularity.}
\label{fig:3staircaseS}
\end{figure}

Consider the $\Z$-cover $\tS$ shown in \Cref{fig:3staircase}.

\begin{figure}[h]
\begin{tikzpicture}[square/.style={regular polygon,regular polygon sides=4,inner sep=1.6,fill}, every node/.style={draw}]
\draw (1,0) -- (3,0) -- (3,1) --  (5,1) -- (5,2) -- (7,2) -- (7,3) -- (9,3) -- (9,4); 
\draw (0,0) -- (0,1) -- (2,1) --  (2,2) -- (4,2) -- (4,3) -- (6,3) -- (6,4) -- (8,4); 
\foreach \i in {0,...,3}
{
	\draw[dotted] ({2*\i},\i) -- ({2*\i+1},\i);
	\foreach \j in {1,2,3}
		{
        	\draw[loosely dotted] ({\j+2*\i},\i) -- ({\j+2*\i},\i+1);
        }
}
\draw[dotted] (8,4) -- (9,4);

\foreach \i in {1,...,3}
{
	\pgfmathtruncatemacro{\ii}{{\i+1}};

	\node[orange,draw=none] at (1.5+2*\i,\i+1) {\large \rep{\i}{/}};
	\node[orange,draw=none] at (1.5+2*\i,\i) {\large \rep{\i}{/}};
	
	\node[skyblue,draw=none] at ({0.5+2*\i},\i-1) { \rep{\i}{>}};
	\node[skyblue,draw=none] at ({0.5+2*\i},\i+1) { \rep{\i}{>}};
}

\node[reddishpurple,draw=none,font=\huge] at (2,1.5) {--};
\node[reddishpurple,draw=none,font=\huge] at (5,1.5) {--};

\node[reddishpurple,draw=none,font=\huge] at (4,2.45) {--};
\node[reddishpurple,draw=none,font=\huge] at (4,2.55) {--};
\node[reddishpurple,draw=none,font=\huge] at (7,2.45) {--};
\node[reddishpurple,draw=none,font=\huge] at (7,2.55) {--};

\node[reddishpurple,draw=none,font=\huge] at (6,3.4) {--};
\node[reddishpurple,draw=none,font=\huge] at (6,3.5) {--};
\node[reddishpurple,draw=none,font=\huge] at (6,3.6) {--};
\node[reddishpurple,draw=none,font=\huge] at (9,3.4) {--};
\node[reddishpurple,draw=none,font=\huge] at (9,3.5) {--};
\node[reddishpurple,draw=none,font=\huge] at (9,3.6) {--};

\node[square,vermillion] at (2,0) {};
\node[square,vermillion] at (2,1) {};
\node[square,vermillion] at (2,2) {};
\node[square,vermillion] at (5,3) {};
\node[square,vermillion] at (5,1) {};
\node[square,vermillion] at (5,2) {};

\end{tikzpicture}
\caption{The cover $\tS$ with one of its singularities.}
\label{fig:3staircase}
\end{figure}

The surface $\tS$ admits cylinder decompositions in the vertical and horizontal directions, of modulus $\frac12$ and $\frac13$ respectively. Hence the Dehn twists in these cylinders, which can be represented by the matrices  $A = \begin{pmatrix} 1 & 0 \\ 2 & 1 \end{pmatrix}$ and $B = \begin{pmatrix} 1 & 3 \\ 0 & 1 \end{pmatrix}$ respectively, give automorphisms of $\tS$. Composing them, for example as $T = BA = \begin{pmatrix} 7 & 3 \\ 2 & 1 \end{pmatrix}$ gives a pseudo-Anosov map on $\tS$.

\subsubsection{The twisted $\tT$-action on Maharam cohomology of $\tS$}
The homology $H_1(\tS_0,\Z)$ is generated by the classes of three families of curves $(a_i)_{i\in \Z}, (b_i)_{i\in \Z}, (c_i)_{i\in \Z}$, as shown in \Cref{fig:staircase_homology}.

\begin{figure}[h]
\begin{tikzpicture}[square/.style={regular polygon,regular polygon sides=4,inner sep=2,fill}, every node/.style={draw}]
\draw (1,0) -- (3,0) -- (3,1) --  (5,1) -- (5,2) -- (7,2) -- (7,3) -- (9,3) -- (9,4); 
\draw (0,0) -- (0,1) -- (2,1) --  (2,2) -- (4,2) -- (4,3) -- (6,3) -- (6,4) -- (8,4); 
\foreach \i in {0,...,3}
{
	\draw[dotted] ({2*\i},\i) -- ({2*\i+1},\i);
	\foreach \j in {1,2,3}
		{
        	\draw[loosely dotted] ({\j+2*\i},\i) -- ({\j+2*\i},\i+1);
        }
}
\draw[dotted] (8,4) -- (9,4);

\foreach \i in {1,...,3}
{
	\pgfmathtruncatemacro{\ii}{{\i+1}};

	\draw[orange,->,line width=1] (1.5+2*\i,\i) -- (1.5+2*\i,\i+1);
	\node[orange,draw=none] at (1.5+2*\i,-0.3+\i) {$b_{\i}$};
	\draw[skyblue,->,line width=1] (0.5+2*\i,\i-1) -- (0.5+2*\i,\i+1);
	\node[skyblue,draw=none] at (0.5+2*\i,0.2+\i+1) {$a_{\i}$};
	\draw[reddishpurple,->,line width=1] (2*\i,0.5+\i) -- (3+2*\i,0.5+\i);
	\node[reddishpurple,draw=none] at (-0.25+2*\i,0.5+\i) {$c_{\i}$};
}

\end{tikzpicture}
\caption{The three families of generators of $H_1(\tS_0,\Z)$.}
\label{fig:staircase_homology}
\end{figure}

This means that if we know that a 1-form $\tilde \alpha$ on $\tS$ is
$z$-Maharam, then it is determined only by its integrals over $a_0,b_0,c_0$,
since $\int_{a_k} \tilde \alpha = e^{kz} \int_{a_0} \tilde \alpha$, and
similarly for $b_k,c_k$. This explains why the space $H^1(\tS_0,z,\C)$ is
3-dimensional, with a basis being $(\alpha,\beta,\gamma)$, where \[\alpha
([a_0]) = \beta ([b_0]) = \gamma ([c_0]) = 1\] and \[\alpha ([b_0]) = \alpha
([c_0]) =  \beta ([a_0]) = \beta ([c_0]) = \gamma ([a_0]) = \gamma ([b_0]) =
0.\]

To determine how $\tT_*$ acts on $H_1(\tS_0,z)$, we
first compute its action on the three families of curves $a_i,b_i,c_i$.

Let us first compute the action of $A = \begin{pmatrix} 1 & 0 \\ 2 & 1
\end{pmatrix}$. In \Cref{fig:homology_action} we draw the image of a section of $\tS$
containing $a_1,b_1,c_1$ under the matrix $A$, and the image of the curves after
cutting and pasting the surface back into its original form.

\begin{figure}[h]
\begin{tikzpicture}[square/.style={regular polygon,regular polygon sides=4,inner sep=2,fill}, every node/.style={draw}, scale=1.1]
\draw (2,0) -- (3,0) -- (3,1) --  (5,1) -- (5,2) ; 
\draw (2,1) --  (2,2) -- (4,2) -- (4,3) -- (5,3) ; 
\foreach \i in {1}
{
	\draw[dotted] ({2*\i},\i) -- ({2*\i+1},\i);
	\foreach \j in {1,2,3}
		{
        	\draw[loosely dotted] ({\j+2*\i},\i) -- ({\j+2*\i},\i+1);
        }
}
\draw[dotted] (5,2) -- (4,2);
\draw[loosely dotted] (5,2) -- (5,3);
\draw[loosely dotted] (2,0) -- (2,1);

\foreach \i in {1}
{
	\pgfmathtruncatemacro{\ii}{{\i+1}};

	\draw[orange,->,line width=1] (1.5+2*\i,\i) -- (1.5+2*\i,\i+1);
	\node[orange,draw=none] at (1.5+2*\i,-0.3+\i) {$b_{\i}$};
	\draw[skyblue,->,line width=1] (0.5+2*\i,\i-1) -- (0.5+2*\i,\i+1);
	\node[skyblue,draw=none] at (0.5+2*\i,0.2+\i+1) {$a_{\i}$};
	\draw[reddishpurple,->,line width=1] (2*\i,0.5+\i) -- (3+2*\i,0.5+\i);
	\node[reddishpurple,draw=none] at (-0.25+2*\i,0.5+\i) {$c_{\i}$};
}

\foreach \x in {7,...,10}{
\draw[gray!30] (\x,-3) -- (\x,6);
}

\foreach \y in {-3,...,6}{
\draw[gray!30] (7,\y) -- (10,\y);
}
\begin{scope}[cm={1,2,0,1,(5,-7)}]

\draw (2,0) -- (3,0) -- (3,1) --  (5,1) -- (5,2) ; 
\draw (2,1) --  (2,2) -- (4,2) -- (4,3) -- (5,3) ; 
\foreach \i in {1}
{
	\draw[dotted] ({2*\i},\i) -- ({2*\i+1},\i);
	\foreach \j in {1,2,3}
		{
        	\draw[loosely dotted] ({\j+2*\i},\i) -- ({\j+2*\i},\i+1);
        }
}
\draw[dotted] (5,2) -- (4,2);
\draw[loosely dotted] (5,2) -- (5,3);
\draw[loosely dotted] (2,0) -- (2,1);

\foreach \i in {1}
{
	\pgfmathtruncatemacro{\ii}{{\i+1}};

	\draw[orange,->,line width=1] (1.5+2*\i,\i) -- (1.5+2*\i,\i+1);
	\node[orange,draw=none] at (1.9+2*\i,-1+\i) {$A_* b_{\i}$};
	\draw[skyblue,->,line width=1] (0.5+2*\i,\i-1) -- (0.5+2*\i,\i+1);
	\node[skyblue,draw=none] at (-0.1+2*\i,1.1+\i+1) {$A_* a_{\i}$};
	\draw[reddishpurple,->,line width=1] (2*\i,0.5+\i) -- (3+2*\i,0.5+\i);
	\node[reddishpurple,draw=none] at (-0.4+2*\i,1.2+\i) {$A_* c_{\i}$};
}
\end{scope}

\begin{scope}[shift={(10,0)}]

\draw (2,0) -- (3,0) -- (3,1) --  (5,1) -- (5,2) ; 
\draw (2,1) --  (2,2) -- (4,2) -- (4,3) -- (5,3) ; 
\foreach \i in {1}
{
	\draw[dotted] ({2*\i},\i) -- ({2*\i+1},\i);
	\foreach \j in {1,2,3}
		{
        	\draw[loosely dotted] ({\j+2*\i},\i) -- ({\j+2*\i},\i+1);
        }
}
\draw[dotted] (5,2) -- (4,2);
\draw[loosely dotted] (5,2) -- (5,3);
\draw[loosely dotted] (2,0) -- (2,1);

\foreach \i in {1}
{
	\pgfmathtruncatemacro{\ii}{{\i+1}};

	\draw[orange,->,line width=1] (1.5+2*\i,\i) -- (1.5+2*\i,\i+1);
	\node[orange,draw=none] at (1.5+2*\i,-0.3+\i) {$A_* b_{\i}$};
	\draw[skyblue,->,line width=1] (0.5+2*\i,\i-1) -- (0.5+2*\i,\i+1);
	\node[skyblue,draw=none] at (0.5+2*\i,0.3+\i+1) {$A_* a_{\i}$};
	\draw[reddishpurple,->,line width=1] (2,1.5) -- (2.25,2);
	\draw[reddishpurple,->,line width=1] (2.25,0) -- (3.25,2);
	\draw[reddishpurple,->,line width=1] (3.25,1) -- (3.75,2);
	\draw[reddishpurple,->,line width=1] (3.75,1) -- (4.75,3);
	\draw[reddishpurple,->,line width=1] (4.75,1) -- (5,1.5);
	\node[reddishpurple,draw=none] at (-0.45+2*\i,0.5+\i) {$A_* c_{\i}$};
}
\end{scope}

\draw[->] (4.5,3.6) arc(120:90:4.5);
\node[draw=none] at (5.7,4.35) {$A$};

\draw[->] (10.5,4.5) arc(80:50:4.5);
\node[draw=none] at (12.5,4.5) {cut and paste};

\end{tikzpicture}
\caption{The images of $a_1,b_1,c_1$ under the action of $A$.}
\label{fig:homology_action}
\end{figure}

We observe that $A_*[a_i] = [a_i]$, $A_* [b_i] = [b_i]$ and $A_*[c_i] = [a_i] + [a_{i+1}] + 2[b_i] + [c_i]$. Thus if we introduce a formal variable $t$, which acts by Deck transformations, $t \cdot [a_i] = [a_{i+1}]$ etc., we can represent the action of $A_*$ on the homology $H_1(\tS_0,\Z)$ in our chosen basis by the polynomial-valued matrix
\[A_* = \begin{pmatrix}  1 & 0 & 1+t \\ 0 & 1 & 2 \\ 0 & 0 & 1  \end{pmatrix}.\]

Similarly for $B = \begin{pmatrix} 1 & 3 \\ 0 & 1 \end{pmatrix}$ we obtain
\[B_* = \begin{pmatrix}  1 & 0 & 0 \\ 0 & 1 & 0 \\ t^{-1}+1 & 1 & 1  \end{pmatrix},\]

and then the action of $T_*$ on $H_1(\tS_0,\Z)$ is given by the product
\begin{equation}
\label{eq:T_action_staircase}
T_* = B_* A_* = \begin{pmatrix}  1 & 0 & 1+t \\ 0 & 1 & 2 \\ t^{-1}+1 & 1 & t^{-1}+5+t  \end{pmatrix}.
\end{equation}

\begin{remark}
If instead of taking the curves $a_0,b_0,c_0$ above one would take the curves corresponding to the zippered rectangles for the flow in the stable direction of $T$, the matrix for $T_*$ would be precisely the level-counting cocycle as defined in~\cite{Tum}.
\end{remark}

The action of $\cL_z^\#$ on $H^1(\tS_0,z,\C)$ in the basis $(\alpha,\beta,\gamma)$ is then given by the matrix for the $T_*$-action on homology obtained in \eqref{eq:T_action_staircase}, with $t = e^z$. Indeed, let us prove the following general statement.

\begin{lemma}\label{lem:twisted_action}
Let $\{c_i^a : 1\leq i \leq k=2g-2+|\Sigma|,\, a \in \Z^d\}$ be a basis for $H_1(\tS_0,\Z)$, such that for $D \in \Deck$, 
\[D(c_i^a) = c_i^{a+\omega(D)}.\]
For $i=1,\dots,k$, let $\alpha_i \in H^1(\tS,z,\C)$ be such that 
\[\alpha_i(c_j^a) = \delta_{ij} e^{z\cdot a}.\] 

Define for $i,j \in \{1,\dots,k\}$ the Laurent polynomial in the variables $t = (t_1,\dots,t_d)$, 
$M_{ij}(t)~=~\sum_{a\in \Z^d} m_{ij,a}\,t^a$, so that
\[\tT_*(c_j^0) = \sum_{a\in \Z^d} \sum_{i=1}^k m_{ij,a}\, c_i^a.\]

For $z\in \C^d$, define the matrix $M_z$ by $(M_z)_{ij} = M_{ij}(e^z).$
Then the $\cL_z^\#$ action on $H^1(\tS,z,\C)$ is given by the matrix $M_z$, in the sense that 
\[\alpha_i \circ \tT = \sum_{j=1}^k (M_z)_{ij}  \alpha_j.\]
\end{lemma}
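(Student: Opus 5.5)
The plan is to evaluate both sides of the claimed identity on the $\Z$-basis $\{c_j^a\}$ of $H_1(\tS_0,\Z)$. Since a class in $H^1(\tS_0,\C)$ is determined by its values on this basis, and since both sides are $z$-Maharam (pullback by $\tT$ preserves the Maharam property because $\tT$ commutes with $\Deck$), it suffices to compare values on the classes $c_j^0$, $j=1,\dots,k$. Indeed, the Maharam relation gives $\beta(c_j^a)=e^{z\cdot a}\beta(c_j^0)$ for any $z$-Maharam class $\beta$, using $D(c_j^0)=c_j^{\omega(D)}$; this is the same mechanism as the relation $\int_{a_k}\tilde\alpha=e^{kz}\int_{a_0}\tilde\alpha$ noted for the staircase. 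So the first step is to record that $\alpha_i\circ\tT\in H^1(\tS_0,z,\C)$, and that the family $\alpha_1,\dots,\alpha_k$ is a basis of this space.

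The second step is the direct computation. By definition of pullback and the stated expansion of $\tT_*(c_j^0)$,
\[
(\alpha_i\circ\tT)(c_j^0)=\alpha_i\big(\tT_*c_j^0\big)=\sum_{a\in\Z^d}\sum_{l=1}^k m_{lj,a}\,\alpha_i(c_l^a)=\sum_{a\in\Z^d} m_{ij,a}\,e^{z\cdot a}=M_{ij}(e^z).
\]
The sum is finite because $\tT_*c_j^0$ is a finite cycle. The right-hand side evaluates to $\sum_l (M_z)_{il}\alpha_l(c_j^0)=(M_z)_{ij}$, so the two sides agree on every $c_j^0$, and hence everywhere by the first step.

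The third step identifies this pullback action with $\cL_z^\#$, using the isomorphism $H^1_z(S_0,\C)\cong H^1(\tS_0,-z,\C)$, $\alpha\mapsto e^{-z\cdot\xi}\mathcal{p}^*\alpha$. I expect this to be the main obstacle, because it is pure bookkeeping: one must check the sign conventions for $z$ versus $-z$, and whether $\cL_z^\#=T_*(e^{z\cdot F}\,\cdot)$ corresponds to pushforward $\tT_*$ or pullback $\tT^*$ on Maharam classes. Writing $\tilde\alpha=e^{-z\cdot\xi}\mathcal{p}^*\alpha$ and using $F=\xi\circ\tT-\xi$, one gets
\[
\widetilde{\cL_z^\#\alpha}=e^{-z\cdot\xi}\,\tT_*\big(e^{z\cdot\xi\circ\tT-z\cdot\xi}\,\mathcal{p}^*\alpha\big)=\tT_*\tilde\alpha .
\]
Hence $\cL_z^\#$ is conjugate to the pushforward of forms, that is, pullback by $\tT^{-1}$, acting on Maharam classes. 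Taking transposes and inverses if necessary, the matrix statement in the lemma (phrased via $\alpha_i\circ\tT$) then reads as the stated action of $M_z$, possibly up to replacing $z$ by $-z$ according to the paper's convention for Maharam classes. I would simply fix the convention so that the displayed identity is exactly the content of the second step.
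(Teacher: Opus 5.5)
Your Steps 1--2 are correct and are essentially the paper's proof. The paper evaluates $\alpha_i\circ\tT$ directly on each $c_j^b$, writing $\tT_*c_j^b=\sum_{a,l}m_{lj,a}\,c_l^{a+b}$ (this is where commutation of $\tT$ with $\Deck$ enters) to get $e^{z\cdot b}M_{ij}(e^z)$; this is equivalent to your reduction to $c_j^0$ via the Maharam property of $\alpha_i\circ\tT$.

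Step 3 is unnecessary. The clause ``in the sense that'' makes the displayed identity the whole content of the lemma, and the paper makes no further identification with $\cL_z^\#$. In any case, your hedge about transposes, inverses and $z\mapsto -z$ would not settle such an identification: your own (correct) computation matches $\cL_z^\#$ with $\tT_*=(\tT^{-1})^*$ on $(-z)$-Maharam classes, whose matrix is $M_{-z}^{-1}$ rather than $M_z$.
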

\begin{proof}
We compute for any $j=1,\dots,k$, and $b\in \Z^d$,
\begin{align*}
\alpha_i \circ \tT (c_j^b) &= \alpha_i \left(\sum_{a\in \Z^d} m_{ij,a} c_i^{a+b}\right)
= \sum_{a\in \Z^d} m_{ij,a} e^{z\cdot(a+b)}
= e^{z\cdot b} M_{ij}(e^{z}).
\end{align*}

\end{proof}



\subsubsection{Other staircases}
One can consider similar staircase constructions with different parameters, for example a $4\times 1$ staircase starting from a genus 2 surface in the stratum $\mathcal{H}(1,1)$, as shown in \Cref{fig:4staircaseS}.

\begin{figure}[h]
\begin{tikzpicture}[square/.style={regular polygon,regular polygon sides=4,inner sep=2,fill}, every node/.style={draw}, 
		scale=1.5]
\draw (0,0) -- (0,1) -- (4,1) --  (4,0) -- cycle;

\foreach \j in {1,2,3}
	{
		\draw[loosely dotted] ({\j},0) -- ({\j},+1);
	}
\node[square] at (0,0) {};
\node[square] at (2,0) {};
\node[square] at (4,0) {};
\node[square] at (1,1) {};
\node[square] at (3,1) {};

\node[circle,inner sep=2] at (0,1) {};
\node[circle,inner sep=2] at (2,1) {};
\node[circle,inner sep=2] at (4,1) {};
\node[circle,inner sep=2] at (1,0) {};
\node[circle,inner sep=2] at (3,0) {};

\node[vermillion,draw=none] at (0.5,0) {\huge |};
\node[vermillion,draw=none] at (3.5,1) {\huge|};

\node[reddishpurple,draw=none] at (1.5,0) {\huge \sffamily s};
\node[reddishpurple,draw=none] at (2.5,1) {\huge\sffamily s};

\node[orange,draw=none] at (2.5,0) {\huge /};
\node[orange,draw=none] at (1.5,1) {\huge /};

\node[skyblue,draw=none] at (3.5,0) {\huge >};
\node[skyblue,draw=none] at (0.5,1) {\huge >};

\node[bluishgreen,draw=none] at (0,0.5) {\huge ---};
\node[bluishgreen,draw=none] at (4,0.5) {\huge ---};

\end{tikzpicture}
\caption{The genus 2 surface $S$.}
\label{fig:4staircaseS}
\end{figure}

Consider the cover $\tS$ as in \Cref{fig:4staircase}. In the figure pairs of squares with
the same label form vertical cylinders of modulus $\frac12$: the top edge of one
square is glued to the bottom edge of the other one and vice versa.

\begin{figure}[h]
\begin{tikzpicture}[square/.style={regular polygon,regular polygon sides=4,inner sep=2,fill}, every node/.style={draw}]
\draw (1,0) -- (4,0) -- (4,1) --  (7,1) -- (7,2) -- (10,2) -- (10,3) -- (13,3) -- (13,4); 
\draw (0,0) -- (0,1) -- (3,1) --  (3,2) -- (6,2) -- (6,3) -- (9,3) -- (9,4) -- (12,4); 
\foreach \i in {0,...,3}
{
	\draw[dotted] ({3*\i},\i) -- ({3*\i+1},\i);
	\foreach \j in {1,2,3}
		{
        	\draw[loosely dotted] ({\j+3*\i},\i) -- ({\j+3*\i},\i+1);
        }
    \node[draw=none] at ({3*\i+1.5},{\i+0.5}) {$\i$};
    \pgfmathtruncatemacro{\ii}{{\i+1}};
    \node[draw=none] at ({3*\i+2.5},{\i+0.5}) {$\ii$};
}
\draw[dotted] (12,4) -- (13,4);

\foreach \i in {1,...,2}
{
	\pgfmathtruncatemacro{\ii}{{\i+1}};
    \node[reddishpurple,draw=none] at ({1.5+3*\i},\i) { \sffamily \rep{\i}{s\hspace{-1pt}}};
	\node[reddishpurple,draw=none] at ({2.5+3*\i},{\i+1}) {\sffamily \rep{\ii}{s\hspace{-1pt}}};
	
	\node[orange,draw=none] at (2.5+3*\i,\i) {\large \rep{\ii}{/}};
	\node[orange,draw=none] at (1.5+3*\i,\i+1) {\large \rep{\i}{/}};
	
	\node[skyblue,draw=none] at ({3.5+3*\i},\i) { \rep{\ii}{>}};
	\node[skyblue,draw=none] at ({0.5+3*\i},\i+1) { \rep{\i}{>}};
}

\node[reddishpurple,draw=none] at (2.5,1) {\sffamily s};
\node[orange,draw=none] at (2.5,0) {/};
\node[skyblue,draw=none] at (3.5,0) {>};

\node[reddishpurple,draw=none] at (1.5+9,3) {\sffamily \rep{3}{s\hspace{-1pt}}};
\node[orange,draw=none] at (1.5+9,4) {\rep{3}{/}};
\node[skyblue,draw=none] at (0.5+9,4) {\rep{3}{>}};

\node[bluishgreen,draw=none] at (3,1.5) {--};
\node[bluishgreen,draw=none] at (7,1.5) {--};
\node[bluishgreen,draw=none] at (6,2.5) {=};
\node[bluishgreen,draw=none] at (10,2.5) {=};
\node[bluishgreen,draw=none] at (9,3.5) {$\equiv$};
\node[bluishgreen,draw=none] at (13,3.5) {$\equiv$};

\node[square,vermillion] at (3,1) {};
\node[square,vermillion] at (5,1) {};
\node[square,vermillion] at (7,1) {};
\node[square,vermillion] at (7,3) {};

\node[square,yellow] at (6,2) {};
\node[square,yellow] at (8,2) {};
\node[square,yellow] at (10,2) {};
\node[square,yellow] at (10,4) {};

\node[circle,inner sep=3,blue] at (3,2) {};
\node[circle,inner sep=3,blue] at (3,0) {};
\node[circle,inner sep=3,blue] at (5,2) {};
\node[circle,inner sep=3,blue] at (7,2) {};

\node[circle,inner sep=3,black] at (6,3) {};
\node[circle,inner sep=3,black] at (6,1) {};
\node[circle,inner sep=3,black] at (8,3) {};
\node[circle,inner sep=3,black] at (10,3) {};

\end{tikzpicture}
\caption{The cover $\tS$ with a few of its singularities.}
\label{fig:4staircase}
\end{figure}

The cover $\tS$ has two cylinder decompositions, with horizontal
cylinders of modulus $\frac14$ and vertical cylinders of modulus $\frac12$, with corresponding Dehn twists $\begin{pmatrix}
1 & 4 \\ 0 & 1\end{pmatrix}$ and $\begin{pmatrix} 1 & 0 \\ 2 & 1\end{pmatrix}$.

\subsection{Example: the wind-tree model, a $\Z^2$ cover}

The Ehrenfest wind-tree model is a billiard in the plane with rectangular obstacles. First introduced by P. Ehrenfest and T. Ehrenfest in 1911 \cite{Ehrenfest} to model gas diffusion, the periodic version as first studied by Hardy and Weber \cite{HardyWeber} has attracted a lot of attention in the last decade and a half, see for example \cite{DHL,FU}. The billiard table for parameters $0<a,b<1$ is defined as $\R^2$ with removed obstacles, which are translates of the rectangle $[0,a] \times [0,b]$, centered at each element of $\Z^2$.

The unfolding of the wind-tree model is a $\Z^2$-cover  $\tilde X$ of the
genus 5 compact translation surface $X$, obtained by gluing 4 copies of $[0,1]^2 \setminus (\frac{1-a}{2},\frac{1+a}{2}) \times (\frac{1-b}{2},\frac{1+b}{2})$. 

While the results of \cite{FU} show that for typical directions on any wind-tree model, the translation flow is not ergodic, there are special parameters, including all rational $a,b$, for which the cover $\tilde X$ does admit pseudo-Anosovs and hence fits our setting. We will consider the case of $a = b  = 1/2$, as shown in \Cref{fig:X}.

\begin{figure}[h]
\centering
\begin{tikzpicture}[scale=2.5,line width=1]
\begin{scope}[decoration={markings, mark=at position 0.6 with {\arrow{>}}}]

\fill[gray!15] (0,0) -- (1,0) -- (1,1) -- (0,1) -- (0,0) -- (0.25,0.25) -- (0.25,0.75) -- (0.75,0.75) -- (0.75,0.25) -- (0.25,0.25) -- cycle;
\draw[skyblue, postaction={decorate}] (0,0) -- (1,0);
\draw[skyblue, postaction={decorate}] (0,1) -- (1,1);
\draw (0.25,0.25) -- (0.75,0.25);
\draw (0.25,0.75) -- (0.75,0.75);
\draw[orange, postaction={decorate}] (0,0) -- (0,1);
\draw[orange, postaction={decorate}] (1,0) -- (1,1);
\draw (0.25,0.25) -- (0.25,0.75);
\draw (0.75,0.25) -- (0.75,0.75);

\node[circle,fill,inner sep=0.9mm,vermillion] at (0.25,0.25) {};
\node[rectangle,fill,inner sep=1mm,reddishpurple] at (0.75,0.25) {};
\node[diamond,fill,inner sep=0.75mm,bluishgreen] at (0.25,0.75) {};
\node[star,fill,inner sep=0.75mm] at (0.75,0.75) {};

\node[skyblue] at (0.6,-0.1) {$h_{00}$};
\node[skyblue] at (0.6,0.9) {$h_{00}$};
\node[orange] at (-0.1,0.15) {$v_{00}$};
\node[orange] at (0.9,0.15) {$v_{00}$};

\draw[bluishgreen, postaction={decorate}] (0.4,0) -- (0.4,0.25);
\draw[bluishgreen] (0.4,0.75) -- (0.4,1);
\node[bluishgreen] at (0.255,0.15) {$c_{v,0}$}; 

\draw[blue, postaction={decorate}] (0,0.5) -- (0.25,0.5);
\draw[blue,] (0.75,0.5) -- (1,0.5);
\node[blue] at (0.1,0.4) {$c_{h,0}$}; 

\begin{scope}[shift={(1.3,0)}]
\fill[gray!15] (0,0) -- (1,0) -- (1,1) -- (0,1) -- (0,0) -- (0.25,0.25) -- (0.25,0.75) -- (0.75,0.75) -- (0.75,0.25) -- (0.25,0.25) -- cycle;
\draw[skyblue, postaction={decorate}] (0,0) -- (1,0);
\draw[skyblue, postaction={decorate}] (0,1) -- (1,1);
\draw (0.25,0.25) -- (0.75,0.25);
\draw (0.25,0.75) -- (0.75,0.75);
\draw[orange, postaction={decorate}] (0,0) -- (0,1);
\draw[orange, postaction={decorate}] (1,0) -- (1,1);
\draw (0.25,0.25) -- (0.25,0.75);
\draw (0.75,0.25) -- (0.75,0.75);

\node[circle,fill,inner sep=0.9mm,vermillion] at (0.75,0.25) {};
\node[rectangle,fill,inner sep=1mm,reddishpurple] at (0.25,0.25) {};
\node[diamond,fill,inner sep=0.75mm,bluishgreen] at (0.75,0.75) {};
\node[star,fill,inner sep=0.75mm] at (0.25,0.75) {};

\node[skyblue] at (0.6,-0.1) {$h_{10}$};
\node[skyblue] at (0.6,0.9) {$h_{10}$};
\node[orange] at (0.1,0.15) {$v_{10}$};
\node[orange] at (1.1,0.15) {$v_{10}$};

\draw[bluishgreen, postaction={decorate}] (0.4,0) -- (0.4,0.25);
\draw[bluishgreen] (0.4,0.75) -- (0.4,1);
\node[bluishgreen] at (0.55,0.15) {$c_{v,1}$}; 

\draw[blue,] (0,0.5) -- (0.25,0.5);
\draw[blue,] (0.75,0.5) -- (1,0.5);
\end{scope}

\begin{scope}[shift={(1.3,1.3)}]
\fill[gray!15] (0,0) -- (1,0) -- (1,1) -- (0,1) -- (0,0) -- (0.25,0.25) -- (0.25,0.75) -- (0.75,0.75) -- (0.75,0.25) -- (0.25,0.25) -- cycle;
\draw[skyblue, postaction={decorate}] (0,0) -- (1,0);
\draw[skyblue, postaction={decorate}] (0,1) -- (1,1);
\draw (0.25,0.25) -- (0.75,0.25);
\draw (0.25,0.75) -- (0.75,0.75);
\draw[orange, postaction={decorate}] (0,0) -- (0,1);
\draw[orange, postaction={decorate}] (1,0) -- (1,1);
\draw (0.25,0.25) -- (0.25,0.75);
\draw (0.75,0.25) -- (0.75,0.75);

\node[circle,fill,inner sep=0.9mm,vermillion] at (0.75,0.75) {};
\node[rectangle,fill,inner sep=1mm,reddishpurple] at (0.25,0.75) {};
\node[diamond,fill,inner sep=0.75mm,bluishgreen] at (0.75,0.25) {};
\node[star,fill,inner sep=0.75mm] at (0.25,0.25) {};

\node[skyblue] at (0.6,0.1) {$h_{11}$};
\node[skyblue] at (0.6,1.1) {$h_{11}$};
\node[orange] at (0.1,0.15) {$v_{11}$};
\node[orange] at (1.1,0.15) {$v_{11}$};

\draw[bluishgreen] (0.4,0) -- (0.4,0.25);
\draw[bluishgreen] (0.4,0.75) -- (0.4,1);

\draw[blue,] (0,0.5) -- (0.25,0.5);
\draw[blue,] (0.75,0.5) -- (1,0.5);
\end{scope}

\begin{scope}[shift={(0,1.3)}]
\fill[gray!15] (0,0) -- (1,0) -- (1,1) -- (0,1) -- (0,0) -- (0.25,0.25) -- (0.25,0.75) -- (0.75,0.75) -- (0.75,0.25) -- (0.25,0.25) -- cycle;
\draw[skyblue, postaction={decorate}] (0,0) -- (1,0);
\draw[skyblue, postaction={decorate}] (0,1) -- (1,1);
\draw (0.25,0.25) -- (0.75,0.25);
\draw (0.25,0.75) -- (0.75,0.75);
\draw[orange, postaction={decorate}] (0,0) -- (0,1);
\draw[orange, postaction={decorate}] (1,0) -- (1,1);
\draw (0.25,0.25) -- (0.25,0.75);
\draw (0.75,0.25) -- (0.75,0.75);

\node[circle,fill,inner sep=0.9mm,vermillion] at (0.25,0.75) {};
\node[rectangle,fill,inner sep=1mm,reddishpurple] at (0.75,0.75) {};
\node[diamond,fill,inner sep=0.75mm,bluishgreen] at (0.25,0.25) {};
\node[star,fill,inner sep=0.75mm] at (0.75,0.25) {};

\node[skyblue] at (0.6,0.1) {$h_{01}$};
\node[skyblue] at (0.6,1.1) {$h_{01}$};
\node[orange] at (-0.1,0.15) {$v_{01}$};
\node[orange] at (0.9,0.15) {$v_{01}$};

\draw[bluishgreen] (0.4,0) -- (0.4,0.25);
\draw[bluishgreen] (0.4,0.75) -- (0.4,1);

\draw[blue, postaction={decorate}] (0,0.5) -- (0.25,0.5);
\draw[blue,] (0.75,0.5) -- (1,0.5);
\node[blue] at (0.1,0.4) {$c_{h,1}$}; 
\end{scope}

\end{scope}

\end{tikzpicture}

\caption{The surface $X$ with its four singularities and the classes generating $H_1(X,\Z)$.}
\label{fig:X}
\end{figure}

Define the homology classes $h_{xy}, v_{xy} \in H_1(X,\Z)$ for $x,y\in \{0,1\}$ as in \Cref{fig:X}.
Define 
\begin{align*}
\gamma_h &\coloneqq -v_{00}+v_{10}-v_{01}+v_{11},\\
\gamma_v &\coloneqq h_{00}+h_{10}-h_{01}-h_{11},
\end{align*}
and let $\tilde X$ be the $\Z^2$-cover associated to $\Gamma^{\mathrm{ab}} = H_1(X,\Z)/ \langle\gamma_h,\gamma_v\rangle$, as shown in \Cref{fig:Xtilde}.

\begin{figure}[h]
\centering

\begin{tikzpicture}[scale=1.1]
\begin{scope}[decoration={markings, mark=at position 0.6 with {\arrow{>}}}]

\fill[orange!20] (-0.25,-0.25) -- (0.75,-0.25) -- (0.75,0.75) -- (-0.25,0.75)  -- (-0.25,-0.25) -- (0,0) -- (0,0.5) -- (0.5,0.5) -- (0.5,0) -- (0,0) -- cycle;
\fill[skyblue!20] (0.75,-0.25) -- (1.75,-0.25) -- (1.75,0.75) -- (0.75,0.75) -- (0.75,-0.25) -- (1,0) -- (1,0.5) -- (1.5,0.5) -- (1.5,0) -- (1,0) -- cycle;

\foreach \x in {-1,...,1}{
	\foreach \y in {-1,...,1}{
		\draw (\x,\y) -- ++(0.5,0) -- ++ (0,0.5) -- ++ (-0.5,0) -- cycle;
		\node at (\x+0.25,\y+0.25) {\tiny \x,\y};	
	}
}

\draw[dotted] (-0.25,-0.25) -- (0.75,-0.25) -- (0.75,0.75) -- (-0.25,0.75) -- cycle;
\draw[dotted] (0.75,-0.25) -- (1.75,-0.25) -- (1.75,0.75) -- (0.75,0.75);

\node[orange,font=\large] at (-0.25,-0.5) {$\mathcal{F}$};
\node[skyblue,font=\large] at (2.15,-0.5) {$\gamma_h \cdot \mathcal{F}$};

\node[reddishpurple] at (0.25,0.5) {>};
\node[bluishgreen,font=\huge] at (0,0.2) {-}; 
\node[vermillion,font=\large] at (1.25,0.5) {$\wr$}; 

\begin{scope}[shift={(4,0)}]
\fill[orange!20] (-0.25,-0.25) -- (0.75,-0.25) -- (0.75,0.75) -- (-0.25,0.75)  -- (-0.25,-0.25) -- (0,0) -- (0,0.5) -- (0.5,0.5) -- (0.5,0) -- (0,0) -- cycle;
\begin{scope}[shift={(-2,0)}]
\fill[skyblue!20] (0.75,-0.25) -- (1.75,-0.25) -- (1.75,0.75) -- (0.75,0.75) -- (0.75,-0.25) -- (1,0) -- (1,0.5) -- (1.5,0.5) -- (1.5,0) -- (1,0) -- cycle;
\end{scope}
\foreach \x in {-1,...,1}{
	\foreach \y in {-1,...,1}{
		\draw (\x,\y) -- ++(0.5,0) -- ++ (0,0.5) -- ++ (-0.5,0) -- cycle;
		\node at (-\x+0.25,\y+0.25) {\tiny \x,\y};	
	}
}
\draw[dotted] (-0.25,-0.25) -- (0.75,-0.25) -- (0.75,0.75) -- (-0.25,0.75) -- cycle;
\draw[dotted] (-0.25,-0.25) -- (-1.25,-0.25) -- (-1.25,0.75) -- (-0.25,0.75);

\node[bluishgreen,font=\huge] at (0.5,0.2) {-}; 
\end{scope}

\begin{scope}[shift={(0,-4)}]
\fill[orange!20] (-0.25,-0.25) -- (0.75,-0.25) -- (0.75,0.75) -- (-0.25,0.75)  -- (-0.25,-0.25) -- (0,0) -- (0,0.5) -- (0.5,0.5) -- (0.5,0) -- (0,0) -- cycle;
\fill[skyblue!20] (0.75,-0.25) -- (1.75,-0.25) -- (1.75,0.75) -- (0.75,0.75) -- (0.75,-0.25) -- (1,0) -- (1,0.5) -- (1.5,0.5) -- (1.5,0) -- (1,0) -- cycle;

\foreach \x in {-1,...,1}{
	\foreach \y in {-1,...,1}{
		\draw (\x,\y) -- ++(0.5,0) -- ++ (0,0.5) -- ++ (-0.5,0) -- cycle;
		\node at (\x+0.25,-\y+0.25) {\tiny \x,\y};	
	}
}
\draw[dotted] (-0.25,-0.25) -- (0.75,-0.25) -- (0.75,0.75) -- (-0.25,0.75) -- cycle;
\draw[dotted] (0.75,-0.25) -- (1.75,-0.25) -- (1.75,0.75) -- (0.75,0.75);
\node[reddishpurple] at (0.25,0) {>};
\node[vermillion,font=\large] at (1.25,0) {$\wr$}; 
\end{scope}

\begin{scope}[shift={(4,-4)}]
\fill[orange!20] (-0.25,-0.25) -- (0.75,-0.25) -- (0.75,0.75) -- (-0.25,0.75)  -- (-0.25,-0.25) -- (0,0) -- (0,0.5) -- (0.5,0.5) -- (0.5,0) -- (0,0) -- cycle;
\begin{scope}[shift={(-2,0)}]
\fill[skyblue!20] (0.75,-0.25) -- (1.75,-0.25) -- (1.75,0.75) -- (0.75,0.75) -- (0.75,-0.25) -- (1,0) -- (1,0.5) -- (1.5,0.5) -- (1.5,0) -- (1,0) -- cycle;
\end{scope}
\foreach \x in {-1,...,1}{
	\foreach \y in {-1,...,1}{
		\draw (\x,\y) -- ++(0.5,0) -- ++ (0,0.5) -- ++ (-0.5,0) -- cycle;
		\node at (-\x+0.25,-\y+0.25) {\tiny \x,\y};	
	}
}
\draw[dotted] (-0.25,-0.25) -- (0.75,-0.25) -- (0.75,0.75) -- (-0.25,0.75) -- cycle;
\draw[dotted] (-0.25,-0.25) -- (-1.25,-0.25) -- (-1.25,0.75) -- (-0.25,0.75);
\end{scope}

\end{scope}
\end{tikzpicture}

\caption{A part of the cover $\tilde X$, a fundamental domain $\mathcal{F}$ for the $\Z^2$ Deck group action and its image under the action of the Deck group element $\gamma_h \Gamma^{\mathrm{ab}}$. The labeled squares represent `holes' which are not part of the surface, the edges of the square holes with equal labels are glued to one other.}
\label{fig:Xtilde}
\end{figure}

As $X$ is of genus 5 and has 4 singularities, for $z\neq 0$, $H^1(\tilde X_0, -z, \C) \cong \C^{12}$.
The homology $H_1(\tilde X_0,\Z)$ is generated by the Deck group translates of the 12 classes $\{[a_h],[a_v],[b_h],[b_v],[c]$, $ [d_1], [d_2], [d_3], [e_1], [e_2], [e_3], [e_4]\}$, whose representatives are depicted in \Cref{fig:windtree_curves}.

For a curve $\gamma$ on $\tilde X$, we denote by $\gamma^{(a,b)}$ with $(a,b) \in \Z^2$ the image of $\gamma$ by the Deck transformation $D_{(a,b)}$, i.e., $\gamma^{(a,b)} = (D_{(a,b)})(\gamma)$.

\begin{figure}[h]
\makebox[\textwidth][c]{
\begin{tikzpicture}[scale=1.1]
\begin{scope}[decoration={markings, mark=at position 0.6 with {\arrow{>}}}]

\foreach \x in {-1,...,1}{
	\foreach \y in {-1,...,1}{
		\draw (\x,\y) -- ++(0.5,0) -- ++ (0,0.5) -- ++ (-0.5,0) -- cycle;
		\node at (\x+0.25,\y+0.25) {\tiny \x,\y};	
	}
}

\begin{scope}[shift={(4,0)}]
\foreach \x in {-1,...,1}{
	\foreach \y in {-1,...,1}{
		\draw (\x,\y) -- ++(0.5,0) -- ++ (0,0.5) -- ++ (-0.5,0) -- cycle;
		\node at (-\x+0.25,\y+0.25) {\tiny \x,\y};	
	}
}
\end{scope}

\begin{scope}[shift={(0,-4)}]
\foreach \x in {-1,...,1}{
	\foreach \y in {-1,...,1}{
		\draw (\x,\y) -- ++(0.5,0) -- ++ (0,0.5) -- ++ (-0.5,0) -- cycle;
		\node at (\x+0.25,-\y+0.25) {\tiny \x,\y};	
	}
}
\end{scope}

\begin{scope}[shift={(4,-4)}]
\foreach \x in {-1,...,1}{
	\foreach \y in {-1,...,1}{
		\draw (\x,\y) -- ++(0.5,0) -- ++ (0,0.5) -- ++ (-0.5,0) -- cycle;
		\node at (-\x+0.25,-\y+0.25) {\tiny \x,\y};	
	}
}
\end{scope}

\draw[skyblue,postaction=decorate] (0.25,0.5) -- ++ (0,0.5);
\draw[skyblue,postaction=decorate] (0.25,-4.5) -- ++ (0,0.5);
\node[skyblue] at (0,0.8) {$a_v$};

\draw[vermillion,postaction=decorate] (4.25,0.5) -- ++ (0,0.5);
\draw[vermillion,postaction=decorate] (4.25,-4.5) -- ++ (0,0.5);
\node[vermillion] at (4,0.75) {$b_v$};

\draw[reddishpurple,postaction=decorate] (0.5,0.25) -- ++ (0.5,0);
\draw[reddishpurple,postaction=decorate] (3.5,0.25) -- ++ (0.5,0);
\node[reddishpurple] at (0.75,0) {$a_h$};

\draw[bluishgreen,postaction=decorate] (0.5,-3.75) -- ++ (0.5,0);
\draw[bluishgreen,postaction=decorate] (3.5,-3.75) -- ++ (0.5,0);
\node[bluishgreen] at (0.75,-3.5) {$b_h$};

\draw[orange,postaction=decorate] (0.25,0.5) arc (120:60:1);
\draw[orange,postaction=decorate] (1.25,-4) arc (-60:-120:1);
\node[orange] at (1.25,0.75) {$c$};

\draw[dashed] (6.75,-5.5) -- (6.75,2);

\begin{scope}[shift={(9,0)}]

\foreach \x in {-1,...,1}{
	\foreach \y in {-1,...,1}{
		\draw (\x,\y) -- ++(0.5,0) -- ++ (0,0.5) -- ++ (-0.5,0) -- cycle;
		\node at (\x+0.25,\y+0.25) {\tiny \x,\y};	
	}
}

\draw[orange,postaction=decorate] (0.5,0.2) arc (-90:180:0.3);
\draw[skyblue,postaction=decorate] (0.2,0) arc (-180:90:0.3);
\draw[bluishgreen,postaction=decorate] (0,0.3) arc (90:360:0.3);
\draw[reddishpurple,postaction=decorate] (0.3,0.5) arc (0:270:0.3); 

\draw[vermillion,postaction=decorate] (0.9,0) -- (0.9,0.9) -- (-0.4,0.9) -- (-0.4,-0.4) -- (0.9,-0.4) -- cycle;

\node[orange] at (0.58,0.58) {$e_3$}; 
\node[bluishgreen] at (-0.1,-0.1) {$e_1$};
\node[skyblue] at (0.5,-0.15) {$e_2$}; 
\node[reddishpurple] at (0,0.63) {$e_4$};  
\node[vermillion] at (-0.6,-0.25) {$d_1$};

\begin{scope}[shift={(4,0)}]
\foreach \x in {-1,...,1}{
	\foreach \y in {-1,...,1}{
		\draw (\x,\y) -- ++(0.5,0) -- ++ (0,0.5) -- ++ (-0.5,0) -- cycle;
		\node at (-\x+0.25,\y+0.25) {\tiny \x,\y};	
	}
}

\draw[bluishgreen,postaction=decorate] (0.3,0.5) arc (180:-90:0.2);
\draw[skyblue,postaction=decorate] (0,0.3) arc (270:0:0.2);
\draw[orange,postaction=decorate] (0.2,0) arc (360:90:0.2);
\draw[reddishpurple,postaction=decorate] (0.5,0.2) arc (90:-180:0.2);

\draw[yellow,postaction=decorate] (0.9,0) -- (0.9,0.9) -- (-0.4,0.9) -- (-0.4,-0.4) -- (0.9,-0.4) -- cycle;
\node[yellow] at (-0.6,-0.25) {$d_2$};

\end{scope}

\begin{scope}[shift={(0,-4)}]
\foreach \x in {-1,...,1}{
	\foreach \y in {-1,...,1}{
		\draw (\x,\y) -- ++(0.5,0) -- ++ (0,0.5) -- ++ (-0.5,0) -- cycle;
		\node at (\x+0.25,-\y+0.25) {\tiny \x,\y};	
	}
}

\draw[bluishgreen,postaction=decorate] (0.3,0.5) arc (180:-90:0.2);
\draw[skyblue,postaction=decorate] (0,0.3) arc (270:0:0.2);
\draw[orange,postaction=decorate] (0.2,0) arc (360:90:0.2);
\draw[reddishpurple,postaction=decorate] (0.5,0.2) arc (90:-180:0.2);

\draw[blue,postaction=decorate] (0.9,0) -- (0.9,0.9) -- (-0.4,0.9) -- (-0.4,-0.4) -- (0.9,-0.4) -- cycle;
\node[blue] at (-0.6,-0.25) {$d_3$};
\end{scope}

\begin{scope}[shift={(4,-4)}]
\foreach \x in {-1,...,1}{
	\foreach \y in {-1,...,1}{
		\draw (\x,\y) -- ++(0.5,0) -- ++ (0,0.5) -- ++ (-0.5,0) -- cycle;
		\node at (-\x+0.25,-\y+0.25) {\tiny \x,\y};	
	}
}
\draw[orange,postaction=decorate] (0.5,0.2) arc (-90:180:0.3);
\draw[skyblue,postaction=decorate] (0.2,0) arc (-180:90:0.3);
\draw[bluishgreen,postaction=decorate] (0,0.3) arc (90:360:0.3);
\draw[reddishpurple,postaction=decorate] (0.3,0.5) arc (0:270:0.3); 

\end{scope}

\end{scope}

\end{scope}
\end{tikzpicture}
}
\caption{The 12 classes generating $H_1(\tilde X_0,\Z)$ up to Deck transformations.}
\label{fig:windtree_curves}
\end{figure}

The compact surface $X$ can be decomposed into cylinders in the north-east and north-west directions, see \Cref{fig:X_cylinders}.  Let $A$ denote the Dehn twist in both north-east cylinders, and $B$ the Dehn twist in both north-west cylinders. Then $T = B^{-1}A$ is a pseudo-Anosov map on $X$. Since the cylinder decompositions lift to $\tilde X$, $A$ and $B$ both lift to $\tilde X$, corresponding to Dehn twists in all north-east (respectively north-west) cylinders on $\tilde X$. Thus also $T$ lifts to $\tilde X$.

\begin{figure}[h]
\centering
\begin{tikzpicture}[line width=1]
\begin{scope}[scale=2.5, decoration={markings, mark=at position 0.6 with {\arrow{>}}}]

\fill[orange!40] (0,0) -- (0.25,0.25) -- (0.75,0.25) -- (0.5,0) -- cycle;
\fill[orange!40] (0.75,0.25) -- (1,0.5) -- (1,1) -- (0.75,0.75) -- cycle;
\fill[orange!40] (0,0.5) -- (0,1) -- (0.5,1) -- cycle;
\fill[skyblue!30] (0.25,0.75) -- (0.5,1) -- (1,1) -- (0.75,0.75) -- cycle;
\fill[skyblue!30] (0.5,0) -- (1,0.5) -- (1,0) -- cycle;
\fill[skyblue!30] (0,0) -- (0.25,0.25) -- (0.25,0.75) -- (0,0.5) -- cycle;

\draw (0,0) -- (1,0);
\draw (0,1) -- (1,1);
\draw (0.25,0.25) -- (0.75,0.25);
\draw (0.25,0.75) -- (0.75,0.75);
\draw (0,0) -- (0,1);
\draw (1,0) -- (1,1);
\draw (0.25,0.25) -- (0.25,0.75);
\draw (0.75,0.25) -- (0.75,0.75);

\begin{scope}[shift={(1.3,0)}]
\fill[orange!40] (0.25,0.75) -- (0.5,1) -- (1,1) -- (0.75,0.75) -- cycle;
\fill[orange!40] (0.5,0) -- (1,0.5) -- (1,0) -- cycle;
\fill[orange!40] (0,0) -- (0.25,0.25) -- (0.25,0.75) -- (0,0.5) -- cycle;
\fill[skyblue!30] (0,0) -- (0.25,0.25) -- (0.75,0.25) -- (0.5,0) -- cycle;
\fill[skyblue!30] (0.75,0.25) -- (1,0.5) -- (1,1) -- (0.75,0.75) -- cycle;
\fill[skyblue!30] (0,0.5) -- (0,1) -- (0.5,1) -- cycle;

\draw (0,0) -- (1,0);
\draw (0,1) -- (1,1);
\draw (0.25,0.25) -- (0.75,0.25);
\draw (0.25,0.75) -- (0.75,0.75);
\draw (0,0) -- (0,1);
\draw (1,0) -- (1,1);
\draw (0.25,0.25) -- (0.25,0.75);
\draw (0.75,0.25) -- (0.75,0.75);

\end{scope}
\begin{scope}[shift={(1.3,1.3)}]
\fill[orange!40] (0,0) -- (0.25,0.25) -- (0.75,0.25) -- (0.5,0) -- cycle;
\fill[orange!40] (0.75,0.25) -- (1,0.5) -- (1,1) -- (0.75,0.75) -- cycle;
\fill[orange!40] (0,0.5) -- (0,1) -- (0.5,1) -- cycle;
\fill[skyblue!30] (0.25,0.75) -- (0.5,1) -- (1,1) -- (0.75,0.75) -- cycle;
\fill[skyblue!30] (0.5,0) -- (1,0.5) -- (1,0) -- cycle;
\fill[skyblue!30] (0,0) -- (0.25,0.25) -- (0.25,0.75) -- (0,0.5) -- cycle;

\draw (0,0) -- (1,0);
\draw (0,1) -- (1,1);
\draw (0.25,0.25) -- (0.75,0.25);
\draw (0.25,0.75) -- (0.75,0.75);
\draw (0,0) -- (0,1);
\draw (1,0) -- (1,1);
\draw (0.25,0.25) -- (0.25,0.75);
\draw (0.75,0.25) -- (0.75,0.75);
\end{scope}
\begin{scope}[shift={(0,1.3)}]
\fill[orange!40] (0.25,0.75) -- (0.5,1) -- (1,1) -- (0.75,0.75) -- cycle;
\fill[orange!40] (0.5,0) -- (1,0.5) -- (1,0) -- cycle;
\fill[orange!40] (0,0) -- (0.25,0.25) -- (0.25,0.75) -- (0,0.5) -- cycle;
\fill[skyblue!30] (0,0) -- (0.25,0.25) -- (0.75,0.25) -- (0.5,0) -- cycle;
\fill[skyblue!30] (0.75,0.25) -- (1,0.5) -- (1,1) -- (0.75,0.75) -- cycle;
\fill[skyblue!30] (0,0.5) -- (0,1) -- (0.5,1) -- cycle;

\draw (0,0) -- (1,0);
\draw (0,1) -- (1,1);
\draw (0.25,0.25) -- (0.75,0.25);
\draw (0.25,0.75) -- (0.75,0.75);
\draw (0,0) -- (0,1);
\draw (1,0) -- (1,1);
\draw (0.25,0.25) -- (0.25,0.75);
\draw (0.75,0.25) -- (0.75,0.75);

\end{scope}

\begin{scope}[scale=0.4, shift={(9,4.5)}]

\fill[skyblue!30] (-0.5,0) -- (-0.5,0.5) -- (0,1) -- (0.5,1) -- cycle;
\fill[orange!40] (0,-0.5) -- (0.5,-0.5) -- (1,0) -- (1,0.5) -- cycle;

\draw[orange,postaction=decorate] (0.25,-0.5) -- (1,0.25);
\draw[skyblue,postaction=decorate] (-0.5,0.25) -- (0.25,1);

\node[skyblue] at (-0.75,0.75) {$\gamma_{NE,1}$};
\node[orange] at (1.25,-0.25) {$\gamma_{NE,2}$};

\foreach \x in {-1,...,1}{
	\foreach \y in {-1,...,1}{
		\draw (\x,\y) -- ++(0.5,0) -- ++ (0,0.5) -- ++ (-0.5,0) -- cycle;
		\node at (\x+0.25,\y+0.25) {\tiny \x,\y};	
	}
}

\begin{scope}[shift={(4,0)}]

\fill[skyblue!30] (0,-0.5) -- (0.5,-0.5) -- (1,0) -- (1,0.5) -- cycle;
\fill[orange!40] (-0.5,0) -- (-0.5,0.5) -- (0,1) -- (0.5,1) -- cycle;

\draw[skyblue,postaction=decorate] (0.25,-0.5) -- (1,0.25);
\draw[orange,postaction=decorate] (-0.5,0.25) -- (0.25,1);

\foreach \x in {-1,...,1}{
	\foreach \y in {-1,...,1}{
		\draw (\x,\y) -- ++(0.5,0) -- ++ (0,0.5) -- ++ (-0.5,0) -- cycle;
		\node at (-\x+0.25,\y+0.25) {\tiny \x,\y};	
	}
}
\end{scope}

\begin{scope}[shift={(0,-4)}]

\fill[skyblue!30] (0,-0.5) -- (0.5,-0.5) -- (1,0) -- (1,0.5) -- cycle;
\fill[orange!40] (-0.5,0) -- (-0.5,0.5) -- (0,1) -- (0.5,1) -- cycle;

\draw[skyblue,postaction=decorate] (0.25,-0.5) -- (1,0.25);
\draw[orange,postaction=decorate] (-0.5,0.25) -- (0.25,1);

\foreach \x in {-1,...,1}{
	\foreach \y in {-1,...,1}{
		\draw (\x,\y) -- ++(0.5,0) -- ++ (0,0.5) -- ++ (-0.5,0) -- cycle;
		\node at (\x+0.25,-\y+0.25) {\tiny \x,\y};	
	}
}
\end{scope}

\begin{scope}[shift={(4,-4)}]

\fill[skyblue!30] (-0.5,0) -- (-0.5,0.5) -- (0,1) -- (0.5,1) -- cycle;
\fill[orange!40] (0,-0.5) -- (0.5,-0.5) -- (1,0) -- (1,0.5) -- cycle;

\draw[orange,postaction=decorate] (0.25,-0.5) -- (1,0.25);
\draw[skyblue,postaction=decorate] (-0.5,0.25) -- (0.25,1);

\foreach \x in {-1,...,1}{
	\foreach \y in {-1,...,1}{
		\draw (\x,\y) -- ++(0.5,0) -- ++ (0,0.5) -- ++ (-0.5,0) -- cycle;
		\node at (-\x+0.25,-\y+0.25) {\tiny \x,\y};	
	}
}
\end{scope}

\end{scope}

\end{scope}
\end{tikzpicture}
\caption{The cylinder decomposition of $X$ in the north-east direction, and the lifts of two cylinders to $\tilde X$, with core curves $\gamma_{NE,1}$ and $\gamma_{NE,2}$.}
\label{fig:X_cylinders}
\end{figure}

We can use the basis shown in \Cref{fig:windtree_curves} to record classes in $H_1(\tilde X_0, \Z)$ as vectors of Laurent polynomials with integer coefficients, i.e., as elements of the ring $\Z[x,x^{-1},y,y^{-1}] ^{12}$, where $x^ay^b$ in the $\gamma$-coordinate represents $\gamma^{(a,b)}$. In this notation the Deck group action by $D_{(a,b)}$ is represented by multiplication by $x^ay^b$.

Denote by $\gamma_{NE,1}$ and $\gamma_{NE,2}$ the two curves depicted in \Cref{fig:X_cylinders}, which are the core curves of two north-east cylinders on $\tilde X$ that project to cover all of $X$. Define $\gamma_{NW,1}$ and $\gamma_{NW,2}$ symmetrically for the north-west cylinders.

For $s \in \{(NE,1),(NE,2),(NW,1),(NW,2)\}$, let $v_s$ be the representation of $[\gamma_s]$ in our basis.
One checks (e.g. see \Cref{fig:curve_NE1} for $\gamma_{NE,1}$) that the following holds:

\[
\begin{array}{llccccccccccr}
v_{NE,1} = & (x^{-1} & 1 & 1 & y^{-1}& 0 & 0 & 0 & 0 & 0 & 0 & 0 & -1),\\
v_{NE,2} = & (1 & y^{-1} & x^{-1} & 1 & 0 & 0 & 0 & 0 & 0 & 1 & 0 & 0),\\
v_{NW,1} = & (-1 & 1 & -x^{-1} & y^{-1} & 0 & 0 & 0 & 0 & 0 & 0 & 1 & 0),\\
v_{NW,2} = & (-x^{-1} & y^{-1} & -1 & 1 & 0 & 0 & 0 & 0 & -1 & 0 & 0 & 0).
\end{array}
\]

\tikzmath{
	\a = 25;
	\r = 0.25/(sin(\a)+cos(\a));
}
\begin{figure}[h]
\centering
\begin{tikzpicture}
\begin{scope}[scale=1.2, decoration={markings, mark=at position 0.75 with {\arrow{>}}}]

\foreach \x in {-1,...,1}{
	\foreach \y in {-1,...,1}{
		\draw (\x,\y) -- ++(0.5,0) -- ++ (0,0.5) -- ++ (-0.5,0) -- cycle;
		\node at (\x+0.25,\y+0.25) {\tiny \x,\y};	
	}
}

\draw[skyblue,postaction=decorate] (-0.5,0.25) -- (0.25,1);
\draw[bluishgreen,postaction=decorate] (-0.5,0.25) -- (0,0.25);
\draw[brown,postaction=decorate] (0.25,0.5) -- (0.25,1);

\draw[reddishpurple,postaction=decorate] (0,0.25) arc (270-\a:\a:\r);

\node[skyblue] at (-0.5,0.75) {$\gamma_{NE,1}$}; 
\node[bluishgreen] at (-0.1,-0.3) {$a_h^{(-1,0)}$};
\node[brown] at (0.7,0.75) {$a_v^{(0,0)}$};

\node[reddishpurple] at (2.2,0.75) {$-e_4^{(0,0)}$};

\begin{scope}[shift={(4,0)}]
\foreach \x in {-1,...,1}{
	\foreach \y in {-1,...,1}{
		\draw (\x,\y) -- ++(0.5,0) -- ++ (0,0.5) -- ++ (-0.5,0) -- cycle;
		\node at (-\x+0.25,\y+0.25) {\tiny \x,\y};	
	}
}

\draw[skyblue,postaction=decorate] (0.25,-0.5) -- (1,0.25);

\draw[bluishgreen,postaction=decorate] (0.5,0.25) -- (1,0.25);
\draw[brown,postaction=decorate] (0.25,-0.5) -- (0.25,0);

\draw[reddishpurple,postaction=decorate] (0.25,0) arc (-180+\a:90-\a:\r);

\node[brown] at (-0.25,-0.25) {$b_v^{(0,-1)}$};

\end{scope}

\begin{scope}[shift={(0,-4)}]
\foreach \x in {-1,...,1}{
	\foreach \y in {-1,...,1}{
		\draw (\x,\y) -- ++(0.5,0) -- ++ (0,0.5) -- ++ (-0.5,0) -- cycle;
		\node at (\x+0.25,-\y+0.25) {\tiny \x,\y};	
	}
}

\draw[skyblue,postaction=decorate] (0.25,-0.5) -- (1,0.25);

\draw[bluishgreen,postaction=decorate] (0.5,0.25) -- (1,0.25);
\draw[brown,postaction=decorate] (0.25,-0.5) -- (0.25,0);

\draw[reddishpurple,postaction=decorate] (0.25,0) arc (-180+\a:90-\a:\r);

\node[bluishgreen] at (0.9,0.6) {$b_h^{(0,0)}$};

\end{scope}

\begin{scope}[shift={(4,-4)}]
\foreach \x in {-1,...,1}{
	\foreach \y in {-1,...,1}{
		\draw (\x,\y) -- ++(0.5,0) -- ++ (0,0.5) -- ++ (-0.5,0) -- cycle;
		\node at (-\x+0.25,-\y+0.25) {\tiny \x,\y};	
	}
}
\draw[skyblue,postaction=decorate] (-0.5,0.25) -- (0.25,1);
\draw[bluishgreen,postaction=decorate] (-0.5,0.25) -- (0,0.25);
\draw[brown,postaction=decorate] (0.25,0.5) -- (0.25,1);

\draw[reddishpurple,postaction=decorate] (0,0.25) arc (270-\a:\a:\r);
\end{scope}

\end{scope}

\end{tikzpicture}
\caption{The class $[\gamma_{NE,1}]$ can be written as $[a_h^{(-1,0)}] + [a_v^{(0,0)}] +[b_h^{(0,0)}] + [b_v^{(0,-1)}] - [e_4^{(0,0)}]$.} 
\label{fig:curve_NE1}
\end{figure}

Further, for $s \in \{(NE,1),(NE,2),(NW,1),(NW,2)\}$ one computes the intersection numbers of $\gamma_s$ with the the basis curves. For the basis curve $\delta_i = \delta_i^{(0,0)}$, let 
\[(\sigma_s)_i = \sum_{(a,b)\in \Z^2} x^ay^b \langle \delta_i, \gamma_s^{(a,b)} \rangle,\]

\newgeometry{bottom=2cm} 

so that if $T_s$ is the simultaneous Dehn twist in the union of cylinders with core curves $\gamma_s^{(a,b)}$ for $(a,b)\in \Z^2$, then 
\[T_s(\delta_i) = \delta_i + v_s (\sigma_s)_i.\]
Thus the matrix $M_s$ for the action of $(T_s)_*$ on $H_1(\tilde X_0,\Z)$ is given by 
\[ M_s = I + v_s^T \cdot \sigma_s.\]

One computes that, if $\chi = x+x^{-1}, \upsilon = y+y^{-1}$, then

\[
\hspace{-2cm}
\begin{array}{llllllllllllr}
\sigma_{NE,1} = & (x & -1 & 1 & -y &  -x+y^{-1} & x-y & x^{-1}-y^{-1} &  -1+x &  \upsilon & -\chi+\upsilon & -\chi & 0),\\
\sigma_{NE,2} = & (1 & -y & x & -1 &  x^{-1}-y & -x^{-1}+y^{-1} & -x+y &  0 &  \chi & 0 & -\upsilon & \chi-\upsilon),\\
\sigma_{NW,1} = & (1 & 1 & x & y &  -x^{-1}+y^{-1} & x^{-1}-y & x-y^{-1} &  1-x &  -\chi+\upsilon & \upsilon & 0 & -\chi),\\
\sigma_{NW,2} = & (x & y & 1 & 1 &  x-y & -x+y^{-1} & -x^{-1}+y &  0 &  0 & \chi & \chi-\upsilon & -\upsilon).
\end{array}
\]
Finally we can compute the linear action on the twisted cohomology.

Let $M = (M_{NW,1} M_{NW,2})^{-1} M_{NE,1} M_{NE,2} $ be the matrix representing the lift to $\tilde X$ of the pseudo-Anosov with derivative
\[\begin{pmatrix} 7 & 6 \\ -6 & -5 \end{pmatrix} \begin{pmatrix} 7 & -6 \\ 6 & -5 \end{pmatrix} ,\]
which is the composition of a positive Dehn twist in the NE cylinders followed by a negative Dehn twist in the NW cylinders.

By \Cref{lem:twisted_action}, for $z = (z_1,z_2) \in \R^2$, to obtain the action of this pseudo-Anosov on $H^1_{-z}(X_0,\C) \cong H^1(\tilde X_0,z,\C)$, we substitute $x = e^{z_1}, y = e^{z_2}$ into the matrix $M$.

The top eigenvalue of $M (e^{z_1},e^{z_2})$ is $\lambda \rho((z_1,z_2))$ where the stretch factor of the pseudo-Anosov is $\lambda = 73+12\sqrt{37}$.

\begin{figure}[h!]
\centering
\includegraphics[width=0.6\textwidth]{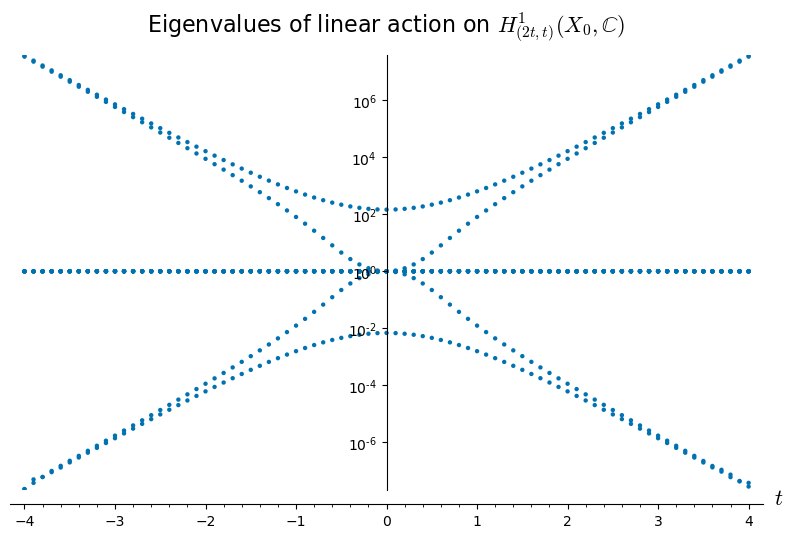}
\captionsetup{width=\textwidth}
\caption{The eigenvalues of the linear action on $H^1_z(X_0,\C)$, for the one-parameter family $\{z = (2t,t):\ t\in \R\}$. On the $x$-axis is the parameter $t$, on the $y$-axis, the blue dots represent the eigenvalues.
One observes that for non-zero values of $z$, the linear action has two eigenvalues greater than 1, two less than 1 and the eigenvalue 1 with multiplicity 8 \big(in $H^1(\tilde X_0,z,\C)$ the 1-eigenspace is the codimension-4 subspace of cohomology classes vanishing on the curves $\gamma^{(0,0)}_{NE,1}, \gamma^{(0,0)}_{NE,2}, \gamma^{(0,0)}_{NW,1}, \gamma^{(0,0)}_{NW,2}$.\big) Thus in this case the eigenvalues (without multiplicity) of the linear action on $H^1_z(X_0,\C)$ and on $H^1_z(X,\C)$ are the same. }
\label{fig:eigenvalues}
\end{figure}
\restoregeometry

\section{Proof of \Cref{prop:leafwise_measures}}
\label{sec:leafwise_measures}
The result of \cite{GoLi2} does not apply directly to our setting since $S_0$ is non-compact and the horizontal foliation is not defined at the singularities. However the proof is not hard to adapt by using the compactness of $S$.

\begin{proof}
Let $f = \frac{\diff\widehat{\mathcal{l}_2}}{\diff\widehat{\ell_1}}$ be the leafwise Radon-Nikodym derivative, which is defined $\widehat{\ell_1}$-almost everywhere on horizontal leaves. Since $|\widehat{\mathcal{l}_2}| \leq \widehat{\ell_1}$, we have $|f|\leq C$. Further, as $\widehat{\ell_1} = g T_* \widehat{\ell_1}$ and $\widehat{\mathcal{l}_2} = \gamma g T_* \widehat{\mathcal{l}_2}$, we have for almost all $x\in S_\mathrm{reg}$, $f(x) = \gamma f(T^{-1}x)$.

Let $I$ be a horizontal interval of length 1 in $S_0$. Consider the sequence of nested partitions $\mathcal{F}_n$ of $I$, where $\mathcal{F}_n$ is the partition into $2^n$ intervals of length $2^{-n}$ each. For $x\in I$, denote by $F_n(x)$ the element of $\mathcal{F}_n$ containing $x$, which is defined for $\widehat{\ell_1}$-almost every $x$. Let $i(n)$ be such that $\lambda^{-1} \leq \lambda^{i(n)} 2^{-(n+1)} < 1$. \\

{ 
Define the finite measure $\mu$ on $I$ by $\mu(A):=\widehat{\ell_1}(A)$ for measurable $A\subset I$ and normalize it to a probability measure $
\bar\mu := \mu(I)^{-1}\mu.$
For each $n$ let $\mathcal{A}_n$ be the sub-$\sigma$-algebra of measurable subsets of $I$ generated by the partition $\mathcal{F}_n$. Then this 
is an increasing sequence of $\sigma$-algebras.

Define the $\mathcal{A}_n$-conditional expectation of $f$ (with respect to $\bar\mu$) by the usual averaging over the element containing $x$:
\[
E_{\bar\mu}[f\mid \mathcal{A}_n](x)
= \frac{1}{\bar\mu(F_n(x))}\int_{F_n(x)} f\,\diff\bar\mu
= \frac{\int_{F_n(x)} f\,\diff\widehat{\ell_1}}{\widehat{\ell_1}(F_n(x))}.
\]
Because $f\in L^\infty(\widehat{\ell_1})$ (indeed $|f|\le C$), we have $f\in L^1(\bar\mu)$ and these conditional expectations are well defined.

\medskip

We now apply the Martingale convergence theorem in our context.
\begin{theorem}[Martingale convergence theorem, {\cite[Theorem 11.5]{Williams}}]
Let $(I,\bar\mu)$ be a probability space and let $\mathcal{A}_n$ be an increasing sequence of sub-$\sigma$-algebras generated by measurable partitions $\mathcal{F}_n$ with $\operatorname{diam}(F_n(x))\to0$ for $\bar\mu$-a.e.\ $x$. If $f\in L^1(\bar\mu)$ then
\[
E_{\bar\mu}[f\mid \mathcal{A}_n](x)\to f(x)
\qquad\text{for }\bar\mu\text{-almost every }x.
\]
Equivalently, for every $\varepsilon>0$,
\[
\frac{\bar\mu\{y\in F_n(x): |f(y)-f(x)|>\varepsilon\}}{\bar\mu(F_n(x))}\to0
\quad\text{for }\bar\mu\text{-a.e. }x.
\]
\end{theorem}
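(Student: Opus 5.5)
We will prove the first assertion as an instance of Lévy's upward theorem, and then derive the density reformulation from it by a countable approximation.

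\emph{Step 1: martingale structure.} Set $f_n \coloneqq E_{\bar\mu}[f\mid\mathcal{A}_n]$. Since the $\mathcal{A}_n$ are increasing and each is generated by a finite (or countable) partition, $(f_n)_n$ is a martingale with respect to $(\mathcal{A}_n)_n$, by the tower property. By conditional Jensen, $\|f_n\|_{L^1(\bar\mu)}\le\|f\|_{L^1(\bar\mu)}$. Moreover the family $\{E[f\mid\mathcal{G}] : \mathcal{G}\subset\mathcal{B}\}$ is uniformly integrable. (In our application $|f|\le C$, so this is immediate, and the bounded case already suffices for \Cref{prop:leafwise_measures}.)

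\emph{Step 2: almost sure convergence.} We apply Doob's upcrossing inequality. For rationals $a<b$, the expected number of upcrossings of $[a,b]$ by $(f_n)$ up to time $N$ is at most $(\sup_n\|f_n\|_{L^1}+|a|)/(b-a)$. Hence almost surely there are finitely many upcrossings for all rational pairs, and $f_n\to f_\infty$ $\bar\mu$-a.e. Uniform integrability upgrades this to $L^1$ convergence. For every $A\in\bigcup_n\mathcal{A}_n$ we then have $\int_A f_\infty\,\diff\bar\mu=\int_A f\,\diff\bar\mu$, and a $\pi$–$\lambda$ argument extends this identity to $\mathcal{A}_\infty=\sigma(\bigcup_n\mathcal{A}_n)$. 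Thus $f_\infty=E[f\mid\mathcal{A}_\infty]$.

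\emph{Step 3: identifying $\mathcal{A}_\infty$.} This is the only point where the geometry enters, and it is where the diameter hypothesis is used. Let $G$ be the full-measure set of $x$ with $\operatorname{diam}F_n(x)\to0$. Every open $U\subset I$ satisfies $U\cap G=\bigcup_n\bigcup\{F\in\mathcal{F}_n: F\subset U\}\cap G$. Hence $\mathcal{A}_\infty$ contains every Borel set up to $\bar\mu$-null sets, so $E[f\mid\mathcal{A}_\infty]=f$ a.e., which proves the first assertion. (For the dyadic partitions used above this is even simpler: dyadic intervals generate the Borel $\sigma$-algebra.)

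\emph{Step 4: the density formulation.} Fix rationals $q$ and $\varepsilon>0$, and apply the first assertion to the bounded functions $\one_{\{|f-q|>\varepsilon\}}$. This gives a common full-measure set of $x$ on which
\[
\frac{\bar\mu\{y\in F_n(x):|f(y)-q|>\varepsilon\}}{\bar\mu(F_n(x))}\to\one_{\{|f(x)-q|>\varepsilon\}}
\]
holds for all such $q,\varepsilon$ simultaneously. Given such an $x$ and $\varepsilon$, choose $q$ rational with $|f(x)-q|<\varepsilon/2$. Then $\{|f-f(x)|>\varepsilon\}\subset\{|f-q|>\varepsilon/2\}$, and the density of the latter tends to $\one_{\{|f(x)-q|>\varepsilon/2\}}(x)=0$. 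This proves the stated density form.

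For the converse implication we restrict to bounded $f$, which is the case needed here. The density statement gives $|f_n(x)-f(x)|\le\varepsilon+2C\cdot o(1)$, hence $f_n(x)\to f(x)$. The main technical step is Step 2, namely the upcrossing argument, but it is entirely classical.
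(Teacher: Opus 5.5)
Your proof is correct and takes the same route as the paper, which gives no argument beyond invoking the martingale convergence theorem and noting that the cells of $\mathcal{F}_n$ shrink to points. Your Steps 2--3 supply what that bare citation leaves implicit: Doob's forward convergence theorem (Theorem~11.5 in Williams) only yields convergence to some limit, and identifying that limit with $f$ needs L\'evy's upward theorem together with your observation that $\sigma\bigl(\bigcup_n\mathcal{A}_n\bigr)$ agrees with the Borel $\sigma$-algebra up to null sets; since the Radon--Nikodym derivative in the application is complex-valued, treat its real and imaginary parts separately and take $q\in\mathbb{Q}+i\mathbb{Q}$ in Step~4.
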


Because $\mathcal{F}_n$’s elements shrink to points a.e., the hypotheses hold, and hence the displayed convergence holds for $\widehat{\ell_1}$-almost every $x\in I$. Concretely, we have

}
 for $\widehat{\ell_1}$-almost every $x\in I$, for any $\epsilon >0$,
\begin{equation}
\label{eq:RN_oscillation}
\frac{\widehat{\ell_1}\{y\in F_n(x): |f(y)-f(x)| > \epsilon\}}{\widehat{\ell_1}(F_n(x))} \to 0  \text{ as } n \to +\infty.
\end{equation}

Fix an $x$ for which the above holds. Let $\bar x_n$ be the center of $F_n(x)$, and let $x_n = T^{-i(n)}(\bar x_n)$. Then $B(x_n,\lambda^{-1}) \subset T^{-i(n)}(F_n(x)) \subset B(x_n,1)$, where $B(x,\delta)$ denotes the one-dimensional ball inside the horizontal leaf. \\

Since $g$ is bounded below by some $g_- > 0$ on $S$, and is Hölder on horizontal leaves, there exists a constant $C > 0$, such that for each $n$, and any $y,z \in F_n(x)$, 
\[\prod_{j=0}^{i(n)-1}g(T^{-j}y) \leq C  \prod_{j=0}^{i(n)-1}g(T^{-j}z).\]
Since $\widehat{\ell_1} = g T_* \widehat{\ell_1}$, it follows that for any set $A \subset F_n(x)$, 
\[\frac1{C} \leq \frac{\widehat{\ell_1} (T^{-i(n)}(A))}{\widehat{\ell_1}(A)} \leq C.\]

Hence by \eqref{eq:RN_oscillation},
\[\frac{\widehat{\ell_1}\big(\{y\in T^{-i(n)}(F_n(x)): |f(T^{i(n)} y)-f(x)| > \epsilon\}\big)}{\widehat{\ell_1}\big(T^{-i(n)}(F_n(x))\big)} \to 0  \text{ as } n \to +\infty.\]

As $\widehat{\ell_1} (T^{-i(n)}F_n(x)) \leq \widehat{\ell_1}(B(x_n,1))$ is uniformly bounded, the numerator of the expression above converges to 0. Since $f(T^{i(n)} y) = \gamma^{\, i(n)} f(y)$, and $T^{-i(n)}(F_n(x)) \supset B(x_n,\lambda^{-1})$, it follows that

\begin{equation}
\label{eq:RN_oscillation2}
\widehat{\ell_1}\big(\{y\in B(x_n,\lambda^{-1}): |\gamma^{\, i(n)}f(y)-f(x)| > \epsilon\}\big) \to 0.
\end{equation}

Take a subsequence along which $x_n$ converges to some $x'$ and $\gamma^{-i(n)}$ converges to some $\gamma'$ with $|\gamma'| = 1$. In the case that $x'$ is a singularity, assume without loss of generality that all $x_n$ are in one half-plane above $x'$. In this case we denote by $B(x',\delta)$ the one-dimensional ball around $x'$ contained in the boundary of that half-plane.

Fix some $u \in \mathscr{C}^{0}(S)$. 
As \eqref{eq:RN_oscillation2} holds for any $\epsilon$ and the subsequence $(x_n)_n$ does not depend on $\epsilon$, it follows that
\[\int_{B(x_n,\lambda^{-1})} u \diff \widehat{\mathcal{l}_2} - f(x)\gamma'\int_{B(x_n,\lambda^{-1})} u \diff \widehat{\ell_1} = 
\int_{B(x_n,\lambda^{-1})} u\, (f - f(x)\gamma') \diff \widehat{\ell_1} \to 0.\]

By the continuity of the measures $\widehat{\ell_1}$ and $\widehat{\mathcal{l}_2}$, we deduce that 
\[\int_{B(x',\lambda^{-2})} u \diff \widehat{\mathcal{l}_2} = f(x)\gamma' \int_{B(x',\lambda^{-2})} u \diff \widehat{\ell_1},\]
and thus on $B(x',\lambda^{-2})$, $\widehat{\mathcal{l}_2} = c_x \widehat{\ell_1}$ for some constant $c_x$.\\

Now we use that $T^{-1}$ is topologically mixing, and contracting on vertical leaves. This means that any interval $I$ contained in a horizontal leaf contains a point $x$, whose orbit under $T^{-1}$ is dense. Pick such a point $y\in B(x',\lambda^{-2})$, and let $r>0$ be such that $B(y,r) \subset B(x',\lambda^{-2})$. Then we see that on $T^{-n}(B(y,r))$, $\widehat{\mathcal{l}_2} = c_x \gamma^{-n} \widehat{\ell_1}$.

By the same argument as above, for any horizontal interval $B(z,r)$, taking a sequence $n_k$ such that $T^{-n_k}(y) \to z$ and $\gamma^{-n_k}\to \gamma_z$ for some $\gamma_z$, we see that on $B(z,r)$, $\widehat{\mathcal{l}_2} = c_x \gamma_z \widehat{\ell_1}$. Thus, letting $f(z) = c_x\gamma_z$, we see that for any $z\in S$, $\widehat{\mathcal{l}_2} = f(z)\widehat{\ell_1}$ on $B(z,r)$. By continuity of the measures, $f$ must be continuous on $S$, and since $f\circ T^{-1} = \gamma^{-1} f$, it follows that $\gamma=1$ and $f$ is constant.
\end{proof}

\subsection*{Acknowledgements}
The project started during the conference ``New Frontiers in Parabolic Dynamics and Renormalization" in Bologna. The authors thank the organisers for making this collaboration possible and, in particular, Corinna Ulcigrai for early encouraging comments.

D.R.~thanks Gabriel Rivière for suggesting that the methods in \cite{DaRi} can be applied to the setting of the present paper, and for patiently explaining their work during several discussions.

This research is part of R.C.’s activity within the INdAM (Istituto Nazionale di Alta Matematica) group GNFM,
and R.C. and D.R.’s activity within the UMI Group “DinAmicI”.

Y.T. was supported by the Swiss National Science Foundation through Grant 213663.

\bibliography{ACRT_mah}
\bibliographystyle{amsplain}

\end{document}